\newif\ifshowfig
    \providecommand{\plotfig}[1]{#1}
  \providecommand{\plotfig}[1]{%
    $$\text{(Plot hidden for now, define \texttt{showfigtrue} in header to show)}$$
  }
\title{A broken FEEC framework for electromagnetic problems on mapped multipatch domains%
\thanks{Preprint version of October 6, 2022
\funding{This work was partially supported by the European Council under the Horizon 2020 Project Energy oriented Centre of Excellence for computing applications - EoCoE, Project ID 676629.}
}}
\author{Yaman Güçlü%
  \thanks{Max-Planck-Institut für Plasmaphysik, Garching, Germany (%
  \email{yaman.guclu@ipp.mpg.de}, \email{said.hadjout@ipp.mpg.de}, \email{martin.campos-pinto@ipp.mpg.de}
  ).}
  \and Said Hadjout\footnotemark[2]
  \and Martin Campos Pinto\footnotemark[2]
}
\begin{document}

\maketitle

\begin{abstract}
We present a framework for the structure-preserving approximation of partial differential 
equations on mapped multipatch domains, extending the classical theory of finite element exterior calculus (FEEC) 
to discrete de Rham sequences which are broken, i.e., fully discontinuous across the patch interfaces.
Following the Conforming/Nonconforming Galerkin (CONGA) schemes developed in \cite{Campos-Pinto.Sonnendrucker.2016.mcomp,conga_hodge}, our approach is based on:
\begin{enumerate*}
  \item[(i)] the identification of a conforming discrete de Rham sequence with stable commuting projection operators,
  \item[(ii)] the relaxation of the continuity constraints between patches, and
  \item[(iii)] the construction of conforming projections mapping back to the conforming subspaces,
  allowing to define discrete differentials on the broken sequence.
\end{enumerate*}

This framework combines the advantages of conforming FEEC discretizations 
(e.g. commuting projections, discrete duality and Hodge-Helmholtz decompositions) 
with the data locality and implementation simplicity of interior penalty methods 
for discontinuous Galerkin discretizations.
We apply it to several initial- and boundary-value problems, 
as well as eigenvalue problems arising in electromagnetics.
In each case our formulations are shown to be well posed 
thanks to an appropriate stabilization of the jumps across the interfaces,
and the solutions are 
extremely robust with respect to the stabilization parameter.

Finally we describe a construction using tensor-product splines on mapped cartesian patches,
and we detail the associated matrix operators. Our numerical experiments confirm
the accuracy and stability of this discrete framework, 
and they allow us to verify that expected structure-preserving properties 
such as divergence or harmonic constraints are respected to floating-point accuracy.
 
\end{abstract}

\begin{keywords}
  Finite element exterior calculus, Broken spaces, 
  Mapped multipatch geometry,
  Compatible Discretization,
  Electromagnetic Simulation
\end{keywords}

\begin{AMS} 
  65M60, 
  65N30, 
  65N25
\end{AMS}

\tableofcontents

\section{Introduction}
\label{sec:intro}

Thanks to enlightening research conducted over the last few decades 
\cite{Bossavit.1998.ap,Hyman_Shashkov_1999_jcp,Hiptmair.2002.anum,Gross_Kotiuga_2004_cup,%
Arnold.Falk.Winther.2006.anum,Boffi.2010.anum,Buffa_2011},
it is now well understood that preserving the geometrical de Rham structure 
of physical problems 
is a key tool in the design of good finite element methods. 
A significant field of success is electromagnetics,
where this principle has produced stable and accurate discretization methods,
from simplicial Whitney forms \cite{Whitney.1957.pup,Bossavit_1988_whitney} 
to high order curved elements in isogeometric analysis
\cite{buffa2010isogeometric,daVeiga_2014_actanum},  
via edge N\'ed\'elec elements \cite{Monk.1993.jcam,Bonazzoli_Rapetti_2016_na}.
The latter, in particular, have been proven to yield Maxwell solvers that are free 
of spurious eigenmodes 
in a series of works dedicated to this issue 
\cite{Bossavit.1990.IEEE-tm,
Boffi.Fernandes.Gastaldi.Perugia.1999.sinum,%
Caorsi.Fernandes.Raffetto.2000.sinum,Monk.Demkowicz.2001.mcomp}.
Here the existence of stable commuting projection operators plays a central role, as highlighted
in the unifying analysis of finite element exterior calculus (FEEC)
\cite{Arnold.Falk.Winther.2006.anum,Arnold.Falk.Winther.2010.bams}. 

A central asset of structure-preserving finite elements is their ability to reproduce discrete Hodge-Helmholtz
decompositions which, in combination with proper commuting projection operators, allow them to preserve
important physical invariants such as the divergence constraints in Maxwell's equations
\cite{CPJSS_2014_crm,MTO_2015_cpc,CPMS_2016_amc}, or
the Hamiltonian structure of the MHD and Vlasov-Maxwell equations
\cite{gempic_2016_jpp,HPRW_2020_jcp,CPKS_variational_2020}.
In the latter application the aforementioned commuting projection operators couple structure-preserving finite element fields with numerical particles.


As they primarily involve strong differential operators, structure-preserving finite elements have been 
essentially developed within the scope of {\em conforming} methods, where the discrete spaces
form a sub-complex of the continuous de Rham sequence.
In practice this imposes continuity conditions at the cell interfaces
which strongly degrade the locality of key operations such as
$L^2$~projections, as sparse finite element mass matrices have no sparse inverses in general.
In the framework of dual complexes this leads to global discrete Hodge operators mapping 
the dual de Rham sequence to the primal one, as well as to global commuting projection operators 
on the dual sequence, as the latter also rely on $L^2$~projection operators on the finite element spaces.



In this article we follow the {\em broken} FEEC approach 
\cite{conga_hodge} first developed for Conforming/Nonconforming Galerkin (CONGA) schemes
in \cite{Campos-Pinto.Sonnendrucker.2016.mcomp,Campos-Pinto.Sonnendrucker.2017a.jcm}.
The principle is to consider local de Rham sequences on subdomains 
and a global finite element space that is broken, i.e. fully discontinuous at the interfaces.
The strong differential operators are then applied by ways of conforming projection 
operators that enforce the proper continuity conditions at the interfaces.
%
This approach yields block-diagonal matrices for $L^2$ projections (i.e.~mass matrices),
Hodge operators, and dual commuting projections,
while it satisfies the key properties of discrete structure-preservation,
such as primal/dual commuting diagrams and discrete Hodge-Helmholtz decompositions \cite{conga_hodge}.
The coupling between subdomains is encoded by the conforming projection operators, which can be highly local:
in practice these may only involve the averaging of degrees of freedom across interfaces,
yielding low-rank sparse matrices.
Ultimately this approach allows us to construct structure-preserving finite element solvers
on complex domains, which are easy to implement and efficient.

Specifically, we extend the theory to the discretization of several boundary value problems 
arising in electromagnetics, and we describe its application to multipatch mapped spline discretizations 
\cite{Buffa_2011,daVeiga_2014_actanum} 
on general non-contractible domains. As our results show, this approach allows us to preserve most properties
of the conforming FEEC approximations, such as stability and accuracy of the solutions, topological invariants, 
as well as harmonic and divergence constraints of the discrete fields.

The outline is as follows:
We first recall in Section~\ref{sec:prince} the main lines of FEEC discretizations
using conforming spaces and we describe their extension to broken spaces,
with a detailed description of the fully discrete diagrams involving the
primal (strong) and dual (weak) de Rham sequences and their respective broken commuting 
projection operators. 
In Section \ref{sec:pbms} we apply this discretization framework to a series
of classical electromagnetic problems, namely Poisson's and harmonic Maxwell's equations,
curl-curl eigenvalue problems, and magnetostatic problems; we also recall the approximation of
the time-dependent Maxwell equations from \cite{Campos-Pinto.Sonnendrucker.2016.mcomp}.
For each problem we state a priori results about the solutions, assuming
the well-posedness of the corresponding conforming FEEC discretization.
In Section \ref{sec:geo_bfeec} we detail the construction of
a geometric broken-FEEC spline discretization on mapped multipatch domains.
Here we consider a 2D setting for simplicity, but the same method applies to 3D domains.
In Section~\ref{sec:num} we conduct extensive numerical experiments for the electromagnetic problems
described in the article, which verify the robustness of our approach.
Finally, in Section~\ref{sec:con} we summarize the main results of our work
and provide an outlook on future research.

%

\section{Principle of FEEC and Broken-FEEC discretizations}
\label{sec:prince}

\subsection{De Rham sequences and Hodge-Helmholtz decompositions}
\label{sec:dR}

In this work we consider 
discretizations of Hilbert de Rham sequences. At the continuous level these are of the form
\begin{equation} \label{dR}
  V^0 \xrightarrow{ \mbox{$~ \grad ~$}}
    V^1 \xrightarrow{ \mbox{$~ \curl ~$}}
      V^2 \xrightarrow{ \mbox{$~ \Div ~$}}
        V^3
\end{equation}
with infinite-dimensional spaces such as
\begin{equation} \label{dR_spaces_hom}
  V^0 = H^1_0(\Omega), \quad
    V^1 = H_0(\curl;\Omega), \quad
      V^2 = H_0(\Div;\Omega), \quad
        V^3 = L^2(\Omega).
\end{equation}
Following the well established analysis of Hilbert complexes by Arnold, Falk and Winther 
\cite{Arnold.Falk.Winther.2006.anum,Arnold.Falk.Winther.2010.bams} we also consider the dual sequence
\begin{equation} \label{dR*}
  V^*_0 \xleftarrow{ \mbox{$~ \Div ~$}}
    V^*_1 \xleftarrow{ \mbox{$~ \curl ~$}}
      V^*_2 \xleftarrow{ \mbox{$~ \grad ~$}}
        V^*_3
\end{equation}
involving the adjoint differential operators (denoted with their usual name)
and their corresponding domains, namely 
\begin{equation} \label{dR_spaces_inhom}
  V^*_3 = H^1(\Omega), \quad
    V^*_2 = H(\curl;\Omega), \quad
      V^*_1 = H(\Div;\Omega), \quad
        V^*_0 = L^2(\Omega).
\end{equation}
Here the construction is symmetric, in the sense that the inhomogeneous sequence \eqref{dR_spaces_inhom} 
could have been chosen for the primal one and the homogeneous \eqref{dR_spaces_hom} for the dual one.
Since the symmetry is broken in the finite element discretization, 
to fix the ideas in this article we mostly consider the choice \eqref{dR}--\eqref{dR_spaces_inhom}, 
except for a few places where we adopt a specific notation.
A key property of these sequences is that each operator maps into the kernel of the next one,
i.e., we always have $\curl \grad = 0$ and $\Div \curl = 0$.
This allows us to write an orthogonal Hodge-Helmholtz decomposition for $L^2(\Omega)^3$, of the form
\begin{equation} \label{HH}
  L^2(\Omega)^3 = \grad V^0 \poplus \cH^1 \poplus \curl V^*_2
\end{equation}
where $\cH^1 := \{ v \in V^1 \cap V^*_1 : \curl v = \Div v = 0\}$, see e.g. \cite[Eq.~(15)]{Arnold.Falk.Winther.2010.bams}.
This space corresponds to harmonic 1-forms, as it coincides with the kernel of the 1-form Hodge-Laplace operator
\begin{equation} \label{L1}
  \cL^{1} := -\grad \Div + \curl \curl
\end{equation} 
seen as an operator $\cL^1: D(\cL^1) \to L^2(\Omega)$ with domain space
\begin{equation} \label{DL1}
  D(\cL^1) := \{v \in V^1 \cap V^*_1 : \curl v \in V^*_2 \text{ and } \Div v \in V^0 \}.
\end{equation} 

Another orthogonal decomposition for $L^2(\Omega)^3$ is 
\begin{equation} \label{HH2}
  L^2(\Omega)^3 = \curl V^1 \poplus \cH^2 \poplus \grad V^*_3,
\end{equation}
where $\cH^2 := \{w \in V^2 \cap V^*_2 : \curl w = \Div w = 0\}$ is the space of harmonic 2-forms, 
which coincides with the kernel of the 2-form Hodge-Laplace operator 
$$\cL^{2} := -\grad \Div + \curl \curl
$$ with domain space 
$$D(\cL^2) := \{w \in V^2 \cap V^*_2 : \curl w \in V^*_1 \text{ and } \Div w \in V^3\}.$$

We point out that, while the 1-form and 2-form Hodge-Laplace operators are formally identical, their domain spaces differ in the boundary conditions and hence $\cH^1 \neq \cH^2$: in the case considered here where the primal sequence has homogeneous boundary conditions, 
the harmonic 1-forms have vanishing tangential trace on~$\partial\Omega$, while the harmonic 2-forms have vanishing normal trace on~$\partial\Omega$.
In the symmetric case where the non-homogeoneous de Rham sequence is considered as the primal one, 
one obtains the same decompositions \eqref{HH} and \eqref{HH2} but with opposite order:
$(\cH^\ell)_\text{non-hom.} = \cH^{3-\ell}$.
This isomorphism, known as Poincaré duality, is provided by the Hodge star operator; see \cite[Sec.~5.6 and 6.2]{Arnold.Falk.Winther.2010.bams}.

On contractible domains the above sequences are exact in the sense that 
the image of each operator coincides exactly with the kernel of the next one, 
and the harmonic space is trivial, $\cH^1 = \{0\}$.
However if the domain $\Omega$ is non-contractible, this is no longer the case and
there exist non trivial harmonic forms.
Indeed, the dimensions of these two harmonic spaces depend on the domain topology: 
$\dim(\cH^2) = \dim(\cH^1)_\text{non-hom.} = b_1(\Omega)$ is the first Betti number, which counts the number of ``tunnels'' through the domain,
while $\dim(\cH^1) = \dim(\cH^2)_\text{non-hom.} = b_2(\Omega)$ is the second Betti number, which counts the number of ``voids'' enclosed by the domain.

\subsection{Conforming FEEC discretizations}
\label{sec:feec}

Finite Element Exterior Calculus (FEEC) discretizations consist 
of Finite Element spaces that form discrete de Rham sequences,
\begin{equation} \label{dR_hc}
  V^{0,c}_h \xrightarrow{ \mbox{$~ \grad ~$}}
    V^{1,c}_h \xrightarrow{ \mbox{$~ \curl ~$}}
      V^{2,c}_h \xrightarrow{ \mbox{$~ \Div ~$}}
        V^{3,c}_h~.
\end{equation}
Here, the superscript~$c$ indicates that the spaces are assumed {\em conforming}
in the sense that $V^{\ell,c}_h \subset V^{\ell}$, and the subscript~$h$ loosely
represents some discretization parameters, such as the resolution of an underlying mesh.
A key tool in the analysis of FEEC discretizations is the existence of projection operators 
$\Pi^{\ell,c}_h : V^\ell \to V^{\ell,c}_h$ that commute with the differential operators, 
in the sense that the relation
\begin{equation} \label{cd}
d^\ell \Pi^{\ell,c}_h v = \Pi^{\ell+1,c}_h d^\ell v
\qquad %
v \in V^\ell
\end{equation}
holds for the different operators in the sequence \eqref{dR_hc}, 
\begin{equation} \label{dell}
  d^0 = \grad,
  \qquad 
  d^1 = \curl,
  \qquad %
  d^2 = \Div.
\end{equation}
In particular, the stability and the accuracy of several discrete problems posed in the sequence \eqref{dR_hc},
relative to usual discretization parameters such as the mesh resolution $h$ 
or the order of the finite element spaces, follow from the stability 
of the commuting projections in $V^\ell$ or $L^2$ norms,
see e.g. \cite[Th.~3.9 and 3.19]{Arnold.Falk.Winther.2010.bams}.

Denoting explicitely by
\begin{equation} \label{d_c}
\grad^c_h := \grad|_{V^{0,c}_h}, 
\qquad 
\curl^c_h := \curl|_{V^{1,c}_h}, 
\qquad
\Div^c_h := \Div|_{V^{2,c}_h}
\end{equation}
the differential operators restricted to the discrete spaces, 
we define their discrete adjoints
\begin{equation} \label{wtops_c}
\left\{\begin{aligned}
  \wt \Div^c_h &:= (-\grad^c_h)^* : V^{1,c}_h \to V^{0,c}_h 
  \\ 
  \wt \curl^c_h &:= (\curl^c_h)^*: V^{2,c}_h \to V^{1,c}_h 
  \\ 
  \wt \grad^c_h &:= (-\Div^c_h)^* : V^{3,c}_h \to V^{2,c}_h
\end{aligned}\right.
\end{equation}
by $L^2$ duality, i.e., 
\begin{equation} \label{d*_c}
\left\{\begin{aligned}
\sprod{\wt \Div^c_h v}{\vp} &= -\sprod{v}{\grad \vp} \qquad &&\forall  v \in V^{1,c}_h, ~  \vp \in V^{0,c}_h
\\
\sprod{\wt \curl^c_h \bw}{v} &= \sprod{w}{\curl v} \qquad &&\forall w \in V^{2,c}_h, ~ v \in V^{1,c}_h
\\
\sprod{\wt \grad^c_h \rho}{w} &= -\sprod{\rho}{\Div w} \qquad &&\forall \rho \in V^{3,c}_h, ~ w \in V^{2,c}_h
\end{aligned}\right.
\end{equation}
where $\sprod{\cdot}{\cdot}$ denotes the $L^2(\Omega)$ scalar product.
This yields a compatible discretization of both the primal and dual sequences \eqref{dR}, \eqref{dR*} 
in strong and weak form, respectively, using the same spaces \eqref{dR_hc}.
This framework can be summarized in the following diagram
\begin{equation} \label{CD_hc}
  \begin{tikzpicture}[ampersand replacement=\&, baseline] 
  \matrix (m) [matrix of math nodes,row sep=3em,column sep=5em,minimum width=2em] {
        ~~ V^{0} ~ \bbb
            \& ~~ V^{1} ~ \bbb
                \& ~~ V^{2} ~ \bbb
                      \& ~~ V^{3} ~ \bbb
    \\
    ~~ V^{0,c}_h ~ \bbb
        \& ~~~ V^{1,c}_h \bbb   
            \& ~~ V^{2,c}_h ~ \bbb
              \& ~~ V^{3,c}_h ~ \bbb
    \\
    ~~ V^*_{0} ~ \bbb
        \& ~~ V^*_{1} ~ \bbb
            \& ~~ V^*_{2} ~ \bbb
            \& ~~ V^*_{3} ~ \bbb
    \\
  };
  \path[-stealth]
  (m-1-1) edge node [above] {$\grad$} (m-1-2)
          edge node [right] {$\Pi^{0,c}_h$} (m-2-1)
  (m-1-2) edge node [above] {$\curl$} (m-1-3)
          edge node [right] {$\Pi^{1,c}_h$} (m-2-2)
  (m-1-3) edge node [above] {$\Div$} (m-1-4)
          edge node [right] {$\Pi^{2,c}_h$} (m-2-3)
  (m-1-4) edge node [right] {$\Pi^{3,c}_h$} (m-2-4)
  (m-2-1.10) edge node [above] {$\grad^c_h$} (m-2-2.170)
  (m-2-2.10) edge node [above] {$\curl^c_h$} (m-2-3.170)
  (m-2-3.10) edge node [above] {$\Div^c_h$} (m-2-4.170)
  (m-2-2.190) edge node [below] {$\wt \Div^c_h$} (m-2-1.350)
  (m-2-3.190) edge node [below] {$\wt \curl^c_h$} (m-2-2.350)
  (m-2-4.190) edge node [below] {$\wt \grad^c_h$} (m-2-3.350)
  (m-3-2) edge node [above] {$\Div$} (m-3-1)
  (m-3-3) edge node [above] {$\curl$} (m-3-2)
  (m-3-4) edge node [above] {$\grad$} (m-3-3)
  (m-3-1) edge node [right] {$Q_{V^{0,c}_h}$} (m-2-1)
  (m-3-2) edge node [right] {$Q_{V^{1,c}_h}$} (m-2-2)
  (m-3-3) edge node [right] {$Q_{V^{2,c}_h}$} (m-2-3)
  (m-3-4) edge node [right] {$Q_{V^{3,c}_h}$} (m-2-4)
  ;
  \end{tikzpicture}
\end{equation}
where the operators $Q_{V^{\ell,c}_h} : L^2(\Omega) \to V^{\ell,c}_h$ represent $L^2$~projections to the conforming discrete spaces.
We observe that these projections commute with the dual differential operators, as a result of~\eqref{d*_c}.
\newline
\newline
{\em Assumption 1}~
  Throughout the article we assume that the primal projection operators 
  $\Pi^{\ell,c}_h$ are $L^2$ stable and satisfy the commuting property \eqref{cd}.

\begin{remark} \label{rem:Pi}
  In practice commuting projection operators also play an important role as they permit
  to approximate coupling or source terms in a structure-preserving way, see e.g.~\cite{CPKS_variational_2020}.
  For this purpose $V^\ell$ or $L^2$-stable projections may not be the best choices 
  as they can be difficult to apply, and simpler commuting projections,
  defined through proper degrees of freedom, are often preferred. 
  These projections are then usually defined on sequences 
  \begin{equation} \label{dR_U}
    U^{0} \xrightarrow{ \mbox{$~ \grad ~$}}
      U^{1} \xrightarrow{ \mbox{$~ \curl ~$}}
        U^{2} \xrightarrow{ \mbox{$~ \Div ~$}}
          U^{3}
  \end{equation}
  involving spaces $U^\ell \subset V^\ell$ that require more smoothness (or integrability) as the ones in
  \eqref{dR_spaces_hom}.
  We refer to e.g. \cite{Nedelec.1980.numa,kreeft2011mimetic,Boffi.Brezzi.Fortin.2013.scm,CPKS_variational_2020}
  for some examples, and to Section~\ref{sec:proj_bfeec} below for more details on such 
  constructions.  
\end{remark}

Another asset of FEEC discretizations is to provide structure-preserving Hodge-Helmholtz decompositions
for the different spaces. For 1-forms, the discrete analog of the continuous decomposition \eqref{HH}
reads
\begin{equation} \label{HH_c}
  V^{1,c}_h = \grad^c_h V^{0,c}_h \poplus \cH^1_h \poplus \wt \curl^c_h V^{2,c}_h 
\end{equation}
where $\cH^1_h := \{v \in V^{1,c}_h : \curl^c_h v = \wt \Div^c_h v = 0\}$ 
is the kernel of the discrete Hodge-Laplace operator $\cL^{1,c}_h: V^{1,c}_h \to V^{1,c}_h$
defined as
\begin{equation} \label{L1hc}
  \cL^{1,c}_h := -\grad^c_h \wt \Div^c_h + \wt \curl^c_h \curl^c_h~.
\end{equation}
The space $\cH^1_h$ may thus be seen as discrete harmonic 1-forms
and under suitable approximation properties of the discrete spaces, 
its dimension coincides with that of the continuous harmonic forms $\cH^1$,
which corresponds to a Betti number of $\Omega$ depending on the boundary conditions, 
see \cite[Sec.~5.6 and 6.2]{Arnold.Falk.Winther.2010.bams}.

\subsection{Broken FEEC discretizations}
\label{sec:bfeec}

In the case where the domain is decomposed
in a partition of open subdomains $\Omega_k$, $k = 1, \dots, K$,
we now consider local FEEC sequences 
\begin{equation} \label{dR_h_loc}
  V^{0}_h(\Omega_k) \xrightarrow{ \mbox{$~ \grad_{\Omega_k}  ~$}}
    V^{1}_h(\Omega_k) \xrightarrow{ \mbox{$~ \curl_{\Omega_k} ~$}}
      V^{2}_h(\Omega_k) \xrightarrow{ \mbox{$~ \Div_{\Omega_k} ~$}}
        V^{3}_h(\Omega_k)
\end{equation}
and global spaces obtained by a simple juxtaposition of the local ones
\begin{equation} \label{Vh_broken}
  V^\ell_h := \{ v \in L^2(\Omega) : v|_{\Omega_k} \in V^{\ell}_h(\Omega_k) \}.
\end{equation}

One attractive feature of the broken spaces \eqref{Vh_broken} is that they are naturally equipped with local
basis functions $\Lambda^\ell_i$ that are supported each on a single patch $\Omega_k$ with $k = k(i)$.
The corresponding mass matrices are then patch-diagonal, i.e., block-diagonal with blocks 
corresponding to the different patches, so that their inversion
-- and hence the $L^2$ projection on $V^\ell_h$ -- can be performed in each
patch independently of the others. 

In general the spaces \eqref{Vh_broken} are not subspaces of their infinite-dimensional 
counterparts, indeed it is well-known that piecewise smooth fields must satisfy some interface 
constraints in order to be globally smooth \cite{Boffi.Brezzi.Fortin.2013.scm}:
for $H^1$ smoothness the fields must be continuous on the interfaces, while
$H(\curl)$ and $H(\Div)$ smoothness of vector-valued fields require the continuity 
of the tangential and normal components, respectively, on the interfaces.
As these constraints are obviously not satisfied by the broken spaces \eqref{Vh_broken},
we have in general
$$
V^{\ell}_h \not\subset V^\ell.
$$
The approach developed for the CONGA schemes in \cite{Campos-Pinto.Sonnendrucker.2016.mcomp,conga_hodge} 
extends the construction of Section~\ref{sec:feec} to this broken FEEC setting, 
by associating to each discontinuous space a projection operator
on its conforming subspace,
\begin{equation} \label{conf_proj}
  P^\ell_h: V^\ell_h \to  V^\ell_h \cap V^\ell =: V^{\ell,c}_h.
\end{equation}
Provided that these conforming spaces form a de Rham sequence \eqref{dR_hc},
this allows us to define new primal differential operators on the broken spaces
\begin{equation} \label{dh}
\left\{\begin{alignedat}{3}
  \grad_h &:= \grad P^0_h &: ~V^{0}_h \to V^{1,c}_h \subset V^{1}_h \\
  \curl_h &:= \curl P^1_h &: ~V^{1}_h \to V^{2,c}_h \subset V^{2}_h \\
  \Div_h  &:= \Div  P^2_h &: ~V^{2}_h \to V^{3,c}_h \subset V^{3}_h
\end{alignedat}\right.
\end{equation}
and new dual ones 
$\wt \Div_h:V^{1}_h \to V^{0}_h$, $\wt \curl_h: V^{2}_h \to V^{1}_h$ and $\wt \grad_h: V^{3}_h \to V^{2}_h$ 
as $L^2$ adjoints, characterized by the relations
\begin{equation} \label{d*}
\left\{\begin{alignedat}{4}
\sprod{\wt \Div_h v}{\vp}   &= -\sprod{v}{\grad P^0_h \vp}         \qquad && \forall &    v &\in V^{1}_h, &~ \vp &\in V^{0}_h \\
\sprod{\wt \curl_h w}{v}    &= \phantom{-}\sprod{w}{\curl P^1_h v} \qquad && \forall &    w &\in V^{2}_h, &~   v &\in V^{1}_h \\
\sprod{\wt \grad_h \rho}{w} &= -\sprod{\rho}{\Div P^2_h w}         \qquad && \forall & \rho &\in V^{3}_h, &~   w &\in V^{2}_h~.
\end{alignedat}\right.
\end{equation}
We represent this broken-FEEC discretization with the following diagram 
\begin{equation} \label{CD_h}
  \begin{tikzpicture}[ampersand replacement=\&, baseline] 
  \matrix (m) [matrix of math nodes,row sep=3em,column sep=5em,minimum width=2em] {
        ~~ V^{0} ~ \bbb
            \& ~~ V^{1} ~ \bbb
                \& ~~ V^{2} ~ \bbb
                      \& ~~ V^{3} ~ \bbb
    \\
    ~~ V^{0}_h ~ \bbb
        \& ~~~ V^{1}_h \bbb   
            \& ~~ V^{2}_h ~ \bbb
              \& ~~ V^{3}_h ~ \bbb
    \\
    ~~ V^*_{0} ~ \bbb
        \& ~~ V^*_{1} ~ \bbb
            \& ~~ V^*_{2} ~ \bbb
            \& ~~ V^*_{3} ~ \bbb
    \\
  };
  \path[-stealth]
  (m-1-1) edge node [above] {$\grad$} (m-1-2)
          edge node [right] {$\Pi^{0}_h$} (m-2-1)
  (m-1-2) edge node [above] {$\curl$} (m-1-3)
          edge node [right] {$\Pi^{1}_h$} (m-2-2)
  (m-1-3) edge node [above] {$\Div$} (m-1-4)
          edge node [right] {$\Pi^{2}_h$} (m-2-3)
  (m-1-4) edge node [right] {$\Pi^{3}_h$} (m-2-4)
  (m-2-1.10) edge node [above] {$\grad_h$} (m-2-2.170)
  (m-2-2.10) edge node [above] {$\curl_h$} (m-2-3.170)
  (m-2-3.10) edge node [above] {$\Div_h$} (m-2-4.170)
  (m-2-2.190) edge node [below] {$\wt \Div_h$} (m-2-1.350)
  (m-2-3.190) edge node [below] {$\wt \curl_h$} (m-2-2.350)
  (m-2-4.190) edge node [below] {$\wt \grad_h$} (m-2-3.350)
  (m-3-2) edge node [above] {$\Div$} (m-3-1)
  (m-3-3) edge node [above] {$\curl$} (m-3-2)
  (m-3-4) edge node [above] {$\grad$} (m-3-3)
  (m-3-1) edge node [right] {$\tilde \Pi^0_h$} (m-2-1)
  (m-3-2) edge node [right] {$\tilde \Pi^1_h$} (m-2-2)
  (m-3-3) edge node [right] {$\tilde \Pi^2_h$} (m-2-3)
  (m-3-4) edge node [right] {$\tilde \Pi^3_h$} (m-2-4)
  ;
  \end{tikzpicture}
\end{equation}
where $\Pi^\ell_h$ and $\tilde \Pi^\ell_h$ are projection operators
that commute with the new primal and dual sequences. 
We postpone their description to the next section.

We note that the presence of conforming projections $P^\ell_h$
in the differential operators \eqref{dh} leads to larger kernels. 
Specifically, we have
$$
\left\{
\begin{alignedat}{2}
\ker \grad_h &= \big(V^{0,c}_h \cap \ker \grad &&\big) \oplus \ker P^0_h \\
\ker \curl_h &= \big(V^{1,c}_h \cap \ker \curl &&\big) \oplus \ker P^1_h \\
\ker \Div_h  &= \big(V^{2,c}_h \cap \ker \Div  &&\big) \oplus \ker P^2_h
\end{alignedat}
\right.  
$$
where the projection kernels
\begin{equation} \label{kerP}
  \ker P^\ell_h = (I-P^\ell_h) V^\ell_h
\end{equation}
correspond intuitively to ``jump spaces'' associated with the 
conforming projections.
In \cite{Campos-Pinto.2016.cras,Campos-Pinto.Sonnendrucker.2017a.jcm}
these extended kernels motivated a modification of the discrete 
differential operators in order to retain exact sequences on contractible domains. 
Here we follow the approach of \cite{conga_hodge}
where the jump spaces \eqref{kerP} are handled by stabilisation and filtering 
operators in the equations, through extended Hodge-Helmholtz decompositions.


\subsection{Commuting projections with broken degrees of freedom}
\label{sec:proj_bfeec}

A practical approach for designing commuting projection operators 
$\Pi^{\ell,c}_h$ and $\Pi^\ell_h$ for the above diagrams 
is to define them via commuting degrees of freedom on the 
discrete Finite Element spaces. In the standard case of conforming 
spaces these are linear forms
$$
\sigma^{\ell,c}_i : U^\ell \to \RR, \qquad i = 1, \dots, N^{\ell,c} := \dim(V^{\ell,c}_h)
$$
defined on infinite-dimensional spaces $U^\ell \subset V^\ell$ satisfying $d^\ell U^\ell \subset U^{\ell+1}$,
that are unisolvent in $V^{\ell,c}_h$ and commute with 
the differential operators
in the sense that there exist coefficients $D^\ell_{i,j}$ such that 
\begin{equation} \label{dof-comp}
  \sigma^{\ell+1,c}_i( d^\ell v) = \sum_{j=1}^{N^{\ell,c}} D^\ell_{i,j} \sigma^{\ell,c}_j(v) 
    \qquad \text{for } ~ i = 1, \dots N^{\ell+1,c}, \quad v \in U^\ell
\end{equation}
where we have denoted again $d^0 = \grad$, $d^1 = \curl$ and $d^2=\Div$.
Letting $\Lambda^{\ell,c}_i$ be the basis of $V^{\ell,c}_h$ characterized
by the relations $\sigma^{\ell,c}_i(\Lambda^{\ell,c}_j) = \delta_{i,j}$ for 
$i,j = 1, \dots N^{\ell,c}$, one verifies indeed that the operators 
$$
\Pi^{\ell,c}_h: U^\ell \to V^{\ell,c}_h, \quad v \mapsto \sum_{i=1}^{N^{\ell,c}} \sigma^{\ell,c}_i(v) \Lambda^{\ell,c}_i
$$
are projections satisfying the commuting property \eqref{cd}, see e.g. \cite[Lemma~2]{CPKS_variational_2020}
and in passing we note that 
$D^\ell$ is the matrix of $d^\ell$ in the bases just defined.

\bigskip
Here, we extend this approach as follows:
\begin{itemize}
  \item {\em Broken degrees of freedom.} 
  Each local space $V^{\ell}_h(\Omega_k)$, $k = 1, \ldots, K$, is
  equip{\-}ped with unisolvent degrees of freedom 
  \begin{equation} \label{brok-dof}
  \sigma^{\ell}_{k,\mu} : U^\ell(\Omega_k) \to \RR, \qquad  \mu \in \cM^\ell_h(\Omega_k)
  \end{equation}
  defined on local spaces $U^\ell(\Omega_k) \subset V^\ell(\Omega_k)$ 
  satisfying $d^\ell U^\ell(\Omega_k) \subset U^{\ell+1}(\Omega_k)$, 
  with multi-index sets with cardinality $\#\cM^\ell_h(\Omega_k) = \dim(V^{\ell}_h(\Omega_k))$. 
  On the full domain we simply set
  \begin{equation} \label{brok-dof-2}
    \sigma^{\ell}_{k,\mu}(v) := \sigma^{\ell}_{k,\mu}(v|_{\Omega_k}).
  \end{equation}
  
  \item {\em Broken basis functions.} 
  To the above degrees of freedom we associate 
  local basis functions $\Lambda^\ell_{k,\mu} \in V^\ell_h(\Omega_k)$
  (extended by zero outside of their patch),
  characterized by the relations
  $$
  \sigma^{\ell}_{k,\mu}(\Lambda^{\ell}_{k,\nu}) = \delta_{\mu,\nu}
  \qquad \text{ for } \quad \mu, \nu \in \cM^\ell_h(\Omega_k).
  $$
  
  \item {\em Local commutation property.} 
    A relation similar to \eqref{dof-comp} must hold on each 
    patch $\Omega_k$, i.e., there exist coefficients $D^\ell_{k,\mu,\nu}$ such that 
    \begin{equation} \label{loc-dof-comp}
      \sigma^{\ell+1}_{k,\mu}(d^\ell v) = \sum_{\nu \in \cM^\ell_h(\Omega_k)} D^\ell_{k,\mu,\nu} \sigma^{\ell}_{k,\nu}(v) 
    \end{equation}
    holds for all $\mu \in \cM^{\ell+1}_h(\Omega_k)$ and all $v \in U^\ell(\Omega_k)$.
    
  \item {\em Inter-patch conformity}. 
    The global projection on $V^{\ell}_h$, 
    \begin{equation} \label{brok-proj}
      \Pi^{\ell}_h: v \mapsto \sum_{k=1}^K \Pi^{\ell}_{h,k} v 
      \quad \text{ with } \quad 
      \Pi^{\ell}_{h,k}: v \mapsto \sum_{\mu \in \cM^{\ell}_h(\Omega_k)} \sigma^{\ell}_{k,\mu}(v) \Lambda^{\ell}_{k,\mu}      
    \end{equation}
    must map smooth functions to conforming finite element fields, namely
    \begin{equation} \label{conf-dof}
      \Pi^\ell_h U^\ell \subset V^{\ell,c}_h \quad \text{ where } \quad U^\ell := \{v \in V^\ell : v|_{\Omega_k} \in U^\ell(\Omega_k), ~ \forall k = 1, \dots K\}.
    \end{equation}
\end{itemize}
This setting guarantees that the commutation properties of the local projection operators extend to the global ones, since
one has then
$$
d^\ell P^\ell_h \Pi^\ell_h v = d^\ell \Pi^\ell_h v = \Pi^{\ell+1}_h d^\ell v, \qquad v \in U^\ell,
$$
and it yields simple expressions for the latter in the broken bases. 
We refer to Section~\ref{sec:geo_bfeec} for a detailed construction of broken degrees of freedom that satisfy the above properties.

The dual projection operators can then be defined following the canonical 
approach of \cite{conga_hodge}, as ``filtered'' $L^2$ projections
\begin{equation}  \label{tPi_def1}
  \tilde \Pi^\ell_h = (P^\ell_h)^* Q_{V^\ell_h} : L^2 \to V^\ell_h,
\end{equation}
where $Q_{V^\ell_h}$ is now the $L^2$ projection on the broken space $V^\ell_h$,
\begin{equation}  \label{tPi_def2}
  \sprod{\tilde \Pi^\ell_h v}{w} = \sprod{v}{P^\ell_h w}, \qquad \forall v \in L^2, ~ w \in V^\ell_h.
\end{equation}
By definition of the dual differentials \eqref{d*},
these operators indeed commute with the dual part of the diagram \eqref{CD_h}, 
in the sense that
\begin{equation}  \label{tCD}
 \t \Pi^0_h \Div = \wt \Div_h \t\Pi^1_h, 
 \qquad  
 \t \Pi^1_h \curl = \wt \curl_h \t\Pi^2_h, 
 \qquad 
 \t \Pi^2_h \grad = \wt \grad_h \t\Pi^3_h
\end{equation}
hold on $V^*_1$, $V^*_2$ and $V^*_3$ respectively. 
We further observe that \eqref{tPi_def1}--\eqref{tPi_def2} defines projection operators
on the subspaces of $V^\ell_h$ corresponding to the orthogonal complements of the ``jump spaces'' \eqref{kerP}.
Indeed, $(\t \Pi^\ell_h)^2 = \t \Pi^\ell_h$ holds with 
$$
\Ima(\t \Pi^\ell_h) = \Ima((P^\ell_h)^*) = (\ker P^\ell_h)^\perp = \{ v \in V^\ell_h : \sprod{v}{(I-P^\ell_h)w} = 0, ~ \forall w \in V^\ell_h\}.
$$

\subsection{Differential operator matrices in primal and dual bases}

Using broken degrees of freedom and basis functions as described in the previous section,
we now derive practical representations for the operators involved in the diagram \eqref{CD_h}.
To do so we first observe that the non-conforming differential operators $d^\ell_h := d^\ell P^\ell_h$
can be reformulated as $d^\ell_h = d^\ell_{\pw} P^\ell_h$ where 
$d^\ell_{\pw}: v \to \sum_{k=1}^K \one_{\Omega_k} d^\ell|_{\Omega_k} v$
is the patch-wise differential operator which coincides with $d^\ell$ on all $v$ such that
$d^\ell v \in L^2$.
Using some implicit flattening $(k,\mu) \mapsto i \in \{1, \dots, N^\ell\}$
for the multi-indices, we represent these 
operators as matrices 
\begin{equation} \label{GCD_matrix}
  (\matG)_{i,j} = \sigma^1_i(\grad_{\pw} \Lambda^0_j),
  \quad 
  (\matC)_{i,j} = \sigma^2_i(\curl_{\pw} \Lambda^1_j),
  \quad 
  (\matD)_{i,j} = \sigma^3_i(\Div_{\pw} \Lambda^2_j)
\end{equation}
of respective sizes $N^1 \times N^0$, $N^2 \times N^1$ and $N^3 \times N^2$.
These matrices have a ``patch-diagonal'' structure, in the sense that 
they are block-diagonal, with blocks corresponding to
the differential operators in each independent patch 
(namely, the matrices $D^\ell_k$ in \eqref{loc-dof-comp}).
The matrices of the conforming projections, seen as
endomorphisms in $V^\ell_h$, are
\begin{equation}
  \label{matP}
  (\matP^\ell)_{i,j} = \sigma^\ell_i(P^\ell_h \Lambda^\ell_j),
  \quad i,j = 1, \dots, N^\ell.
\end{equation}
In general $\matP^\ell$ is not patch-diagonal, as it maps in the coefficient space
of the conforming spaces $V^{\ell,c}_h$, where the global smoothness corresponds to 
some matching of the degrees of freedom across the interfaces.

With these elementary matrices we can build the operator matrices 
of the non-conforming differential operators $d^\ell_h$: they read 
%
\begin{equation*} \label{matsD-general}
  \begin{split}
    \left(\matO(d^\ell_h)\right)_{i,j}
   :\!\!&= \sigma^{\ell+1}_i\! \left(d^\ell_h \Lambda^\ell_j\right)
    = \sigma^{\ell+1}_i\! \left(d^\ell_{\pw} P^\ell_h \Lambda^\ell_j\right)
    = \sigma^{\ell+1}_i \Big(d^\ell_{\pw} \sum_{k=1}^{N^\ell} \Lambda^\ell_k \sigma^\ell_k\! \left(P^\ell_h \Lambda^\ell_j\right)\Big) \\
   &= \sum_{k=1}^{N^\ell} \sigma^{\ell+1}_i\! \left(d^\ell_{\pw} \Lambda^\ell_k\right)\,
      \sigma^\ell_k\! \left(P^\ell_h \Lambda^\ell_j\right)
    = \sum_{k=1}^{N^\ell} \left(\matD^\ell\right)_{i,k} \left(\matP^\ell\right)_{k,j}
    = \left(\matD^\ell \matP^\ell\right)_{i,j} ,
  \end{split}
\end{equation*}
where we have denoted $(\matD^\ell)_\ell = (\matG, \matC, \matD)$ for brevity. 
Therefore,
\begin{equation} \label{matsD}
  \left\{\begin{alignedat}{3}
  &\matO(\grad_h) 
    &&= \Big(\sigma^1_i\!\left(\grad P^0_h \Lambda^0_j\right)&\Big)_{\substack{1 \le i \le N^1 \\ 1 \le j \le N^0}}
     &= \matG\matP^0,
  \\
  &\matO(\curl_h) 
    &&= \Big(\sigma^2_i\!\left(\curl P^1_h \Lambda^1_j\right)&\Big)_{\substack{1 \le i \le N^2 \\ 1 \le j \le N^1}}
     &= \matC\matP^1,
  \\
  &\matO(\Div_h) 
    &&= \Big(\sigma^3_i\!\left(\Div P^2_h \Lambda^2_j\right)&\Big)_{\substack{1 \le i \le N^3 \\ 1 \le j \le N^2}}
     &= \matD\matP^2.
\end{alignedat}\right.
\end{equation}
A simple representation of the dual discrete operators \eqref{d*} 
is obtained in the dual bases $\{\t\Lambda^\ell_i: i = 1, \dots, N^\ell\}$ of the broken spaces $V^\ell_h$.
These are characterized by the relations
\begin{equation} \label{tLambda}
\t\sigma^\ell_i(\t\Lambda^\ell_j) = \delta_{i,j}, \qquad i, j = 1, \dots N^\ell
\end{equation}
where the dual degrees of freedom are defined as
\begin{equation} \label{tsigma}
  \t\sigma^\ell_i(v) := \sprod{v}{\Lambda^\ell_i}. 
\end{equation}
It follows from~\eqref{tLambda} and~\eqref{tsigma} that the primal and dual bases are in \(L^2\) duality, i.e.~$\sprod{\Lambda^\ell_i}{\t\Lambda^\ell_j} = \delta_{i,j}$.
The matrices of the dual differential operators in the dual bases read then
\begin{equation*} \label{matsDt-general}
\begin{split}
  \big(\t\matO(\t d^{3-\ell}_h)\big)_{i,j}
  :\!\!&= \t\sigma^\ell_i \big(\t d^{3-\ell}_h \t\Lambda^{\ell+1}_j\big)
   = \big\langle \t d^{3-\ell}_h \t\Lambda^{\ell+1}_j, \Lambda^\ell_i \big\rangle
   = (-1)^{\ell+1} \big\langle \t\Lambda^{\ell+1}_j, d^\ell_h \Lambda^\ell_i \big\rangle \\
  &= (-1)^{\ell+1} \Big\langle \t\Lambda^{\ell+1}_j, \sum_{k=1}^{N^{\ell+1}} \Lambda^{\ell+1}_k \sigma^{\ell+1}_k \big(d^\ell_h\Lambda^\ell_i\big) \Big\rangle \\
  &= (-1)^{\ell+1} \sum_{k=1}^{N^{\ell+1}} \big\langle \t\Lambda^{\ell+1}_j, \Lambda^{\ell+1}_k \big\rangle \sigma^{\ell+1}_k \big(d^\ell_h\Lambda^\ell_i\big) \\
  &= (-1)^{\ell+1} \sum_{k=1}^{N^{\ell+1}} \delta_{j,k} \left(\matD^\ell \matP^\ell\right)_{k,i}
   = (-1)^{\ell+1} \left(\matD^\ell \matP^\ell\right)_{j,i} ,
\end{split}
\end{equation*}
where again $(\matD^\ell)_\ell = (\matG, \matC, \matD)$ for brevity. Therefore,
\begin{equation} \label{matsDt}
  \left\{\begin{alignedat}{3}
  &\t\matO(\wt \Div_h)
    &&= \Big(\t\sigma^0_i\big(\wt \Div_h \t\Lambda^1_j\big)&\Big)_{\substack{1 \le i \le N^0 \\ 1 \le j \le N^1}}
     &= -(\matG\matP^0)^T,
  \\
  &\t\matO(\wt \curl_h)
    &&= \Big(\t\sigma^1_i\big(\wt \curl_h \t\Lambda^2_j\big)&\Big)_{\substack{1 \le i \le N^1 \\ 1 \le j \le N^2}}
     &= \phantom{-}(\matC\matP^1)^T,
  \\
  &\t\matO(\wt \grad_h)
    &&= \Big(\t\sigma^2_i\big(\wt \grad_h \t\Lambda^3_j\big)&\Big)_{\substack{1 \le i \le N^2 \\ 1 \le j \le N^3}}
     &= -(\matD\matP^2)^T.
  \end{alignedat}\right.
\end{equation}
The change-of-basis matrices are described in the next section.


\subsection{Primal-dual diagram in matrix form} 
\label{sec:CD}

Using the matrix form of the differential operators just described,
we extend the broken-FEEC diagram \eqref{CD_h} as follows:

\begin{equation} \label{CD}
\begin{tikzpicture}[ampersand replacement=\&, baseline] 
\matrix (m) [matrix of math nodes,row sep=3em,column sep=5em,minimum width=2em] {
   ~~ V^0 ~ \bbb
   \& ~~ V^1 ~ \bbb
    \& ~~ V^2 ~ \bbb
      \& ~~ V^3 ~ \bbb
  \\
  ~~ V_h^0 ~ \bbb
    \& ~~ V_h^1 ~ \bbb
    \& ~~ V_h^2 ~ \bbb
    \& ~~ V_h^3 ~ \bbb
\\
~~ \cC^0 ~ \bbb
  \& ~~ \cC^1 ~ \bbb
    \& ~~ \cC^2 ~ \bbb
    \& ~~ \cC^3 ~ \bbb
\\
~~ \t \cC^0 ~ \bbb
\& ~~ \t \cC^1 ~ \bbb
\& ~~ \t \cC^2 ~ \bbb
\& ~~ \t \cC^3 ~ \bbb
\\
~~ V_h^0 ~ \bbb
  \& ~~ V_h^1 ~ \bbb
  \& ~~ V_h^2 ~ \bbb
  \& ~~ V_h^3 ~ \bbb
\\
~~ V^*_0 ~ \bbb
\& ~~ V^*_1 ~ \bbb
\& ~~ V^*_2 ~ \bbb
\& ~~ V^*_3 ~ \bbb
\\
};
\path[-stealth]
(m-1-1) edge node [above] {$\grad$} (m-1-2)
(m-1-2) edge node [above] {$\curl$} (m-1-3)
(m-1-3) edge node [above] {$\Div$} (m-1-4)
(m-1-1) edge node [right] {$\Pi^0_h$} (m-2-1)
(m-1-2) edge node [right] {$\Pi^1_h$} (m-2-2)
(m-1-3) edge node [right] {$\Pi^2_h$} (m-2-3)
(m-1-4) edge node [right] {$\Pi^3_h$} (m-2-4)
(m-1-1) edge [bend right=40] node [pos=0.2, left] {$\arrsigma^0$} (m-3-1)
(m-1-2) edge [bend right=40] node [pos=0.2, left] {$\arrsigma^1$} (m-3-2)
(m-1-3) edge [bend right=40] node [pos=0.2, left] {$\arrsigma^2$} (m-3-3)
(m-1-4) edge [bend right=40] node [pos=0.2, left] {$\arrsigma^3$} (m-3-4)
(m-2-1) edge node [above] {$\grad_h$} (m-2-2)
(m-2-2) edge node [above] {$\curl_h$} (m-2-3)
(m-2-3) edge node [above] {$\Div_h$} (m-2-4)
(m-3-1.75) edge node [right] {$\cI^0$} (m-2-1.285)
(m-3-2.75) edge node [right] {$\cI^1$} (m-2-2.285)
(m-3-3.75) edge node [right] {$\cI^2$} (m-2-3.285)
(m-3-4.75) edge node [right] {$\cI^3$} (m-2-4.285)
(m-2-1.255) edge node [pos=0.2, left] {$\arrsigma^0$} (m-3-1.105)
(m-2-2.255) edge node [pos=0.2, left] {$\arrsigma^1$} (m-3-2.105)
(m-2-3.255) edge node [pos=0.2, left] {$\arrsigma^2$} (m-3-3.105)
(m-2-4.255) edge node [pos=0.2, left] {$\arrsigma^3$} (m-3-4.105)
(m-3-1) edge node[auto] {$ \matG \matP^0$} (m-3-2)
(m-3-2) edge node[auto] {$ \matC \matP^1$} (m-3-3)
(m-3-3) edge node[auto] {$ \matD \matP^2$} (m-3-4)
%
%
(m-4-1.105) edge [dashed] node [left] {$\t \matH^0$} (m-3-1.255)
(m-4-2.105) edge [dashed] node [left] {$\t \matH^1$} (m-3-2.255)
(m-4-3.105) edge [dashed] node [left] {$\t \matH^2$} (m-3-3.255)
(m-4-4.105) edge [dashed] node [left] {$\t \matH^3$} (m-3-4.255)
(m-3-1.285) edge [dashed] node [right] {$\matH^0$} (m-4-1.75)
(m-3-2.285) edge [dashed] node [right] {$\matH^1$} (m-4-2.75)
(m-3-3.285) edge [dashed] node [right] {$\matH^2$} (m-4-3.75)
(m-3-4.285) edge [dashed] node [right] {$\matH^3$} (m-4-4.75)
%
%
(m-5-2) edge node [above] {$\wt \Div_h$} (m-5-1)
(m-5-3) edge node [above] {$\wt \curl_h$} (m-5-2)
(m-5-4) edge node [above] {$\wt \grad_h$} (m-5-3)
(m-5-1.105) edge node [pos=0.2, left] {$\t \arrsigma^0$} (m-4-1.255)
(m-5-2.105) edge node [pos=0.2, left] {$\t \arrsigma^1$} (m-4-2.255)
(m-5-3.105) edge node [pos=0.2, left] {$\t \arrsigma^2$} (m-4-3.255)
(m-5-4.105) edge node [pos=0.2, left] {$\t \arrsigma^3$} (m-4-4.255)
(m-4-1.285) edge node [right] {$\t \cI^0$} (m-5-1.75)
(m-4-2.285) edge node [right] {$\t \cI^1$} (m-5-2.75)
(m-4-3.285) edge node [right] {$\t \cI^2$} (m-5-3.75)
(m-4-4.285) edge node [right] {$\t \cI^3$} (m-5-4.75)
(m-4-2) edge node [above] {$-(\matG \matP^0)^T $} (m-4-1)
(m-4-3) edge node [above] {$ (\matC \matP^1)^T $} (m-4-2)
(m-4-4) edge node [above] {$-(\matD \matP^2)^T $} (m-4-3)
(m-6-1) edge [bend left=40] node [pos=0.2, left] {$(\matP^0)^T\t \arrsigma^0$} (m-4-1)
(m-6-2) edge [bend left=40] node [pos=0.2, left] {$(\matP^1)^T\t \arrsigma^1$} (m-4-2)
(m-6-3) edge [bend left=40] node [pos=0.2, left] {$(\matP^2)^T\t \arrsigma^2$} (m-4-3)
(m-6-4) edge [bend left=40] node [pos=0.2, left] {$(\matP^3)^T\t \arrsigma^3$} (m-4-4)
(m-6-1) edge node [right] {$\t \Pi^0_h$} (m-5-1)
(m-6-2) edge node [right] {$\t \Pi^1_h$} (m-5-2)
(m-6-3) edge node [right] {$\t \Pi^2_h$} (m-5-3)
(m-6-4) edge node [right] {$\t \Pi^3_h$} (m-5-4)
(m-6-2) edge node [above] {$\Div$} (m-6-1)
(m-6-3) edge node [above] {$\curl$} (m-6-2)
(m-6-4) edge node [above] {$\grad$} (m-6-3)
;
\end{tikzpicture}
\end{equation}
Here $\cC^\ell$ and $\tilde \cC^\ell$ are the spaces of scalar coefficients associated with 
the primal and dual basis functions described in the previous Section.
Both are of the form $\RR^{N^\ell}$ with $N^\ell = \dim(V^\ell_h)$, but we use a different
notation to emphasize the different roles played by the coefficient vectors.
The interpolation operators $\cI^\ell: \cC^\ell \to V^\ell_h$
and $\t \cI^\ell: \t \cC^\ell \to V^\ell_h$, which read
\begin{equation} \label{cI}
  \cI^\ell: \arr{c} \mapsto \sum_{i=1}^{N^\ell} c_i \Lambda^\ell_i
  \qquad \text{ and }\qquad
  \t \cI^\ell: \t {\arr{c}} \mapsto \sum_{i=1}^{N^\ell} \t c_i \t \Lambda^\ell_i~,  
\end{equation}
are the right-inverses of the respective degrees of freedom $\arrsigma^\ell$ and
$\t\arrsigma^\ell$, and also their left-inverses on $V^\ell_h$. 

The matrices $\matH^\ell$ and $\t \matH^\ell$ are change-of-basis matrices which 
allow us to go from one sequence to the other, in the sense that
$\cI^\ell =\t \cI^\ell \matH^\ell$ and $\t \cI^\ell = \cI^\ell \t \matH^\ell$:
they correspond to discrete Hodge operators~\cite{Hiptmair_hodge_2001}.
Here it follows from the duality construction \eqref{tLambda}--\eqref{tsigma} 
that they are given by the mass matrices and their inverses,
\begin{equation} \label{HM_matrix}
  \matH^\ell = (\t \matH^\ell)^{-1} = \matM^\ell
  \quad  \text{ where } \quad 
  (\matM^\ell)_{i,j} = \sprod{\Lambda^\ell_i}{\Lambda^\ell_j}.
\end{equation}
In our framework, the Hodge matrices have a patch-diagonal structure due to the local support
of the broken basis functions. In particular, the dual basis functions are also 
supported on a single patch.

The remaining operators are the primal and dual commuting projections,
respectively defined by \eqref{brok-proj} and \eqref{tPi_def1}--\eqref{tPi_def2},
expressed in terms of primal and dual coefficients.
Using the interpolation operators \eqref{cI} we may write them as
\begin{equation} \label{tPi_op}
  \Pi^\ell = \cI^\ell \arrsigma^\ell \quad \text{ on } ~ U^\ell \subset V^\ell
  \qquad \text{ and } \qquad
  \t \Pi^\ell = \t \cI^\ell (\matP^\ell)^T \t \arrsigma^\ell \quad \text{ on } ~ V^*_\ell~.
\end{equation}
Finally we remind that here the commutation of the primal (upper) diagram follows from the
assumed properties of the primal degrees of freedom \eqref{brok-dof}--\eqref{conf-dof},
while the commutation of the dual (lower) diagram follows from the 
weak definition of the dual differential operators.

\begin{remark}[conforming case]
  The conforming FEEC diagram \eqref{CD_hc} 
  (corresponding, e.g., to a single patch discretization)  
  may be extended in the same way as the broken-FEEC one \eqref{CD_h}. 
  Apart from the differences between the discrete differential operators and commuting projection 
  operators already visible in \eqref{CD_hc} and \eqref{CD_h}, the resulting
  6-rows diagram would be formally the same as \eqref{CD}, but no conforming projection matrices
  would be involved any longer.  
\end{remark}

\begin{remark}[change of basis]
  \label{rem:cob}
  It may happen that the basis functions $\Lambda^\ell_i$ associated with the 
  commuting degrees of freedom $\sigma^\ell_i$ are not the most convenient ones 
  when it comes to actually implement the discrete operators. In such cases 
  a few changes are to be done to the diagram \eqref{CD}, such as the
  ones described in Section~\ref{sec:cob} for local spline spaces.
\end{remark}

\subsection{Discrete Hodge-Laplace and jump stabilization operators}
\label{sec:HL_S}

In addition to the first order differential operators, 
an interesting feature of the diagram \eqref{CD} is to provide us with natural
discretizations for the Hodge-Laplace operators. 
On the space $V^0$ this is the standard Laplace operator, $\cL^0 = -\Delta$,
which is discretized as
\begin{equation} \label{L0h}
\cL^0_h := -\wt \Div_h \grad_h : V^0_h \to V^0_h
\end{equation}
with an operator matrix in the primal basis that reads
\begin{equation} \label{matL0}
\matL^0 = \t \matH^0 (\matG\matP^0)^T \matH^1 \matG\matP^0~.
\end{equation}
As for the Hodge-Laplace operator for 1-forms, 
$\cL^1 = \curl \curl - \grad \Div$, its discretization is
\begin{equation} \label{L1h}
  \cL^1_h := \wt \curl_h \curl_h -\grad_h \wt \Div_h 
    : V^1_h \to V^1_h  
\end{equation} 
with an operator matrix (again in the primal basis) 
\begin{equation} \label{matL1}
\matL^1 = \t\matH^1 (\matC\matP^1)^T \matH^2 \matC\matP^1 + \matG\matP^0 \t\matH^0 (\matG\matP^0)^T \matH^1.
\end{equation} 
The Hodge-Laplace operators for 2 and 3-forms are discretized similarly.

As discussed in Section~\ref{sec:feec}, a key asset of FEEC discretizations 
is their ability to preserve the exact dimension of the harmonic forms,
defined here as the kernel of the Hodge-Laplace operators.
In our broken-FEEC framework this property is a priori not preserved
by the non-conforming operators due to the extended kernels of the conforming
projection operators, but it is for the {\em stabilized} operators
  $\cL^\ell_{h,\alpha} = \cL^\ell_h + \alpha S^\ell_h$ 
where 
\begin{equation}\label{S_ell}
  S^\ell_h := (I-P^\ell_h)^* (I-P^\ell_h)
\end{equation}
is the symmetrized projection operator on the jump space \eqref{kerP},
and $\alpha$ is a stabilization parameter. 
Indeed, it was shown in \cite{conga_hodge} that
the kernel of $\cL^\ell_{h,\alpha}$ coincides with that of the conforming
discrete Hodge-Laplacian, for any positive value of $\alpha > 0$.
Finally we note that the corresponding stabilization matrix is readily derived from the operators 
in the diagram. It reads
\begin{equation} \label{matS}
  \matS^\ell 
    = \t \matH^\ell (\matI-\matP^\ell)^T \matH^\ell (\matI-\matP^\ell).
\end{equation}

\section{Application to electromagnetic problems}
\label{sec:pbms}

In this section we propose broken-FEEC approximations for several problems arising 
in electromagnetics, and we state a priori results for the solutions. 
For the sake of completeness we provide several formulations (all equivalent) for the 
discrete problems: one in operator form, one in weak form and one in matrix form,
using the primal and dual bases introduced in Section~\ref{sec:proj_bfeec}. 
As pointed out in Remark~\ref{rem:cob}, practical implementation may be more efficient with
other basis functions. In this case the matrices need to be changed, as will be described
in Section~\ref{sec:cob} below.

Throughout this section we will mostly work with 
the homogeneous spaces $V^\ell$ corresponding to \eqref{dR_spaces_hom}. 
In the few cases where we will need the inhomogeneous spaces, we 
shall use a specific notation $\bar V^\ell$ for the spaces in \eqref{dR_spaces_inhom}, 
for the sake of clarity.

\subsection{Poisson's equation}
\label{sec:poisson}

For the Poisson problem with homogeneous Dirichlet boundary conditions:
given $f \in L^2(\Omega)$, find $\phi \in V^0 = H^1_0(\Omega)$ such that
\begin{equation} \label{poisson_f}
  - \Delta \phi = f,
\end{equation}
we combine the stabilized broken-FEEC discretization described in Section~\ref{sec:HL_S}
for the Laplace operator, and a dual commuting projection \eqref{tPi_def1}--\eqref{tPi_def2} for the source.
The resulting problem reads: Find $\phi_h \in V^0_h$ such that
\begin{equation} \label{poisson_hLS}
  (\cL^0_h + \alpha S^0_h) \phi_h = \t\Pi^0_h f
\end{equation}
and it enjoys the following property.

\begin{proposition} \label{prop:poisson}
  For all $\alpha \neq 0$, equation \eqref{poisson_hLS} admits a unique solution $\phi_h$
  which belongs to the conforming space $V^{0,c}_h = V^0_h \cap H^1_0(\Omega)$
  and solves the conforming Poisson problem
  \begin{equation} \label{poisson_hc}
    \sprod{\grad \vp^c}{\grad \phi_h} = \sprod{\vp^c}{f} \qquad \forall \vp^c \in V^{0,c}_h.
  \end{equation}
  In particular, $\phi_h$ is independent of both $\alpha$ and the specific projection $P^0_h$.
\end{proposition}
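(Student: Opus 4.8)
The plan is to exploit the central structural fact established in Section~\ref{sec:HL_S}, namely that the stabilized operator $\cL^0_{h,\alpha} = \cL^0_h + \alpha S^0_h$ has a kernel coinciding with that of the conforming Hodge-Laplacian, and then to peel apart the broken space into its conforming and jump components. First I would decompose $V^0_h = V^{0,c}_h \poplus \ker P^0_h$ (the orthogonality following from the projection structure, with $\ker P^0_h = (I-P^0_h)V^0_h$ as in~\eqref{kerP}), and examine how each term of $\cL^0_{h,\alpha}$ acts on each summand. The key observation is that $\cL^0_h = -\wt\Div_h \grad_h = -\wt\Div_h\,\grad P^0_h$ annihilates $\ker P^0_h$ entirely, since $\grad_h = \grad P^0_h$ kills anything in $\ker P^0_h$; conversely the stabilization $S^0_h = (I-P^0_h)^*(I-P^0_h)$ vanishes on $V^{0,c}_h$ (where $P^0_h$ acts as the identity) and is coercive on $\ker P^0_h$. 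This gives $\cL^0_{h,\alpha}$ a block-diagonal structure with respect to the decomposition, which is the crux of the argument.

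Next I would establish invertibility of $\cL^0_{h,\alpha}$ for $\alpha \neq 0$. On the jump block, $\alpha S^0_h$ is $\alpha$ times a symmetric positive-definite operator on $\ker P^0_h$, hence invertible as long as $\alpha \neq 0$. On the conforming block, $\cL^0_h$ restricted to $V^{0,c}_h$ is exactly the conforming discrete Laplacian $-\wt\Div^c_h\grad^c_h$, which is invertible on $V^{0,c}_h = V^0_h \cap H^1_0(\Omega)$ under Assumption~1 (the primal projections are $L^2$-stable and commuting, so the conforming sequence~\eqref{dR_hc} is well-behaved and the Dirichlet Laplacian is positive-definite on this space). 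Since the operator is block-triangular (indeed block-diagonal here), invertibility of each block yields a unique solution $\phi_h \in V^0_h$.

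It then remains to identify this solution. I would test the right-hand side against the jump space: for $w \in \ker P^0_h$ one has $\sprod{\t\Pi^0_h f}{w} = \sprod{f}{P^0_h w} = 0$ by~\eqref{tPi_def2}, so the source has no component driving the jump block. Combined with the block structure, this forces the jump component of $\phi_h$ to vanish, hence $\phi_h \in V^{0,c}_h$, and on this subspace the $\alpha S^0_h$ term drops out entirely. Testing~\eqref{poisson_hLS} against arbitrary $\vp^c \in V^{0,c}_h$ and unwinding the definitions $\cL^0_h = -\wt\Div_h\grad_h$ together with the adjoint relation~\eqref{d*} yields $\sprod{\grad\vp^c}{\grad\phi_h} = \sprod{\vp^c}{P^0_h f} = \sprod{\vp^c}{f}$ (using $P^0_h\vp^c = \vp^c$), which is precisely the conforming weak form~\eqref{poisson_hc}. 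Since~\eqref{poisson_hc} is uniquely solvable independently of $\alpha$ and $P^0_h$, the final independence claim follows immediately.

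The main obstacle I anticipate is handling the non-self-adjointness subtleties in the stabilized operator cleanly: $\cL^0_h$ is built from $\wt\Div_h$ and $\grad_h$ which are $L^2$-adjoints only in the filtered sense of~\eqref{d*}, so I must verify carefully that the decomposition $V^0_h = V^{0,c}_h \poplus \ker P^0_h$ is genuinely $\cL^0_{h,\alpha}$-invariant and that the cross terms vanish, rather than merely one triangular block vanishing. The cleanest route is probably to argue entirely through the variational form, testing~\eqref{poisson_hLS} separately against conforming functions and against jump functions, which sidesteps any need to diagonalize the operator explicitly and makes the orthogonality of the source against the jump space do the decisive work.
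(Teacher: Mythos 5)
Your final recommendation --- to argue entirely through the variational form \eqref{poisson_hw}, testing against jump functions and then against conforming functions --- is exactly the paper's proof: the paper tests with $\vp = (I-P^0_h)\phi_h$, for which the gradient term dies (since $P^0_h(I-P^0_h)=0$), the source term dies (since $\sprod{\t\Pi^0_h f}{\vp} = \sprod{f}{P^0_h \vp} = 0$), and the stabilization yields $\alpha\norm{(I-P^0_h)\phi_h}^2 = 0$, hence conformity; restricting then to conforming test functions gives \eqref{poisson_hc}. Your identification of the decisive facts (source orthogonal to $\ker P^0_h$, stabilization coercive on jumps, $\grad_h$ annihilating jumps) is correct, and your observation that the conforming solution conversely solves the broken problem settles existence.

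However, the operator-level framing in your first two paragraphs is genuinely wrong, and you were right to distrust it. First, the decomposition $V^0_h = V^{0,c}_h \oplus \ker P^0_h$ is direct but \emph{not} orthogonal: $P^0_h$ is a local averaging projection, not self-adjoint in $L^2$, so its range and kernel are not perpendicular --- indeed the paper notes in Section~\ref{sec:proj_bfeec} that $(\ker P^0_h)^\perp = \Ima((P^0_h)^*)$, which differs from $V^{0,c}_h = \Ima(P^0_h)$ precisely when $P^0_h$ is not orthogonal. Second, and as a consequence, $\cL^0_{h,\alpha}$ is \emph{not} block-diagonal with respect to this decomposition: by the defining relation \eqref{d*}, $\wt\Div_h$ maps into $(\ker P^0_h)^\perp$, so $\cL^0_h$ sends $V^{0,c}_h$ into $(\ker P^0_h)^\perp \not\subset V^{0,c}_h$ in general; likewise $S^0_h = (I-P^0_h)^*(I-P^0_h)$ sends $\ker P^0_h$ into $(V^{0,c}_h)^\perp \not\subset \ker P^0_h$. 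What does decouple is the \emph{bilinear form}: writing $\vp = \vp^c + w$, $\psi = \psi^c + z$ with conforming and jump parts, one gets $a(\vp,\psi) = \sprod{\grad \vp^c}{\grad \psi^c} + \alpha\sprod{w}{z}$ and right-hand side $\sprod{f}{\psi^c}$, which is the precise statement your variational route exploits; no invariance of the subspaces under the operator is needed or true. A minor further correction: invertibility of the conforming block follows from the Poincar\'e inequality (the Dirichlet form is positive definite on $V^{0,c}_h \subset H^1_0(\Omega)$), not from Assumption~1, which plays no role in this proposition.
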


\begin{proof}
  By definition of the different operators, \eqref{poisson_hLS} reads
  \begin{equation} \label{poisson_h}
    \big(-\wt \Div_h \grad_h + \alpha (I-P^0_h)^* (I-P^0_h)\big) \phi_h = \t\Pi^0_h f
  \end{equation}
  which may be reformulated using test functions $\vp \in V^{0}_h$, as
  \begin{equation} \label{poisson_hw}
    \sprod{\grad P^0_h\vp}{\grad P^0_h \phi_h}
    + \alpha \sprod{(I-P^0_h)\vp}{(I-P^0_h)\phi_h}
     = \sprod{P^0_h\vp}{f} .
  \end{equation}
  Taking $\vp = (I-P^0_h)\phi_h$ yields $\phi_h = P^0_h \phi_h \in V^{0,c}_h$ as long as $\alpha \neq 0$, 
  and \eqref{poisson_hc} follows by considering test functions $\vp$ in the conforming subspace $V^{0,c}_h$.
  \qed
\end{proof}

The matrix formulation of \eqref{poisson_hLS} is easily derived
by using the diagram~\eqref{CD}, writing the equation in the dual basis
in order to obtain symmetric matrices as is usual with the Poisson problem.
Denoting by $\arr{\phi} = \arrsigma^0(\phi_h)$ the coefficient vector of $\phi_h$
in the primal basis, we thus find
\begin{equation} \label{poisson_m}
  \matA^0 \arr{\phi} = (\matP^0)^T \t \arrsigma^0(f)
\end{equation}
with a stabilized stiffness matrix
\begin{equation} \label{A0}
\matA^0
  = \matH^0 (\matL^0 + \alpha \matS^0) 
  = (\matG \matP^0)^T \matH^1 \matG \matP^0 
    + \alpha (\matI-\matP^0)^T \matH^0 (\matI-\matP^0).
\end{equation}

\subsection{Time-harmonic Maxwell's equation}
\label{sec:maxwell}

Maxwell's equation with homogeneous boundary conditions reads:
given $\omega \in \RR$ and $J \in L^2(\Omega)$, find $u \in H_0(\curl;\Omega)$ such that
\begin{equation} \label{max_f}
  - \omega^2 u + \curl\curl u = J.
\end{equation}
Here the (complex) electric field corresponds to $E(t,x) = i\omega\, u(x) e^{-i \omega t}$.
When $\omega^2$ is not an eigenvalue of the $\curl \curl$ operator, equation~\eqref{max_f}
is well-posed: see e.g.~\cite[Th.~8.3.3]{Assous.Ciarlet.Labrunie.2018.sp}.
For this problem we propose a stabilized broken-FEEC discretization where $u_h \in V^1_h$ solves
\begin{equation} \label{maxwell_hfilt}
  (-\omega^2(P^1_h)^*P^1_h + \wt \curl_h \curl_h + \alpha S^1_h)u_h = \t \Pi^1_h J,
\end{equation}
with a 
parameter $\alpha \in \RR$.
We remind that $S^1_h := (1-P^1_h)^* (1-P^1_h)$ is the 
jump stabilization 
operator, 
according to~\eqref{S_ell} and~\eqref{kerP} for $\ell=1$.
With test functions $v \in V^{1}_h$, 
the discrete problem~\eqref{maxwell_hfilt} writes
\begin{equation} \label{maxwell_hwfilt}
     - \omega^2 \sprod{P^1_hv}{P^1_hu_h}
    + \sprod{\curl P^1_h v}{\curl P^1_h u_h}
      +\alpha \sprod{(I-P^1_h)v}{(I-P^1_h)u_h} = \sprod{P^1_h v}{J}.
\end{equation}
Note that the zeroth-order term is filtered by the symmetric operator
$(P^1_h)^* P^1_h$: 
this allows us to obtain a conforming solution in the broken space.

\begin{proposition} \label{prop:maxwell}
  Let $\alpha \neq 0$, and $\omega$ such that $\omega^2$ is not an eigenvalue of the
  {\em conforming} discrete $\wt\curl^c_h \curl^c_h$ operator defined in \eqref{d_c}--\eqref{d*_c}.
  Then equation~\eqref{maxwell_hfilt} admits a unique solution $u_h$
  which belongs to the conforming space $V^{1,c}_h = V^{1}_h \cap H_0(\curl;\Omega)$,
  and solves the conforming FEEC Maxwell problem
  \begin{equation} \label{maxwell_hc}
      - \omega^2 \sprod{v^c}{u_h} + \sprod{\curl v^c}{\curl u_h} = \sprod{v^c}{J},
      \qquad \forall v^c \in V^{1,c}_h.
  \end{equation}
  In particular, $u_h$ is independent of both $\alpha$ and the specific projection $P^1_h$.
\end{proposition}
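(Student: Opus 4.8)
The plan is to mimic the proof of Proposition~\ref{prop:poisson} very closely, since the structure is essentially identical: the stabilization term forces the solution into the conforming subspace, after which the equation reduces to the conforming problem. First I would rewrite \eqref{maxwell_hfilt} in the weak form \eqref{maxwell_hwfilt} using arbitrary test functions $v \in V^1_h$, which follows directly from the definitions of $\wt\curl_h = (\curl P^1_h)^* \cdot$, of $S^1_h = (I-P^1_h)^*(I-P^1_h)$, and of the filtered zeroth-order operator $(P^1_h)^*P^1_h$. The key observation is that every term on the left except the stabilization term only sees $P^1_h u_h$ and $P^1_h v$, so the jump component $(I-P^1_h)u_h$ is controlled solely by the $\alpha$-term.

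Next I would test \eqref{maxwell_hwfilt} with $v = (I-P^1_h)u_h$. Since $P^1_h$ is a projection, $P^1_h v = P^1_h(I-P^1_h)u_h = 0$, so all three terms involving $P^1_h v$ (the two coming from the curl-curl and mass pairings, and the right-hand side $\sprod{P^1_h v}{J}$) vanish identically. What remains is $\alpha\sprod{(I-P^1_h)u_h}{(I-P^1_h)u_h} = \alpha\,\norm{(I-P^1_h)u_h}^2 = 0$, and since $\alpha \neq 0$ this forces $(I-P^1_h)u_h = 0$, i.e. $u_h = P^1_h u_h \in V^{1,c}_h$. Once $u_h$ is known to be conforming, restricting the test functions to $v = v^c \in V^{1,c}_h$ (on which $P^1_h$ acts as the identity, so the $\alpha$-term drops out) reduces \eqref{maxwell_hwfilt} to exactly \eqref{maxwell_hc}, establishing that any solution is a conforming solution and is independent of $\alpha$ and $P^1_h$.

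It then remains to argue existence and uniqueness of $u_h$. For uniqueness, suppose $J = 0$: the argument above shows any solution is conforming and satisfies $-\omega^2\sprod{v^c}{u_h} + \sprod{\curl v^c}{\curl u_h} = 0$ for all $v^c \in V^{1,c}_h$, which by the definition \eqref{d_c}--\eqref{d*_c} of $\wt\curl^c_h\curl^c_h$ means $(\wt\curl^c_h\curl^c_h - \omega^2 I)u_h = 0$; since $\omega^2$ is assumed not to be an eigenvalue of the conforming operator, $u_h = 0$. For existence, I would note that \eqref{maxwell_hfilt} is a square linear system on the finite-dimensional space $V^1_h$, so injectivity (uniqueness with $J=0$) already yields surjectivity; alternatively one constructs $u_h$ explicitly as the unique conforming solution of \eqref{maxwell_hc} guaranteed by the eigenvalue hypothesis, then checks it satisfies the full broken equation by splitting any test function as $v = P^1_h v + (I-P^1_h)v$.

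The main subtlety, and the only place where this differs genuinely from the Poisson case, is the role of the filtering operator $(P^1_h)^*P^1_h$ on the zeroth-order term. Had one used the plain mass pairing $\sprod{v}{u_h}$ instead, testing with $v=(I-P^1_h)u_h$ would leave an uncontrolled cross term $-\omega^2\sprod{(I-P^1_h)u_h}{u_h}$ that does not vanish, and the conformity argument would collapse; the filtering is precisely what makes every non-stabilization term factor through $P^1_h$. Thus the hardest part is not the algebra but recognizing that the well-posedness hinges on the eigenvalue hypothesis being imposed on the \emph{conforming} operator $\wt\curl^c_h\curl^c_h$ rather than on the broken one, whose large kernel \eqref{kerP} would otherwise obstruct invertibility.
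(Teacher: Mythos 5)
Your proposal is correct and follows essentially the same route as the paper's own proof: testing with $v=(I-P^1_h)u_h$ to force conformity, then restricting to conforming test functions to recover \eqref{maxwell_hc}, with existence and uniqueness resting on the eigenvalue hypothesis for the conforming operator $\wt\curl^c_h\curl^c_h$. Your added details (the finite-dimensional injectivity-implies-surjectivity argument and the remark on the role of the filtering $(P^1_h)^*P^1_h$) merely make explicit what the paper leaves implicit.
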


\begin{proof}
Taking $v = (I-P^1_h)u_h$ in~\eqref{maxwell_hwfilt} gives $u_h = P^1_h u_h \in V^{1,c}_h$ as long as $\alpha \neq 0$, and \eqref{maxwell_hc} follows by considering test functions $v = v^c$ in $V^{1,c}_h$.
Existence and uniqueness follow from the assumption that $\omega$ is not an eigenvalue of the
conforming curl curl operator.
\qed
\end{proof}

The matrix form of \eqref{maxwell_hfilt} is easily derived from diagram \eqref{CD}.
It reads 
\begin{equation} \label{maxwell_m}
  \matA^1 \arr{u} = (\matP^1)^T \t \arrsigma^1(J)
\end{equation}
where $\arr{u} := \arrsigma^1(u_h)$ is the (column) vector containing the coefficients of $u_h$
in the primal basis of $V^1_h$,
and $\matA^1$ is the resulting stabilized stiffness matrix,
\begin{equation} \label{A1}
  \matA^1 = (\matP^1)^T (-\omega^2 \matH^1 + \matC^T \matH^2 \matC) \matP^1
    + \alpha (\matI-\matP^1)^T \matH^1 (\matI-\matP^1)    .
\end{equation}

\subsection{Lifting of boundary conditions}
\label{sec:lifting}

In the case of inhomogeneous boundary conditions a standard approach is to introduce a lifted solution
and solve the modified problem in the homogeneous spaces.
For a Poisson equation of the form 
\begin{equation} \label{poisson_fg}
\left\{\begin{aligned}
    - \Delta \phi &= f  \qquad \text{ in ~  $\Omega$}
    \\
    \phi &= g  \qquad \text{ on ~  $\partial \Omega$}
\end{aligned}
\right.
\end{equation}
this corresponds to introducing $\phi_g \in H^1(\Omega)$
such that $\phi_g = g$ on $\partial \Omega$, and characterizing the solution
as $\phi = \phi_g + \phi_0$, where $\phi_0 \in V^0 = H^1_0(\Omega)$ solves \eqref{poisson_f} with a modified source,
namely
$$
- \Delta \phi_0 = f + \Delta \phi_g.
$$
For an inhomogeneous Maxwell equation of the form 
\begin{equation} \label{max_fg}
  \left\{\begin{aligned}
      - \omega^2 u + \curl\curl u &= J  \qquad \text{ in ~ $\Omega$}
      \\
      n \times u &= g  \qquad \text{ on ~ $\partial \Omega$}
  \end{aligned}
  \right.
\end{equation}
we introduce $u_g \in H(\curl;\Omega)$
such that $n  \times u_g = g$ on $\partial \Omega$, and characterize the solution
as $u = u_g + u_0$, where $u_0 \in H_0(\curl;\Omega)$ solves \eqref{max_f} with
a modified source, namely
\begin{equation} \label{max_fbc}
  - \omega^2 u_0 + \curl\curl u_0 = J + (\omega^2 - \curl\curl) u_g.
\end{equation}


The lifting approach can be applied in broken-FEEC methods by combining
the discretizations of the homogeneous and inhomogeneous sequences \eqref{dR_spaces_hom}
and \eqref{dR_spaces_inhom}, which amounts to combining different conforming projections.
For clarity 
we denote in this section 
the respective homogeneous and inhomogeneous spaces by $V^\ell$ and $\bar V^\ell$.
At the discrete level the broken spaces $V^\ell_h$ have no boundary conditions, 
so that the distinction only appears in the conforming projections to the
spaces $V^{\ell,c}_h = V^\ell_h \cap V^\ell$ and $\bar V^{\ell,c}_h = V^\ell_h \cap \bar V^\ell$,
which we naturally denote by $P^\ell_h$ and $\bar P^\ell_h$ respectively.
In practice the latter projects on the inhomogeneous conforming spaces,
while the former further sets the boundary degrees of freedom to zero.

For the Poisson equation where $V^0 = H^1_0(\Omega)$ and $\bar V^0 = H^1(\Omega)$, 
we first compute $\phi_{g,h} \in V^{0}_h$ 
that approximates $g$ on $\partial \Omega$,
and then we compute the homogeneous part of the solution,
$
\phi_{0,h} := \phi_h - \phi_{g,h} \in V^{0}_h,
$
by solving the homogeneous CONGA problem with modified source
\begin{multline} \label{poisson_hbc}
  \sprod{\grad P^0_h\vp}{\grad P^0_h \phi_{0,h}}
  + \alpha \sprod{(I-P^0_h)\vp}{(I-P^0_h)\phi_{0,h}}
   = \sprod{P^0_h\vp}{f} \\
   - \sprod{\grad P^0_h\vp}{\grad \bar P^0_h \phi_{g,h}}
\end{multline}
for all $\vp \in V^{0}_h$,
which also reads in matrix form
\begin{equation} \label{poisson_mbc}
  \matA^0 \arr{\phi}_0 = (\matP^0)^T \Big(\t \arrsigma^0(f) - \matG^T \matH^1 \matG \bar \matP^0 \arr{\phi}_g \Big)
\end{equation}
where $\matA^0$ is the stabilized stiffness matrix \eqref{A0}, $\bar \matP^0$ is the matrix of the 
inhomogeneous projection $\bar P^0_h$, and the coefficient vectors involve the broken degrees of freedom
$\arr{\phi}_0 = \arrsigma^0(\phi_{0,h})$, $\arr{\phi}_g = \arrsigma^0(\phi_{g,h})$ in the full broken space $V^0_h$. 

For the Maxwell equation where $V^1 = H_0(\curl;\Omega)$ and $\bar V^1 = H(\curl;\Omega)$, 
the method consists of first computing $u_{g,h} \in V^{1}_h$ 
such that $n \times u_{g,h}$ approximates $g$ on $\partial \Omega$,
and then characterizing the homogeneous part of the solution,
$
u_{0,h} := u_h - u_{g,h} \in V^{1}_h,
$
by 
\begin{multline} \label{maxwell_hbc}
  - \omega^2 \sprod{P^1_h v}{P^1_h u_{0,h}}
    + \sprod{\curl P^1_h  v}{\curl P^1_h  u_{0,h}}
    + \alpha \sprod{(I-P^1_h) v}{(I-P^1_h) u_{0,h}} =\\
  = \sprod{P^1_h  v}{ J} + \omega^2 \sprod{P^1_h v}{ \bar P^1_h u_{g,h}}
    -\sprod{\curl P^1_h  v}{\curl \bar P^1_h u_{g,h}}
\end{multline}
for all $v \in V^{1}_h$.
In matrix terms, this reads
\begin{equation} \label{maxwell_mbc}
  \matA^1 \arr{u}_0 = (\matP^1)^T \Big(\t \arrsigma^1(J) + (\omega^2 \matH^1 - \matC^T \matH^2 \matC)\bar \matP^1 \arr{u}_g \Big)
\end{equation}
where $\matA^1$ is the stabilized stiffness matrix \eqref{A1}, $\bar \matP^1$ is the matrix of the 
inhomogeneous projection $\bar P^1_h$, and the coefficient vectors involve the broken degrees of freedom
$\arr{u}_0 = \arrsigma^1(u_{0,h})$, $\arr{u}_g = \arrsigma^1(u_{g,h})$ in the full broken space $V^1_h$. 
\begin{proposition} \label{prop:maxwell_inhom}
  Let $\alpha$ and $\omega$ be as in Prop.~\ref{prop:maxwell}.
  Then \eqref{maxwell_hbc} admits a unique solution
  $u_{0,h}$ which belongs to the conforming space 
  $V^{1,c}_h = V^{1}_h \cap H_0(\curl;\Omega)$.
  The projection of the full solution in $\bar V^{1,c}_h = V^{1}_h \cap H(\curl;\Omega)$,
  $u^c_h := \bar P^1_h u_h$, 
  approximates the boundary condition,
  $n \times u^c_h \approx g$ on $\partial \Omega$,
  and it solves the discrete conforming Maxwell equation inside $\Omega$, in the sense that
  \begin{equation} \label{maxwell_hbc_c}
      - \omega^2 \sprod{v^c}{u^c_h} + \sprod{\curl v^c}{\curl u^c_h} = \sprod{v^c}{J},
      \qquad \forall v^c \in V^{1,c}_h.
  \end{equation}
  In particular, $u^c_h$ is independent of both $\alpha$ and $P^1_h$.
  Moreover if the lifted boundary condition $u_{g,h}$ is chosen in $\bar V^{1,c}_h$,
  then $u_h = u^c_h$.
\end{proposition}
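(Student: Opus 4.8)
The plan is to follow the argument of Proposition~\ref{prop:maxwell} closely, the only new ingredients being the modified right-hand side of~\eqref{maxwell_hbc} and the bookkeeping of the lifting decomposition $u_h = u_{0,h} + u_{g,h}$. First I would settle well-posedness of the full system~\eqref{maxwell_hbc} on $V^1_h$. The associated symmetric bilinear form splits along the algebraic direct sum $V^1_h = V^{1,c}_h \oplus \ker P^1_h$ induced by the projection $P^1_h$: on the conforming factor it reduces to the conforming curl-curl form $-\omega^2\sprod{\cdot}{\cdot} + \sprod{\curl\cdot}{\curl\cdot}$ (the jump term vanishing), on the jump factor it reduces to $\alpha\sprod{\cdot}{\cdot}$ (the first two terms vanishing since $P^1_h$ annihilates $\ker P^1_h$), and the cross terms vanish. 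Non-degeneracy of each block---guaranteed respectively by the assumption that $\omega^2$ is not an eigenvalue of the conforming operator and by $\alpha\neq 0$---gives existence and uniqueness of $u_{0,h}$.

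Next, exactly as in Proposition~\ref{prop:maxwell}, I would test~\eqref{maxwell_hbc} against $v = (I-P^1_h)u_{0,h}$. Since $P^1_h(I-P^1_h)=0$, every term carrying a factor $P^1_h v$ or $\curl P^1_h v$ drops out on both sides, leaving only $\alpha\norm{(I-P^1_h)u_{0,h}}^2 = 0$; as $\alpha\neq 0$ this forces $u_{0,h} = P^1_h u_{0,h}\in V^{1,c}_h$.

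With $u_{0,h}$ now conforming I would restrict the test functions to $v = v^c\in V^{1,c}_h$, so that $P^1_h v^c = v^c$ and the jump term disappears. Moving the two $\bar P^1_h u_{g,h}$ contributions on the right to the left and using $\bar P^1_h u_{0,h} = u_{0,h}$ (valid because $V^{1,c}_h\subset\bar V^{1,c}_h$), the mass and curl terms regroup so that the unknown enters only through $u^c_h := u_{0,h} + \bar P^1_h u_{g,h} = \bar P^1_h u_h$, which is precisely~\eqref{maxwell_hbc_c}. Independence of $\alpha$ and $P^1_h$ then follows because $u_{0,h}$ is characterized by the well-posed conforming problem~\eqref{maxwell_hbc_c}, whose data involve only $J$, $\omega$ and the fixed lifting term $\bar P^1_h u_{g,h}$.

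It remains to address the boundary trace and the final identity. Since $u_{0,h}\in V^{1,c}_h = V^1_h\cap H_0(\curl;\Omega)$ has vanishing tangential trace, we have $n\times u^c_h = n\times\bar P^1_h u_{g,h}$ on $\partial\Omega$, and the approximation $n\times u^c_h\approx g$ follows from the construction of $u_{g,h}$ together with the fact that the conforming projection $\bar P^1_h$ preserves the boundary degrees of freedom. This trace-preservation is where I expect the only real subtlety, since it relies on the commuting degree-of-freedom structure of Section~\ref{sec:proj_bfeec} rather than on a generic projection; the rest of the argument is routine bookkeeping. Finally, if $u_{g,h}\in\bar V^{1,c}_h$ then $\bar P^1_h u_{g,h} = u_{g,h}$, whence $u^c_h = u_{0,h} + u_{g,h} = u_h$.
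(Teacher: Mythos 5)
Your proof is correct and follows essentially the same route as the paper: conformity of $u_{0,h}$ by testing with $v=(I-P^1_h)u_{0,h}$, restriction to conforming test functions to regroup the lifting terms into \eqref{maxwell_hbc_c}, the observation that $n\times u^c_h = n\times \bar P^1_h u_{g,h} \approx g$ because $\bar P^1_h$ leaves boundary degrees of freedom untouched, and the identity $u_h=u^c_h$ when $u_{g,h}\in\bar V^{1,c}_h$. Your explicit block-diagonalization of the bilinear form along $V^1_h = V^{1,c}_h\oplus\ker P^1_h$ is simply a cleaner formalization of the existence/uniqueness step that the paper disposes of by citing the arguments of Prop.~\ref{prop:maxwell}.
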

\begin{proof}
  The conformity and unicity of $u_{0,h}$ can be shown with the same arguments as in Prop.~\ref{prop:maxwell}.
  This shows that $u^c_h = u_{0,h} + \bar P^1_h u_{g,h}$ and that the tangential trace of $u^c_h$ 
  coincides with that of $\bar P^1_h u_{g,h}$ on the boundary, which itself should coincide with that of 
  $u_{g,h}$ as there are no conformity constraints there for the inhomogeneous space
  $\bar V^1 = H(\curl;\Omega)$. Equation \eqref{maxwell_hbc_c} and the additional observations follow easily.
\qed
\end{proof}

%
%

\subsection{Eigenvalue problems}

Broken-FEEC approximations to Hodge-Laplace eigenvalue problems of the form
\begin{equation} \label{L_ev}
  \cL^\ell u = \lambda u
\end{equation}
are readily derived using the discrete operators presented in Section~\ref{sec:HL_S}.
For a general stabilization parameter $\alpha \ge 0$, they take the form:
\begin{equation} \label{Lh_ev}
      (\cL^\ell_h + \alpha S^\ell_h) u_h = \lambda_h u_h
\end{equation}
with $u_h \in V^\ell_h \setminus \{0\}$.
Their convergence can be established under the
assumption of a strong stabilization regime and of moment-preserving
properties for the conforming projections, see \cite{conga_hodge}.
There it has also been shown that for arbitrary positive penalizations $\alpha > 0$,
the zero eigenmodes coincide with their conforming counterparts, namely the conforming harmonic forms,
$$
\ker (\cL^\ell_h + \alpha S^\ell_h) = 
\ker \cL^{\ell,c}_h = \cH^\ell_h
$$
see \eqref{HH_c}--\eqref{L1hc}.

For the first space $V^0 = H^1_0(\Omega)$ the corresponding operator is the standard Laplace operator $\cL^0 = -\Delta$ 
and its broken-FEEC discretization reads:
Find $\lambda_h \in \RR$ and $\phi_h \in V^0_h \setminus \{0\}$ such that
\begin{equation} \label{L0_evh}
  (\cL^0_h + \alpha S^0_h) \phi_h = \lambda_h \phi_h
\end{equation}
where $\cL^0_h$ and $S^0_h$ are 
given by \eqref{L0h} and \eqref{S_ell}, with stabilization parameter $\alpha$.
For $V^1 = H_0(\curl;\Omega)$ the Hodge-Laplace operator is
$\cL^1 = \curl \curl -\grad \Div$ 
and its discretization reads:
Find $\lambda_h \in \RR$ and $u_h \in V^1_h \setminus \{0\}$ such that
\begin{equation} \label{L1_evh}
  (\cL^1_h + \alpha S^1_h) u_h = \lambda_h u_h
\end{equation}
where $\cL^1_h = \wt \curl_h \curl_h - \grad_h \wt \Div_h$, see \eqref{L1h},
and $S^1_h$ is given again by \eqref{S_ell}. 
Eigenvalue problems in $V^2_h$ and $V^3_h$ are discretized in a similar fashion.

Another important eigenvalue problem is that of the curl-curl operator, for which one could simply adapt the broken-FEEC discretization~\eqref{L1_evh} used for the Hodge-Laplace operator, by dropping the term $(-\grad_h \wt\Div_h)$.
By testing such an equality with $v \in \grad P^0_h V^0_h$ we see that
the non-zero eigenmodes satisfy $\wt \Div_h u_h = 0$,
hence they are also eigenmodes of \eqref{L1_evh} and their convergence follows 
from the results mentionned above in the case of strong penalization regimes. 
In the unpenalized case ($\alpha = 0$) their convergence was also established, as
shown in \cite{Campos-Pinto.Sonnendrucker.2016.mcomp}.

The main drawback of the approach just described is that the computed eigenvalues do not coincide with those of the 
conforming operator $\wt \curl^c_h \curl^c_h$, but only approximate them.
(We recall that the knowledge of the eigenvalues of $\wt \curl^c_h \curl^c_h$ is needed in order to verify the hypotheses of Proposition~\ref{prop:maxwell} for the source problem: $\omega^2$ should not concide with one of them.)
To overcome this difficulty we propose a novel broken-FEEC discretization, which consists of the generalized eigenvalue problem
\begin{equation} \label{CC_evh}
  \wt \curl_h \curl_h u_h = \lambda_h \left[ (P^1_h)^\ast P^1_h + (I-P^1_h)^\ast (I-P^1_h) \right] u_h.
\end{equation}
With test functions $v \in V^1_h$, the discrete problem~\eqref{CC_evh} writes
\begin{equation} \label{CC_evh_wf}
    \langle \curl P^1_h v, \curl P^1_h u_h \rangle =
    \lambda_h \left[ \langle P^1_h v, P^1_h u_n \rangle +
                     \langle (1 - P^1_h) v, (1 - P^1_h) u_h \rangle
              \right]
\end{equation}
and its matrix formulation is provided 
below.
\begin{proposition} \label{prop:CC_evh}
  For $\lambda_h \neq 0$, the CONGA eigenvalue problem~\eqref{CC_evh}
  has the same solutions as the conforming eigenproblem
  \begin{equation} \label{CC_evhc}
    \wt \curl^c_h \curl^c_h u_h = \lambda_h u_h, \quad \text{with ~ $u_h \in V^{1,c}_h$}.    
  \end{equation}
  For $\lambda_h = 0$, the corresponding eigenspace is
  \begin{equation} \label{CC_evhc_ker}
    \ker \big(\wt \curl_h \curl_h \big) 
      = \big(\grad V^{0,c}_h \poplus \cH^1_h\big) \oplus (I-P^1_h) V^1_h.
  \end{equation}
\end{proposition}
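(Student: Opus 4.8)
The plan is to exploit the structure of the weak form \eqref{CC_evh_wf}, following the same filtering argument used in Propositions~\ref{prop:poisson} and~\ref{prop:maxwell}. First I would record two elementary facts. The right-hand side operator $R := (P^1_h)^* P^1_h + (I-P^1_h)^*(I-P^1_h)$ is symmetric positive definite on $V^1_h$, since $\sprod{R u_h}{u_h} = \norm{P^1_h u_h}^2 + \norm{(I-P^1_h)u_h}^2$ vanishes only when $u_h = 0$; hence \eqref{CC_evh} is a well-posed generalized eigenproblem with an invertible mass operator. The left-hand side operator $\wt \curl_h \curl_h$ is symmetric positive semi-definite, with $\sprod{\wt \curl_h \curl_h u_h}{v} = \sprod{\curl P^1_h u_h}{\curl P^1_h v}$ by the very definition of $\wt \curl_h$.

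For $\lambda_h \neq 0$ I would first establish that every eigenvector is conforming. Testing \eqref{CC_evh_wf} with $v = (I-P^1_h)u_h \in \ker P^1_h$ annihilates the left-hand side (as $P^1_h v = 0$) together with the first right-hand term, leaving $0 = \lambda_h \norm{(I-P^1_h)u_h}^2$; since $\lambda_h \neq 0$ this forces $(I-P^1_h)u_h = 0$, i.e.~$u_h = P^1_h u_h \in V^{1,c}_h$. With $u_h$ conforming the weak form reduces to $\sprod{\curl P^1_h v}{\curl u_h} = \lambda_h \sprod{P^1_h v}{u_h}$ for all $v \in V^1_h$, and since $P^1_h$ maps $V^1_h$ onto $V^{1,c}_h$ this is equivalent to testing against arbitrary $v^c \in V^{1,c}_h$, which by the definition of $\wt \curl^c_h$ is exactly the conforming eigenproblem \eqref{CC_evhc}. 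Conversely, for a conforming eigenpair of \eqref{CC_evhc} I would note that $R u_h = (P^1_h)^* u_h$ (because $(I-P^1_h)u_h = 0$) and verify \eqref{CC_evh} weakly against all $v \in V^1_h$, again using the surjectivity of $P^1_h$; this closes the equivalence.

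For $\lambda_h = 0$, since $R$ is invertible the corresponding eigenspace is exactly $\ker(\wt \curl_h \curl_h)$. Testing against $u_h$ itself gives $\norm{\curl P^1_h u_h}^2 = 0$, whence $\ker(\wt \curl_h \curl_h) = \ker \curl_h$. I would then invoke the kernel formula from Section~\ref{sec:bfeec}, namely $\ker \curl_h = (V^{1,c}_h \cap \ker \curl) \oplus \ker P^1_h$ with $\ker P^1_h = (I-P^1_h)V^1_h$, and identify the conforming part: since $\curl$ restricted to $V^{1,c}_h$ is $\curl^c_h$, the discrete Hodge-Helmholtz decomposition \eqref{HH_c} yields $V^{1,c}_h \cap \ker \curl = \ker \curl^c_h = \grad^c_h V^{0,c}_h \poplus \cH^1_h$, the co-closed summand $\wt \curl^c_h V^{2,c}_h$ meeting the kernel trivially by a one-line $L^2$-duality argument. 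Substituting gives \eqref{CC_evhc_ker}.

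The duality manipulations are routine; the only steps demanding care are the identification $V^{1,c}_h \cap \ker \curl = \grad^c_h V^{0,c}_h \poplus \cH^1_h$, which rests on the conforming FEEC decomposition rather than on anything specific to the broken setting, and the careful bookkeeping of the surjectivity of $P^1_h$ when passing between broken and conforming test functions.
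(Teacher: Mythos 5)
Your proof is correct and follows essentially the same route as the paper's: testing with $v=(I-P^1_h)u_h$ to force conformity when $\lambda_h\neq 0$, passing to conforming test functions via the surjectivity of $P^1_h$ (with the same converse verification), and reducing the $\lambda_h=0$ case to the chain of identities $\ker(\wt\curl_h\curl_h)=\ker\curl_h=\ker(\curl^c_h)\oplus(I-P^1_h)V^1_h$ together with $\ker\curl^c_h=\grad V^{0,c}_h\poplus\cH^1_h$. The only difference is that you fill in details the paper states without proof or cites from references, namely the positive definiteness of the right-hand-side operator and the Hodge--Helmholtz identification of $\ker\curl^c_h$, which is a welcome but not substantively different elaboration.
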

\begin{proof}
  For $\lambda_h \neq 0$ we take $v = (I - P^1_h) u_h$ in~\eqref{CC_evh_wf}, and find that $u_h = P^1_h u_h \in V^{1,c}_h$.
  Hence we can restrict the test space to functions $v = P^1_h v \in V^{1,c}_h$, and obtain the conforming eigenproblem $\langle \curl v, \curl u_h \rangle = \lambda_h \langle v, u_h \rangle$. Conversely, we verify that 
  if $u_h \in V^{1,c}_h$ solves \eqref{CC_evhc} then it also solves \eqref{CC_evh_wf} for all $v \in V^1_h$.
  The relationship~\eqref{CC_evhc_ker} between the null spaces follows from the equalities
    $\ker \big(\wt \curl_h \curl_h \big) = \ker (\curl_h)$, 
    $\ker \big(\wt \curl^c_h \curl^c_h \big) = \ker (\curl^c_h) = \grad V^{0,c}_h \poplus \cH^1_h $
    and 
    \[
    \ker (\curl_h)
      = \ker (\curl^c_h)  \oplus (I-P^1_h) V^1_h
    \]
  which was observed in, e.g., \cite{Campos-Pinto.Sonnendrucker.2016.mcomp}.
  \qed
\end{proof}

These eigenvalue problems 
may be expressed in symmetric matrix form by 
expressing the eigenmodes in the primal bases and the equations in the dual bases,
as we did for the Poisson and Maxwell equations in Section~\ref{sec:poisson} and \ref{sec:maxwell}.
For the Poisson eigenvalue problem~\eqref{L0_evh} we obtain
\begin{equation} \label{L0_evmat}
      \matA^0 \arr{\phi} = \lambda_h \matH^0 \arr{\phi}
\end{equation}
with $\matA^0 = \matH^0(\matL^0 + \alpha \matS^0) = 
(\matG \matP^0)^T \matH^1 \matG \matP^0 
  + \alpha (\matI-\matP^0)^T \matH^0 (\matI-\matP^0)$ as in \eqref{A0}.
Similarly we can write the Hodge-Laplace eigenvalue problem~\eqref{L1_evh} as
\begin{equation} \label{L1_evmat}
      \matA^1 \arr{u} = \lambda_h \matH^1 \arr{u}
\end{equation}
with matrix
$$
\matA^1 = 
(\matC\matP^1)^T \matH^2 \matC\matP^1 + \matH^1 \matG\matP^0 \t\matH^0 (\matG\matP^0)^T \matH^1
  + \alpha (\matI-\matP^1)^T \matH^1 (\matI-\matP^1),
$$
and the curl-curl eigenvalue problem~\eqref{CC_evh} as
\begin{equation} \label{CC_evmat}
  (\matC\matP^1)^T \matH^2 \matC\matP^1 \arr{u}
  = \lambda_h \left[ (\matP^1)^T \matH^1 \matP^1 + (\matI - \matP^1)^T \matH^1 (\matI - \matP^1) \right] \arr{u}.
\end{equation}

\subsection{Magnetostatic problems with harmonic constraints}
\label{sec:MS}

We next consider a magnetostatic problem of the form
\begin{equation}\label{ms}
  \left\{
  \begin{aligned}
    \Div B &= 0
    \\
    \curl B &= J
  \end{aligned}
  \right.  
\end{equation}
posed for a current $J \in L^2(\Omega)$. 
Unlike the Poisson and Maxwell source problems above, 
this problem requires an additional constraint in non-contractible domains.
Indeed it can only be well-posed if 
$\ker \curl \cap \ker \Div = \{0\}$, i.e., if the harmonic forms are trivial.
To formulate this more precisely we need to take into account particular
boundary conditions.

\subsubsection{Magnetostatic problem with pseudo-vacuum boundary condition}
\label{sec:MS_vbc}

We first consider a boundary condition of the form $n \times B = 0$,
which is sometimes used to model pseudo-vacuum boundaries \cite{Jackson_al_magneto_pvbc_2014}.
A mixed formulation is then: Find $B \in H_0(\curl;\Omega)$ and 
$p \in H^1_0(\Omega)$ such that
\begin{equation} \label{ms_w}
  \left\{
  \begin{aligned}
    \sprod{\grad q}{B}& = 0  \qquad && \forall q \in H^1_0(\Omega)
    \\
    \sprod{v}{\grad p} + \sprod{\curl v}{\curl B} &= \sprod{\curl v}{J} \qquad && \forall v \in H_0(\curl;\Omega)
  \end{aligned}
  \right.  
\end{equation}
see e.g. \cite{Beirao_da_Veiga_Brezzi_Dassi_Marini_Russo_2017}.
Here the auxiliary unknown $p$ is a Lagrange multiplier for the divergence constraint written in weak form.

The well-posedness of \eqref{ms_w} is related to the space of harmonic 1-forms
$\cH^1$, defined as the kernel of the Hodge-Laplace
operator \eqref{L1} in the homogeneous sequence, i.e., 
\begin{equation} \label{H_gc_hom_remind}
  \cH^1 = \ker \cL^1 = \{ v \in H_0(\curl;\Omega): ~  \curl v = \Div v = 0\}.
\end{equation}
As discussed in \cite{Arnold.Falk.Winther.2010.bams}, $\cH^1 = \cH^1(\Omega)$ is isomorphic to a de Rham cohomology group and its
dimension corresponds to a Betti number of the domain $\Omega$. With homogeneous, resp. inhomogeneous boundary conditions 
its dimension is $b_2(\Omega)$ the number of cavities, 
resp. $b_1(\Omega)$ the number of handles in $\Omega$.
Thus in a contractible domain we have $\cH^1 = \{0\}$ and the problem is well posed, but in general 
$\cH^1$ may not be trivial. Additional constraints must then be imposed on the solution,
such as
$
B \in (\cH^1)^\perp,
$
which can be associated with an additional Lagrange multiplier $z \in \cH^1$. The constrained problem 
then consists of finding $B \in H_0(\curl;\Omega), p \in H^1_0(\Omega)$ and $z \in \cH^1$, such that
\begin{equation} \label{magneto_w}
\left\{
\begin{aligned}
  \sprod{\grad q}{B} &= 0  \qquad && \forall q \in H^1_0(\Omega)
  \\
  \sprod{v}{\grad p} + \sprod{\curl v}{\curl B}  + \sprod{v}{z} &= \sprod{\curl v}{J} \qquad && \forall v \in H_0(\curl;\Omega)
  \\
  \sprod{w}{B} &= 0  \qquad && \forall w \in \cH^1.
\end{aligned}
\right.
\end{equation}

In our broken-FEEC framework this problem is discretized as: 
find $p_h \in V^0_h$, $B_h \in V^1_h$ and $z_h \in \cH^1_h$, such that
\begin{equation} \label{magneto_h}
  \left\{
  \begin{aligned}
    \alpha^0 \sprod{(I-P^0_h)q}{(I-P^0_h)p_{h}} + \sprod{\grad P^0_h q}{B_h} &= 0  
    \\
    \sprod{v}{\grad P^0_h p_h}  + \sprod{\curl P^1_h v}{\curl P^1_h B_h} \mspace{100mu} & 
    \\
      + \alpha^1 \sprod{(I-P^1_h)v}{(I-P^1_h)B_{h}} + \sprod{v}{z_h}  &= \sprod{\curl P^1_h v}{J} 
    \\
    \sprod{w}{B_h} &= 0  
  \end{aligned}
  \right.
\end{equation}
for all $q \in V^0_h$, $v \in V^1_h$ and $w \in \cH^1_h$.
Here, $\alpha^\ell \in \RR$ are stabilization parameters for the jump terms in $V^0_h$ and $V^1_h$, 
and $\cH^1_h = \ker (\cL^1_h + \alpha^1 S^1_h)$ 
may be computed as the kernel of the stabilized discrete Hodge-Laplace operator \eqref{L1h}.
As discussed above it coincides with that of the conforming Hodge-Laplace operator, hence 
\begin{equation} \label{H1_h_hom}
  \cH^1_h = \{v \in V^{1,c}_h : \curl v = 0 ~ \text{ and } ~ \sprod{\grad q}{v} = 0, ~ \forall q \in V^{0,c}_h\}.
\end{equation}
In operator form, this amounts to 
finding $p_h \in V^0_h$, $B_h \in V^1_h \cap (\cH^1_h)^\perp$ and $z_h \in \cH^1_h$ such that
\begin{equation} \label{magneto_h_op}
  \left\{
  \begin{aligned}
    \alpha^0 S^0_h p_{h} + \wt \Div_h B_h &= 0  
    \\
    \grad_h p_h + \wt \curl_h \curl_h B_h + \alpha^1 S^1_h B_{h} + z_h  &= \wt \curl_h \t \Pi^2_h J 
  \end{aligned}
  \right.
\end{equation}

\begin{proposition} \label{prop:MS}
  System~\eqref{magneto_h} (or equivalently \eqref{magneto_h_op}) is well-posed for arbitrary non-zero stabilization parameters 
  $\alpha^0$, $\alpha^1 \neq 0$, and its solution satisfies
  \begin{equation}
    \begin{aligned}
      &B_h \in V^{1,c}_h \cap (\cH^1_h \oplus \grad V^{0,c}_h)^\perp 
      \\
      &p_h \in V^{0,c}_h \cap \ker \grad = \{0\}
      \\
      &z_h = 0.
    \end{aligned}
  \end{equation}
Moreover the discrete magnetic field can be decomposed according to \eqref{HH_c}, 
as 
$$
B_h = B^{\rm g}_h + B^{\rm h}_h + B^{\rm c}_h 
  \in \grad^c_h V^{0,c}_h \poplus \cH^1_h \poplus \wt \curl^c_h V^{2,c}_h
$$ 
with discrete grad, harmonic and curl components characterized by
\begin{equation}
  \left\{\begin{aligned}
  & B^{\rm g}_h = 0
  \\
  & B^{\rm h}_h = 0
  \\
  & \sprod{w}{\curl B^{\rm c}_h} = \sprod{w}{J} \qquad && \forall w \in \curl V^{1,c}_h.
  \end{aligned} \right.
\end{equation}
In particular, the solution is independent of the non-zero stabilization parameters and 
the conforming projection operators.
\end{proposition}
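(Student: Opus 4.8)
The plan is to follow the strategy of Propositions~\ref{prop:poisson} and~\ref{prop:maxwell}, testing the system against functions built from the unknowns themselves so as to force the non-conforming parts and the Lagrange multipliers to vanish. The essential difference is that here the coupling terms $\sprod{v}{\grad P^0_h p_h}$ and $\sprod{v}{z_h}$ do not drop out of the conformity argument as cleanly as in the pure curl-curl case, because $P^1_h$ is not an orthogonal projection; the multipliers must therefore be eliminated \emph{before} conformity of $B_h$ can be established. First I would test the first equation of~\eqref{magneto_h} with $q = (I-P^0_h)p_h$: since $P^0_h(I-P^0_h)=0$ the coupling term $\sprod{\grad P^0_h q}{B_h}$ vanishes, leaving $\alpha^0\norm{(I-P^0_h)p_h}^2 = 0$, hence $p_h = P^0_h p_h \in V^{0,c}_h$ because $\alpha^0 \neq 0$.

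Next I would eliminate $p_h$ and $z_h$ using the second equation. Testing with $v = \grad P^0_h p_h = \grad p_h \in V^{1,c}_h$, every term but the first collapses: $\curl\grad p_h = 0$ kills the curl-curl and right-hand side contributions, $(I-P^1_h)\grad p_h = 0$ kills the jump term, and $\sprod{\grad p_h}{z_h}=0$ because $z_h \in \cH^1_h$ is $L^2$-orthogonal to $\grad V^{0,c}_h$ by~\eqref{H1_h_hom}; what remains is $\norm{\grad p_h}^2 = 0$, and the homogeneous condition $V^{0,c}_h \subset H^1_0(\Omega)$ forces $p_h = 0$. Testing instead with $v = z_h \in \cH^1_h \subset V^{1,c}_h$, the relations $\curl z_h = 0$, $(I-P^1_h)z_h = 0$ and $\sprod{z_h}{\grad P^0_h p_h}=0$ leave $\norm{z_h}^2 = 0$, so $z_h = 0$. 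With both multipliers gone I would test the second equation once more with $v = (I-P^1_h)B_h$: now $P^1_h v = 0$ annihilates the curl and source terms, and only $\alpha^1\norm{(I-P^1_h)B_h}^2 = 0$ survives, giving $B_h = P^1_h B_h \in V^{1,c}_h$ since $\alpha^1 \neq 0$.

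Having shown $p_h = 0$, $z_h = 0$ and $B_h \in V^{1,c}_h$, I would restrict the test functions to the conforming subspaces to read off the characterization. The first equation then yields $\sprod{\grad q^c}{B_h} = 0$ for all $q^c \in V^{0,c}_h$ and the third line of~\eqref{magneto_h} yields $\sprod{w}{B_h}=0$ for all $w \in \cH^1_h$, so $B_h \in (\grad^c_h V^{0,c}_h \oplus \cH^1_h)^\perp$; the conforming Hodge--Helmholtz decomposition~\eqref{HH_c} then places $B_h$ in the remaining summand $\wt\curl^c_h V^{2,c}_h$, giving $B^{\rm g}_h = B^{\rm h}_h = 0$. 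The second equation restricted to $v^c \in V^{1,c}_h$ reads $\sprod{\curl v^c}{\curl B_h} = \sprod{\curl v^c}{J}$, which is exactly $\sprod{w}{\curl B^{\rm c}_h} = \sprod{w}{J}$ as $w = \curl v^c$ ranges over $\curl V^{1,c}_h$.

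For well-posedness I would use that~\eqref{magneto_h} is a square finite-dimensional linear system, so it suffices to prove that $J=0$ forces $(p_h,B_h,z_h)=0$: the steps above already give $p_h=z_h=0$ and $B_h \in \wt\curl^c_h V^{2,c}_h$ with $\sprod{\curl v^c}{\curl B_h}=0$; taking $v^c = B_h$ gives $\curl B_h = 0$, hence $B_h \in \ker\curl^c_h = \grad^c_h V^{0,c}_h \poplus \cH^1_h$, which is orthogonal to $\wt\curl^c_h V^{2,c}_h$, so $B_h = 0$. Thus the system is invertible, and the solution both exists and is unique (equivalently, the reduced problem is the coercive conforming curl-curl problem posed on $\wt\curl^c_h V^{2,c}_h$). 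Finally, since $p_h=z_h=0$ and $B_h$ is characterized entirely through the conforming spaces, the conforming operators, and the harmonic space $\cH^1_h$, which by~\eqref{H1_h_hom} does not depend on the specific $P^1_h$, the solution depends on neither $\alpha^0,\alpha^1$ nor the projections $P^\ell_h$. The main obstacle is precisely the non-orthogonality of the $P^\ell_h$, which is what prevents discarding the coupling terms directly and dictates that $p_h$ and $z_h$ be killed first.
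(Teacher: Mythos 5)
Your proof is correct and follows essentially the same route as the paper's: the identical test functions $q = (I-P^0_h)p_h$, $v = \grad P^0_h p_h$, $v = z_h$, $v = (I-P^1_h)B_h$, then conforming test functions, combined with the orthogonality relations \eqref{H1_h_hom} and the discrete Hodge--Helmholtz decomposition \eqref{HH_c}. The only differences are inessential: you eliminate the unknowns in a slightly different order (the paper kills $z_h$ first, which the orthogonality $\cH^1_h \perp \grad V^{0,c}_h$ permits without knowing anything about $p_h$), and you spell out the square-system injectivity argument for well-posedness that the paper leaves implicit.
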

\begin{proof} 
  The result follows by using appropriate test functions and the orthogonality of the 
  conforming discrete Hodge-Helmholtz decomposition \eqref{HH_c}.
  In particular, taking $v = z_h$ (in $\cH^1_h \subset V^{1,c}_h$) yields
  $z_h=0$, taking $q = (I-P^0_h) p_h$ and $v = \grad \bar P^0_h p_h$
  shows that $p_h = 0$, and the characterization of $B_h$ follows by taking first $v = (I-P^1_h) B_h$ and 
  then $v \in V^{1,c}_h$, an arbitrary conforming test function.
  \qed
\end{proof}

\begin{remark}
  Prop.~\ref{prop:MS} states that the auxiliary unknowns $p_h$ and $z_h$ 
  may be discarded a posteriori from the system. However they are needed 
  in \eqref{magneto_h} to have a square matrix.  
\end{remark}
In practice the discrete harmonic fields can be represented by a basis 
of $\cH^1_h$, of the form $\Lambda^{1,H}_i$, $i = 1, \ldots, \dim(\cH^1_h) = b_2(\Omega)$,
computed as the zero eigenmodes of a penalized $\cL^1_h + \alpha S^1_h$,
where $\alpha > 0$ is arbitrary and $b_2(\Omega)$ is the Betti number of order 2, i.e. the number of cavities in $\Omega$.
Using these harmonic fields, one assembles the rectangular mass matrix
\begin{equation} \label{mat_MH}
  (\matM^{1,H})_{i,j} = \sprod{\Lambda^{1}_i}{\Lambda^{1,H}_j}, \qquad i = 1, \ldots, N^1, ~ j = 1, \ldots,  b_2(\Omega).
\end{equation}
which allows us to rewrite \eqref{magneto_h} as
\begin{equation} \label{magneto_mat}
  \left\{
  \begin{aligned}
    \alpha^0 \matS^0 \arr{p} + (\matG \matP^0)^T \matH^1\arr{B} &=0
    \\
    \matH^1 \matG \matP^0 \arr{p}
    + \big(
    (\matC\matP^1)^T \matH^2 \matC\matP^1 + \alpha^1 \matS^1 \big) \arr{B}  + \matM^{1,H} \arr{z}  &= (\matC\matP^1)^T \t \arrsigma^2(J)
    \\
    (\matM^{1,H})^T \arr{B} &= 0
  \end{aligned}
  \right.
\end{equation}
where $\arr{p}$, $\arr{B}$ and $\arr{z}$ contain the coefficients of $p_h$, $B_h$ and $z_h$
in the (primal) bases of $V^0_h$, $V^1_h$ and $\cH^1_h$ respectively.

\subsubsection{Magnetostatic problem with metallic boundary conditions}
\label{sec:MS_mbc}

We next consider a boundary condition of the form 
$$
n \cdot B = 0
$$
which corresponds to a metallic (perfectly conducting) boundary. 
In our framework it can be approximated by using the inhomogeneous sequence 
\eqref{dR_spaces_inhom} which we denote by $\bar V^\ell$ as in Section~\ref{sec:lifting},
and by modelling the boundary condition in a weak form.
The mixed formulation takes a form similar to \eqref{ms_w} with a few changes. Namely,
we now look for $p \in \bar V^0 = H^1(\Omega)$ and $B \in \bar V^1 = H(\curl;\Omega)$, solution to
\begin{equation} \label{ms_w_pc}
\left\{
\begin{aligned}
  \sprod{\grad q}{B} &= 0  \qquad && \forall q \in H^1(\Omega)
  \\
  \sprod{v}{\grad p} + \sprod{\curl v}{\curl B} &= \sprod{\curl v}{ J} \qquad && \forall v \in H(\curl;\Omega).
\end{aligned}
\right.
\end{equation}
Compared to \eqref{ms_w}, the only change is in the spaces chosen for the test functions.
In particular we may formally decompose the second equation into a first set of test functions 
with $q|_{\partial \Omega} = 0$ which enforces $\Div B = 0$, and a second set with 
$q|_{\partial \Omega} \neq 0$ which enforces $n \cdot B = 0$ on the boundary.

Again, this problem is not well-posed in general: on domains with holes (i.e., handles) 
a constraint must be added corresponding to harmonic forms,
which leads to an augmented system with one additional Lagrange multiplier $z_h$ 
as we did in the previous section. 
Here the proper space of harmonic forms is the one associated with the inhomogeneous 
sequence \eqref{dR_spaces_inhom} involving $\bar V^1 = H(\curl;\Omega)$ and $\bar V^*_1 = H_0(\Div;\Omega)$.
Thus, it reads
\begin{equation} \label{H1_inhom}
  \bar \cH^1(\Omega) = \{ v \in H_0(\Div;\Omega): ~  \curl v = \Div v = 0\}.
\end{equation}
We also observe that now $p$ is only defined up to constants, so that an additional constraint needs to be added, such as 
$p \in (\ker \grad)^\perp$. In practice this constraint may either be enforced in a similar way as the harmonic constraint,
or by a so-called regularization technique where a term of the form $\ve \sprod{q}{p}$, with $\ve \neq 0$,
is added to the first equation of \eqref{ms_w_pc}. The constrained problem reads then:
Find $B \in H(\curl;\Omega), p \in H^1(\Omega)$ and $z \in \bar \cH^1$, such that
\begin{equation} \label{magneto_mw}
\left\{
\begin{aligned}
  \ve \sprod{q}{p} + \sprod{\grad q}{B} &= 0  \qquad && \forall q \in H^1(\Omega)
  \\
  \sprod{v}{\grad p} + \sprod{\curl v}{\curl B}  + \sprod{v}{z} &= \sprod{\curl v}{J} \qquad && \forall v \in H(\curl;\Omega)
  \\
  \sprod{w}{B} &= 0  \qquad && \forall w \in \bar \cH^1.
\end{aligned}
\right.
\end{equation}
We note that here the result does not depend on $\ve$ (indeed by testing with $v = \grad p$ we find that $p$ is a constant,
and with $q = p$ we find that $p = 0$), hence we may set $\ve = 1$.

At the discrete level we write a system similar to \eqref{magneto_h} or \eqref{magneto_h_op}, but we must replace 
the space \eqref{H1_h_hom} with 
\begin{equation} \label{H1_h_inhom}  
  \bar \cH^1_h = \{v \in \bar V^{1,c}_h : \curl v = 0 ~ \text{ and } ~ \sprod{\grad q}{v} = 0, ~ \forall q \in \bar V^{0,c}_h\}
\end{equation}
where we have denoted $\bar V^{\ell,c}_h = V^\ell_h \cap \bar V^\ell$.
Observe that the for the inhomogeneous sequence, the decomposition \eqref{HH_c} becomes
\begin{equation} \label{HH_c_inhom}
  \bar V^{1,c}_h = \grad^c_h \bar V^{0,c}_h \poplus \bar \cH^1_h \poplus \wt \curl^c_h \bar V^{2,c}_h 
\end{equation}
where $\wt \curl^c_h: \bar V^{2,c}_h \to \bar V^{1,c}_h$ is now the discrete adjoint of
$\curl^c_h: \bar V^{1,c}_h \to \bar V^{2,c}_h$.

We compute the space \eqref{H1_h_inhom} as the kernel of the stabilized Hodge-Laplace operator 
associated with the inhomogeneous sequence, 
$\bar \cH^1_h = \ker (\bar \cL^1_h + \alpha^1 \bar S^1_h)$, with an arbitrary positive $\alpha^1 > 0$.
Note that the matrix of $\bar \cL^1_h$ takes the same form as in \eqref{matL1}, 
\begin{equation} \label{matL1_inhom}
  \bar \matL^1 = \t\matH^1 (\matC\bar \matP^1)^T \matH^2 \matC\bar \matP^1 + \matG\bar \matP^0 \t\matH^0 (\matG\bar \matP^0)^T \matH^1
\end{equation} 
where $\bar \matP^0$ and $\bar \matP^1$ are the conforming projection matrices on the {inhomogeneous} spaces, which leave the boundary degrees 
of freedom untouched instead of setting them to zero. The Hodge (mass) and differential matrices are those of the broken spaces
which have no boundary conditions, hence they are the same as in Section~\ref{sec:MS_vbc}.
The resulting system reads:
find $p_h \in \bar V^0_h$, $B_h \in \bar V^1_h$ and $z_h \in \bar \cH^1_h$, such that
\begin{equation} \label{magneto_h_metal}
  \left\{
  \begin{aligned}
    \sprod{q}{p_h} + \alpha^0 \sprod{(I-\bar P^0_h)q}{(I-\bar P^0_h)p_{h}} + \sprod{\grad \bar P^0_h q}{B_h} &= 0  
    \\
    \sprod{v}{\grad \bar P^0_h p_h}  + \sprod{\curl \bar P^1_h v}{\curl \bar P^1_h B_h} \mspace{100mu} & 
    \\
      + \alpha^1 \sprod{(I-\bar P^1_h)v}{(I-\bar P^1_h)B_{h}} + \sprod{v}{z_h}  &= \sprod{\curl \bar P^1_h v}{J} 
    \\
    \sprod{w}{B_h} &= 0  
  \end{aligned}
  \right.
\end{equation}
for all $q \in \bar V^0_h$, $v \in \bar V^1_h$ and $w \in \bar \cH^1_h$.
An operator form similar to \eqref{magneto_h_op} can also be written.

Finally the matrix form of our broken-FEEC magnetostatic equation with metallic boundary conditions 
is similar to \eqref{magneto_mat}, with the following differences:
\begin{itemize}

  \item the conforming projection operators map in the full conforming spaces $\bar V^\ell$ (boundary dofs are not set to zero),

  \item the discrete harmonic space is now $\bar \cH^1_h$ the kernel of the $\bar \cL^1_h$ operator associated with the full sequence,
  which in practice is also implemented by using conforming projection operators on the inhomogeneous spaces,
  
  \item a regularization term $\matM^0 \arr{p}$ involving the mass matrix in $V^0_h$ is added to the first equation to fully determine $p$.
\end{itemize}

\begin{proposition} \label{prop:MS_metal}
  System~\eqref{magneto_h_metal} is well-posed for arbitrary stabilization parameters 
  $\alpha^0 > 0$, $\alpha^1 \neq 0$, and its solution satisfies
  \begin{equation}
    \begin{aligned}
      &B_h \in \bar V^{1,c}_h \cap (\bar \cH^1_h \oplus \grad \bar V^{0,c}_h)^\perp 
      \\
      &p_h = 0
      \\
      &z_h = 0.
    \end{aligned}
  \end{equation}
Moreover the discrete magnetic field can be decomposed according to \eqref{HH_c_inhom},
as 
$$
B_h = B^{\rm g}_h + B^{\rm h}_h + B^{\rm c}_h 
  \in \grad^c_h \bar V^{0,c}_h \poplus \bar \cH^1_h \poplus \wt \curl^c_h \bar V^{2,c}_h
$$ 
with discrete grad, harmonic and curl components characterized by
\begin{equation}
  \left\{\begin{aligned}
  & B^{\rm g}_h = 0
  \\
  & B^{\rm h}_h = 0
  \\
  & \sprod{w}{\curl B^{\rm c}_h} = \sprod{w}{J} \qquad && \forall w \in \curl \bar V^{1,c}_h.
  \end{aligned} \right.
\end{equation}
In particular, the solution is independent of the parameters $\alpha^0 > 0$, $\alpha^1 \neq 0$,
and the conforming projection operators.
\end{proposition}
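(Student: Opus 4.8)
The plan is to mirror the argument sketched for the pseudo-vacuum case in Prop.~\ref{prop:MS}, transposed to the inhomogeneous sequence. I would test the mixed system~\eqref{magneto_h_metal} against a carefully chosen sequence of functions so as to eliminate the auxiliary unknowns $z_h$ and $p_h$ one after the other, then show that $B_h$ is conforming and solves a parameter-free conforming problem, and finally identify the Hodge-Helmholtz components of $B_h$ using the orthogonality relations together with the inhomogeneous decomposition~\eqref{HH_c_inhom}.

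Concretely, the steps in order would be as follows. First, taking $v = z_h$ in the second equation: since $z_h \in \bar\cH^1_h \subset \bar V^{1,c}_h$ we have $\curl z_h = 0$, $(I-\bar P^1_h)z_h = 0$, and $z_h \perp \grad \bar V^{0,c}_h$ by the definition~\eqref{H1_h_inhom}, so every term and the right-hand side vanish except $\sprod{z_h}{z_h}$, giving $z_h = 0$. Next, taking $v = \grad \bar P^0_h p_h \in \bar V^{1,c}_h$: its curl vanishes and it is conforming, so only the term $\sprod{\grad \bar P^0_h p_h}{\grad \bar P^0_h p_h}$ survives, forcing $\grad \bar P^0_h p_h = 0$. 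Substituting this into the first equation tested with $q = p_h$ leaves $\sprod{p_h}{p_h} + \alpha^0\sprod{(I-\bar P^0_h)p_h}{(I-\bar P^0_h)p_h} = 0$, whence $p_h = 0$ because $\alpha^0 > 0$. Finally, with $z_h = p_h = 0$, testing the second equation with $v = (I-\bar P^1_h)B_h$ isolates $\alpha^1\sprod{(I-\bar P^1_h)B_h}{(I-\bar P^1_h)B_h}$, so $B_h = \bar P^1_h B_h \in \bar V^{1,c}_h$ since $\alpha^1 \neq 0$.

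It then remains to characterize $B_h$ and to close the argument. Restricting the second equation to conforming test functions $v^c \in \bar V^{1,c}_h$ gives the conforming problem $\sprod{\curl v^c}{\curl B_h} = \sprod{\curl v^c}{J}$, the first equation (with $p_h = 0$) gives $\sprod{\grad \bar P^0_h q}{B_h} = 0$ for all $q$, i.e.\ $B_h \perp \grad \bar V^{0,c}_h$, and the third equation gives $B_h \perp \bar\cH^1_h$. Thus $B_h \in \bar V^{1,c}_h \cap (\bar\cH^1_h \oplus \grad \bar V^{0,c}_h)^\perp$, and the orthogonal decomposition~\eqref{HH_c_inhom} then forces $B^{\rm g}_h = B^{\rm h}_h = 0$ and $B_h = B^{\rm c}_h \in \wt\curl^c_h \bar V^{2,c}_h$; the stated curl identity follows from the conforming equation as $\curl v^c$ ranges over $\curl \bar V^{1,c}_h$. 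For well-posedness I would note that the system is square, so it suffices to check uniqueness: with $J = 0$ the above gives $p_h = z_h = 0$ and $B_h \in \wt\curl^c_h \bar V^{2,c}_h$ with $\sprod{\curl v^c}{\curl B_h} = 0$ for all $v^c$, hence (taking $v^c = B_h$) $\curl B_h = 0$, and since $\curl^c_h$ is injective on $\wt\curl^c_h \bar V^{2,c}_h$ we obtain $B_h = 0$. Independence of $\alpha^0$, $\alpha^1$ and of the projections is then immediate, since the reduced problem determining $B_h$ is the parameter-free conforming problem above.

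The main obstacle, and the only genuine departure from Prop.~\ref{prop:MS}, lies in the treatment of $p_h$: on the inhomogeneous space $\bar V^{0,c}_h = V^0_h \cap H^1(\Omega)$ the kernel of $\grad$ is nontrivial (it contains the constants), so $\grad \bar P^0_h p_h = 0$ no longer implies $\bar P^0_h p_h = 0$ as it did under homogeneous boundary conditions. This is precisely why the regularization term $\sprod{q}{p_h}$ appears in~\eqref{magneto_h_metal} and why the hypothesis reads $\alpha^0 > 0$ rather than merely $\alpha^0 \neq 0$: together they supply the coercivity in $p_h$ needed to annihilate both the constant mode and the jump component, which is the step I expect to require the most care.
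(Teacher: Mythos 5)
Your proof is correct and follows essentially the same route as the paper: test with $v = z_h$, then $v = \grad \bar P^0_h p_h$, then eliminate $p_h$ via the regularization term, then $v = (I-\bar P^1_h)B_h$, and finally characterize $B_h$ through the conforming equation and the orthogonal decomposition \eqref{HH_c_inhom}. Your one deviation is a slight simplification: by testing the first equation directly with $q = p_h$ (legitimate, since $\grad \bar P^0_h p_h = 0$ kills the cross term) you get $\norm{p_h}^2 + \alpha^0\norm{(I-\bar P^0_h)p_h}^2 = 0$ in one step, whereas the paper reaches the same identity via the two test functions $q = 1$ and $q = (I-\bar P^0_h)p_h$; you also make explicit the square-system/uniqueness argument for well-posedness, which the paper leaves implicit.
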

\begin{proof}
  Taking $v = z_h$ (in $\bar \cH^1_h \subset \bar V^{1,c}_h$) and using the orthogonality \eqref{HH_c_inhom} shows that $z_h = 0$.
  Taking next $v = \grad \bar P^0_h p_h$ and using again \eqref{HH_c_inhom} 
  shows that $\bar P^0_h p_h$ is a constant, and with $q = 1$ we see that $\sprod{1}{p_h} = 0$. 
  It follows that with $q = (I-\bar P^0_h) p_h$ the first equation becomes $\norm{p_h}^2 + \alpha^0 \norm{(I-\bar P^0_h)p_{h}}^2 = 0$,
  hence $p_h = 0$. Taking next $v = (I-\bar P^1_h) B_h$ yields $\norm{(I-\bar P^1_h) B_h}^2 = 0$, hence 
  $B_h \in \bar V^{1,c}_h$. Since it is orthogonal to both $\bar \cH^1_h$ and $\grad\bar V^{0,c}_h$, it belongs
  to $\wt \curl^c_h V^{2,c}_h$, and it is characterized by $\sprod{\curl v}{\curl B_h} = \sprod{\curl v}{J}$
  for all $v \in \bar V^{1,c}_h$, which completes the proof.
  \qed
\end{proof}

\subsection{Time-dependent Maxwell's equations}
\label{sec:td_max}

We conclude this section by recalling the CONGA discretization of Maxwell's time-dependent equations,
\begin{equation} \label{AF}
    \left\{
    \begin{aligned}
    &\dt E - \curl B = - J
    \\
    &\dt B + \curl E = 0
    \end{aligned}
    \right.
\end{equation}
which consists of computing $E_h \in V^1_h$ and $B_h \in V^2_h$ solution to
\begin{equation} \label{AF_h}
    \left\{
    \begin{aligned}
    &\dt E_h - \wt \curl_h B_h = - J_h
    \\
    &\dt B_h + \curl_h E = 0
    \end{aligned}
    \right.
\end{equation}
with a discrete source approximated with the dual commuting projection
\begin{equation} \label{Jh_tPi}
  J_h = \t \Pi^1_h J.
\end{equation}
This discretization has been proposed and studied 
in \cite{Campos-Pinto.Sonnendrucker.2016.mcomp,Campos-Pinto.Sonnendrucker.2017a.jcm,Campos-Pinto.Sonnendrucker.2017b.jcm},
where it has been shown to be structure-preserving and have long-time stability properties: 
in addition to conserve energy in the absence of sources,
it preserves exactly the discrete Gauss laws
\begin{equation} \label{Gauss_h}
    \left\{
    \begin{aligned}
    &\wt \Div_h E_h = \t \Pi^0_h \rho
    \\
    &\Div_h B_h = 0
    \end{aligned}
    \right.
\end{equation}
thanks to the commuting diagram property $\wt \Div_h \t \Pi^1_h J = \t\Pi^0_h \Div J$ 
satisfied by the dual projection operators, see \eqref{tCD}. Moreover,
the filtering by $(P^1_h)^*$ involved in the source approximation operator
\eqref{tPi_def1} avoids the accumulation of approximation errors 
in the non-conforming part of the kernel of the CONGA $\curl \curl$ operator
\cite{Campos-Pinto.Sonnendrucker.2016.mcomp}.

\begin{remark} \label{rem:Gauss_h}
  The commuting diagram property \eqref{tCD} is actually not needed for 
  the preservation of the discrete Gauss law. One can indeed verify that 
  \eqref{Gauss_h} still holds for a current source defined by a simple $L^2$ 
  projection, 
  \begin{equation} \label{Jh_L2}
  J_h = P_{V^1_h} J
  \end{equation}
  since one has in this case 
  $$\sprod{\wt \Div_h J_h}{\vp} = -\sprod{J_h}{\grad P^0_h \vp} 
    = -\sprod{J}{\grad P^0_h \vp} = \sprod{\Div J}{P^0_h\vp} = \sprod{\tilde \Pi^0_h \Div J}{ \vp}
  $$
  so that \eqref{Gauss_h} follows from the 
  compatibility relation $\Div J + \dt \rho = 0$ satisfied by the exact sources.
  The additional filtering of the source by $(P^1_h)^*$ involved in the dual commuting 
  projection \eqref{Jh_tPi} is nevertheless important for 
  long term stability, as analyzed in \cite{Campos-Pinto.Sonnendrucker.2016.mcomp} 
  and verified numerically in Section~\ref{sec:td_max_num} below.
\end{remark} 

An attractive feature of the CONGA discretization is that for an explicit time-stepping method such as the standard leap-frog scheme
\begin{equation} \label{AF_lf}
    \left\{
    \begin{aligned}
      &B^{n+\frac 12}_h = B^{n}_h - \frac{\Dt}{2}\curl_h E^{n}_h
      \\
      &E^{n+1}_h = E^n_h + \Dt \big(\wt \curl_h B^{n+\frac 12}_h - \t \Pi^1_h J^{n+\frac 12}\big)
      \\
      &B^{n+1}_h = B^{n+\frac 12}_h - \frac{\Dt}{2}\curl_h E^{n+1}_h
    \end{aligned}
    \right.
\end{equation}
each time step is purely local in space. This is easily seen in the matrix form of \eqref{AF_lf}
\begin{equation} \label{AF_mat}
    \left\{
    \begin{aligned}
      &\arrB^{n+\frac 12} = \arrB^{n} - \frac{\Dt}{2}\matC\matP^1 \arrE^{n}
      \\
      &\arrE^{n+1} = \arrE^n + \Dt \t\matH^1  \big((\matC\matP^1)^T \matH^2 \arrB^{n+\frac 12} - (\matP^1)^T\t \arrsigma^1(J^{n+\frac 12})\big)
      \\
      &\arrB^{n+1} = \arrB^{n+\frac 12} - \frac{\Dt}{2}\matC\matP^1 \arrE^{n+1}
    \end{aligned}
    \right.
\end{equation}
where all the matrices except $\matP^1$ are patch-diagonal, and $\matP^1$ only couples 
degrees of freedom of neighboring patches.

Another key property is that for a time-averaged current source,
\begin{equation} \label{tav_J}
  J^{n+\frac 12} := \Dt^{-1}\int_{t^n}^{t^{n+1}} J(t') \rmd t',  
\end{equation}
the discrete Gauss law \eqref{Gauss_h} is also preserved by the fully discrete solution,
namely 
\begin{equation} \label{Gauss_hn}
    \left\{
    \begin{aligned}
    &\wt \Div_h E^n_h = \t \Pi^0_h \rho(t^n)
    \\
    &\Div_h B^n_h = 0
    \end{aligned}
    \right.
\end{equation}
holds for all $n > 0$, provided it holds for $n=0$. 
We also remind that in the absence of source ($J=0$), the leap-frog time scheme 
requires a standard CFL condition for numerical stability, of the form 
\begin{equation} \label{Dt_cond}
  \Dt\tnorm{\curl_h} < 2  
\end{equation}
where
$\tnorm{\curl_h} = \max_{\bF \in V^1_h} \frac{\norm{\curl_h \bF}}{\norm{\bF}}$.
Indeed the discrete pseudo-energy
$$
\cH^{n,*}_h := \frac 12 \big(\norm{\bE^n_h}^2 + \norm{B^{n+\frac 12}_h}^2\big) 
  + \frac {\Dt}{2} \sprod{\curl_h \bE^n_h}{B^{n+\frac 12}_h}
$$
is a constant, $\cH^{n,*}_h = \cH^{*}_h$, so that \eqref{Dt_cond} yields 
a long-time bound 
\begin{equation*} 
  \Big(1+\frac{\Dt\tnorm{\curl_h}}{2}\Big)^{-1} \cH^{*}_h 
  \le \frac 12 \big(\norm{\bE^n_h}^2 + \norm{B^{n+\frac 12}_h}^2\big) 
    \le \Big(1-\frac{\Dt\tnorm{\curl_h}}{2}\Big)^{-1} \cH^{*}_h  
\end{equation*}
valid for all $n \ge 0$. In practice we set $\Dt = C_{\text{cfl}} (2/\tnorm{\curl_h})$
with $C_{\text{cfl}} \approx 0.8$, and we evaluate the operator norm of $\curl_h$
with an iterative power method, noting that it amounts to computing the spectral 
radius of a diagonalizable matrix with non-negative eigenvalues, namely
$$
\tnorm{\curl_h}^2 
= \max_{\bF \in V^1_h}\frac{\sprod{\wt \bcurl_h \curl_h \bF}{\bF}}{\norm{\bF}^2} 
= \rho\big(\t \matH^1 (\matC \matP^1)^T \matH^2 \matC \matP^1\big).
$$



\section{Geometric broken-FEEC discretizations with mapped patches}
\label{sec:geo_bfeec}

In this section we detail the construction of a broken FEEC discretization 
on a 2D multipatch domain where each patch is the image of the reference square,
\begin{equation} \label{Ok}
  \Omega_k = F_k(\hat \Omega), \qquad \hat \Omega = \left]0,1\right[^2
\end{equation}
with smooth diffeomorphisms $F_k$, $k = 1, \dots, K$. 
For simplicity we consider the case 
of planar domains $\Omega_k \subset \RR^2$ and orientation-preserving mappings,
in the sense that the Jacobian matrices
$DF_k(\hat \bx)$ 
have positive Jacobian determinants $J_{F_k}(\hat \bx) = \det (DF_k(\hat \bx)) > 0$ 
for all $\hat \bx \in \hat \Omega$. We also assume that the multipatch decomposition
is geometrically conforming, in a sense that is specified in Section~\ref{sec:mp_conf}

Examples of such domains are represented on Figure~\ref{fig:mp} where the left one 
is a standard curved L-shaped domain with circular patch boundaries that is used in some reference academic test-cases, 
see e.g. \cite{dauge_benchmarks}, and the right one is a non contractible domain 
that also involves analytical mappings, shaped as a simplified pretzel for cultural reasons.
In the latter domain we remind that the presence of holes leads a priori to non exact de Rham sequences, 
and hence non trivial harmonic forms. 
\begin{figure} [!ht]
    \subfloat{  
      \includegraphics[width=0.35\textwidth]{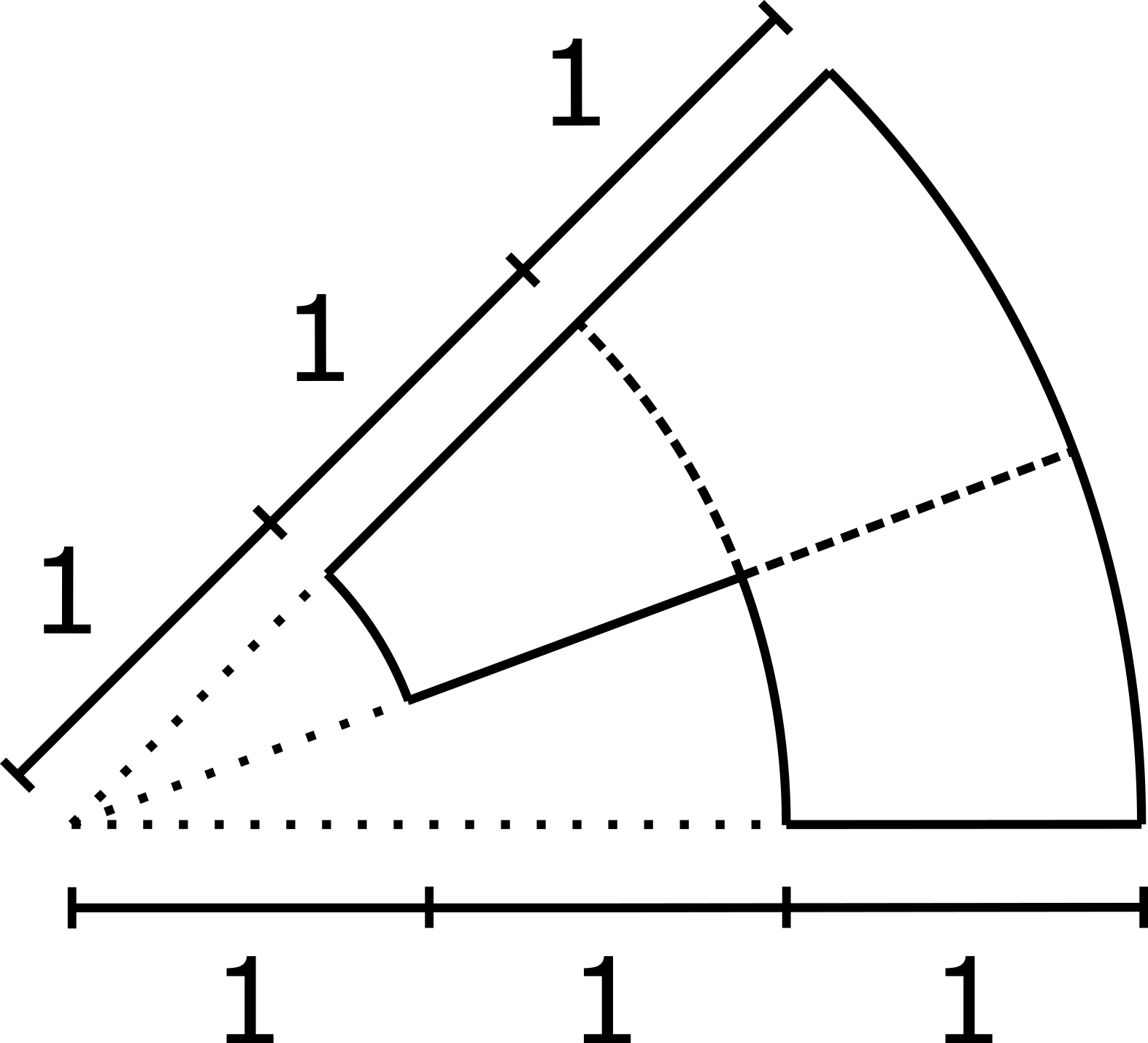}
    }
    \hfill
    \subfloat{ 
      \includegraphics[width=0.45\textwidth]{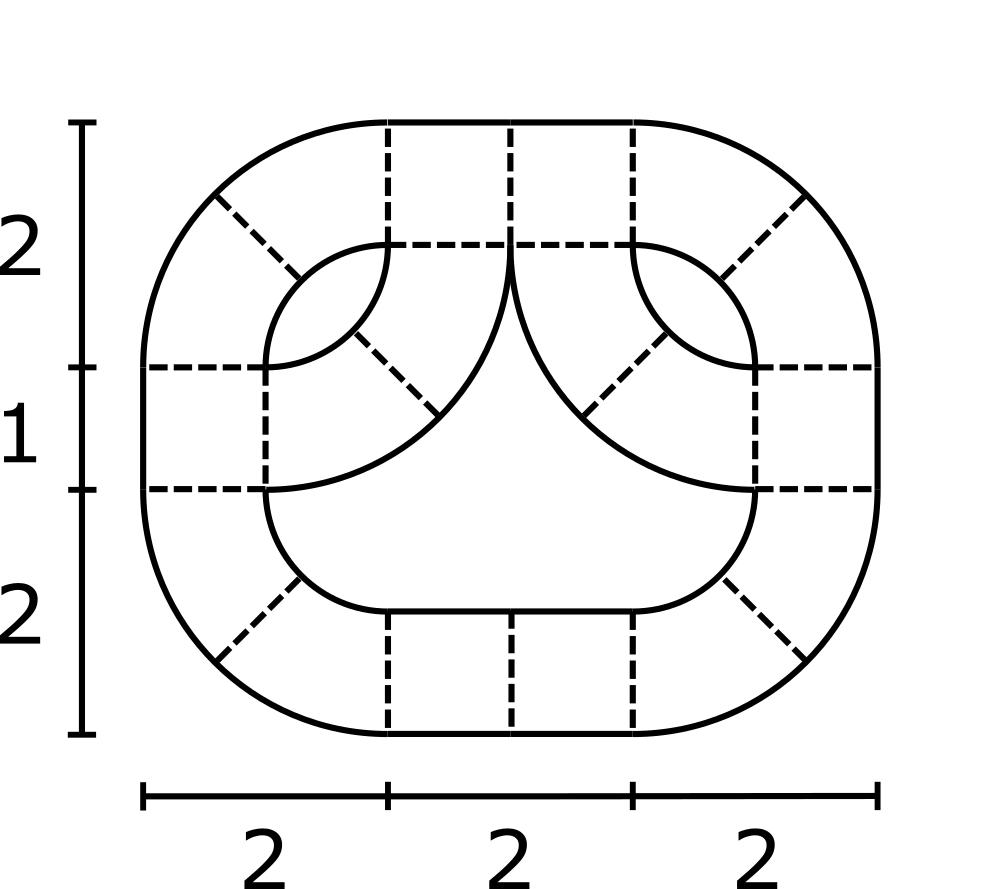}
    }
  \caption{
  Two multipatch domains: a curved L-shaped (left) and a non-contractible domain (right).
  Solid and dashed lines are used to represent the domain boundaries and the patch interfaces,
  respectively.
  }
  \label{fig:mp}
\end{figure}

\subsection{De Rham sequences in 2D}

In 2D the de Rham sequence \eqref{dR} may be reduced in two different ways, namely
\begin{equation} \label{dR_gc}
    V^0 \xrightarrow{ \mbox{$~ \bgrad ~$}}
      V^1 \xrightarrow{ \mbox{$~ \curl ~$}}
        V^2
\end{equation}
and 
\begin{equation} \label{dR_cd}
  V^0 \xrightarrow{ \mbox{$~ \bcurl ~$}}
    V^1 \xrightarrow{ \mbox{$~ \Div ~$}}
      V^2
\end{equation}
where $\bcurl $ and $\curl$ denote the vector and scalar-valued curl operators respectively
(for clarity we use bold fonts to denote vector-valued fields and operators in the subsequent sections).
Here the operators in the second sequence are dual to those in the first one,
and in each case we may consider spaces with homogeneous boundary conditions,
namely
\begin{equation} \label{dR_gc_hom}
  V^0 = H^1_0(\Omega), \quad
    V^1 = \bH_0(\curl;\Omega), \quad
      V^2 = L^2(\Omega)
\end{equation}
for the first sequence and
\begin{equation} \label{dR_cd_hom}
  V^0 = H_0(\bcurl;\Omega), \quad
      V^1 = \bH_0(\Div;\Omega), \quad
        V^2 = L^2(\Omega)
\end{equation}
for the second one, or the full inhomogeneous spaces
\begin{equation} \label{dR_gc_inhom}
  V^0 = H^1(\Omega), \quad
    V^1 = \bH(\curl;\Omega), \quad
      V^2 = L^2(\Omega),
\end{equation}
and
\begin{equation} \label{dR_cd_inhom}
  V^0 = H(\bcurl;\Omega), \quad
      V^1 = \bH(\Div;\Omega), \quad
        V^2 = L^2(\Omega).
\end{equation}

In the numerical examples presented in this article we will consider the first sequence \eqref{dR_gc}
as the primal one, both with homogeneous spaces \eqref{dR_gc_hom} or inhomogeneous ones
\eqref{dR_gc_inhom}.
As mentionned in Section~\ref{sec:lifting} this distinction will only affect the conforming 
projection operators, indeed the broken spaces consist of local spaces which have no boundary 
conditions. This point will be further discussed in Remark~\ref{rem:bc} below.


\subsection{Reference spline complexes}
\label{sec:ref}

On each mapped patch the local sequence \eqref{dR_h_loc} will be built as the push-forward 
of tensor-product spline spaces defined on the reference patch $\hat \Omega = \left]0,1\right[^2$
following \cite{buffa2010isogeometric,Buffa_2011}, and we shall equip it with geometric degrees of freedom.
In order to simplify the construction of the conforming spaces~$\{V^{\ell,c}_h\}_\ell$ over domains with 
non-trivial inter-patch connectivities, we will define a discretization which has reflectional and rotational 
symmetries over~$\hat\Omega$. This amounts to using the same knot sequence along every dimension of~$\hat\Omega$, 
and to require such a knot sequence to be left-right symmetric as explained below.

We now recall the construction 
of the reference tensor-product spline complex with maximal coordinate degree $p \ge 1$
on a grid with $N$ cells per dimension.
For this we equip the interval $[0,1]$ with an open knot sequence
of the form
\begin{equation} \label{knots}
  0 = \xi_0 = \cdots = \xi_p < \xi_{p+1} 
  < \cdots < 
  \xi_{n-1} < \xi_n = \cdots = \xi_{n+p} = 1
\end{equation}
where $n = N+p$, and assumed symmetric for simplicity, in the sense
that $\xi_i = 1 - \xi_{n+p-i}$ for all $i$.
For $i = 0, \dots, n-1$ and $q \in \{p-1,p\}$ we then let $\cN^{q}_{i}$ be the normalized B-spline of
degree $q$ associated with the knots
$(\xi_i, \dots, \xi_{i+q+1})$, see \cite[Def.~4.19]{Schumaker_2007}.
The spline space $\SS_q = \SS_q({\bs \xi}):= \Span\{\cN^{q}_{i} : i = 0, \dots, n-1\}$
then corresponds to splines of maximal regularity on the given subdivision, namely
$
\SS_q = \{ v \in \cC^{q-1}([0,1]) : v|_{(\xi_{q+j},\xi_{q+j+1})} \in \PP_{q}, ~ \forall j = 0, \dots, N-1\}.
$
Notice that here $\cN^{p-1}_{0}$ vanishes identically; it is only kept in the formulas for notational simplicity.
Tensor-product spline spaces of degree $\bq \in \{p-1,p\}^2$
are then defined on the reference domain $\hat \Omega$ as
\begin{equation}\label{SSbq}
  \SS_{\bq}
    := \Span \big\{\cN^{\bq}_{\bi}:
    \bi \in \range{0}{n-1}^2 \big\}
    \quad \text{ with } \quad
  \cN^{\bq}_\bi(\hat \bx) := \prod_{d=1}^2 \cN^{q_d}_{i_d}(\hat x_d).
\end{equation}
The reference spline complex \cite{buffa2010isogeometric} associated with the $\bgrad-\curl$
sequence \eqref{dR_gc} reads then
\begin{equation} \label{refdR_gc}
  \hat \VV^0 := \SS_{(p,p)}  ~ \xrightarrow{ \mbox{$ \bgrad $} }~
  \hat \VV^1 := \begin{pmatrix}
    \SS_{(p-1,p)} \\ \SS_{(p,p-1)}
    \end{pmatrix}
  ~ \xrightarrow{ \mbox{$ \curl $} }~
  \hat \VV^2 := \SS_{(p-1,p-1)}
\end{equation}
while the one associated to the $\bcurl-\Div$ sequence \eqref{dR_cd} reads
\begin{equation} \label{refdR_cd}
  \hat \VV^0 := \SS_{(p,p)}  ~ \xrightarrow{ \mbox{$ \bcurl $} }~
  \hat \VV^1 := \begin{pmatrix}
    \SS_{(p,p-1)} \\ \SS_{(p-1,p)}
    \end{pmatrix}
  ~ \xrightarrow{ \mbox{$ \Div $} }~
  \hat \VV^2 := \SS_{(p-1,p-1)}.
\end{equation}

\subsection{Geometric degrees of freedom on the reference patch}
\label{sec:ref-dofs}

Following~\cite{Bochev_Hyman_2006_csd,Robidoux.2008.histo,Gerritsma.2011.spec,CPKS_variational_2020}
we next equip the reference complexes \eqref{refdR_gc} and \eqref{refdR_cd} with geometric degrees of freedom
which are known to commute with the differential operators. They are based on an interpolation grid 
$$
0 = \zeta_{0} < \cdots < \zeta_{n-1} = 1
$$
for the univariate spline space $\SS_p$ in the sense of
\cite[Th.~4.61]{Schumaker_2007}, that is also symmetric, namely
$\zeta_i = 1-\zeta_{n-1-i}$ for all $i = 0, \dots, n-1$. 
Here the usual choice is to consider Greville points associated with the knots \eqref{knots}.
On the reference domain $\hat \Omega$
we then consider the interpolatory nodes, edges and cells
\begin{equation} \label{subgrid}
  \left\{ \begin{aligned}
    &\hat \ttn_{\bi} := (\zeta_{i_1}, \zeta_{i_2})
    \quad &&\text{ for }
    \bi \in \hat \cM^0 
    \\
    &\hat \tte_{d,\bi} := [\hat \ttn_{\bi}, \hat \ttn_{\bi+\uvec_d}] 
    \quad &&\text{ for }
    (d, \bi) \in \hat \cM^1
    \\
    &\hat \ttc_{\bi} := [\hat \tte_{1,\bi}, \hat \tte_{1,\bi+\uvec_1}] = \prod_{1\le d\le 2}[\zeta_{i_d}, \zeta_{i_d+1}]
    &&\text{ for }
    \bi \in \hat \cM^2.
  \end{aligned} \right.
\end{equation}
Here the square brackets $[\cdot]$ denote convex hulls, $\uvec_d$ is the canonical
basis vector of $\RR^2$ along dimension $d$ and 
the multi-index sets read 
\begin{equation*}
  \left\{ \begin{aligned}
  & \hat \cM^0 :=
    \{\bi : \bi \in \range{0}{n-1}^2 \}
  \\
  & \hat \cM^1 :=
    \{(d, \bi) : d \in \range{1}{2}, \bi \in \range{0}{n-1}^2, i_d+1 < n\}
  \\
  & \hat \cM^2 :=
    \{\bi : \bi \in \range{0}{n-2}^2 \}.
  \end{aligned} \right.
\end{equation*}
Geometric degrees of freedom associated with the $\bgrad-\curl$
sequence \eqref{dR_gc} are then defined 
as the linear forms
\begin{equation} \label{geodofs_gc}
    \left\{ \begin{aligned}
        &\hat \sigma^{0}_{\bi}(v) :=
            v(\hat \ttn_{\bi})
        \quad &&\text{ for }
        \bi \in \hat \cM^0
        \\
        &\hat \sigma^{1}_{d,\bi}(\bv) :=
            \int_{\hat \tte_{d,\bi}} \bv \cdot \uvec_d
        \quad &&\text{ for }
        (d, \bi) \in \hat \cM^1
        \\
        &\hat \sigma^{2}_{\bi}(v) := \int_{\hat \ttc_{\bi}} v
        \quad &&\text{ for }
        \bi \in \hat \cM^2
    \end{aligned} \right.
\end{equation}
where we observe that the reference edges are oriented according to their
natural parametrization,
\begin{equation} \label{params_e}
  \hat \bx^{\tte}_{d,\bi}(s) := \hat \ttn_\bi + (s - \zeta_{i_d}) \uvec_d
  \quad \text{ for } s \in [\zeta_{i_d},\zeta_{i_d+1}].
\end{equation}
For the $\bcurl-\Div$ sequence \eqref{dR_cd} we define $\hat \sigma^{0}_{\bi}$ and
$\hat \sigma^{2}_{\bi}$ similarly, while the intermediate degrees of freedom are modified such that
\begin{equation} \label{geodofs_hdiv}
        \hat \sigma^{1}_{d,\bi}(\bv) :=
            \int_{\hat \tte_{d,\bi}} \bv \cdot \uvec_{d}^\perp
        \quad \text{ for }
        (d, \bi) \in \hat \cM^1,
\end{equation}
where~$\uvec_{d}^\perp := (\uvec_{2}, -\uvec_{1})^T$ corresponds to a $\pi/2$ rotation (counterclockwise) 
of the basis vector~$\uvec_{d}$. 
As these degrees of freedom are unisolvent 
we let $\hat \Lambda^\ell_\mu$, $\mu \in \hat \cM^\ell$, be the {\em geometric basis} for
the spline space $\hat \VV^\ell$, characterized by the duality relations
\begin{equation} \label{geosplines}
  \hat \sigma^\ell_\mu(\hat \Lambda^\ell_\nu) = \delta_{\mu, \nu} \quad
  \text{ for } ~~ \mu, \nu \in \hat \cM^\ell.
\end{equation}

Because the degrees of freedom $\hat \sigma^0_\mu$ 
are based on point values they cannot be applied to general functions in $H^1(\hat \Omega)$,
indeed this space contains discontinuous and even unbounded functions. Similarly the 
degrees of freedom $\hat \sigma^1_\mu$ are not well-defined on $H(\curl;\hat \Omega)$
since functions in this space may not have tangential traces on local edges.
Thus we are in the situation described in Remark~\ref{rem:Pi} where the degrees 
of freedom are not well-defined on the sequence 
\begin{equation} \label{dR_hat}
  \hat V^0 := H^1(\hat \Omega) \xrightarrow{ \mbox{$~ \bgrad ~$}}
    \hat V^1 := H(\curl;\hat \Omega) \xrightarrow{ \mbox{$~ \curl ~$}}
      \hat V^2 := L^2(\hat \Omega).
\end{equation}
Hence, we need to specify some proper domain spaces.

\begin{proposition} \label{prop:ref_dofs}
The reference degrees of freedom $\hat \sigma^\ell_\mu$
are well defined on the domain spaces $\hat U^\ell := \hat V^\ell \cap \hat U^\ell_{L^1}$,
where we have set 
\begin{equation} \label{U_refdofs_gc}
  \hat U^0_{L^1} := W^1_{1,2}(\hat \Omega)  ~ \xrightarrow{ \mbox{$ \bgrad $} }~
  \hat U^1_{L^1} := \begin{pmatrix}
    W^1_{2}(\hat \Omega) \\ W^1_{1}(\hat \Omega)
    \end{pmatrix}
  ~ \xrightarrow{ \mbox{$ \curl $} }~
  \hat U^2_{L^1} := L^1(\hat \Omega)
\end{equation}
for the $\bgrad-\curl$ sequence \eqref{dR_gc}, 
and 
\begin{equation} \label{U_refdofs_cd}
  \hat U^0_{L^1} := W^1_{1,2}(\hat \Omega)  ~ \xrightarrow{ \mbox{$ \bcurl $} }~
  \hat U^1_{L^1} := \begin{pmatrix}
    W^1_{1}(\hat \Omega) \\ W^1_{2}(\hat \Omega)
    \end{pmatrix}
  ~ \xrightarrow{ \mbox{$ \Div $} }~
  \hat U^2_{L^1} := L^1(\hat \Omega)
\end{equation}
for the $\bcurl-\Div$ sequence \eqref{dR_cd}, with anisotropic Sobolev spaces defined as
$$
\begin{cases}
    W^1_{1,2}(\hat \Omega) := \{v \in L^1(\hat \Omega): \partial_1\partial_2 v \in L^1(\hat \Omega)\},
    & \text{ and }
    \\
    W^1_{d}(\hat \Omega) := \{v \in L^1(\hat \Omega): \partial_d v \in L^1(\hat \Omega)\}
    & \text{ for } d \in \{1,2\}.
\end{cases}
$$
Moreover they commute with the local differential operators, namely we have
\begin{equation} \label{ref_cd}
  \hat \sigma^{\ell+1}_{\mu}(d^\ell v) =
  \sum_{\nu \in \hat \cM^\ell} \hat D^\ell_{\mu,\nu} \hat \sigma^{\ell}_{\nu}(v)
  \qquad \forall ~ v \in \hat U^\ell,
  \quad \forall ~ \mu \in \hat \cM^\ell,
\end{equation}
with graph-incidence Kronecker-product matrices $\hat D^\ell$.
\end{proposition}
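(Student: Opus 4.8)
The plan is to treat the two assertions separately: first the well-posedness of the linear forms $\hat\sigma^\ell_\mu$ on the anisotropic domain spaces $\hat U^\ell$, then the commutation identity \eqref{ref_cd}, whose combinatorial matrix $\hat D^\ell$ I will read off as the graph incidence of the interpolation subgrid \eqref{subgrid}. Throughout I exploit the tensor-product structure, reducing the two-dimensional statements to one-dimensional facts about the univariate space $W^{1,1}(0,1)=\{u\in L^1: u'\in L^1\}$, which I recall embeds continuously into $C^0([0,1])$ (each class having an absolutely continuous representative, with $\sup$ norm bounded by the graph norm).

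First I would establish well-definedness. The cell dofs $\hat\sigma^2_\bi$ only integrate over a two-dimensional cell, so they are defined on $L^1(\hat\Omega)\supset L^2(\hat\Omega)=\hat U^2$ with no further work. The edge dofs $\hat\sigma^1_{d,\bi}$ integrate a single component along a one-dimensional edge in direction $d$, so I need a line trace of that component on the line transverse to $d$. This is exactly what the transverse-derivative condition in \eqref{U_refdofs_gc} (respectively \eqref{U_refdofs_cd}) supplies: viewing the first component $v_1\in W^1_2$ as a map $x_2\mapsto v_1(\cdot,x_2)\in L^1(dx_1)$ with weak derivative $\partial_2 v_1\in L^1$, the univariate trace lemma gives a continuous representative $[0,1]\to L^1(dx_1)$, hence a well-defined integrable trace on every horizontal line; symmetrically for $v_2\in W^1_1$ on vertical lines, and for the rotated components of the $\bcurl$--$\Div$ sequence. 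The node dofs $\hat\sigma^0_\bi$ are point evaluations, and here lies the main obstacle: $\hat V^0=H^1(\hat\Omega)$ does not embed into $C^0$ in two dimensions, so continuity cannot come from $H^1$ alone. The intersection with $W^1_{1,2}$ is precisely what fixes this: on $\hat U^0=H^1\cap W^1_{1,2}$ one has $v,\partial_1 v,\partial_2 v\in L^2\subset L^1$ and $\partial_1\partial_2 v\in L^1$, so that $v$ lies in the space of dominating mixed smoothness in which every derivative $\partial_1^{\alpha_1}\partial_2^{\alpha_2}v$, $\alpha\le(1,1)$, belongs to $L^1$. I would prove $H^1\cap W^1_{1,2}\hookrightarrow C^0(\overline{\hat\Omega})$ by density of smooth functions in the graph norm together with the corner representation
\[
  v(x_1,x_2)=v(0,0)+\int_0^{x_1}\partial_1 v(s,0)\,ds+\int_0^{x_2}\partial_2 v(0,t)\,dt+\int_0^{x_1}\!\!\int_0^{x_2}\partial_1\partial_2 v(s,t)\,ds\,dt,
\]
in which every term is continuous in $(x_1,x_2)$ (the edge traces $\partial_d v(\cdot,0)$ being themselves controlled in $L^1$ by the mixed derivative $\partial_1\partial_2 v$), yielding a uniform bound $\|v\|_\infty\lesssim\|v\|_{H^1}+\|\partial_1\partial_2 v\|_{L^1}$ and hence a continuous representative for which $\hat\sigma^0_\bi(v)=v(\hat\ttn_\bi)$ makes sense.

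Having the requisite traces, the commutation property is the geometrically transparent part and amounts to the fundamental theorem of calculus and Green's theorem applied on the subgrid of \eqref{subgrid}. For $d^0=\bgrad$, integrating $\partial_d v$ along the oriented edge gives $\hat\sigma^1_{d,\bi}(\bgrad v)=v(\hat\ttn_{\bi+\uvec_d})-v(\hat\ttn_\bi)=\hat\sigma^0_{\bi+\uvec_d}(v)-\hat\sigma^0_\bi(v)$, valid because the continuous representative of $v$ is absolutely continuous along edges with derivative the trace of $\partial_d v$; this is the node-to-edge incidence. For $d^1=\curl$, Green's theorem on the cell $\hat\ttc_\bi$ turns $\int_{\hat\ttc_\bi}\curl\bv$ into the signed circulation $\oint_{\partial\hat\ttc_\bi}\bv\cdot\btau$, i.e.\ the alternating sum of the four surrounding edge dofs; this is the edge-to-cell incidence. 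The $\bcurl$--$\Div$ sequence is identical after the $\pi/2$ rotation of components in \eqref{geodofs_hdiv}, with Green's theorem replaced by the divergence theorem, which is why the roles of $W^1_1$ and $W^1_2$ are exchanged between \eqref{U_refdofs_gc} and \eqref{U_refdofs_cd}. Finally, because the interpolation grid is a tensor product of identical univariate grids, each of these incidence operators is the Kronecker product of a univariate first-difference incidence matrix with identity factors in the remaining direction; assembling them over $\ell$ yields the graph-incidence Kronecker-product matrices $\hat D^\ell$ asserted in \eqref{ref_cd}, which coincide with the differentials of the reference spline complexes \eqref{refdR_gc}--\eqref{refdR_cd}. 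The delicate step throughout remains the mixed-smoothness embedding for the node dofs; once it and the line traces are in hand, the commutation identities require only the classical integration-by-parts theorems at the established regularity.
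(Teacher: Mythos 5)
Your proof is correct, and its overall architecture matches the paper's own proof: both reduce well-definedness to one-dimensional continuity/trace facts supplied by the anisotropic (transverse) integrability, and both obtain the commutation \eqref{ref_cd} from the fundamental theorem of calculus and Green's theorem, with the Kronecker/incidence structure of $\hat D^\ell$ inherited from the tensor-product grid. The execution differs in two respects worth noting. For the edge dofs, the paper introduces the antiderivative $\phi(x,y):=\int_0^x v_1(x',y)\,\rmd x'$ and concludes from $\phi\in W^1_{1,2}(\hat\Omega)$ that $\phi$ is continuous, so the edge integral becomes a difference of point values; you instead view $x_2\mapsto v_1(\cdot,x_2)$ as a $W^{1,1}$ curve with values in $L^1$ and invoke the vector-valued continuity/trace lemma --- the two devices are interchangeable. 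For the nodal dofs, the paper cites Remarks~9 and~13 of \cite{Brezis.2010.fa} together with a density argument and asserts the inclusion $W^1_{1,2}(\hat\Omega)\subset C^0(\hat\Omega)$; your treatment is in fact the more careful one, since that inclusion is false as literally stated (take $v(x_1,x_2)=f(x_1)$ with $f\in L^1$ discontinuous: the mixed derivative vanishes), and what is true --- and what your corner representation formula proves --- is that the intersection $\hat U^0=H^1\cap W^1_{1,2}$, for which all of $v,\ \partial_1 v,\ \partial_2 v,\ \partial_1\partial_2 v$ lie in $L^1$, embeds into $C^0$; this is also exactly the regularity the paper implicitly uses when it applies the inclusion to $\phi$. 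Finally, where you sketch the FTC/Green computations yielding the node-to-edge and edge-to-cell incidence matrices, the paper simply defers to \cite{Bochev_Hyman_2006_csd} and \cite{CPKS_variational_2020}; your sketch is precisely the content of those citations and is valid at the stated regularity.
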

\begin{proof}
  Nodal degrees of freedom $\hat \sigma^0_\mu$ are well defined 
  on $\hat U^0_{L^1} = W^1_{1,2}(\hat \Omega)$ because this space is continuously imbedded in $C^0(\hat \Omega)$:
  this can be verified using Remarks~9 and~13 from \cite[Sec.~9]{Brezis.2010.fa},
  and a density argument.
  Next we observe that edge degrees of freedom \eqref{geodofs_gc} along horizontal edges $\tte_{1,\bi}$ are of the form 
  $$
  \hat \sigma_{1,\bi}(\bv) = \int_{\zeta_{i_1+1}}^{\zeta_{i_1}} v_1(x',\zeta_{i_2})\rmd x' = \phi(\zeta_{i_1+1},\zeta_{i_2})-\phi(\zeta_{i_1},\zeta_{i_2})
  $$
  where $\phi(x, y) := \int_0^x v_1(x',y)\rmd x'$.
  If $v_1 \in W^1_2(\hat \Omega)$, then we see that both $\phi$ and $\partial_1\partial_2\phi = \partial_2 v_1$ are in $L^1(\hat \Omega)$, 
  hence $\phi \in W^1_{1,2}(\hat \Omega) \subset C^0(\hat \Omega)$, in particular $\hat \sigma_{1,\bi}$ is indeed well defined on $\hat U^1_{L^1}$.
  Similarly we see that $\hat \sigma_{2,\bi}$ is also well defined on $\hat U^1_{L^1}$, and the other degrees of freedom are analyzed 
  with the same reasoning. We also verify easily that in both cases, namely \eqref{U_refdofs_gc} and \eqref{U_refdofs_cd}, 
  the spaces $\hat U^\ell_{L^1}$ form a de Rham sequence. Taking the intersection with \eqref{dR_hat} 
  thus yields a subsequence $\hat U^\ell \subset \hat V^\ell$ where the degrees of freedom are indeed 
  well-defined.
  As for the commuting property, it is well known and follows from the Stokes formula,
  see e.g. \cite{Bochev_Hyman_2006_csd}. We refer to \cite{CPKS_variational_2020}
  for a description of the graph-incidence and Kronecker-product structure of the matrices $\hat D^\ell$.
  \qed
\end{proof}

\subsection{Broken FEEC spaces on the mapped patches}
\label{sec:mapped_spaces}

Following \cite{Hiptmair.2002.anum,Buffa_2011,Perse.Kormann.Sonnendrucker_2020} 
we define the local spaces \eqref{dR_h_loc} on the mapped patches 
$\Omega_k = F_k(\hat \Omega)$ as push-forwards of the reference spline spaces, namely
\begin{equation} \label{Vh_loc}
  V^\ell_h(\Omega_k) := \cF^\ell_k(\hat \VV^\ell) 
\end{equation}
where the push-forward transforms associated with a mapping $F_k$ are obtained as 
the 2D reduction of the usual ones in 3D.
For the $\bgrad-\curl$ sequence \eqref{dR_gc} they read
\begin{equation} \label{pf_gc}
  \left\{
  \begin{aligned}
  &\cF^0_k : \hat v \mapsto v := \hat v \circ F_k^{-1}
  \\
  &\cF^1_k : \hat \bv \mapsto \bv :=  \big(DF_k^{-T} \hat \bv \big)\circ F_k^{-1}
  \\
  &\cF^2_k : \hat v \mapsto v :=  \big(J_{F_k}^{-1} \hat v \big)\circ F_k^{-1}
  \end{aligned}
  \right.
\end{equation}
and for the $\bcurl-\Div$ sequence \eqref{dR_cd} they read
\begin{equation} \label{pf_cd}
  \left\{
  \begin{aligned}
    &\cF^0_k : \hat v \mapsto v := \hat v \circ F_k^{-1}
    \\
    &\cF^1_k : \hat \bv \mapsto \bv :=  \big(J_{F_k}^{-1} DF_k \hat \bv \big)\circ F_k^{-1}
    \\
    &\cF^2_k : \hat v \mapsto v :=  \big(J_{F_k}^{-1} \hat v \big)\circ F_k^{-1}
  \end{aligned}
  \right.
\end{equation}
We remind that $DF_k = \big(\partial_b (F_k)_a(\hat \bx)\big)_{1 \le a,b \le 2}$
denotes the Jacobian matrix of $F_k$, and $J_{F_k}$ 
its (positive) metric determinant corresponding to the surface measure for two-dimensional manifolds in $\RR^3$,
see e.g. \cite{Hiptmair.2002.anum,kreeft2011mimetic,Buffa_Doelz_Kurz_Schoeps_Vazquez_Wolf:2019,%
Holderied_Possanner_Wang_2021}.
The inverse transforms $(\cF^\ell_k)^{-1}$ are called pull-backs, and a fundamental property of these transforms
is that they commute with the differential operators \cite{Hiptmair.2002.anum,Gerritsma.2011.spec}, namely
\begin{equation}\label{cd_pb}
  (\cF^{\ell+1}_k)^{-1}(d^\ell) = \hat d^\ell (\cF^{\ell}_k)^{-1}
\end{equation}
holds with $(d^0, d^1)$ the differential operators in the sequences \eqref{dR_gc} or \eqref{dR_cd},
and $(\hat d^0, \hat d^1)$ their counterparts on the logical (reference) domain.
As these transforms are linear operators, the local spaces are spanned by the push-forwarded basis functions.
In particular, setting 
\begin{equation} \label{bf}
  \Lambda^\ell_{k,\mu} := \begin{cases}
    \cF^\ell_k(\hat \Lambda^\ell_\mu) &\text{ on } \Omega_k
    \\
    0  &\text{ on } \Omega \setminus \Omega_k
\end{cases}
\quad \text{ for } (k, \mu) \in \range{1}{K} \times \hat \cM^\ell
\end{equation}
provides us with local bases for the global broken spaces \eqref{Vh_broken}, i.e., 
\begin{equation} \label{Vh_span}
  V^\ell_h = \Span\{ \Lambda^\ell_{k,\mu} : (k, \mu) \in \cM^\ell_h \}
  \quad \text{ where } \quad 
  \cM^\ell_h := \range{1}{K} \times \hat \cM^\ell.
\end{equation}

\subsection{Geometric degrees of freedom on the mapped patches}
\label{sec:geo_dofs}

In connection with the multipatch structure of $V^\ell_h$
we define broken degrees of freedom using pull-back transforms, which are the inverse of the push-forward operators \eqref{pf_gc} or \eqref{pf_cd}, to obtain
\begin{equation} \label{sell}
  \sigma^\ell_{k,\mu}(v) := \hat \sigma^\ell_{\mu}\big((\cF^{\ell}_k)^{-1}(v|_{\Omega_k})\big)
\quad \text{ for } (k,\mu) \in \cM^\ell_h.
\end{equation}
By construction these degrees of freedom are in duality with the local basis functions \eqref{bf}, i.e. 
\begin{equation} \label{geodual}
  \sigma^\ell_{k,\mu}(\Lambda^\ell_{l,\nu}) = \delta_{(k,\mu), (l,\nu)} \quad
  \text{ for } ~~ (k,\mu), (l,\nu) \in \cM^\ell_h.
\end{equation}
A key feature of the pull-back operators is to carry the geometric nature of the reference degrees of freedom to the mapped elements.
For $\ell = 0$ the degrees of freedom simply consist of pointwise evaluations on the mapped nodes, i.e.
\begin{equation} \label{s0}
\sigma^0_{k,\bi}(v) = v|_{\Omega_k}(\ttn_{k,\bi}) \quad \text{ with } \quad \ttn_{k,\bi} := F_k(\hat \ttn_\bi).
\end{equation}
For $\ell = 1$ with the $\bgrad-\curl$ sequence \eqref{dR_gc}, they correspond to line integrals along 
the mapped edges $\tte_{k,\mu} := F_k(\hat \tte_\mu)$ with $\mu = (d,\bi) \in \hat \cM^1$. 
Specifically, using the pull-back $\hat \bv_k := (\cF^{1}_k)^{-1}(\bv|_{\Omega_k}) = DF_k^T (\bv|_{\Omega_k} \circ F_k)$ 
from \eqref{pf_gc} and the parametrization \eqref{params_e}, we have
\begin{equation} \label{s1_gc}
  \sigma^1_{k,\mu}(\bv) 
  = \int_{\zeta_{i_d}}^{\zeta_{i_d+1}} \big(\bv|_{\Omega_k}(F_k(\hat \bx^\tte_{\mu}(s))\big) \cdot \big(DF_k(\hat \bx^\tte_\mu(s)) \uvec_d\big) \rmd s
  = \int_{\tte_{k,\mu}} \!\! \bv|_{\Omega_k} \cdot \btau 
\end{equation}
where $\btau$ is the unit vector tangent to $\tte_{k,\mu}$ with the orientation inherited from that of $\hat \tte_\mu$,
as mapped by $F_k$: at
$\bx = \bx^\tte_{k, \mu}(s) = F_k(\hat \bx^\tte_\mu(s))$
it reads
\begin{equation} \label{btau}
  \btau(\bx) = \frac{\t \btau(s)}{\norm{\t \btau(s)}}
  \quad \text{ with } \quad
  \t \btau(s) := \frac{\partial \bx^\tte_{k,\mu}}{\partial s}(s) = DF_k(\hat \bx^\tte_\mu(s)) \uvec_d.
\end{equation}
For $\ell = 1$ in the $\bcurl-\Div$ sequence \eqref{dR_cd}, the pull-back corresponding to \eqref{pf_cd} reads
$\hat \bv_k := (\cF^{1}_k)^{-1}(\bv|_{\Omega_k}) = J_{F_k} DF_k^{-1} (\bv|_{\Omega_k} \circ F_k)$, so that 
\eqref{geodofs_hdiv} gives
\begin{equation} \label{s1_cd}
  \sigma^1_{k,\mu}(\bv) 
  = \int_{\zeta_{i_d}}^{\zeta_{i_d+1}} \big(\bv|_{\Omega_k}(F_k(\hat \bx^\tte_{\mu}(s))\big) 
      \cdot \big(J_{F_k} DF_k^{-T}(\hat \bx^\tte_\mu(s)) \uvec^\perp_d\big) \rmd s
  = \int_{\tte_{k,\mu}} \!\! \bv|_{\Omega_k} \cdot \btau^\perp 
\end{equation}
where $\btau^\perp$ is the result of a $\pi/2$ rotation (counterclockwise) of the unit tangent vector~\eqref{btau}, and reads
\begin{equation} \label{btau_perp}
\btau^\perp = \frac{\t \btau^\perp(s)}{\norm{\t \btau(s)}}
\quad \text{ with } \quad
\t \btau^\perp(s) = J_{F_k} DF_k^{-T}(\hat \bx^\tte_\mu(s)) \uvec_d^\perp.
\end{equation}
Finally for $\ell=2$ the degrees of freedom are the integrals on the mapped cells,
\begin{equation} \label{s2}
\sigma^2_{k,\bi}(v) 
= \int_{\hat \ttc_{\bi}} \big(v|_{\Omega_k}(F_k(\hat \bx)\big) J_{F_k}(\hat \bx) \rmd \hat \bx = \int_{\ttc_{k,\bi}} \! v|_{\Omega_k}
\quad \text{ with } \quad \ttc_{k,\bi} := F_k(\hat \ttc_\bi).
\end{equation}

\begin{proposition} \label{prop:dofs}
The broken degrees of freedom $\sigma^\ell_{k,\mu}$
are well defined on the local domain spaces
\begin{equation} \label{U_dofs}
  U^\ell(\Omega_k) := \cF^{\ell}_k(\hat U^\ell) = \{v \in V^\ell(\Omega_k) : (\cF^{\ell}_k)^{-1}(v) \in \hat U^\ell\}
\end{equation}
which involve the spaces $\hat U^\ell$ from \eqref{U_refdofs_gc} and \eqref{U_refdofs_cd},
and form local de Rham sequences. Moreover these degrees of freedom commute with the local 
differential operators. Namely, \eqref{loc-dof-comp} 
holds on $U^\ell(\Omega_k)$ with the coefficients $D^\ell_{k,\mu,\nu} = \hat D^\ell_{\mu,\nu}$ 
from Prop.~\ref{prop:ref_dofs} which are independent of the mapping $F_k$.
\end{proposition}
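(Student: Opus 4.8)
The plan is to transport every claim from the reference patch to the mapped patch $\Omega_k$ through the pull-back $(\cF^\ell_k)^{-1}$, using Proposition~\ref{prop:ref_dofs} as the engine together with the commutation property \eqref{cd_pb} of the pull-backs with the differential operators. Since the local spaces, basis functions and degrees of freedom are all defined by push-forward or pull-back (see \eqref{Vh_loc}, \eqref{bf} and \eqref{sell}), and the domain spaces are defined as $U^\ell(\Omega_k) := \cF^\ell_k(\hat U^\ell)$, each assertion should reduce to the corresponding reference assertion applied to $(\cF^\ell_k)^{-1}(v)$. In essence the proof is a diagram chase.

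First I would settle well-definedness. The identity $\cF^\ell_k(\hat U^\ell) = \{v \in V^\ell(\Omega_k) : (\cF^\ell_k)^{-1}(v) \in \hat U^\ell\}$ in \eqref{U_dofs} is simply the statement that $\cF^\ell_k$ is a bijection from $\hat V^\ell$ onto $V^\ell(\Omega_k)$ (which holds because $F_k$ is a diffeomorphism), restricted to the subspace $\hat U^\ell$. Then for $v \in U^\ell(\Omega_k)$ we have $(\cF^\ell_k)^{-1}(v) \in \hat U^\ell$, and since $\hat\sigma^\ell_\mu$ is well-defined on $\hat U^\ell$ by Proposition~\ref{prop:ref_dofs}, the definition $\sigma^\ell_{k,\mu}(v) = \hat\sigma^\ell_\mu((\cF^\ell_k)^{-1}(v))$ makes sense.

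Next, for the de Rham structure, I would combine \eqref{cd_pb} with the fact---also established in Proposition~\ref{prop:ref_dofs}---that the reference spaces $\hat U^\ell$ form a de Rham sequence. For $v \in U^\ell(\Omega_k)$, the commutation of pull-backs gives $(\cF^{\ell+1}_k)^{-1}(d^\ell v) = \hat d^\ell (\cF^\ell_k)^{-1}(v) \in \hat d^\ell \hat U^\ell \subset \hat U^{\ell+1}$, hence $d^\ell v \in \cF^{\ell+1}_k(\hat U^{\ell+1}) = U^{\ell+1}(\Omega_k)$; the relation $d^{\ell+1} d^\ell = 0$ on each patch is the usual $\curl\bgrad = 0$ (resp.\ $\Div\bcurl = 0$). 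For the commutation property \eqref{loc-dof-comp} I would then chain the two commutation relations: for $v \in U^\ell(\Omega_k)$,
\begin{align*}
  \sigma^{\ell+1}_{k,\mu}(d^\ell v)
    &= \hat\sigma^{\ell+1}_\mu\big((\cF^{\ell+1}_k)^{-1}(d^\ell v)\big)
     = \hat\sigma^{\ell+1}_\mu\big(\hat d^\ell (\cF^\ell_k)^{-1}(v)\big) \\
    &= \sum_{\nu \in \hat \cM^\ell} \hat D^\ell_{\mu,\nu}\, \hat\sigma^\ell_\nu\big((\cF^\ell_k)^{-1}(v)\big)
     = \sum_{\nu \in \hat \cM^\ell} \hat D^\ell_{\mu,\nu}\, \sigma^\ell_{k,\nu}(v),
\end{align*}
using \eqref{sell}, then \eqref{cd_pb}, then the reference commutation \eqref{ref_cd}, then \eqref{sell} once more. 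This yields \eqref{loc-dof-comp} with $D^\ell_{k,\mu,\nu} = \hat D^\ell_{\mu,\nu}$, which is manifestly independent of $k$.

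I do not expect a genuine obstacle here: all the analytic work---the regularity analysis of the anisotropic Sobolev domains $\hat U^\ell_{L^1}$ and the commuting property on the reference patch---is already packaged into Proposition~\ref{prop:ref_dofs}. The one subtlety worth flagging is that a general diffeomorphism $F_k$ does not preserve the tensor-product / anisotropic structure of $\hat U^\ell_{L^1}$, so one should not seek an intrinsic Sobolev characterization of $U^\ell(\Omega_k)$; instead the pull-back definition $U^\ell(\Omega_k) = \cF^\ell_k(\hat U^\ell)$ is taken as primitive, and everything above follows formally from it.
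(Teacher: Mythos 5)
Your proposal is correct and follows essentially the same route as the paper's proof: well-definedness and the de Rham structure are reduced to Proposition~\ref{prop:ref_dofs} via the pull-back definition \eqref{sell} and the commutation \eqref{cd_pb}, and the commuting property \eqref{loc-dof-comp} is obtained by exactly the same chain of equalities $\sigma^{\ell+1}_{k,\mu}(d^\ell v) = \hat\sigma^{\ell+1}_\mu(\hat d^\ell(\cF^\ell_k)^{-1}v) = \sum_\nu \hat D^\ell_{\mu,\nu}\sigma^\ell_{k,\nu}(v)$ used in the paper. Your added remarks (bijectivity of $\cF^\ell_k$ justifying \eqref{U_dofs}, and the caveat that $U^\ell(\Omega_k)$ has no intrinsic anisotropic Sobolev characterization) are sound elaborations of details the paper leaves implicit.
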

\begin{proof}
The first statement is a direct consequence of the definition \eqref{sell} and Prop.~\ref{prop:ref_dofs}.
In particular the fact that the spaces \eqref{U_dofs} form a sequence follows from the similar property of the 
reference spaces and the commutation \eqref{cd_pb} of the pull-backs.
As for the local commuting property, it also follows from \eqref{cd_pb} and the similar property \eqref{ref_cd} 
of the reference degrees of freedom,
indeed
\begin{multline*} 
    \sigma^{\ell+1}_{k,\mu}(d^\ell v) 
    = \hat \sigma^{\ell+1}_{\mu}((\cF^{\ell+1}_k)^{-1}(d^\ell v))
    = \hat \sigma^{\ell+1}_{\mu}(\hat d^\ell (\cF^{\ell}_k)^{-1} v)
    \\
    = \sum_{\nu \in \hat \cM^\ell} \hat D^\ell_{\mu,\nu} \hat \sigma^{\ell}_{\nu}((\cF^{\ell}_k)^{-1} v)
    = \sum_{\nu \in \hat \cM^\ell} \hat D^\ell_{\mu,\nu} \sigma^{\ell}_{k,\nu}(v).   
\end{multline*}
\qed
\end{proof}

\subsection{Conformity of the multipatch geometry}
\label{sec:mp_conf}

We next assume that the patches are {\em fully conforming} in the sense that
any interface $\Gamma_{k,l} = \partial \Omega_k \cap \partial \Omega_l$ between
any two distinct patches
\begin{itemize}
  \item[(i)] is either a vertex, or a full edge of both patches,
  \item[(ii)] admits the same parametrization from both patches, up to the orientation.
\end{itemize}
In the case where the interface is a vertex, condition (ii) is empty.
If it is a full patch edge of the form
\begin{equation} \label{patch_conf_1}
  \Gamma_{k,l} = F_k([\hat \bx_0,\hat \bx_0 + \uvec_d]) = F_l([\hat \by_0,\hat \by_0 + \uvec_b])  
\end{equation} 
where $\hat \bx_0$ and $\hat \by_0$ are vertices of the reference domain $\hat \Omega$,
condition~(ii) means that
\begin{equation} \label{patch_conf_2}
F_k(\hat \bx_0 + s\uvec_d) = F_l(\hat \by_0 + \theta_{k,l}(s)\uvec_b), \qquad s \in [0,1]
\end{equation} 
holds with $\theta_{k,l}$ an affine bijection on $[0,1]$, that is $\theta_{k,l}(s) =s$ or $1-s$.

Since the reference knot sequence was supposed symmetric in Section~\ref{sec:ref}, 
these conditions imply that the patches are fully matching in the sense of Assumption 3.3 
from \cite{Buffa_isogeometric_mp_2015} (applied to the scalar spline spaces in the sequence).

\subsection{Conforming projection operators}

An important property of the geometric degrees of freedom is that they provide us with a simple characterization of 
the discrete fields $v_h$ in the broken spaces $V^\ell_h$ which actually belong to the conforming spaces
$V^{\ell,c}_h := V^{\ell}_h \cap V^\ell$.
In order to belong to the space~$V^\ell$ appearing in the continuous de Rham sequence~\eqref{dR}, a field must satisfy regularity conditions which are well known for piecewise-smooth functions.
For $\ell = 0$, the condition for $H^1(\Omega)$ regularity amounts to continuity across the patch interfaces, which is equivalent to requiring that the broken degrees of freedom associated to the same interpolation node coincide:
\begin{equation}\label{conf_s0}
  \ttn_{k,\bi} = \ttn_{l,\bj} \quad \implies \quad
  \sigma^0_{k,\bi}(v_h) = \sigma^0_{l,\bj}(v_h).
\end{equation}
For $\ell = 1$ in the $\bgrad-\curl$ sequence \eqref{dR_gc}, the interface constraint for $\bH(\curl;\Omega)$ regularity
consists in the continuity of the tangential traces, which amounts to requiring
that edge degrees of freedom \eqref{s1_gc} associated to the same edge coincide, up to the orientation:
\begin{equation}\label{conf_s1}
  \tte_{k,\mu} = \tte_{l,\nu} \quad \implies \quad
  \sigma^1_{k,\mu}(v_h) = \ve^\tte(k,\mu;l,\nu) \sigma^1_{l,\nu}(v_h).
\end{equation}
Here $\ve^\tte(k,\mu;l,\nu) = \pm 1$ is the relative orientation of the mapped edges $\tte_{k,\mu}$ and $\tte_{l,\nu}$,
characterized by the relation
$$
\frac{\bx^\tte_{k,\mu}}{\partial s}(s) = \ve^\tte(k,\mu;l,\nu)\frac{\bx^\tte_{l,\nu}}{\partial s}(\theta_{k,l}(s)),
$$
where $\bx^\tte_{k,\mu}(s) = \partial F_k(\hat \bx^\tte_\mu(s))$ and similarly for $\bx^\tte_{l,\nu}$, see \eqref{btau}, and $\theta_{k,l}$ is the affine bijection involved in the interpatch conformity assumption
\eqref{patch_conf_1}--\eqref{patch_conf_2}.

For $\ell = 1$ in the $\bcurl-\Div$ sequence \eqref{dR_cd}, the interface constraint for $\bH(\Div;\Omega)$ regularity
consists in the continuity of the normal traces, which again amounts to requiring
that edge degrees of freedom associated to the same edge coincide, up to the orientation: this constraint takes the same form as
\eqref{conf_s1} with the corresponding definition \eqref{s1_cd}.
Finally as $V^2 = L^2(\Omega)$ there are no constraints for $\ell=2$, i.e., the spaces $V^{2}_h$ and $V^{2,c}_h$ coincide. 
We gather these constraints in a single formula
\begin{equation} \label{conf_cond}
  \ttg^\ell_{k,\mu} = \ttg^\ell_{l,\nu}
  \quad \implies \quad
  \sigma^\ell_{k,\mu}(v_h) = \ve^\ell(k,\mu;l,\nu) \sigma^\ell_{l,\nu}(v_h)  
\end{equation}
where $\ttg^\ell_{k,\mu}$ denotes the geometric element (node, edge or cell) of dimension $\ell$ associated with the multi-index
$(k,\mu) \in \cM^\ell_h$, $\ve^1$ denotes the relative orientation of two edges,
and $\ve^0 = \ve^2 := 1$.

Thanks to the conformity assumption of the multipatch geometry and to the symmetry of the interpolatory grid,
each broken degree of freedom on an interface can be matched to those of the adjacent patches in such a way that the constraints above are satisfied.
Accordingly, the resulting broken discrete field will belong to the conforming space~$V^{\ell,c}_h$.
In particular we may 
define simple conforming projection operators $P^\ell$ by averaging 
the broken degrees of freedom associated with interface elements.
Using the broken basis functions \eqref{bf} this yields an expression similar
to the one given in \cite{conga_hodge} for tensor-product polynomial elements,
with additional relative orientation factors due to the general mapping configurations,
\begin{equation} \label{conf_P}
  P^\ell_h \Lambda^\ell_{l,\nu} :=
  \frac{1}{\#\cM^\ell_h(\ttg^\ell_{l,\nu})} \sum_{(k,\mu) \in \cM^\ell_h(\ttg^\ell_{l,\nu})}
  \ve^\ell(k,\mu;l,\nu)\Lambda^{\ell}_{k,\mu}
\end{equation}
where $\cM^\ell_h(\ttg^\ell_{l,\nu})$ contains the broken indices corresponding to the same 
geometric element.
The entries of the corresponding operator matrix \eqref{matP} read then
\begin{equation} \label{conf_P_mat}
\matP_{(k,\mu),(l,\nu)} = \begin{cases}
  \frac{\ve^\ell(k,\mu;l,\nu)}{\#\cM^\ell_h(\ttg^\ell_{l,\nu})} \quad &\text{ if } \ttg^\ell_{k,\mu} = \ttg^\ell_{l,\nu}
  \\
  0 \quad &\text{ otherwise }
\end{cases}
\end{equation}
for all $(k,\mu), (l,\nu) \in \cM^\ell_h$.
We may summarize our construction with the following result.
\begin{proposition} \label{prop:summary}
The broken degrees of freedom defined in Section~\ref{sec:geo_dofs} 
satisfy the properties listed in Section~\ref{sec:proj_bfeec},
with local domain spaces $U^\ell(\Omega_k)$ defined in \eqref{U_dofs} and 
local differential matrices independent of the mappings $F_k$.
The operators $P^\ell_h : V^\ell_h \to V^{\ell}_h$ defined by \eqref{conf_P}
are projections on the conforming subspaces $V^{\ell,c}_h = V^{\ell}_h \cap V^{\ell}$.
\end{proposition}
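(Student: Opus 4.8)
The plan is to treat the two assertions separately, leaning heavily on what has already been proved in Proposition~\ref{prop:dofs}. For the first assertion I note that the well-definedness of the broken degrees of freedom on the local spaces $U^\ell(\Omega_k)$, the fact that these spaces form local de~Rham sequences (so that $d^\ell U^\ell(\Omega_k) \subset U^{\ell+1}(\Omega_k)$), and the local commutation property \eqref{loc-dof-comp} with mapping-independent matrices $D^\ell_{k,\mu,\nu} = \hat D^\ell_{\mu,\nu}$ are all contained in Proposition~\ref{prop:dofs}. The duality between degrees of freedom and basis functions is \eqref{geodual}, and the cardinality condition $\#\hat\cM^\ell = \dim \hat\VV^\ell$ follows from the unisolvence of the reference degrees of freedom. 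Thus the only property from Section~\ref{sec:proj_bfeec} that remains to be checked is the inter-patch conformity \eqref{conf-dof}, namely $\Pi^\ell_h U^\ell \subset V^{\ell,c}_h$.

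To establish \eqref{conf-dof} I would first use the duality \eqref{geodual} to observe that applying $\sigma^\ell_{k,\mu}$ to $\Pi^\ell_h v$ recovers $\sigma^\ell_{k,\mu}(v)$ for any $v \in U^\ell$; hence $\Pi^\ell_h v$ lies in $V^{\ell,c}_h$ as soon as the degrees of freedom of a globally smooth $v$ satisfy the matching constraints \eqref{conf_cond}. This is exactly where the geometric nature of the degrees of freedom pays off: at level $\ell=0$ the constraint holds because point evaluations of a globally continuous field (as any element of $U^0 \subset H^1(\Omega)$ with patchwise $W^1_{1,2}$ regularity is) at a shared node coincide; at level $\ell=1$ it holds because the tangential (resp. normal) line integral over a shared mapped edge is a single-valued quantity for a field in $\bH(\curl;\Omega)$ (resp. $\bH(\Div;\Omega)$), up to the relative-orientation factor $\ve^\ell$; and at level $\ell=2$ there is nothing to check since $V^2_h = V^{2,c}_h$. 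The conformity assumptions~(i)--(ii) of Section~\ref{sec:mp_conf}, together with the shared parametrization \eqref{patch_conf_1}--\eqref{patch_conf_2}, guarantee that the relevant geometric elements genuinely coincide and that the orientation factors are well defined.

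For the second assertion I would compute the degrees of freedom of the averaged basis functions directly from \eqref{conf_P} and the duality \eqref{geodual}, obtaining $\sigma^\ell_{k,\mu}(P^\ell_h \Lambda^\ell_{l,\nu}) = \ve^\ell(k,\mu;l,\nu)/\#\cM^\ell_h(\ttg^\ell_{l,\nu})$ when $\ttg^\ell_{k,\mu} = \ttg^\ell_{l,\nu}$ and $0$ otherwise, which is precisely the matrix \eqref{conf_P_mat}. Range in $V^{\ell,c}_h$ then amounts to checking that these averaged degrees of freedom satisfy \eqref{conf_cond}; for two indices sharing a geometric element this reduces to the cocycle identity $\ve^\ell(k_1,\mu_1;l,\nu) = \ve^\ell(k_1,\mu_1;k_2,\mu_2)\,\ve^\ell(k_2,\mu_2;l,\nu)$ for the relative orientations, and is trivial when the shared element differs from $\ttg^\ell_{l,\nu}$. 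To show that $P^\ell_h$ acts as the identity on $V^{\ell,c}_h$ I would take $v_h \in V^{\ell,c}_h$, expand it in the broken basis, and use the conformity constraint \eqref{conf_cond} satisfied by $v_h$ together with $(\ve^\ell)^2 = 1$ to collapse the averaging sum: every term in the average over $\cM^\ell_h(\ttg^\ell_{k,\mu})$ contributes the same value $\sigma^\ell_{k,\mu}(v_h)$, so $\sigma^\ell_{k,\mu}(P^\ell_h v_h) = \sigma^\ell_{k,\mu}(v_h)$ and hence $P^\ell_h v_h = v_h$ by unisolvence. Together with the range inclusion this shows that $P^\ell_h$ is idempotent with image exactly $V^{\ell,c}_h$.

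The step I expect to be the main obstacle is the characterization of conformity through matching degrees of freedom, i.e. the claim implicit in \eqref{conf_cond} that a broken spline field belongs to $V^{\ell,c}_h$ \emph{if and only if} its degrees of freedom match across interfaces. The ``if'' direction, needed for the range inclusion, relies crucially on the shared interface parametrization and on the fully-matching tensor-product spline property (Assumption~3.3 of \cite{Buffa_isogeometric_mp_2015}), which ensures that equal traces of the univariate spline factors propagate to genuine $H^1$, $\bH(\curl)$ or $\bH(\Div)$ continuity of the two-dimensional fields. Verifying the consistency (cocycle) property of the orientation factors $\ve^\ell$ under the conformity conditions~(i)--(ii) is the other delicate bookkeeping point, although it is elementary once the interface orientations are fixed carefully.
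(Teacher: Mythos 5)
Your proposal is correct and follows essentially the same route as the paper's proof: it delegates well-definedness and commutation to Proposition~\ref{prop:dofs}, derives inter-patch conformity \eqref{conf-dof} from the single-valuedness of traces (point values, tangential/normal line integrals) of globally smooth fields on shared geometric elements, and verifies the projection property of $P^\ell_h$ through the degree-of-freedom characterization \eqref{conf_cond} of the conforming subspace. Your write-up merely fills in details the paper leaves implicit (the orientation cocycle identity, the collapse of the averaging sum on conforming fields, and the reliance on the fully-matching spline property of Section~\ref{sec:mp_conf} for the ``if'' direction of \eqref{conf_cond}), all of which are consistent with the paper's argument.
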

\begin{proof}
The boundedness of $\sigma^\ell_{k,\mu}$ on the local spaces $U^\ell(\Omega_k)$, as well as the
commuting properties, have been verified in Prop.~\ref{prop:dofs}.
The inter-patch conformity property \eqref{conf-dof} follows from the fact that piecewise smooth functions that 
belong to $H^1(\Omega)$, resp. $H(\curl;\Omega)$ and $H(\Div;\Omega)$, admit a unique trace,
resp. tangential and normal trace on patch interfaces \cite{Boffi.Brezzi.Fortin.2013.scm}.
The same property, together with the interpolation nature of the geometric basis functions, allows verifying that the
operators $P^\ell_h : V^\ell_h \to V^\ell_h$ are characterized by the relations 
\begin{equation}
  \sigma^\ell_{k,\mu}(P^\ell_h v_h) = 
  \frac{1}{\#\cM^\ell_h(\ttg^\ell_{k,\mu})} \sum_{(l,\nu) \in \cM^\ell_h(\ttg^\ell_{k,\mu})}
  \ve^\ell(k,\mu;l,\nu) \sigma^\ell_{l,\nu}(v_h)
\end{equation}
for all $(k,\mu) \in \cM^\ell_h$.
Using the geometric condition \eqref{conf_cond}, this allows verifying that $P^\ell_h$
is indeed  a projection on the conforming subspace $V^{\ell,c}_h$. 
\qed
\end{proof}

\begin{remark}[boundary conditions] \label{rem:bc}
  As stated, the above construction actually corresponds to the inhomogeneous sequences
  \eqref{dR_gc_inhom} and \eqref{dR_cd_inhom}.
  If the conforming spaces are homogeneous, as in
  \eqref{dR_gc_hom} or \eqref{dR_cd_hom}, then $P^\ell_h$ should further set 
  the boundary degree of freedom to 0, which amounts to restricting the non-zeros entries 
  in \eqref{conf_P_mat} to the geometrical elements $\ttg^\ell_{k,\mu}$ that are inside $\Omega$.
  We note that this does not affect the broken spaces $V^\ell_h$, as already observed 
  in Section~\ref{sec:lifting}, since they are not required to have boundary conditions.
\end{remark}

\begin{remark}[extension to 3D] \label{rem:3D}
  The extension to the 3D setting of the above construction poses no particular difficulty,
  using the same tensor-product and mapped spline spaces as in \cite{Buffa_2011,daVeiga_2014_actanum},
  and the same geometric degrees of freedom as in
  \cite[Sec.~6.1]{CPKS_variational_2020} for the primal sequence. 
\end{remark}

\subsection{Primal-dual matrix diagram with B-splines}
\label{sec:cob}

In practice, a natural approach is to work with B-splines. Indeed the
geometric (interpolatory) splines $\Lambda^\ell_{k,\mu}$ introduced 
in Sections~\ref{sec:ref-dofs} and \ref{sec:mapped_spaces}
are not known explicitely: they depend on the degrees of freedom, i.e. on the
interpolation grids \eqref{subgrid}. They are also less local than the B-splines,
as they are in general supported on their full patch $\Omega_k$: for patches with 
many cells, this would lead to an expensive computation of the fully populated mass matrices.

In our numerical experiments we have followed \cite{daVeiga_2014_actanum,CPKS_variational_2020} 
and used tensor-product splines $\hat B^\ell_{\mu}$ on the reference domain $\hat \Omega$,
composed of normalized B-splines $\cN^p_i$ in the dimensions of degree $p$
and of Curry-Schoenberg B-splines $\cD^{p-1}_{i} = \Big(\frac{p}{\xi_{i+p+1}-\xi_{i+1}}\Big) \cN^{p-1}_{i+1}$ 
(also called M-splines) in the dimensions of degree $(p-1)$.
On the mapped patches $\Omega_k = F_k(\hat \Omega)$ the basis functions are defined again 
as push-forwards $B^\ell_{k,\mu} := \cF^\ell_k(\hat B^\ell_\mu)$ for $(k,\mu) \in \cM^\ell_h$.

The discrete elements in the matrix diagram \eqref{CD} must then be adapted for B-splines:
one first observe that the change of basis is provided by the patch-wise collocation matrices 
whose diagonal blocks read
\begin{equation} \label{matK}
  \matK^\ell_{(k,\mu), (k,\nu)} = \sigma^\ell_{k,\mu}(B^\ell_{k,\nu}) = \hat \sigma^\ell_{\mu}(\hat B^\ell_{\nu}),
  \qquad \mu, \nu \in \hat \cM^\ell
\end{equation}
where the second equality uses \eqref{sell}.
From $B^\ell_{k,\nu} = \sum_\mu \matK^\ell_{(k,\mu), (k,\nu)} \Lambda^\ell_{k,\mu}$ 
it follows that the B-spline coefficients of the geometric projection \eqref{brok-proj}, namely
\begin{equation} \label{proj-B}
 \Pi^\ell_h v = \sum_{(k,\mu) \in \cM^\ell_h} \sigma^\ell_{k,\mu}(v) \Lambda^\ell_{k,\mu} 
    = \sum_{(k,\mu) \in \cM^\ell_h}\beta^\ell_{k,\mu}(v) B^\ell_{k,\mu}
\end{equation}
read  (using again some implicit flattening $\cM^\ell_h \ni (k,\mu) \mapsto i \in \{1, \dots, N^\ell\}$)
\begin{equation} \label{cob}
  \arrbeta^\ell(v) = (\matK^\ell)^{-1} \arrsigma^\ell(v).
\end{equation}
Accordingly, we obtain the matrices $\matP^\ell_B = (\beta^\ell_i(P^\ell_h B^\ell_j))_{1 \le i,j \le N^\ell}$ 
of the conforming projection operator \eqref{conf_P} in the B-spline bases
by combining the matrices \eqref{conf_P_mat} with the above change of basis: this gives
\begin{equation} \label{conf_P_matB} 
  \matP^\ell_B = (\matK^\ell)^{-1} \matP^\ell \matK^\ell.
\end{equation}
Here we note that the collocation matrices $\matK^\ell$ are Kronecker products of univariate banded matrices,
see \cite[Sec.~6.2]{CPKS_variational_2020}, so that \eqref{cob} and \eqref{conf_P_matB} may be implemented 
in a very efficient way.
The primal sequence is completed by computing the patch-wise differential matrices in the B-spline bases,
$$
(\matD^\ell_B)_{(k,\mu),(k,\nu)} := \beta^{\ell+1}_{k,\mu}(d^\ell B^\ell_{k,\nu}).
$$
Thanks to the univariate relation $\frac{\dd}{\dd x}\cN^p_i = \cD^{p-1}_{i-1} - \cD^{p-1}_{i}$, 
these are patch-wise incidence matrices, just as the ones in the geometric bases
\cite{daVeiga_2014_actanum,CPKS_variational_2020}.
For the dual sequence we use bi-orthogonal splines characterized by the relations
$$
\t B_{k,\mu} \in V^\ell_h(\Omega_k), 
\quad \t \beta^\ell_{k,\nu}(\t B^\ell_{k,\mu}) = \delta_{\mu,\nu} \quad \forall \mu, \nu \in \hat \cM^\ell
$$
with dual degrees of freedom
\begin{equation} \label{dual_dofs_B}
  \t \beta^\ell_{k,\mu}(v) := \sprod{v}{B^\ell_{k,\mu}}.  
\end{equation}
This leads to defining again the primal Hodge matrices as patch-diagonal mass matrices,
$\matH^\ell_B = \matM^\ell_B$ and the dual Hodge ones as their patch-diagonal inverses
$\t \matH^\ell_B = (\matM^\ell_B)^{-1}$.
These matrices can be computed on the reference domain according to
\begin{equation} \label{M_matrix_map}
  (\matM^\ell_B)_{(k,\mu),(k,\nu)} = \sprod{B^\ell_{k,\mu}}{B^\ell_{k,\nu}} = 
  \sprod{\cF^\ell_k(\hat B^\ell_\mu)}{\cF^\ell_k(\hat B^\ell_\nu)}.
\end{equation}
Specifically, for the 2D $\bgrad-\curl$ sequence \eqref{dR_gc}, the transforms \eqref{pf_gc} yield
\begin{equation} \label{M_matrix_map_gc}
  (\matM^\ell_B)_{(k,\mu),(k,\nu)} =
  \begin{cases} 
  \int_{\hat \Omega} \hat B^0_\mu \hat B^0_\nu J_{F_k} \rmd \hat \bx,
    & \text{ for $\ell = 0$}
    \\
    \int_{\hat \Omega} (\hat B^1_\mu)^T (DF_k^T DF_k)^{-1} \hat B^1_\nu J_{F_k} \rmd \hat \bx,
    & \text{ for $\ell = 1$}
    \\
    \int_{\hat \Omega} (\hat B^2_\mu)^T \hat B^2_\nu J_{F_k}^{-1} \rmd \hat \bx,
    & \text{ for $\ell = 2$}
  \end{cases}
\end{equation}
and for the 2D $\bcurl-\Div$ sequence \eqref{dR_cd} we find from \eqref{pf_cd} that
\begin{equation} \label{M_matrix_map_cd}
  (\matM^1_B)_{(k,\mu),(k,\nu)} =
    \int_{\hat \Omega} (\hat B^1_\mu)^T DF_k^T DF_k \hat B^1_\nu J_{F_k}^{-1} \rmd \hat \bx,
\end{equation}
while $\matM^0_B$ and $\matM^2_B$ take the same values as in \eqref{M_matrix_map_gc}.
In 3D the formulas can be derived from the pull-backs given in \cite{daVeiga_2014_actanum}.

Finally the coefficients of the dual commuting projections \eqref{tPi_def1}--\eqref{tPi_def2} 
in the dual B-spline bases read
$$
\tilde \arrbeta^\ell(\t \Pi^\ell_h v) = (\matP_B^\ell)^T \tilde \arrbeta^\ell(v)
$$ 
using the dual degrees of freedom \eqref{dual_dofs_B} and the same implicit flattening 
as in \eqref{cob}, \eqref{conf_P_matB}.

Gathering the above elements we obtain a new version of diagram \eqref{CD},
where the coefficient spaces (still defined as $\RR^{N^\ell}$)
are now denoted $\cC^\ell_B$ and $\t \cC^\ell_B$ to indicate that they contain 
coefficients in the B-spline and dual B-spline basis, respectively.
\begin{equation} \label{CD_B}
\begin{tikzpicture}[ampersand replacement=\&, baseline] 
\matrix (m) [matrix of math nodes,row sep=3em,column sep=5em,minimum width=2em] {
   ~~ V^0 ~ \bbb
   \& ~~ V^1 ~ \bbb
    \& ~~ V^2 ~ \bbb
      \& ~~ V^3 ~ \bbb
  \\
  ~~ V_h^0 ~ \bbb
    \& ~~ V_h^1 ~ \bbb
    \& ~~ V_h^2 ~ \bbb
    \& ~~ V_h^3 ~ \bbb
\\
~~ \cC^0_B ~ \bbb
  \& ~~ \cC^1_B ~ \bbb
    \& ~~ \cC^2_B ~ \bbb
    \& ~~ \cC^3_B ~ \bbb
\\
~~ \t \cC^0_B ~ \bbb
\& ~~ \t \cC^1_B ~ \bbb
\& ~~ \t \cC^2_B ~ \bbb
\& ~~ \t \cC^3_B ~ \bbb
\\
~~ V_h^0 ~ \bbb
  \& ~~ V_h^1 ~ \bbb
  \& ~~ V_h^2 ~ \bbb
  \& ~~ V_h^3 ~ \bbb
\\
~~ V^*_0 ~ \bbb
\& ~~ V^*_1 ~ \bbb
\& ~~ V^*_2 ~ \bbb
\& ~~ V^*_3 ~ \bbb
\\
};
\path[-stealth]
(m-1-1) edge node [above] {$\grad$} (m-1-2)
(m-1-2) edge node [above] {$\curl$} (m-1-3)
(m-1-3) edge node [above] {$\Div$} (m-1-4)
(m-1-1) edge node [pos=0.6, right] {$\Pi^0_h$} (m-2-1)
(m-1-2) edge node [pos=0.6, right] {$\Pi^1_h$} (m-2-2)
(m-1-3) edge node [pos=0.6, right] {$\Pi^2_h$} (m-2-3)
(m-1-4) edge node [pos=0.6, right] {$\Pi^3_h$} (m-2-4)
(m-1-1) edge [bend right=40] node [pos=0.1, left] {$(\matK^0)^{-1} \arrsigma^0$} (m-3-1)
(m-1-2) edge [bend right=40] node [pos=0.1, left] {$(\matK^1)^{-1} \arrsigma^1$} (m-3-2)
(m-1-3) edge [bend right=40] node [pos=0.1, left] {$(\matK^2)^{-1} \arrsigma^2$} (m-3-3)
(m-1-4) edge [bend right=40] node [pos=0.1, left] {$(\matK^3)^{-1} \arrsigma^3$} (m-3-4)
(m-2-1) edge node [above] {$\grad_h$} (m-2-2)
(m-2-2) edge node [above] {$\curl_h$} (m-2-3)
(m-2-3) edge node [above] {$\Div_h$} (m-2-4)
(m-3-1.75) edge node [right] {$\cI^0_B$} (m-2-1.285)
(m-3-2.75) edge node [right] {$\cI^1_B$} (m-2-2.285)
(m-3-3.75) edge node [right] {$\cI^2_B$} (m-2-3.285)
(m-3-4.75) edge node [right] {$\cI^3_B$} (m-2-4.285)
(m-2-1.255) edge node [pos=0.2, left] {$\arrbeta^0$} (m-3-1.105)
(m-2-2.255) edge node [pos=0.2, left] {$\arrbeta^1$} (m-3-2.105)
(m-2-3.255) edge node [pos=0.2, left] {$\arrbeta^2$} (m-3-3.105)
(m-2-4.255) edge node [pos=0.2, left] {$\arrbeta^3$} (m-3-4.105)
(m-3-1) edge node[auto] {$ \matG_B \matP^0_B$} (m-3-2)
(m-3-2) edge node[auto] {$ \matC_B \matP^1_B$} (m-3-3)
(m-3-3) edge node[auto] {$ \matD_B \matP^2_B$} (m-3-4)
%
%
(m-4-1.105) edge [dashed] node [left] {$\t \matH^0_B$} (m-3-1.255)
(m-4-2.105) edge [dashed] node [left] {$\t \matH^1_B$} (m-3-2.255)
(m-4-3.105) edge [dashed] node [left] {$\t \matH^2_B$} (m-3-3.255)
(m-4-4.105) edge [dashed] node [left] {$\t \matH^3_B$} (m-3-4.255)
(m-3-1.285) edge [dashed] node [right] {$\matH^0_B$} (m-4-1.75)
(m-3-2.285) edge [dashed] node [right] {$\matH^1_B$} (m-4-2.75)
(m-3-3.285) edge [dashed] node [right] {$\matH^2_B$} (m-4-3.75)
(m-3-4.285) edge [dashed] node [right] {$\matH^3_B$} (m-4-4.75)
%
%
(m-5-2) edge node [above] {$\wt \Div_h$} (m-5-1)
(m-5-3) edge node [above] {$\wt \curl_h$} (m-5-2)
(m-5-4) edge node [above] {$\wt \grad_h$} (m-5-3)
(m-5-1.105) edge node [pos=0.2, left] {$\t \arrbeta^0$} (m-4-1.255)
(m-5-2.105) edge node [pos=0.2, left] {$\t \arrbeta^1$} (m-4-2.255)
(m-5-3.105) edge node [pos=0.2, left] {$\t \arrbeta^2$} (m-4-3.255)
(m-5-4.105) edge node [pos=0.2, left] {$\t \arrbeta^3$} (m-4-4.255)
(m-4-1.285) edge node [right] {$\t \cI^0_B$} (m-5-1.75)
(m-4-2.285) edge node [right] {$\t \cI^1_B$} (m-5-2.75)
(m-4-3.285) edge node [right] {$\t \cI^2_B$} (m-5-3.75)
(m-4-4.285) edge node [right] {$\t \cI^3_B$} (m-5-4.75)
(m-4-2) edge node [above] {$-(\matG_B \matP^0_B)^T $} (m-4-1)
(m-4-3) edge node [above] {$ (\matC_B \matP^1_B)^T $} (m-4-2)
(m-4-4) edge node [above] {$-(\matD_B \matP^2_B)^T $} (m-4-3)
(m-6-1) edge [bend left=40] node [pos=0.2, left] {$(\matP^0_B)^T\t \arrbeta^0$} (m-4-1)
(m-6-2) edge [bend left=40] node [pos=0.2, left] {$(\matP^1_B)^T\t \arrbeta^1$} (m-4-2)
(m-6-3) edge [bend left=40] node [pos=0.2, left] {$(\matP^2_B)^T\t \arrbeta^2$} (m-4-3)
(m-6-4) edge [bend left=40] node [pos=0.2, left] {$(\matP^3_B)^T\t \arrbeta^3$} (m-4-4)
(m-6-1) edge node [right] {$\t \Pi^0_h$} (m-5-1)
(m-6-2) edge node [right] {$\t \Pi^1_h$} (m-5-2)
(m-6-3) edge node [right] {$\t \Pi^2_h$} (m-5-3)
(m-6-4) edge node [right] {$\t \Pi^3_h$} (m-5-4)
(m-6-2) edge node [above] {$\Div$} (m-6-1)
(m-6-3) edge node [above] {$\curl$} (m-6-2)
(m-6-4) edge node [above] {$\grad$} (m-6-3)
;
\end{tikzpicture}
\end{equation}

Note that here the primal and dual interpolation operators are simply
\begin{equation} \label{cIB}
    \cI^\ell_B: \arr{b} \mapsto \sum_{i=1}^{N^\ell} b_i B^\ell_i
    \qquad \text{ and }\qquad
    \t \cI^\ell_B: \t {\arr{b}} \mapsto \sum_{i=1}^{N^\ell} \t b_i \t B^\ell_i~.  
\end{equation}


\section{Numerical results}
\label{sec:num}

In this section we illustrate several of the numerical schemes described in 
Section~\ref{sec:pbms}, using the 2D multipatch spline spaces presented above.
The results have been obtained with the Psydac library \cite{psydac}.

\subsection{Poisson problem with homogeneous boundary conditions}
\label{sec:num_poisson_hom}

We first test our CONGA scheme for the homogeneous Poisson problem
presented in Section \ref{sec:poisson}.
For this we consider an analytical solution on the pretzel-shaped domain 
shown in Figure~\ref{fig:mp}, given by 
\begin{equation} \label{phi_manu_poisson_elliptic}
  \phi(\bx) = \exp\Big(-\frac{\tau^2(\bx)}{2 \sigma^2} \Big)  
\qquad \text{ with }\qquad 
\tau(\bx) = as^2(\bx) + bt^2(\bx) - 1.
\end{equation}
Here, $s(\bx) = \t x -\t y$, $t(\bx) = \t x +\t y$
with $\t x = x- x_0$, $\t y= y-y_0$
and we take   
$x_0 = y_0 = 1.5$, 
$a = (1/1.7)^2$, 
$b = (1/1.1)^2$ 
and $\sigma = 0.11$
in order to satisfy the homogeneous boundary condition with accuracy $\approx 1e-10$.
The associated manufactured source is
\begin{equation} \label{f_manu_poisson_elliptic}
  f = -\Delta \phi = -\left( \frac{\tau^2 \norm{\nabla \tau}^2}{\sigma^4} - \frac{\tau \Delta \tau + \norm{\nabla\tau}^2}{\sigma^2}\right)\phi  ~.
\end{equation}

In Table~\ref{tab:Poisson_L2_error} 
we show the relative $L^2$ errors corresponding to different grids and spline degrees, 
and in Figure~\ref{fig:poisson} we plot the numerical solutions corresponding 
to spline elements of degree $3 \times 3$ on each patch.
These results show that the numerical solutions converge towards the exact one
as the grids are refined. 
We do not observe significant improvements however when higher order polynomials are used.
As the next results will show, this is most likely due to the steep nature of the solution.

\begin{table}[!htbp]
\centering
\begin{tabular}{|l|l|l|l|l|}
\hline
\diagbox[width=10em]{cells p.p.}{degree} & \multicolumn{1}{p{12mm}|}{$2\times 2$} & \multicolumn{1}{p{12mm}|}{$3 \times 3$} & \multicolumn{1}{p{12mm}|}{$4 \times 4$} & \multicolumn{1}{p{12mm}|}{$5 \times 5$}
\\ 
\hline
 $4  \times 4$  & 0.99420 & 0.73962 & 0.35919 & 0.41536 \\ \hline
 $8  \times 8$  & 0.11599 & 0.12811 & 0.13921 & 0.16493 \\ \hline
 $16 \times 16$ & 0.00692 & 0.00892 & 0.01013 & 0.00684 \\ \hline
\end{tabular}
\caption{Relative $L^2$ errors for the Poisson problem with homogeneous boundary 
conditions discretized with \eqref{poisson_hLS}, 
using various numbers of cells per patch and spline degrees.
In each case the error is computed with respect to the finite element projection $\Pi^0_h \phi$ of the analytic solution \eqref{phi_manu_poisson_elliptic} on the space $V^0_h$.}
\label{tab:Poisson_L2_error}
\end{table}

\begin{figure}[!htbp]
\def \plotdircoarse {poisson_hom/pretzel_f_manu_poisson_elliptic_nc=4_deg=3}
\def \plotdirmed {poisson_hom/pretzel_f_manu_poisson_elliptic_nc=8_deg=3}
\def \plotdirfine {poisson_hom/pretzel_f_manu_poisson_elliptic_nc=16_deg=3}
\def \plotfn {eta=0_mu=1_gamma_h=10.0_phi_h}
\begin{center}
  \subfloat[$4 \times 4$ cells p.p.]{%
    \includegraphics[width=0.3\textwidth]{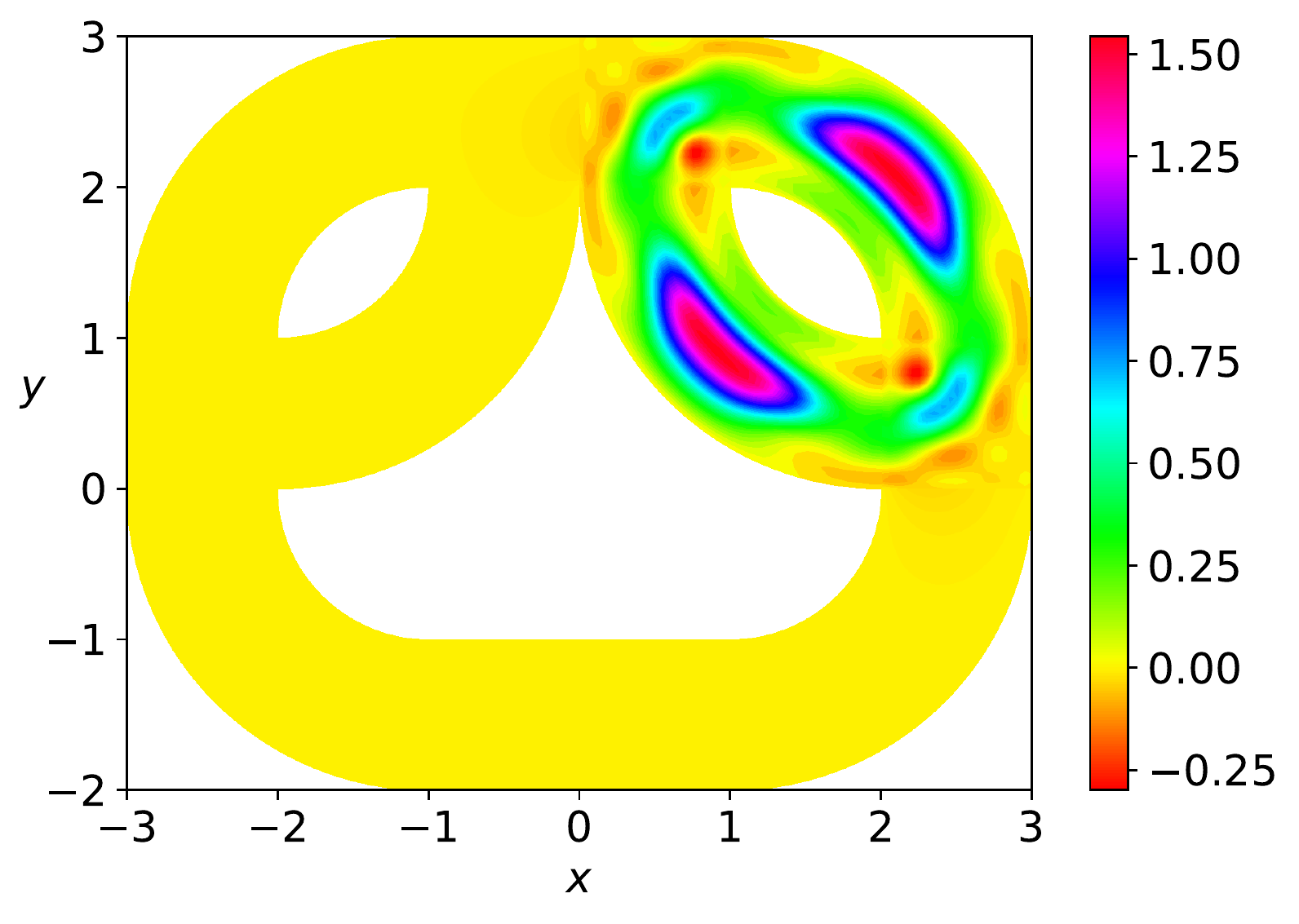}
  }
  \hspace{10pt}
  \subfloat[$8 \times 8$ cells p.p.]{%
    \includegraphics[width=0.3\textwidth]{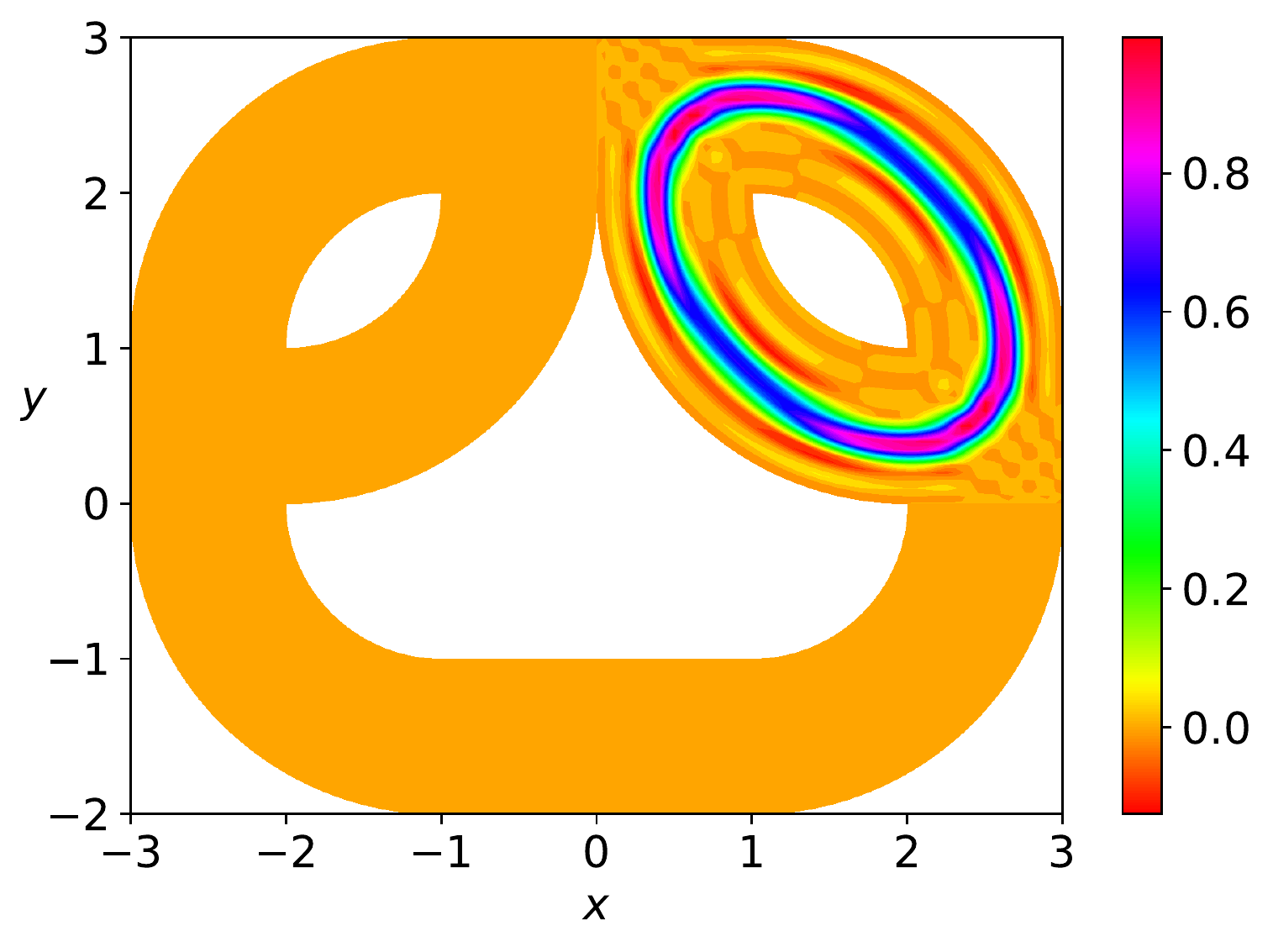}
  }
  \hspace{10pt}
  \subfloat[$16 \times 16$ cells p.p.]{
    \includegraphics[width=0.3\textwidth]{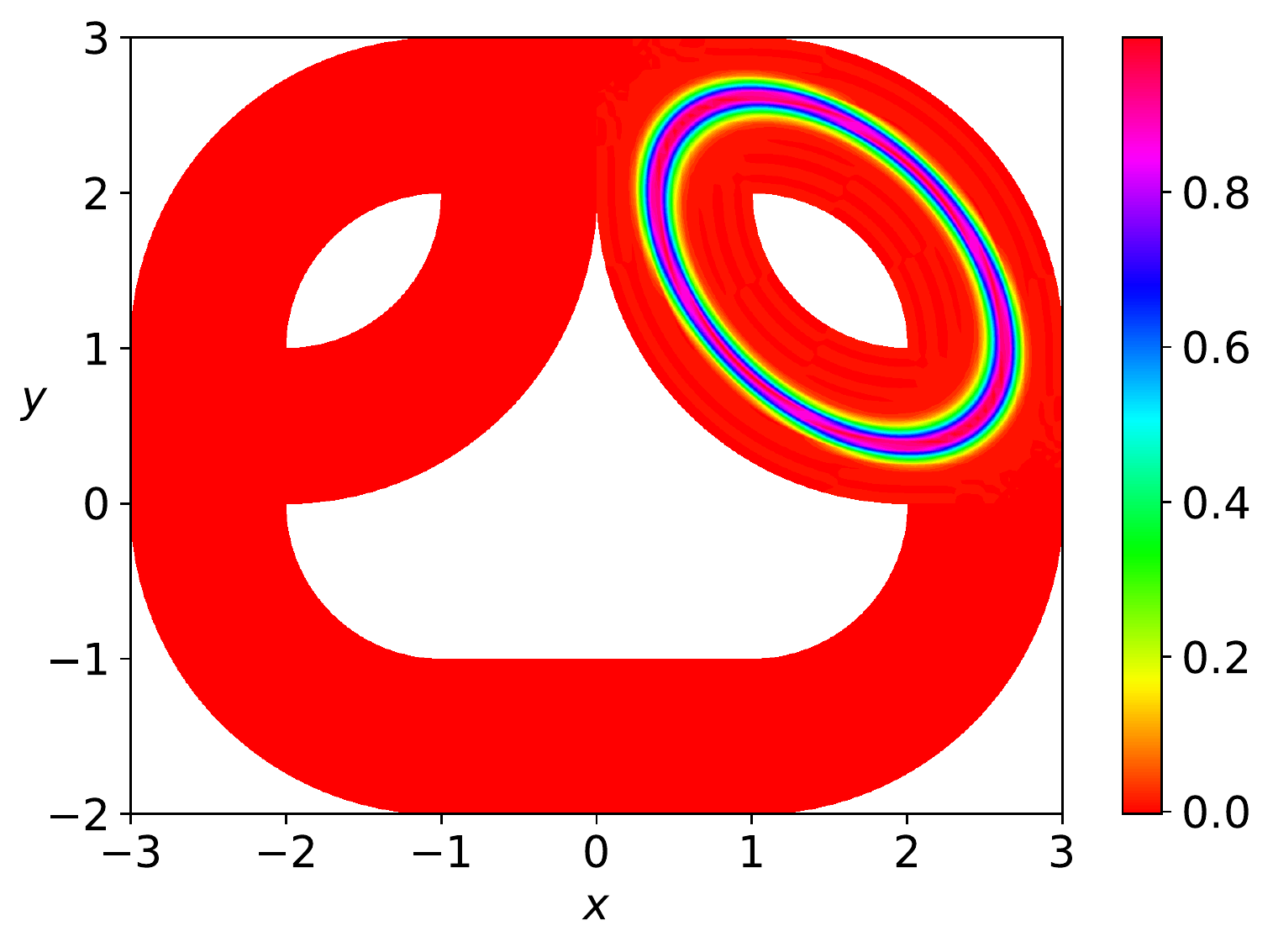}
  }
\end{center}
\caption{
  Solutions $\phi_h \in V^0_h$ computed by the scheme \eqref{poisson_hLS}, 
  with spline elements of degree $3 \times 3$ and
  $N \times N$ of cells per patches as indicated,
  corresponding to some of the errors shown in Table~\ref{tab:Poisson_L2_error}.
}
\label{fig:poisson}
\end{figure}

\subsection{Poisson problem with inhomogeneous boundary conditions}
\label{sec:num_poisson_inhom}

Our second test is with an inhomogeneous Poisson-Dirichlet problem
\eqref{poisson_fg}, using the lifting method described 
in Section~\ref{sec:lifting} for the boundary condition
and the solver tested just above for the homogeneous part of the solution. 
Specifically, we define $\phi_{g,h} \in V^0_h$ by computing its 
boundary degrees of freedom from the data $g$ on $\partial \Omega$
(this is straightforward with our geometric boundary degrees of freedom
\eqref{s0}), 
and we compute $\phi_{0,h} = \phi_{h}-\phi_{g,h}$ by solving \eqref{poisson_hbc}.
As we are not constrained by a specific condition on the domain boundary 
we consider a smooth solution to assess whether high order convergence 
rates can be observed despite the singularities in the domain boundaries. 
Specifically, we use again the pretzel-shaped domain and set the source 
and boundary condition as
\begin{equation} \label{fg_sincos}
  f(\bx) = -2\pi^2 \sin(\pi x)\cos(\pi y) ~ \text{ in } \Omega,
  \qquad 
  g(\bx) = \phi(\bx) ~ \text{ on } \partial\Omega
\end{equation}
where $\phi(\bx) = \sin(\pi x)\cos(\pi y)$ is the exact solution.

In Figure~\ref{fig:poisson_cc} we plot the convergence curves corresponding to various
$N \times N$ grids for the 18 patches of the domain, and various degrees 
$p = 2, \dots 5$. They show that the solutions converge with optimal rate $p+1$
(and even $p+2$ for $p=2$) as the patch grids are refined, which confirms the numerical
accuracy of our stabilized CONGA formulation for the Poisson problem with smooth solutions.

\begin{figure}[!htbp]
\def \plotdir {poisson_inhom/pretzel_f_manu_poisson_sincos_nc=16_deg=3}
\def \plotfn {eta=0_mu=1_gamma_h=10.0_phi_h}
\def \plotdircc {convergence_curves}
\def \plotfncc {Poiss_inhom}
\begin{center}
  \subfloat[convergence curves]{%
    \includegraphics[height=0.35\textwidth]{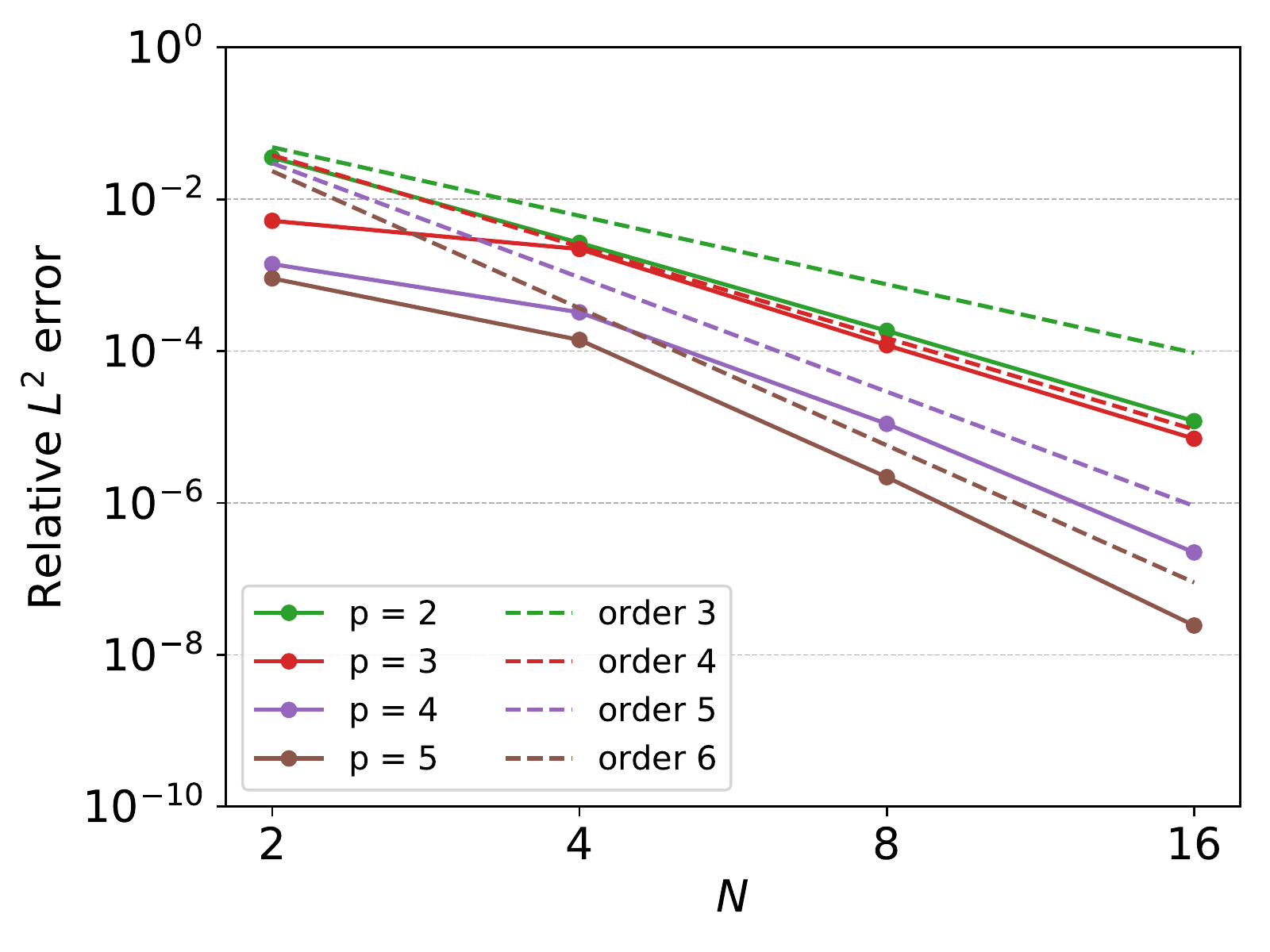}
  }
  \hspace{5pt}
  \subfloat[solution $\phi_h$ for $N=16$, $p=3$]{%
    \includegraphics[height=0.35\textwidth]{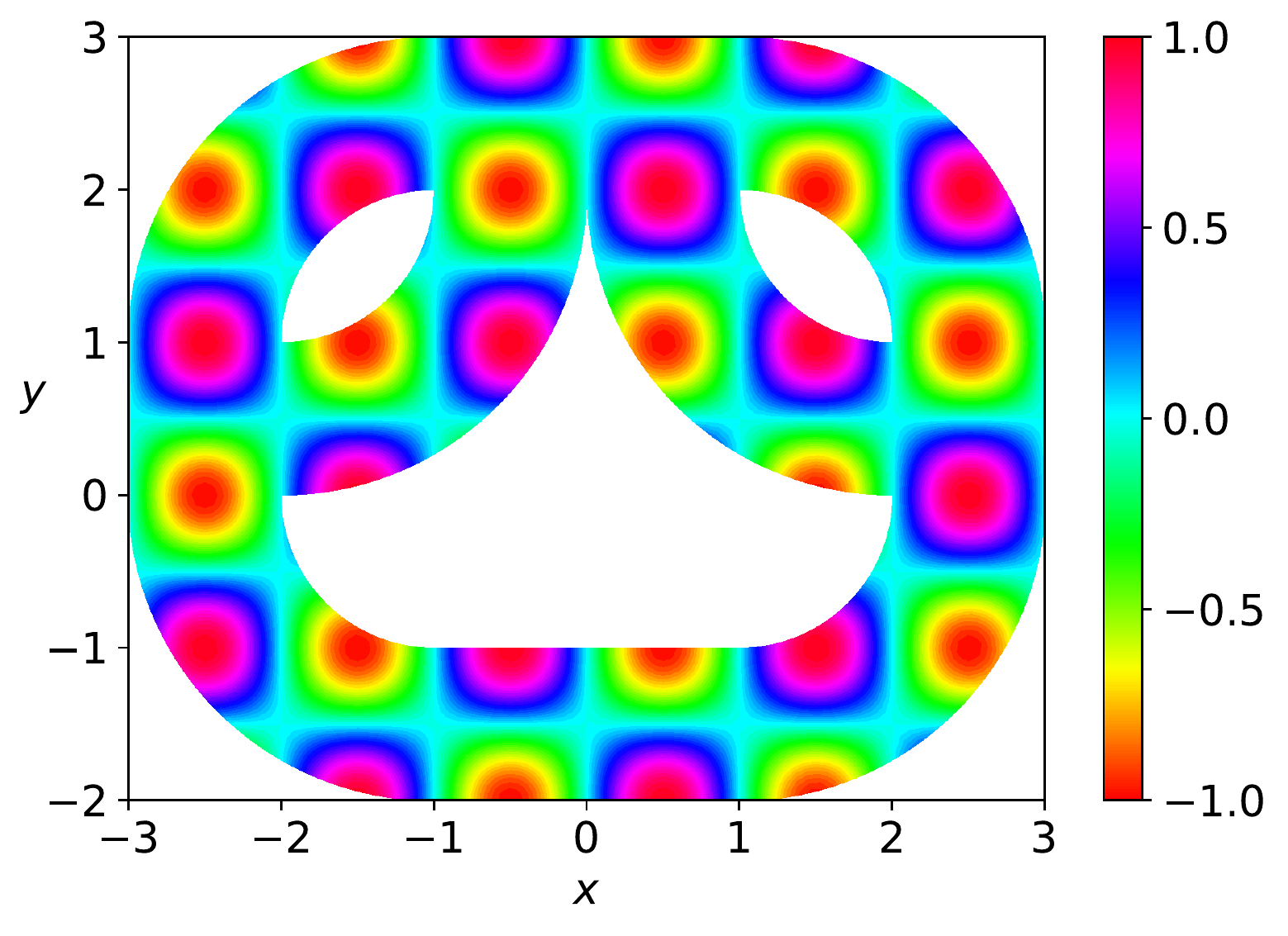}
  }
\end{center}
\caption{
Convergence study for the inhomogeneous Poisson solver with source and boundary conditions
given by \eqref{fg_sincos}.
The left plot shows the relative $L^2$ errors corresponding to various grids of 
$N \times N$ cells per patch (i.e., $18 N^2$ cells in total) and spline degrees $p \times p$ 
as indicated. The right plot shows one numerical solution $\phi_h$ of good accuracy.
}
\label{fig:poisson_cc}
\end{figure}

%
%
%

\subsection{Harmonic Maxwell problem with homogeneous boundary conditions}

We next turn to a numerical assessment of our CONGA solvers for the time-harmonic Maxwell equation,
and as we did for the Poisson equation we begin with the homogeneous case presented 
in Section \ref{sec:maxwell}.
Since now the solution depends a priori on the time pulsation $\omega$ we opt for 
a physically relevant current source localized around the upper right hole of the 
pretzel-shaped domain,
\begin{equation} \label{source_max_elliptic}
  \bJ =  \phi \bcurl \tau    
\end{equation}
where $\phi$ and $\tau$ are as in \eqref{phi_manu_poisson_elliptic}. We plot this source
in Figure~\ref{fig:maxwell_source}.

\begin{figure}[!htbp]
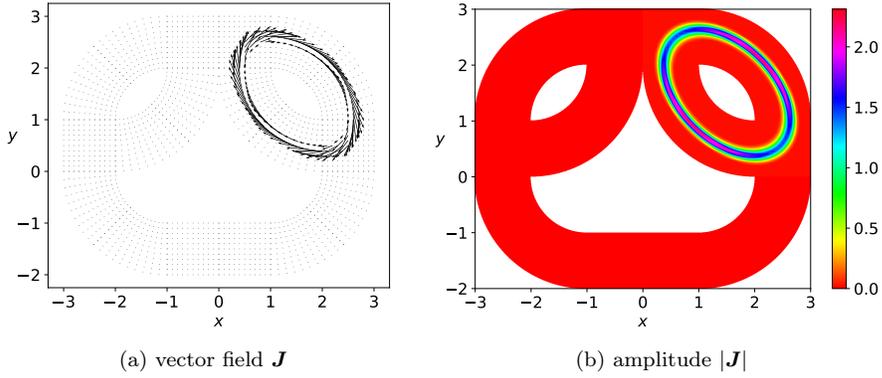

\def \plotdir {maxwell_hom_eta=50/pretzel_f_elliptic_J_nc=20_deg=6}
\def \plotfnvf {fh_tilde_Pi_vf}
\def \plotfnamp {fh_tilde_Pi}
\begin{center}
  \subfloat[vector field $\bJ$]{%
    \includegraphics[height=0.35\textwidth]{plots/\plotdir/\plotfnvf}
  }
  \hspace{5pt}
  \subfloat[amplitude $\abs{\bJ}$]{%
    \includegraphics[height=0.35\textwidth]{plots/\plotdir/\plotfnamp}
  }
\end{center}
\caption{
Source \eqref{source_max_elliptic} (vector field and amplitude) for the 
homogeneous Maxwell solutions plotted in Figure~\ref{fig:max_ref_sols} below.
}
\label{fig:maxwell_source}
\end{figure}

\begin{figure}[!htbp]
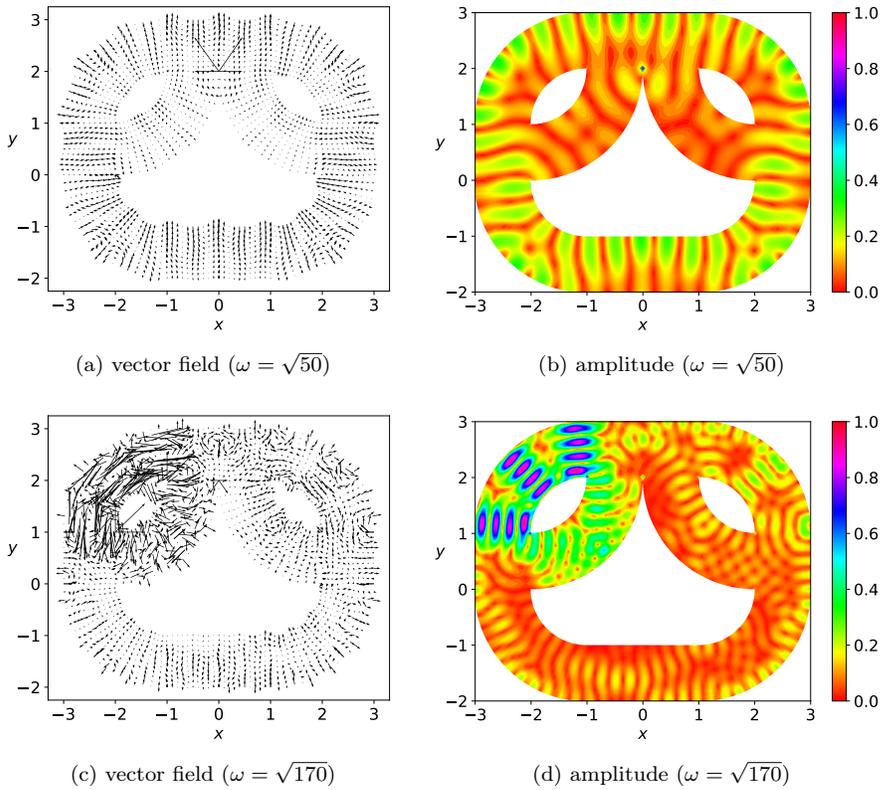

\begin{center}
  \def \plotdir {maxwell_hom_eta=50/pretzel_f_elliptic_J_nc=20_deg=6}
  \def \plotfnvf {eta=-50.0_mu=1_nu=0_gamma_h=10_Pf=tilde_Pi_uh_vf}
  \def \plotfn   {eta=-50.0_mu=1_nu=0_gamma_h=10_Pf=tilde_Pi_uh}
  \subfloat[vector field ($\omega = \sqrt{50}$)]{%
    \includegraphics[height=0.35\textwidth]{plots/\plotdir/\plotfnvf}
  }
  \hspace{5pt}
  \subfloat[amplitude ($\omega = \sqrt{50}$)]{%
    \includegraphics[height=0.35\textwidth]{plots/\plotdir/\plotfn}
  }
  \\
  \def \plotdir {maxwell_hom_eta=170/pretzel_f_elliptic_J_nc=20_deg=6}
  \def \plotfnvf {eta=-170.0_mu=1_nu=0_gamma_h=10_Pf=tilde_Pi_uh_vf}
  \def \plotfn   {eta=-170.0_mu=1_nu=0_gamma_h=10_Pf=tilde_Pi_uh}
  \subfloat[vector field ($\omega = \sqrt{170}$)]{%
    \includegraphics[height=0.35\textwidth]{plots/\plotdir/\plotfnvf}
  }
  \hspace{5pt}
  \subfloat[amplitude ($\omega = \sqrt{170}$)]{%
    \includegraphics[height=0.35\textwidth]{plots/\plotdir/\plotfn}
  }
\end{center}
\caption{
Reference numerical solutions for the homogeneous Maxwell problem 
with source \eqref{source_max_elliptic} and
time pulsation $\omega = \sqrt{50}$ (top) and $\omega = \sqrt{170}$ (bottom).
These solutions have been obtained using $20 \times 20$ cells per patch and spline elements 
of degree $6 \times 6$. The vector fields are shown on the left while the amplitudes are shown on the right.
}
\label{fig:max_ref_sols}
\end{figure}


\begin{table}[!htbp]
\centering
\begin{tabular}{c}
\hline
$\phantom{\Big(} \mspace{160mu}$ 
Errors for $\omega = \sqrt{50}$ 
$\phantom{\Big(} \mspace{160mu}$  \\
\hline   
\end{tabular}
\begin{tabular}{|l|l|l|l|l|}
\hline
\diagbox[width=10em]{cells p.p.}{degree} & \multicolumn{1}{p{12mm}|}{$2\times 2$} & \multicolumn{1}{p{12mm}|}{$3 \times 3$} & \multicolumn{1}{p{12mm}|}{$4 \times 4$} & \multicolumn{1}{p{12mm}|}{$5 \times 5$}
\\ 
\hline   
 $4  \times 4$   & 1.32277 & 1.53875 & 0.42168 & 0.02399 \\ \hline
 $8  \times 8$   & 0.45461 & 0.03990 & 0.03640 & 0.02547 \\ \hline
 $16 \times 16$  & 0.02158 & 0.02354 & 0.01333 & 0.00738 \\ \hline
\end{tabular}
\begin{tabular}{c}
\hline
$\phantom{\Big(} \mspace{160mu}$ 
Errors for $\omega = \sqrt{170}$ 
$\phantom{\Big(} \mspace{160mu}$  \\
\hline   
\end{tabular}
\begin{tabular}{|l|l|l|l|l|}
\hline
\diagbox[width=10em]{cells p.p.}{degree} & \multicolumn{1}{p{12mm}|}{$2\times 2$} & \multicolumn{1}{p{12mm}|}{$3 \times 3$} & \multicolumn{1}{p{12mm}|}{$4 \times 4$} & \multicolumn{1}{p{12mm}|}{$5 \times 5$}
\\ 
\hline   
 $4  \times 4$   & 0.99472 & 1.00119 & 1.00864 & 1.00200 \\ \hline
 $8  \times 8$   & 1.03397 & 1.39890 & 0.29048 & 0.11714 \\ \hline
 $16 \times 16$  & 0.68206 & 0.00828 & 0.00880 & 0.00484 \\ \hline
\end{tabular}
\caption{$L^2$ errors for the solution of the homogeneous Maxwell problem
with source \eqref{source_max_elliptic} and time pulsation $\omega$ as indicated.
Here the errors are computed using the numerical reference solutions
shown in Figure~\ref{fig:max_ref_sols}.
}
\label{tab:max_L2_error}
\end{table}

\begin{figure}[!htbp]
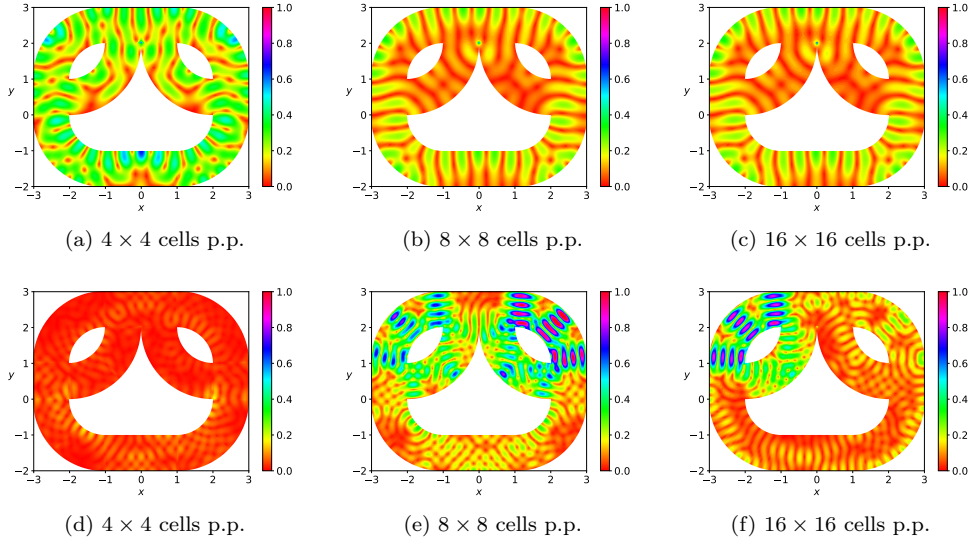

\begin{center}
  \def \plotdircoarse {maxwell_hom_eta=50/pretzel_f_elliptic_J_nc=4_deg=3}
  \def \plotdirmed {maxwell_hom_eta=50/pretzel_f_elliptic_J_nc=8_deg=3}
  \def \plotdirfine {maxwell_hom_eta=50/pretzel_f_elliptic_J_nc=16_deg=3}
  \def \plotfn {eta=-50.0_mu=1_nu=0_gamma_h=10_Pf=tilde_Pi_uh}
  \subfloat[$4 \times 4$ cells p.p.]{%
    \includegraphics[width=0.3\textwidth]{plots/\plotdircoarse/\plotfn}
  }
  \hspace{10pt}
  \subfloat[$8 \times 8$ cells p.p.]{%
    \includegraphics[width=0.3\textwidth]{plots/\plotdirmed/\plotfn}
  }
  \hspace{10pt}
  \subfloat[$16 \times 16$ cells p.p.]{
    \includegraphics[width=0.3\textwidth]{plots/\plotdirfine/\plotfn}
  }
  \\
  \def \plotdircoarse {maxwell_hom_eta=170/pretzel_f_elliptic_J_nc=4_deg=3}
  \def \plotdirmed {maxwell_hom_eta=170/pretzel_f_elliptic_J_nc=8_deg=3}
  \def \plotdirfine {maxwell_hom_eta=170/pretzel_f_elliptic_J_nc=16_deg=3}
  \def \plotfn {eta=-170.0_mu=1_nu=0_gamma_h=10_Pf=tilde_Pi_uh}
  \subfloat[$4 \times 4$ cells p.p.]{%
    \includegraphics[width=0.3\textwidth]{plots/\plotdircoarse/\plotfn}
  }
  \hspace{10pt}
  \subfloat[$8 \times 8$ cells p.p.]{%
    \includegraphics[width=0.3\textwidth]{plots/\plotdirmed/\plotfn}
  }
  \hspace{10pt}
  \subfloat[$16 \times 16$ cells p.p.]{
    \includegraphics[width=0.3\textwidth]{plots/\plotdirfine/\plotfn}
  }
\end{center}
\caption{
Numerical solutions obtained with spline elements of degree $3 \times 3$ and
different grids as indicated, corresponding to the errors shown in 
Table~\ref{tab:max_L2_error}.
}
\label{fig:maxwell_hom}
\end{figure}

We then consider two values for the time pulsation, namely 
$\omega=\sqrt{50}$ and $\omega=\sqrt{170}$, and for each of these values
we use as reference solutions the numerical solutions computed using our 
method on a mesh with $20 \times 20$ cells per patch (i.e. 7200 cells in total)
and elements of degree $6 \times 6$ in each patch. These reference solutions
are shown in Figure~\ref{fig:max_ref_sols}.
Interestingly, we observe that for the higher pulsation $\omega=\sqrt{170}$
the source triggers a time-harmonic field localized around the upper left hole,
opposite to where the source is.

In Table~\ref{tab:max_L2_error} we show the $L^2$ errors corresponding to different grids 
and spline degrees for the two values of the pulsation $\omega$, and we also plot in 
Figure~\ref{fig:maxwell_hom} the different solutions corresponding 
to spline elements of degree $3 \times 3$.
These results show that the numerical solutions converge towards the reference ones
as the grids are refined, and with a faster convergence in the case of the 
lower pulsation $\omega = \sqrt{50}$, due to the higher smoothness of the 
corresponding solution.

\subsection{Time-harmonic Maxwell problem with inhomogeneous boundary conditions}
\label{sec:num_maxwell_inhom}

As we did for the Poisson problem, we next test our Maxwell solver with 
a smooth solution and handle the inhomogeneous boundary conditions 
with the lifting method described in Section~\ref{sec:lifting}.
Specifically, we define $\bu_{g,h} \in V^1_h$ by computing its 
boundary degrees of freedom from the data $g = \bn \times \bu$ on $\partial \Omega$
(this is again straightforward with our geometric boundary degrees of freedom
\eqref{s1_gc}), 
and we compute $\bu_{0,h} = \bu_{h}-\bu_{g,h}$ by solving \eqref{maxwell_hbc}.
Here we take $\omega = \pi$ and consider \eqref{max_fg}
with the source-solution pair
\begin{equation} \label{J_sincos}
  \bJ = \begin{pmatrix}
  - \pi^2 \sin(\pi y)\cos(\pi x) \\ 0
  \end{pmatrix},
  \qquad 
  \bu = \begin{pmatrix} \sin(\pi y) \\ \sin(\pi x)\cos(\pi y) \end{pmatrix}
\end{equation}
and a boundary condition given by $g := \bn \times \bu$ on $\partial\Omega$.

In Figure~\ref{fig:maxwell_cc} we plot the convergence curves corresponding to 
spline elements of various degrees $p\times p$ and $N \times N$ cells per patch. 
The results are similar to what was observed for the Poisson problem: the solutions 
converge with optimal rate $p+1$ as the grids are refined
(again a rate of $p+2$ is observed for $=2$) which confirms the numerical accuracy of our 
approach for the Maxwell problem combined with a geometric lifting technique for the Dirichlet boundary condition.

\begin{figure}[!htbp]
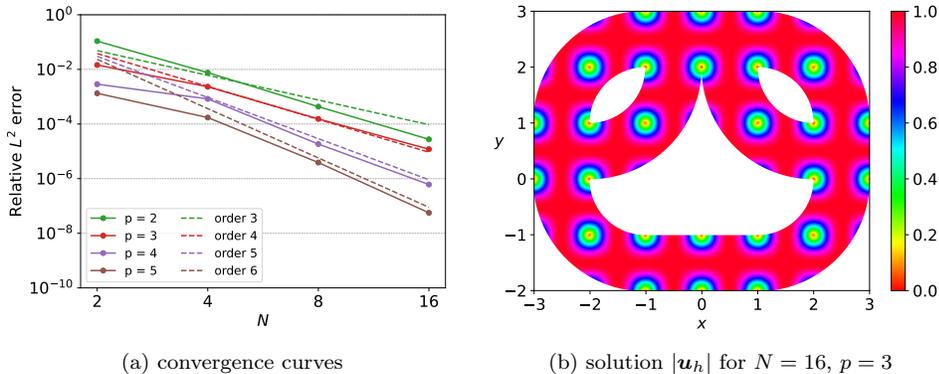

\def \plotdir {maxwell_inhom/pretzel_f_manu_maxwell_inhom_nc=16_deg=3}
\def \plotfn {eta=-9.8696_mu=1_nu=0_gamma_h=10_Pf=tilde_Pi_uh}
\def \plotdircc {convergence_curves}
\def \plotfncc {Max_inhom}
\begin{center}
  \subfloat[convergence curves]{%
    \includegraphics[height=0.35\textwidth]{plots/\plotdircc/\plotfncc}
  }
  \hspace{5pt}
  \subfloat[solution $\abs{\bu_h}$ for $N=16$, $p=3$]{%
    \includegraphics[height=0.35\textwidth]{plots/\plotdir/\plotfn}
  }
\end{center}
\caption{
Convergence study for the time-harmonic Maxwell solver with 
source given by \eqref{J_sincos} and inhomogeneous boundary conditions 
$\bn \times \bu = g$.
Relative $L^2$ errors are plotted on the left for various grids of 
$N \times N$ cells per patch (corresponding to a total of $18 N^2$ cells) 
and spline degrees $p \times p$ as indicated. 
The right plot shows the amplitude of a numerical solution 
$\bu_h$ of good accuracy.
}
\label{fig:maxwell_cc}
\end{figure}

\subsection{Eigenvalue problems}

We next assess the accuracy of our CONGA approximation \eqref{CC_evh}
for the curl-curl eigenvalue problem.

We test our discretization on the two domains shown in Figure~\ref{fig:mp},
and plot in Figure~\ref{fig:emodes} the amplitude of the first five eigenmodes,
together with their positive eigenvalues.
Here the eigenmodes are computed using spline elements of degree $6 \times 6$ and 
$56 \times 56$, resp. $20 \times 20$ cells per patch in the case of the curved L-shaped, 
resp. pretzel-shaped domain composed of 3, resp. 18 patches.
On the former domain this corresponds to 9408 cells in total and 22692 degrees of freedom
for the broken space $V^1_h$, while on the latter domain it corresponds to 7200 cells 
and 23400 degrees of freedom for $V^1_h$ (a higher value than for the L-shaped domain
despite less cells, because of the duplication of boundary dofs at the patch interfaces).

In Figure~\ref{fig:ev_pbm_cc} we then plot the relative eigenvalue errors 
\begin{equation} \label{err_lambda}
  e_{h,i} = \frac{\abs{\lambda_i-\lambda_{h,i}}}{\max(\lambda_i,\lambda_{h,i})}  
\end{equation}
as a function of the eigenvalue index $i$, 
for degrees $p = 3$ and $5$, and $N \times N$ cells per patch 
with $N = 2, 4, 8$ and $16$.
For the curved L-shaped domain we use as reference the eigenvalues 
provided as benchmark in \cite{dauge_benchmarks,Durufle.2006.phd}, and 
for the pretzel-shaped domain we use the eigenvalues computed using our CONGA scheme,
with as many reliable digits as we could find using uniform patches with degree 
$6 \times 6$ and $N \times N$ cells per patch with $N \le 20$ (this limit being imposed by the fact that we compute the
matrix eigenmodes with Scipy's \texttt{eigsh} solver with a sparse LU decomposition).

This allows us to verify numerically that the discrete eigenvalues converge towards the exact ones,
with smaller errors corresponding to the smoother eigenmodes visible in Figure~\ref{fig:emodes}.

\begin{figure}[!htbp]
\def \plotdirL {cc_eigenpbm_L_shape/curved_L_shape_nc=56_deg=6}
\def \plotdirP {cc_eigenpbm_pretzel/pretzel_f_nc=20_deg=6}

\begin{center}
  \subfloat[$\abs{\bu_1}$, $\lambda_1 = 1.81857115231$]{%
    \includegraphics[width=0.3\textwidth]{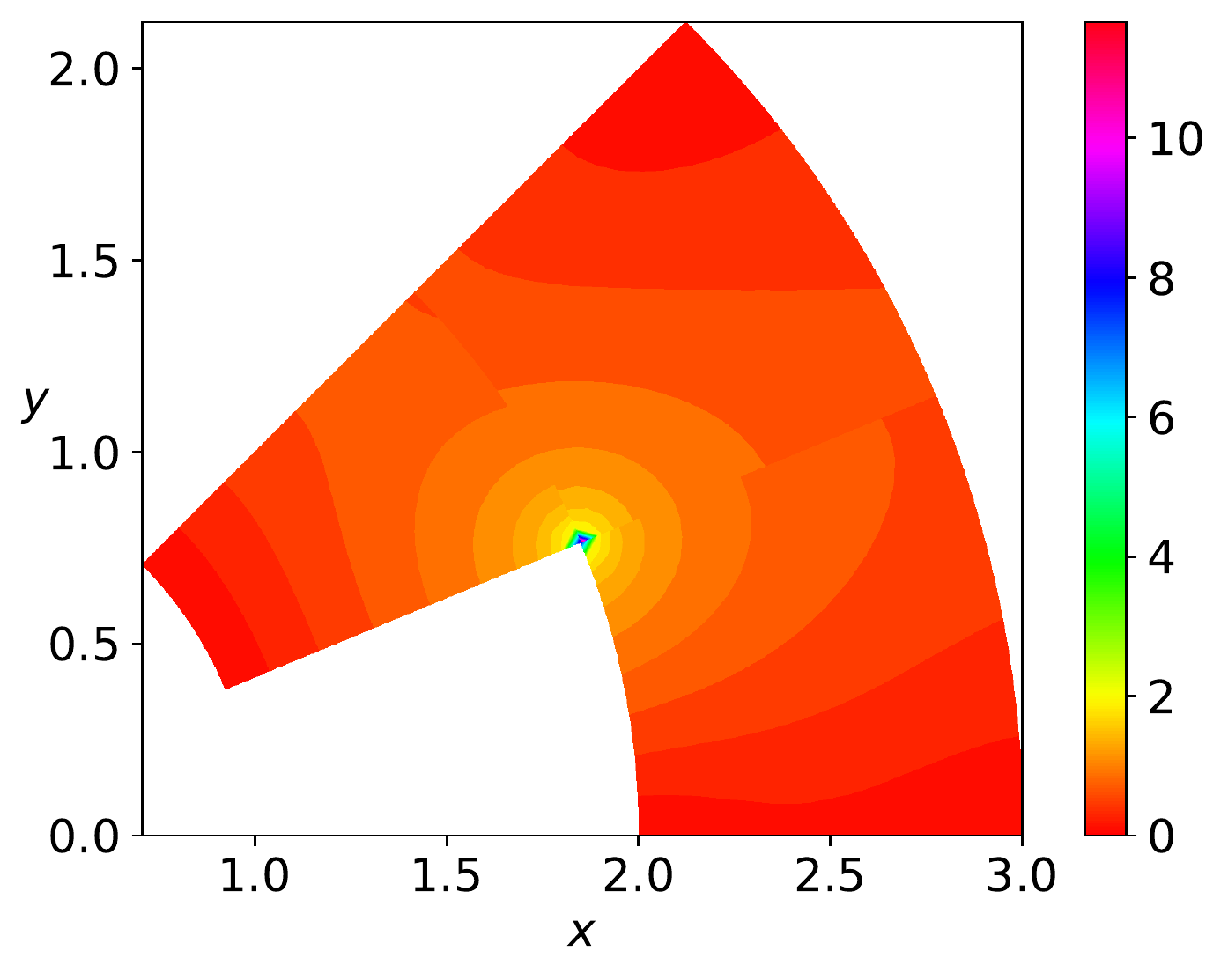}
  }
  \hspace{15pt}
  \subfloat[$\abs{\bu_1}$, $\lambda_1 = 0.1795$]{%
    \includegraphics[width=0.3\textwidth]{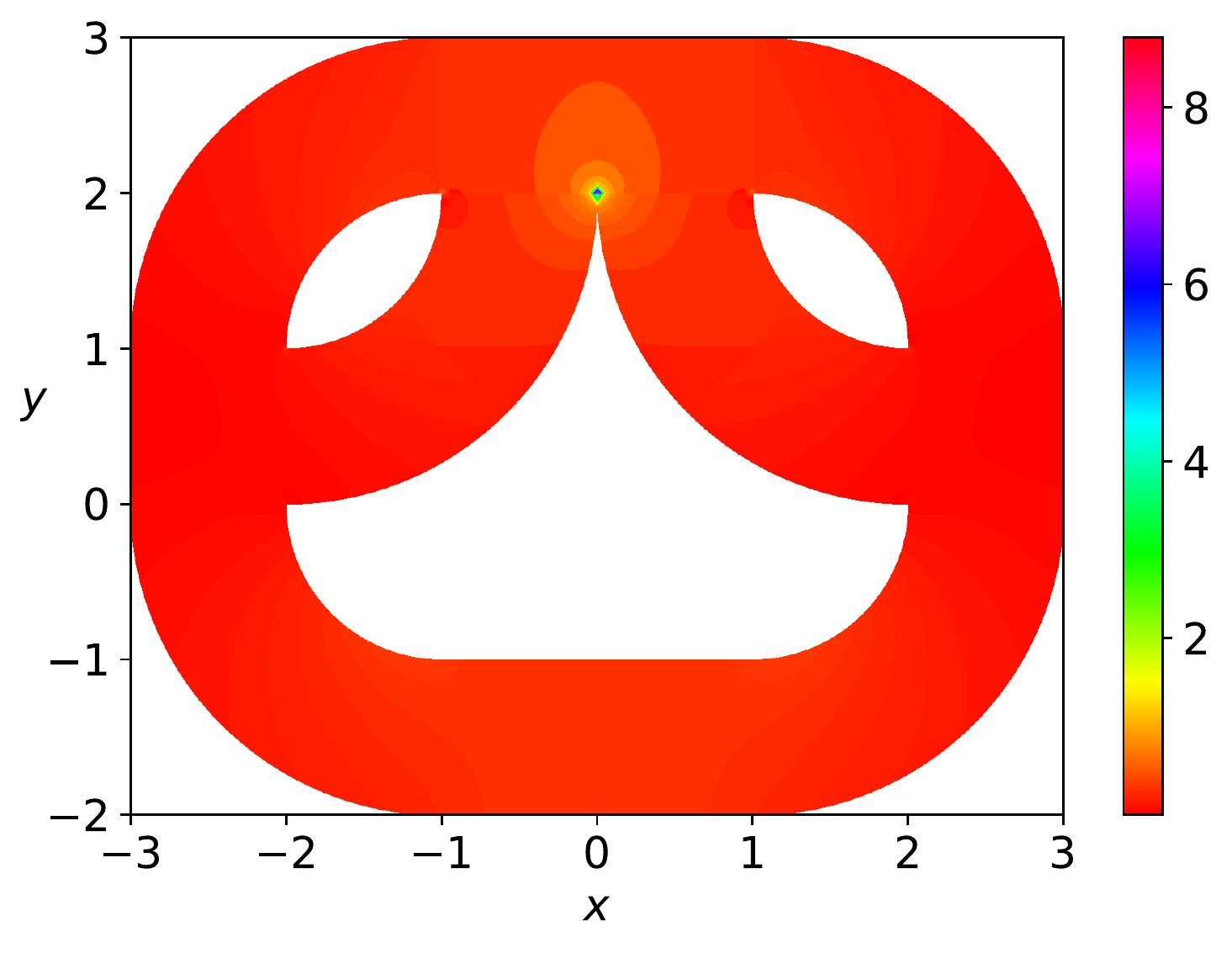}
  }
  \\
  \subfloat[$\abs{\bu_2}$, $\lambda_2 = 3.49057623279$]{%
    \includegraphics[width=0.3\textwidth]{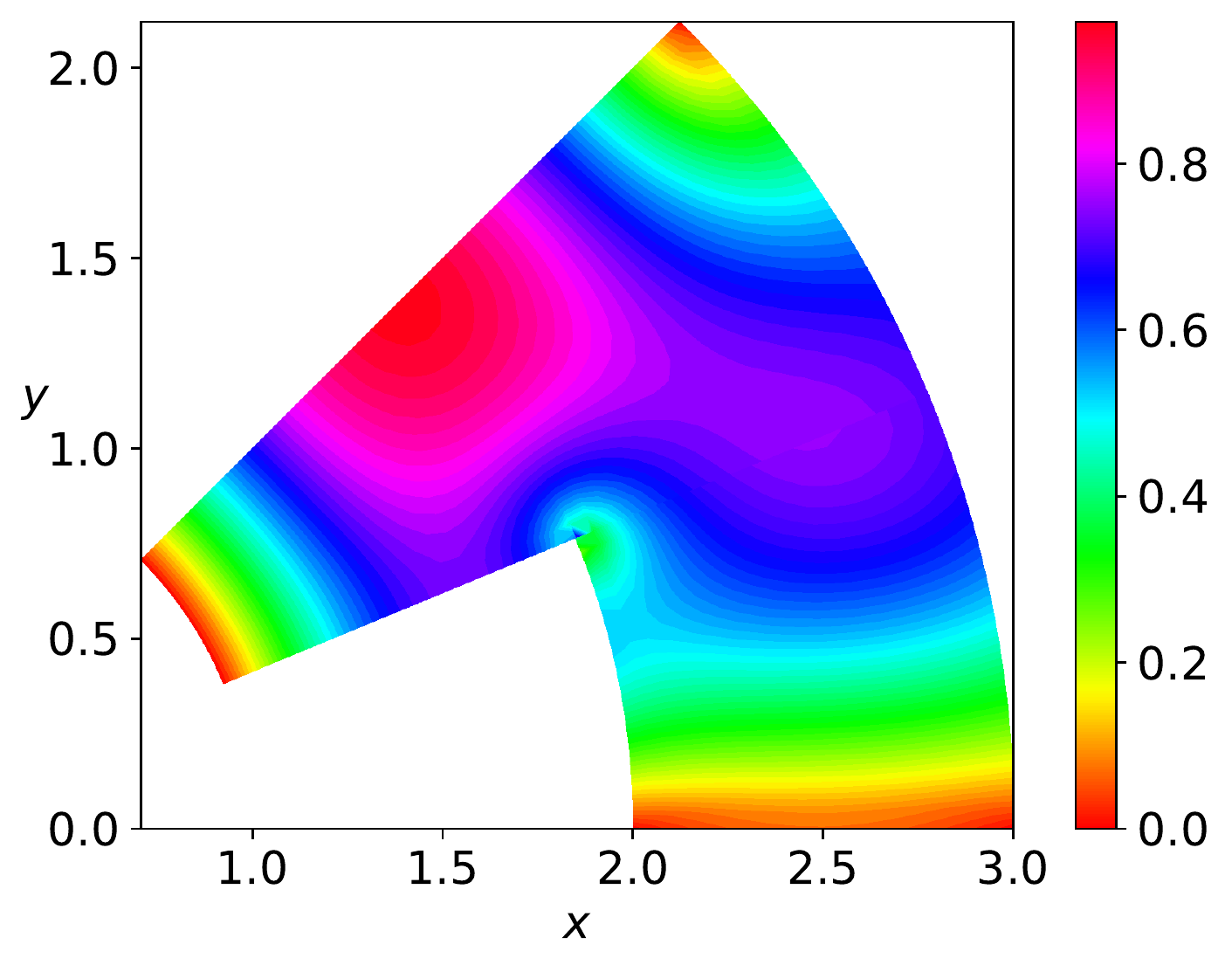}
  }
  \hspace{15pt}
  \subfloat[$\abs{\bu_2}$, $\lambda_2 = 0.19922$]{%
    \includegraphics[width=0.3\textwidth]{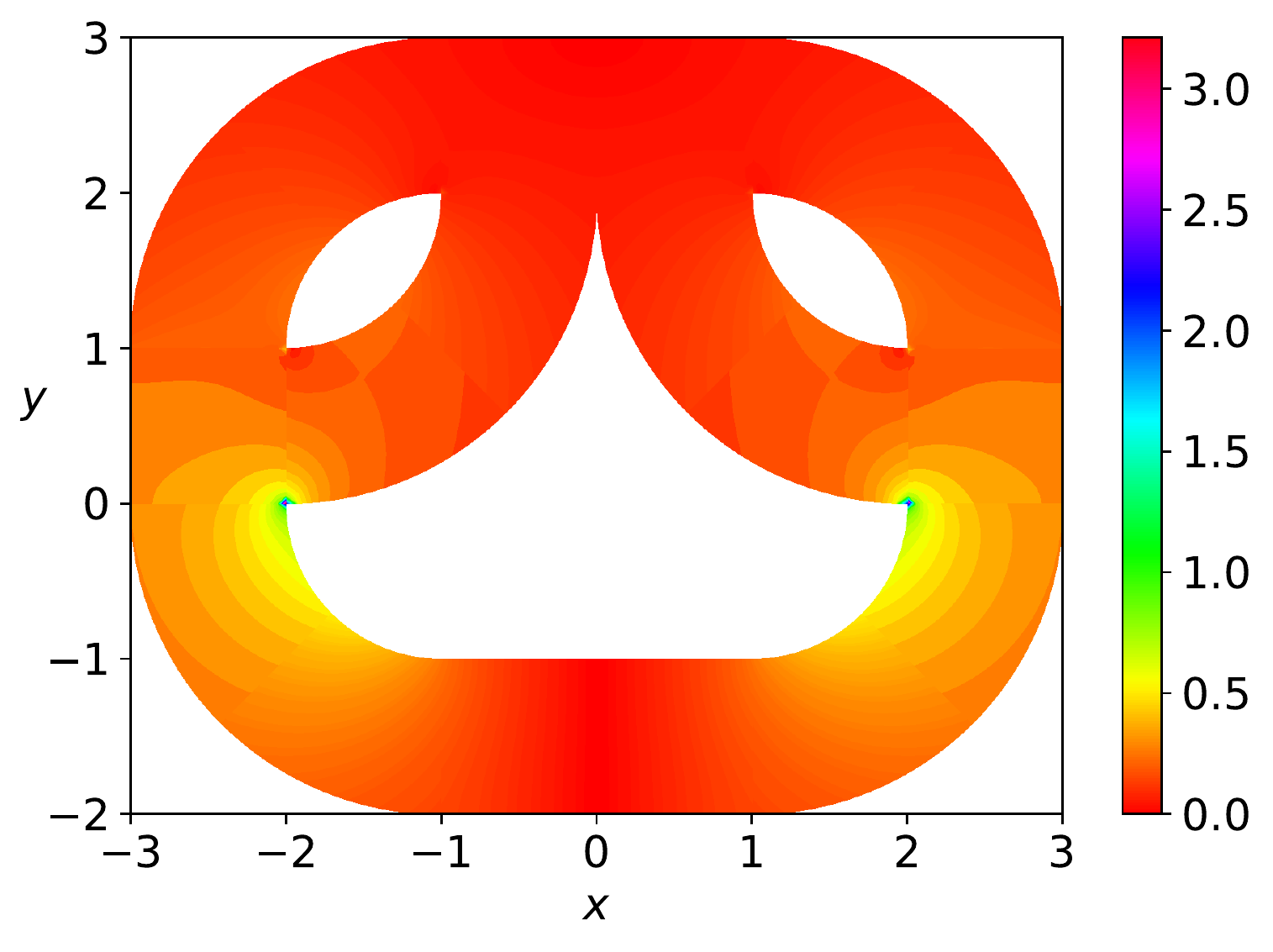}
  }
  \\
  \subfloat[$\abs{\bu_3}$, $\lambda_3 = 10.0656015004$]{%
    \includegraphics[width=0.3\textwidth]{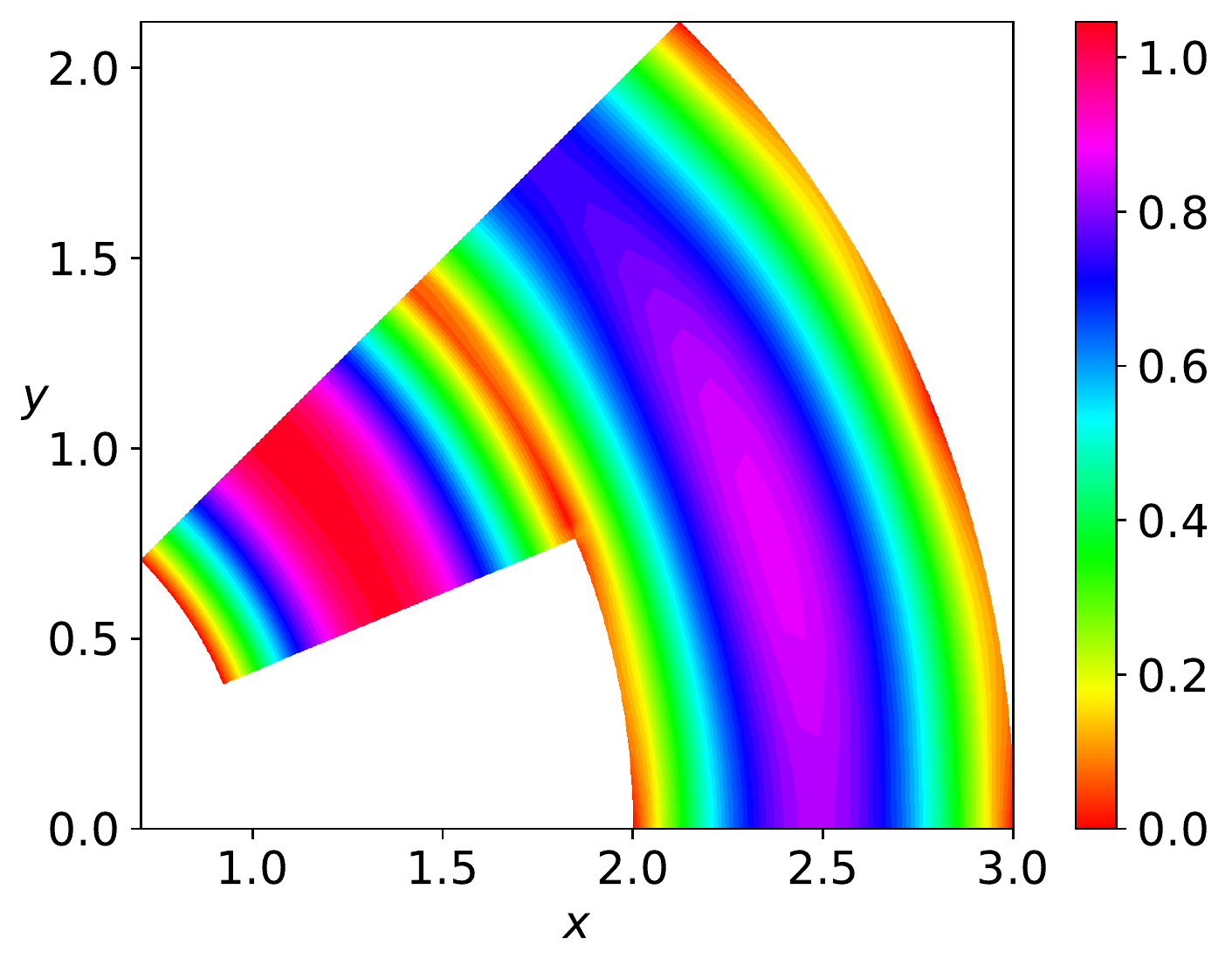}
  }
  \hspace{15pt}
  \subfloat[$\abs{\bu_3}$, $\lambda_3 = 0.6992$]{%
    \includegraphics[width=0.3\textwidth]{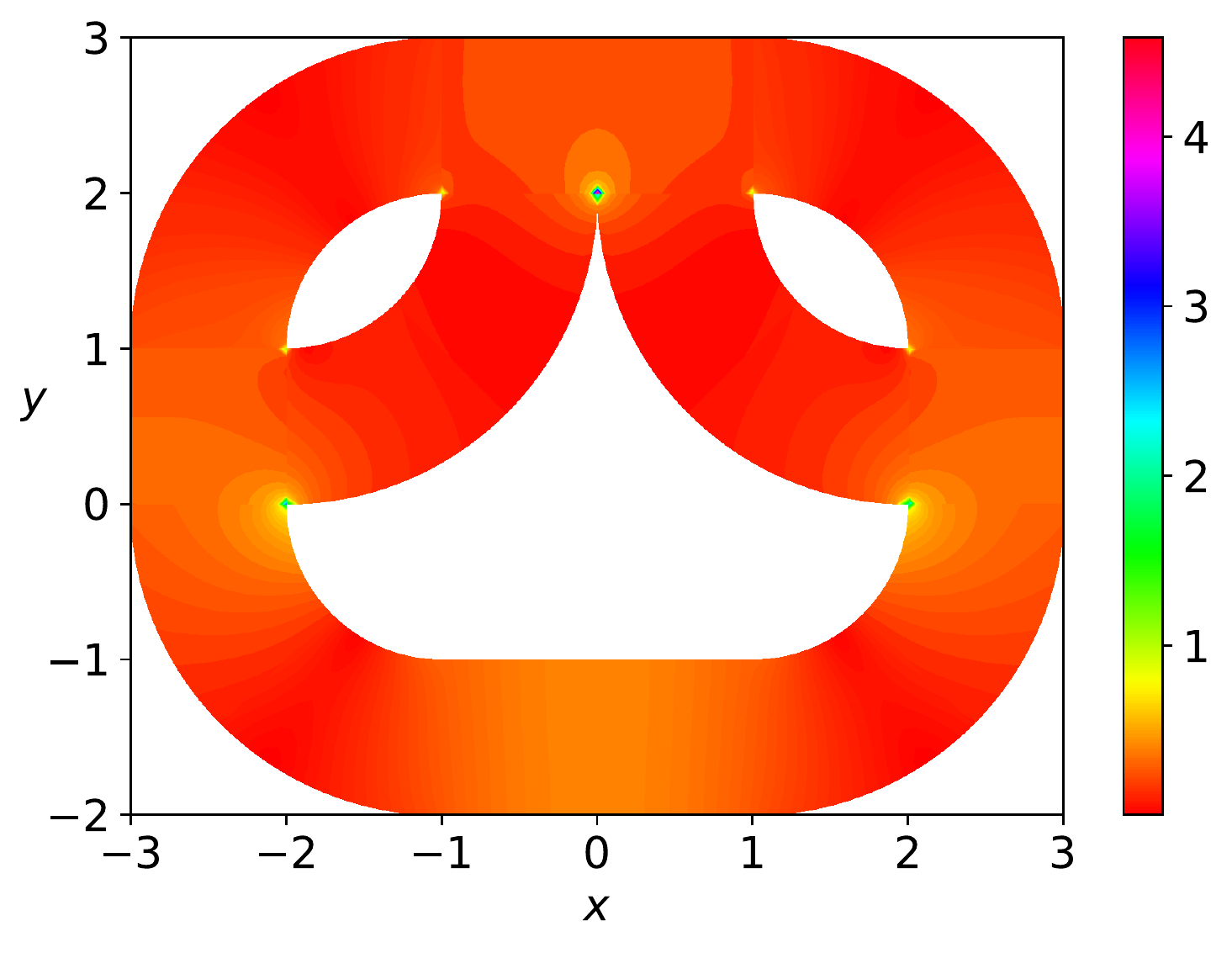}
  }
  \\
  \subfloat[$\abs{\bu_4}$, $\lambda_4 = 10.1118862307$]{%
    \includegraphics[width=0.3\textwidth]{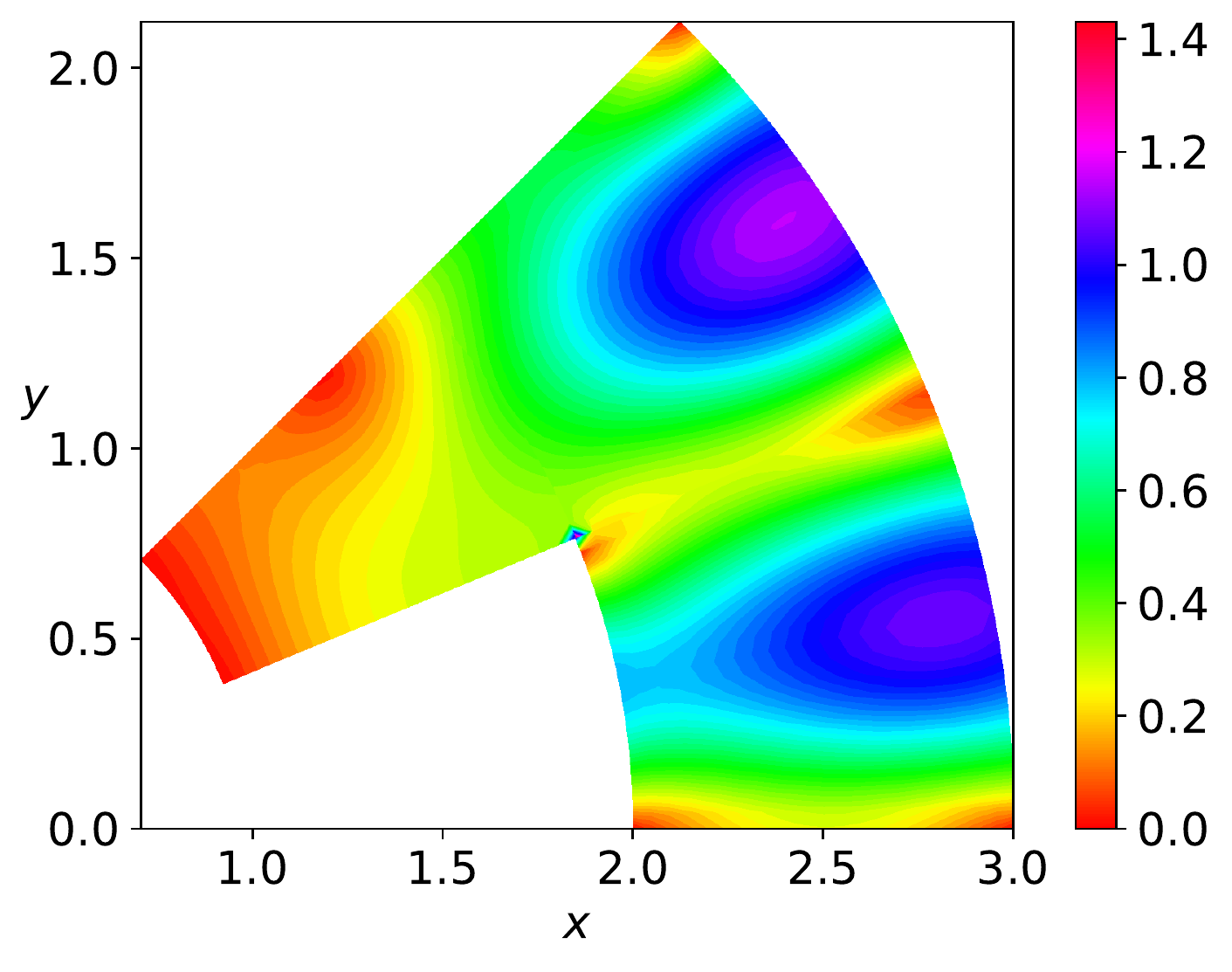}
  }
  \hspace{15pt}
  \subfloat[$\abs{\bu_4}$, $\lambda_4 = 0.870941$]{%
    \includegraphics[width=0.3\textwidth]{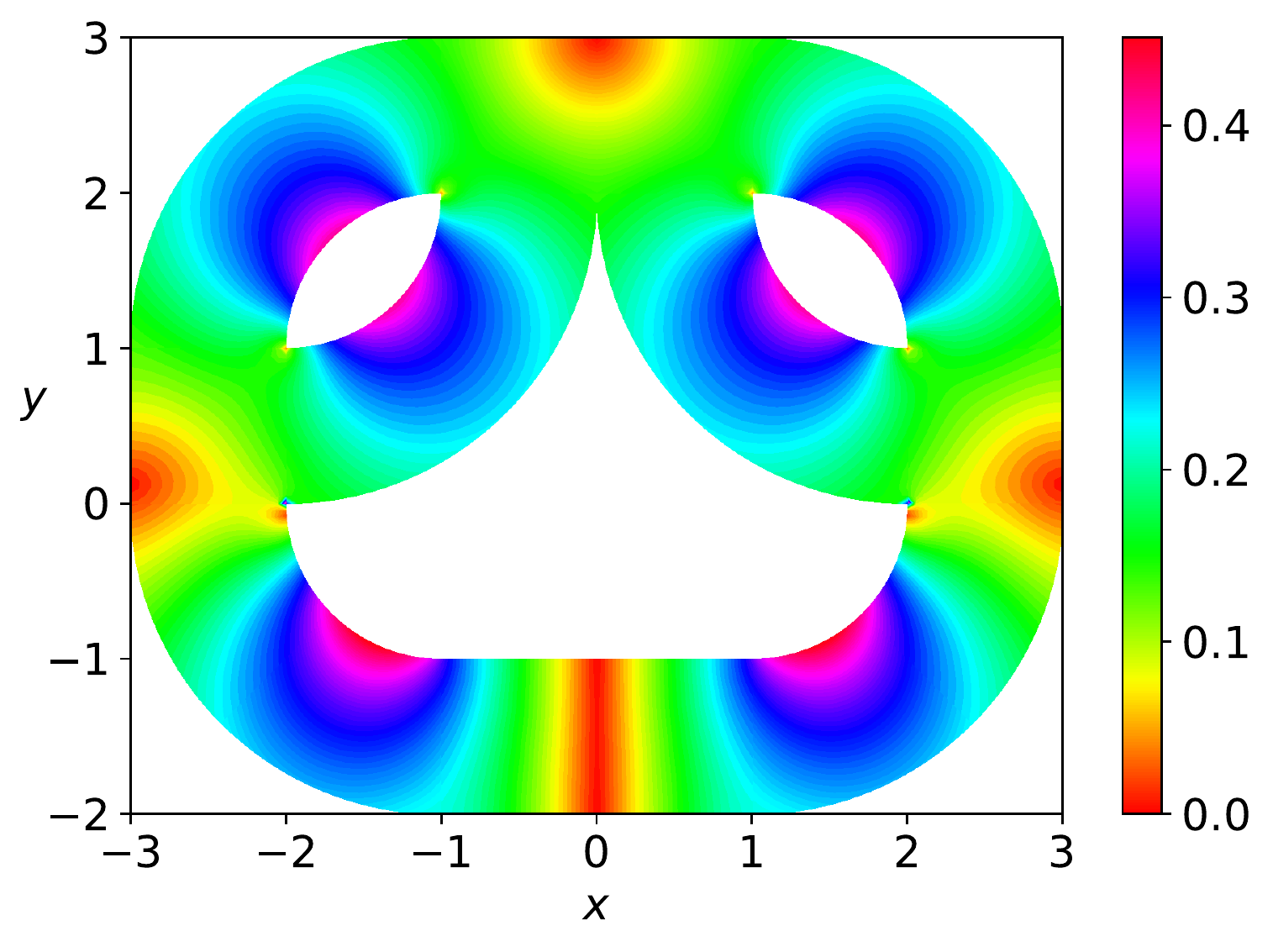}
  }
  \\
  \subfloat[$\abs{\bu_5}$, $\lambda_5 = 12.4355372484$]{%
    \includegraphics[width=0.3\textwidth]{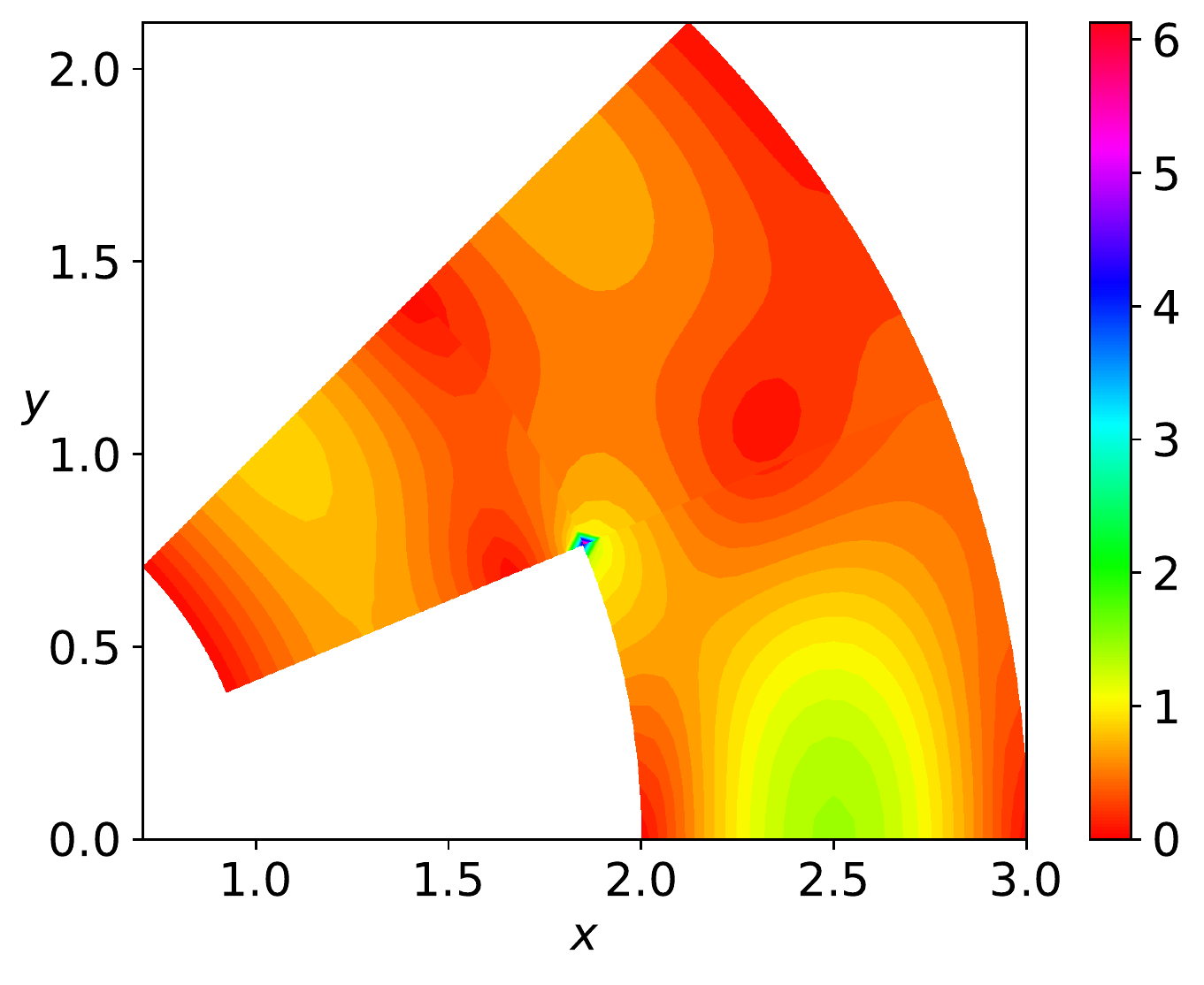}
  }
  \hspace{15pt}
  \subfloat[$\abs{\bu_5}$, $\lambda_5 = 1.1945$]{%
    \includegraphics[width=0.3\textwidth]{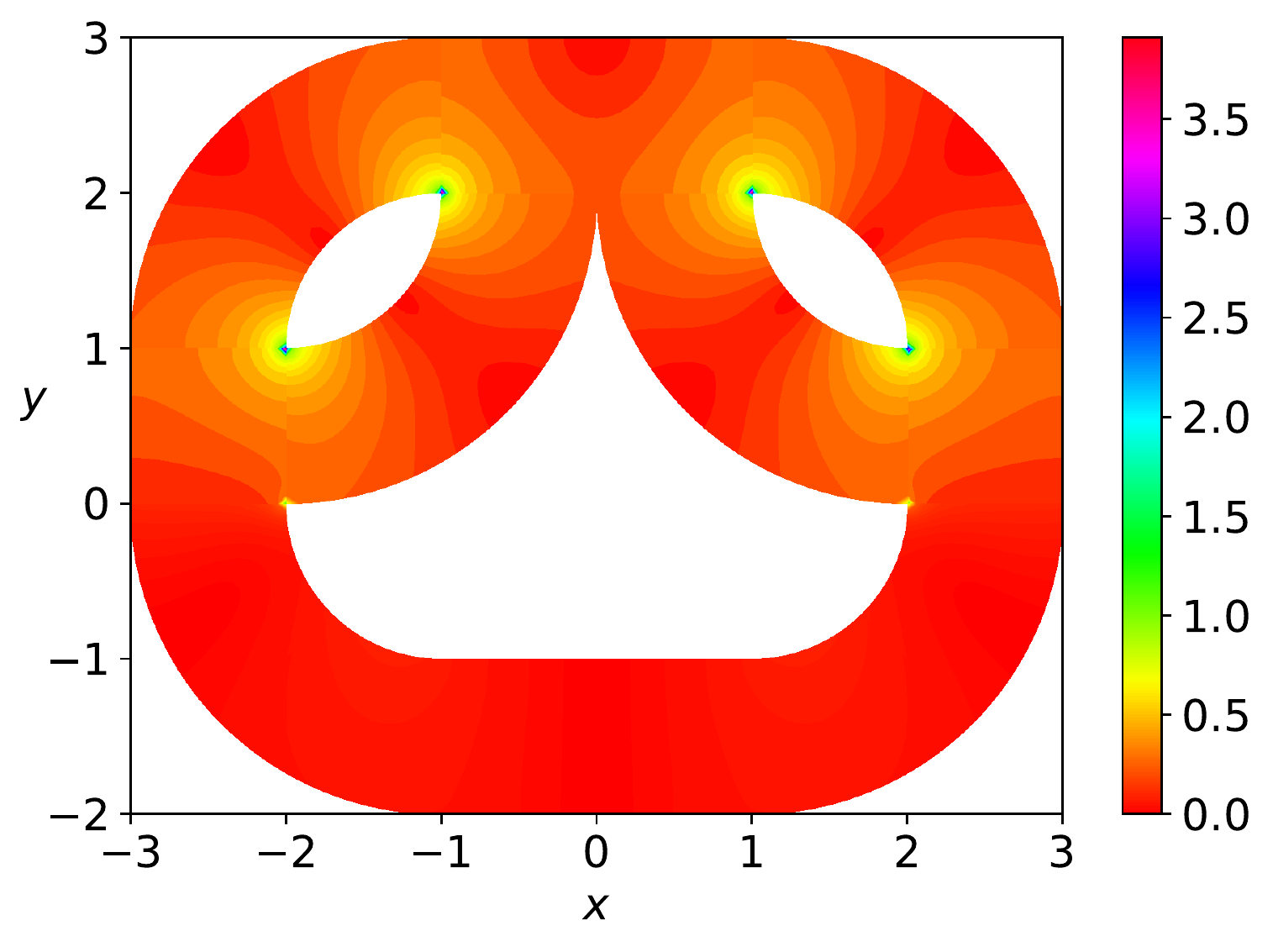}
  }
\end{center}
\caption{
Eigenmodes of the curl-curl problem on the curved-L-shaped domain (left) 
and on the pretzel domain (right), obtained on fine grids with $20 \times 20$
cells per patch and degree $6 \times 6$.
}
\label{fig:emodes}
\end{figure}

\begin{figure}[!htbp]
\def \plotdircc {convergence_curves}
\begin{center}
  \subfloat[curved L-shaped domain, $p=3$]{%
    \includegraphics[width=0.4\textwidth]{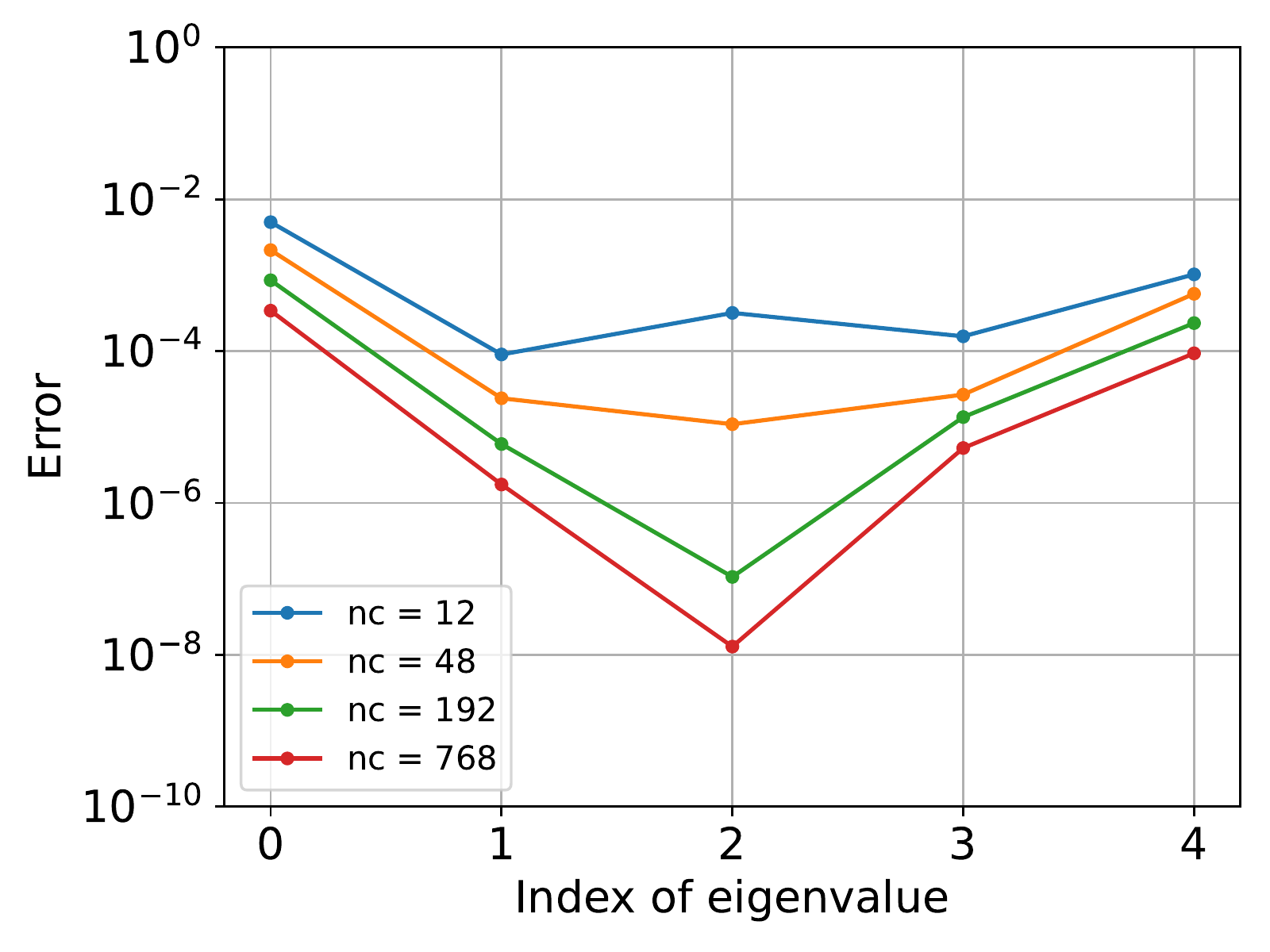}
  }
  \hspace{15pt}
  \subfloat[pretzel domain, $p=3$]{%
    \includegraphics[width=0.4\textwidth]{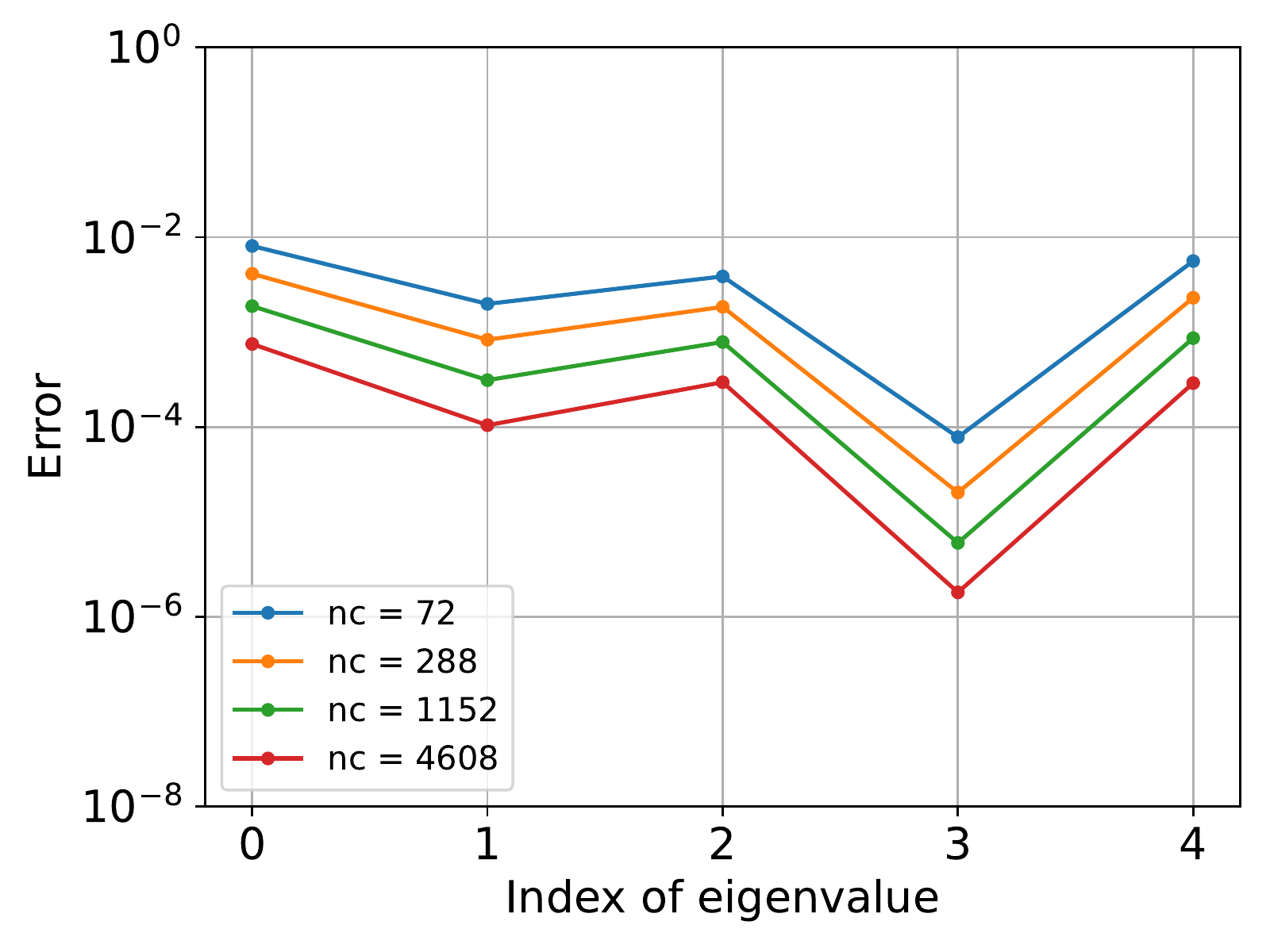}
  }
  \\
  \subfloat[curved L-shaped domain, $p=5$]{%
    \includegraphics[width=0.4\textwidth]{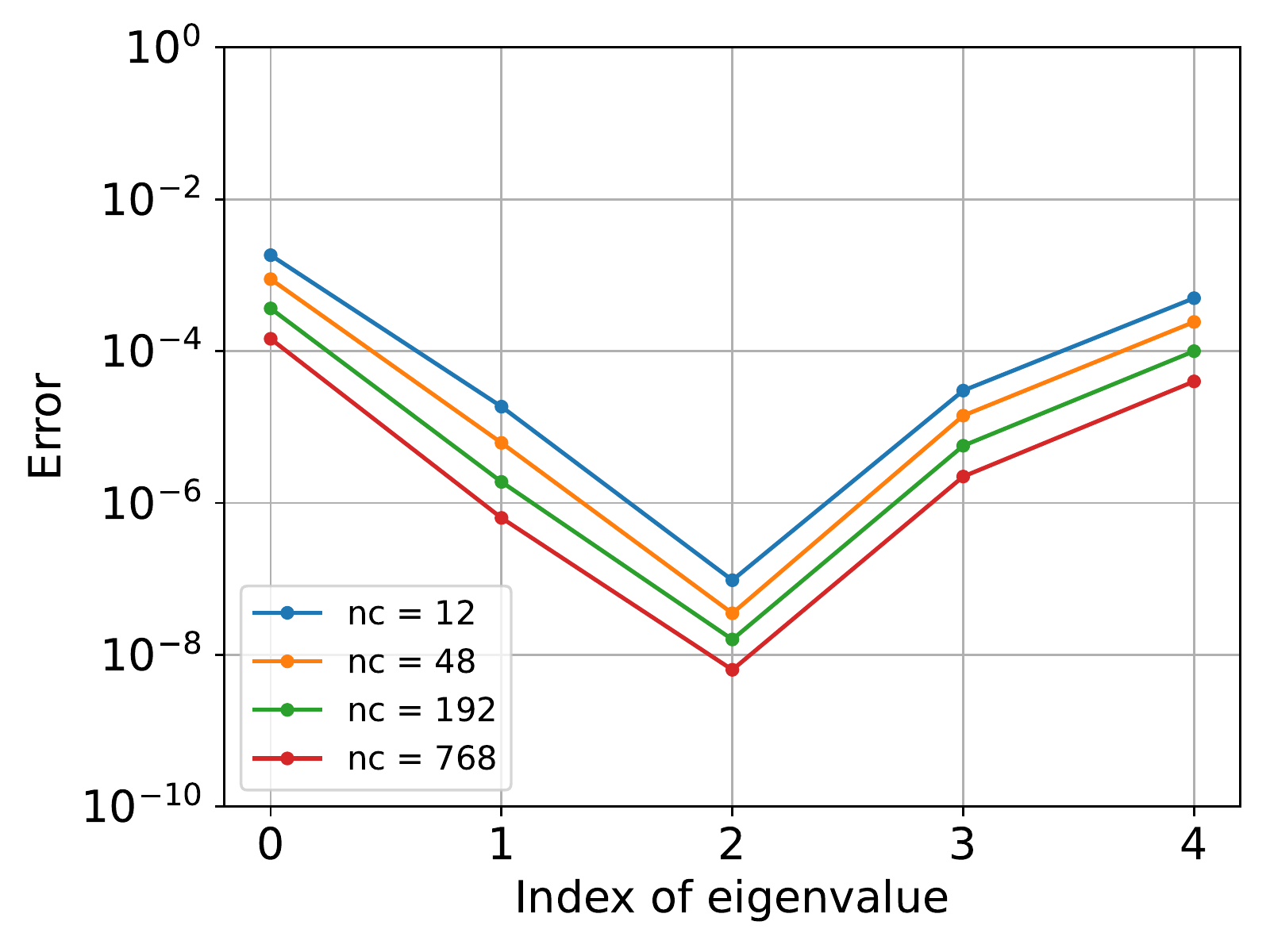}
  }
  \hspace{15pt}
  \subfloat[pretzel domain, $p=5$]{%
    \includegraphics[width=0.4\textwidth]{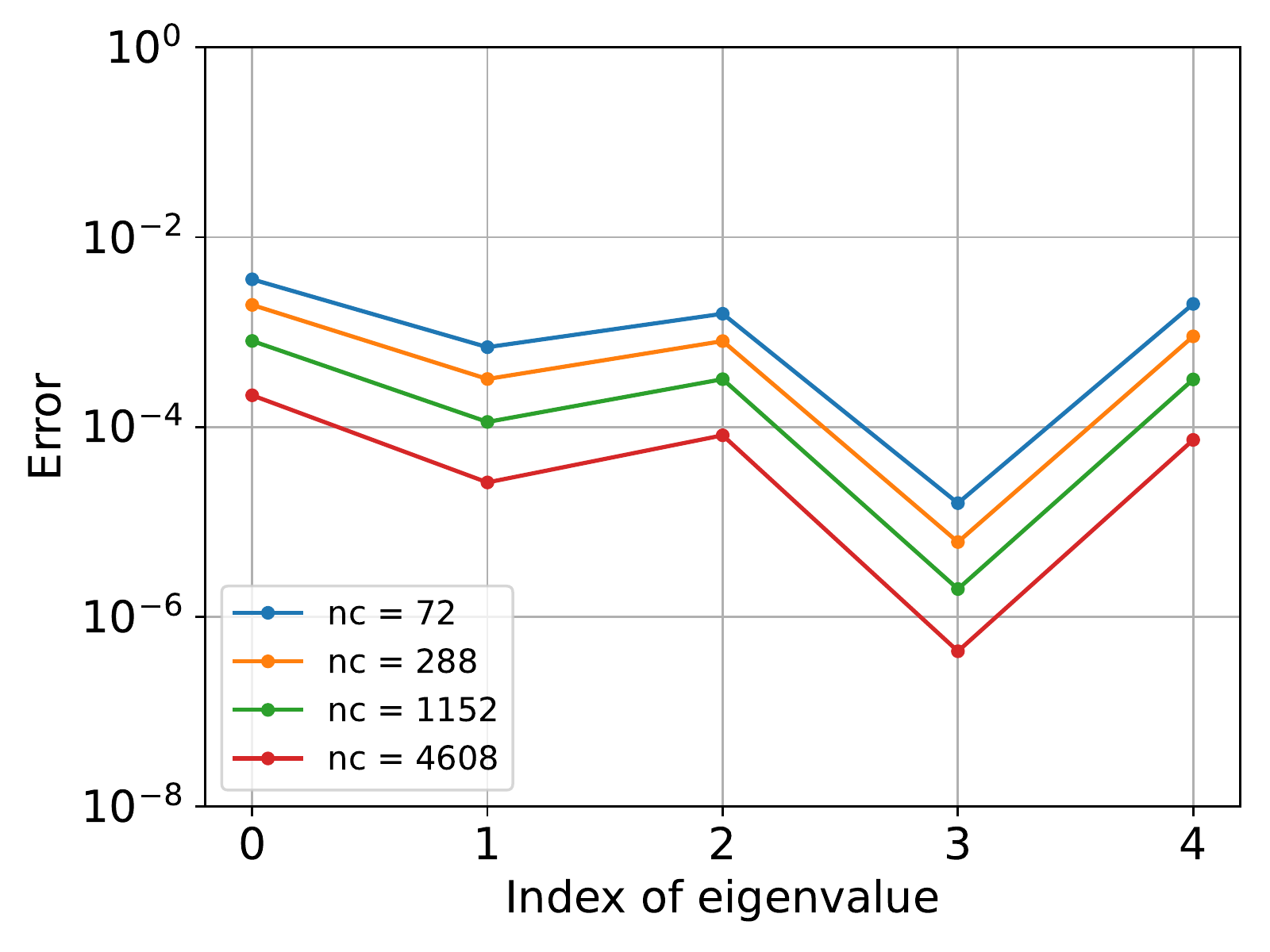}
  }
\end{center}
\caption{
Relative eigenvalue errors \eqref{err_lambda} for the curl-curl problem on the 
curved-L-shaped domain (left) 
and on the pretzel domain (right), using 
elements of degree $p \times p$ as indicated, and 
$N \times N$ cells per patch with $N = 2, 4, 8$ and $16$. The resulting 
total numbers of cells are $\texttt{nc} = 3N^2$ and $18N^2$ for the respective
multipatch domains.
}
\label{fig:ev_pbm_cc}
\end{figure}

\subsection{Magnetostatic test-cases}

We next study the CONGA discretizations of the magnetostatic problems
presented in Section~\ref{sec:MS}: either the one 
for the problem with pseudo-vacuum boundary conditions \eqref{magneto_w}
or the one with metallic boundary conditions  \eqref{magneto_mw}. 

To this end we consider a scalar dipole current source,
\begin{equation} \label{tc_ms_Jz}
  J_z = \psi_0 - \psi_1
  \qquad \text{ where } \psi_m = \exp\Big(-\frac{\big((x-x_m)^2 + (y-y_m)^2\big)^2}{2 \sigma^2} \Big)
\end{equation}
in the pretzel-shaped domain, with $\sigma = 0.02$.
We set the positive current pole at $x_0 = y_0 = 1$ and the negative one at $x_1 = y_1 = 2$.
We then consider the discrete solvers described in Section~\ref{sec:MS_vbc} and \ref{sec:MS_mbc}
for the problems with pseudo-vacuum and metallic boundary conditions, respectively.

In Figure~\ref{fig:MS_source} we plot the scalar source $J_z$ together with the
vector-valued $\bcurl J_z$ field, and for each of the boundary conditions, 
we plot in Figure~\ref{fig:MS_ref_sols} fine solutions computed using the CONGA scheme 
on a mesh with $20 \times 20$ cells per patch (i.e. 7200 cells in total)
and elements of degree $6 \times 6$ in each patch. 

In Table~\ref{tab:MS_L2_error} we then show the relative $L^2$ errors 
corresponding to coarser grids and lower spline degrees for both boundary 
conditions, using as reference the fine solutions shown in Figure~\ref{fig:MS_ref_sols}. 
We also plot in Figure~\ref{fig:MS_sols} the solutions corresponding 
to spline elements of degree $3 \times 3$ on each patch.
Again these results indicate that our CONGA solutions converge nicely as the grids are refined, 
with smaller errors associated with higher polynomial degrees.

\begin{figure}[!htbp]
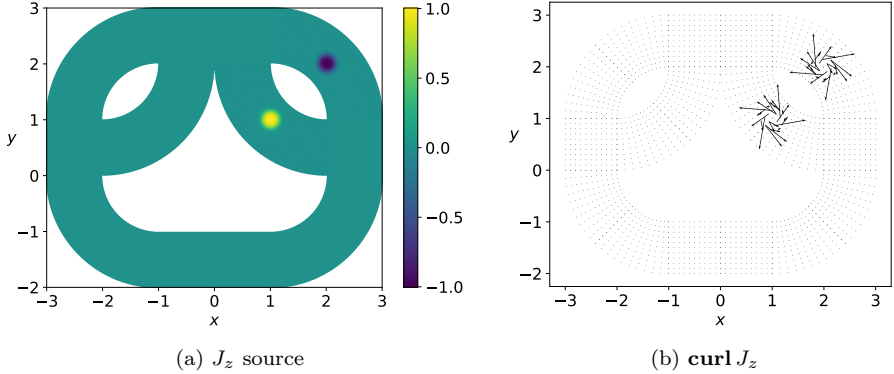

\def \plotdir {magnetostatic_metal/pretzel_f_dipole_J_nc=20_deg=6}
\def \plotfnJ {j2h}
\def \plotfnCurlJ {f1h_P_L2_wcurl_J}
\begin{center}
  \subfloat[$J_z$ source]{%
    \includegraphics[height=0.35\textwidth]{plots/\plotdir/\plotfnJ}
  }
  \hspace{5pt}
  \subfloat[$\bcurl J_z$]{%
    \includegraphics[height=0.35\textwidth]{plots/\plotdir/\plotfnCurlJ}
  }
\end{center}
\caption{
Source for the magnetostatic test-cases:
the scalar current density~\eqref{tc_ms_Jz} is plotted on the left panel
and its vector-valued curl is shown on the right.}
\label{fig:MS_source}
\end{figure}

\begin{figure}[!htbp]
  \def \plotdirMMS {magnetostatic_metal/pretzel_f_dipole_J_nc=20_deg=6}
  \def \plotdirVMS {magnetostatic_vacuum/pretzel_f_dipole_J_nc=20_deg=6}
  \def \plotfnBvf {gamma0_h=10.0_gamma1_h=10.0_uh_vf}
  \def \plotfnB   {gamma0_h=10.0_gamma1_h=10.0_uh}

\begin{center}
  \subfloat[vector field $\bB$ (VBC)]{%
    \includegraphics[height=0.35\textwidth]{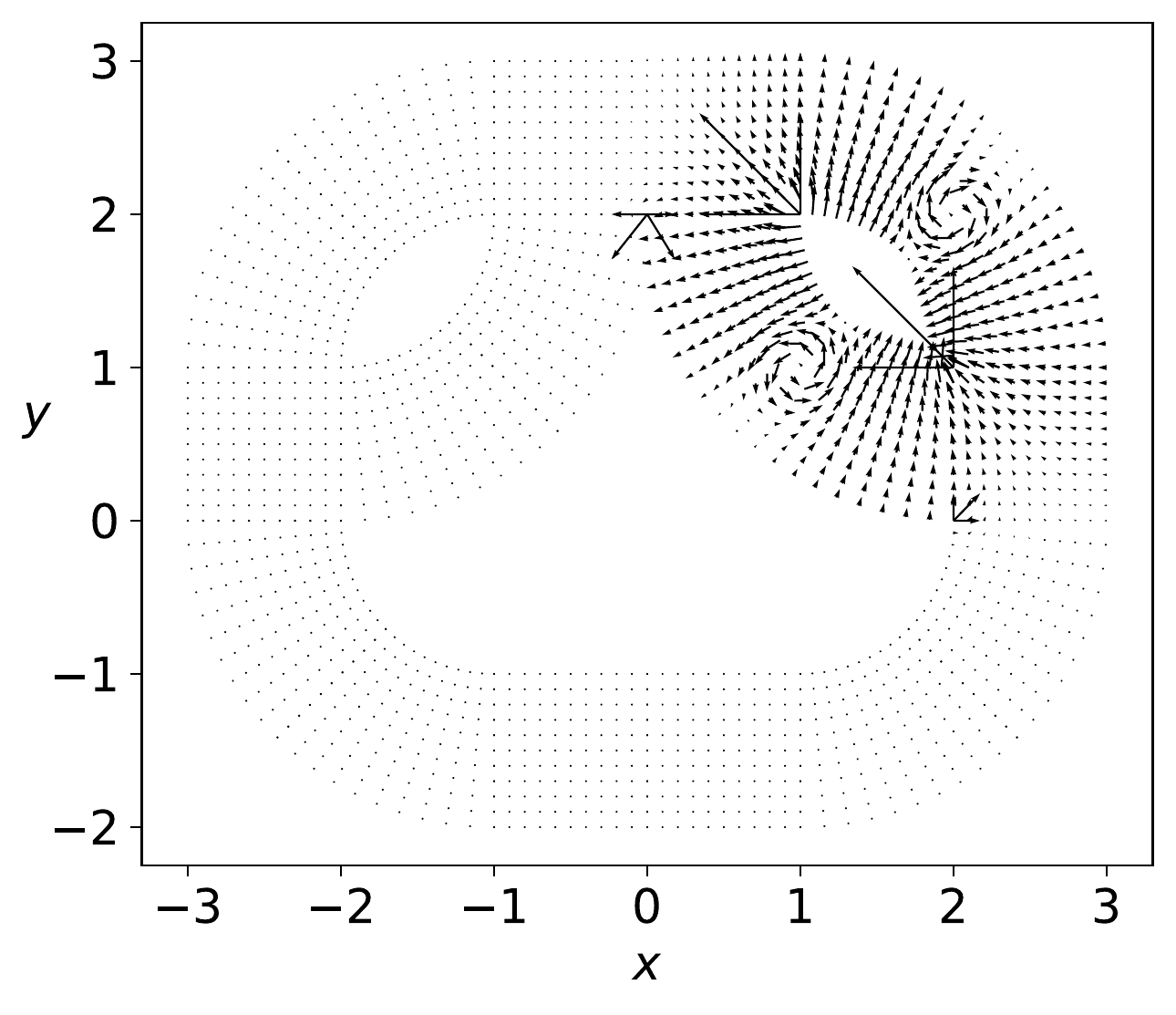}
  }
  \hspace{5pt}
  \subfloat[amplitude $\abs{\bB}$ (VBC)]{%
    \includegraphics[height=0.35\textwidth]{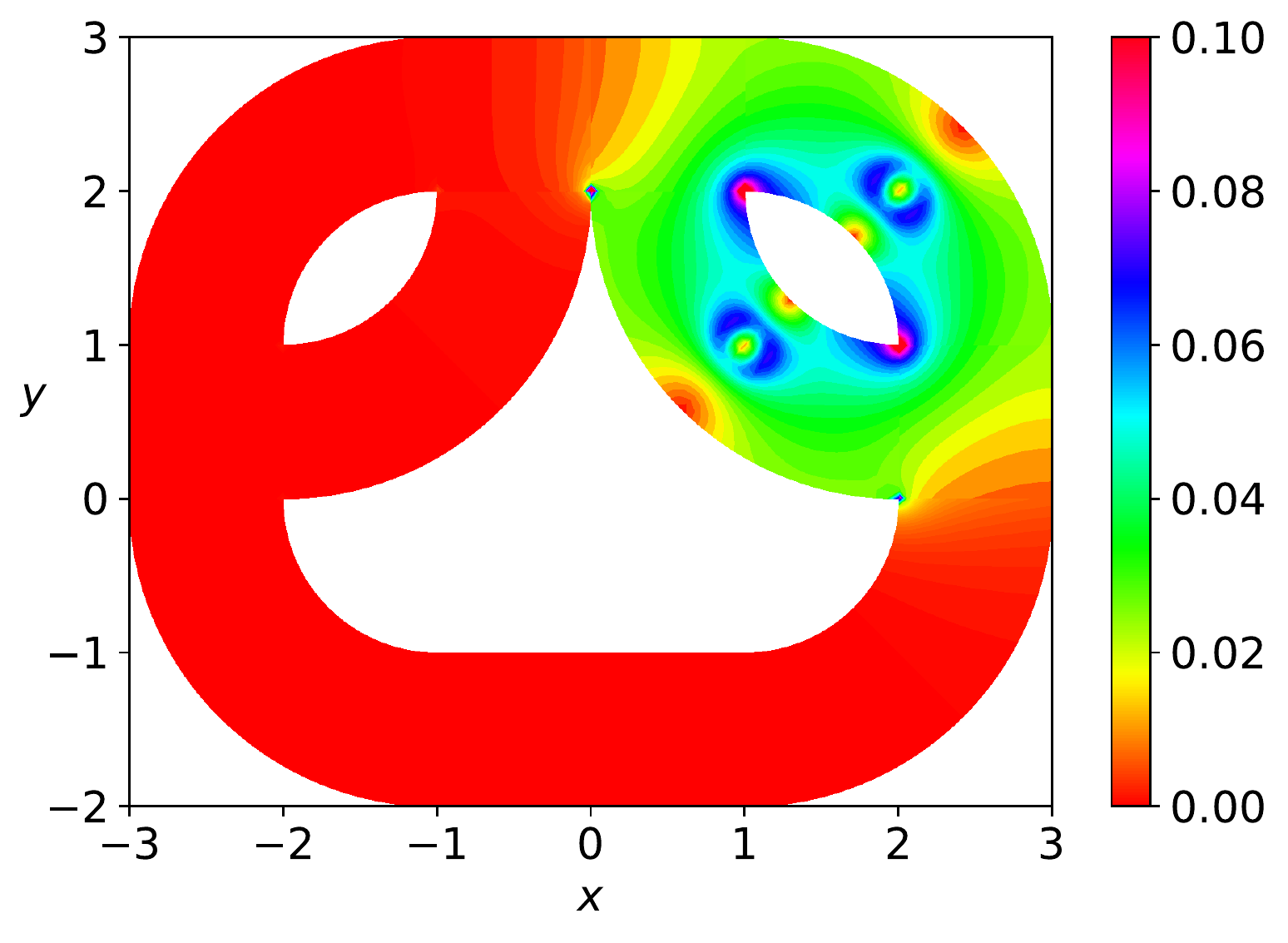}
  }
  \\
  \subfloat[vector field $\bB$ (MBC)]{%
    \includegraphics[height=0.35\textwidth]{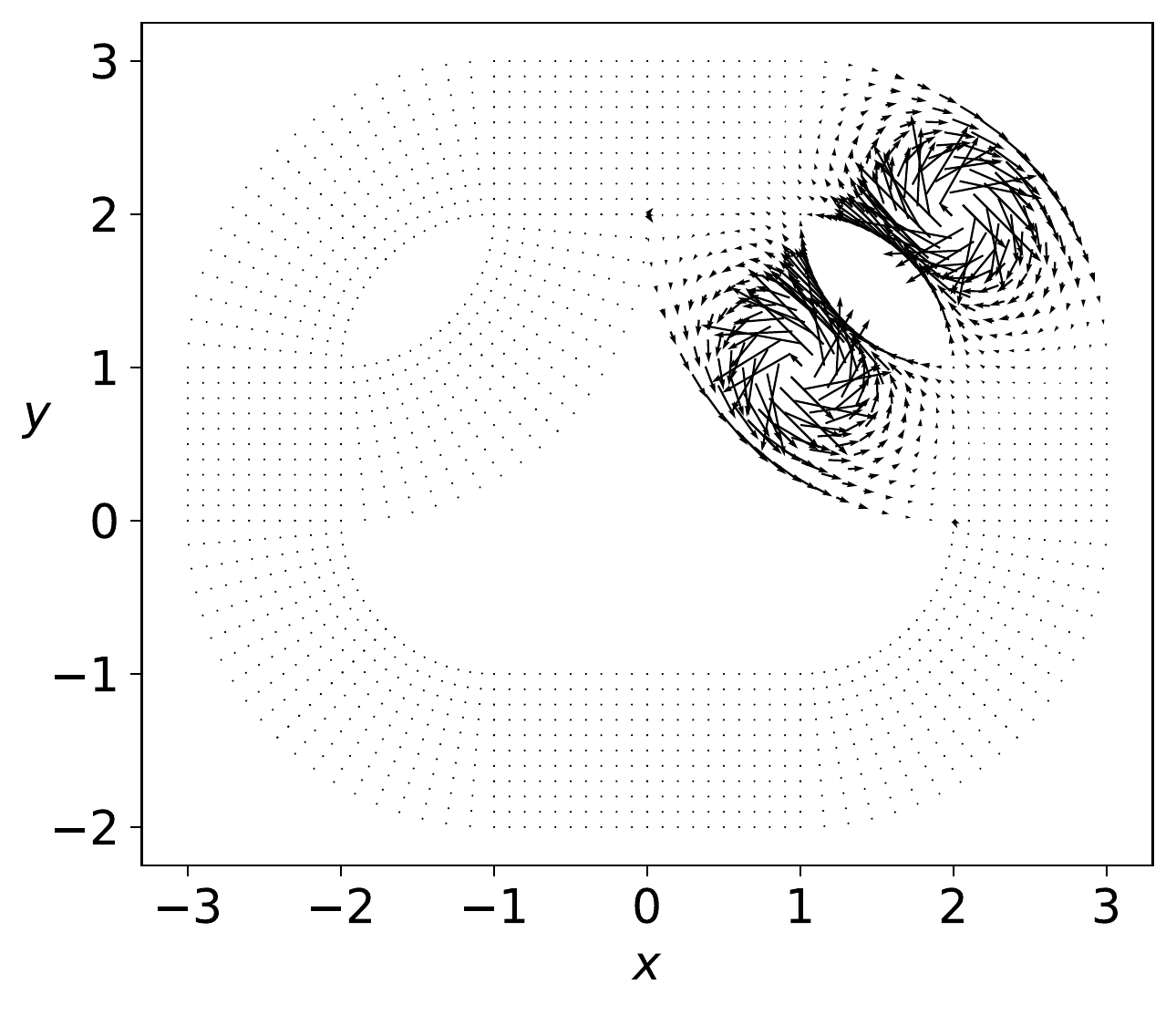}
  }
  \hspace{5pt}
  \subfloat[amplitude $\abs{\bB}$ (MBC)]{%
    \includegraphics[height=0.35\textwidth]{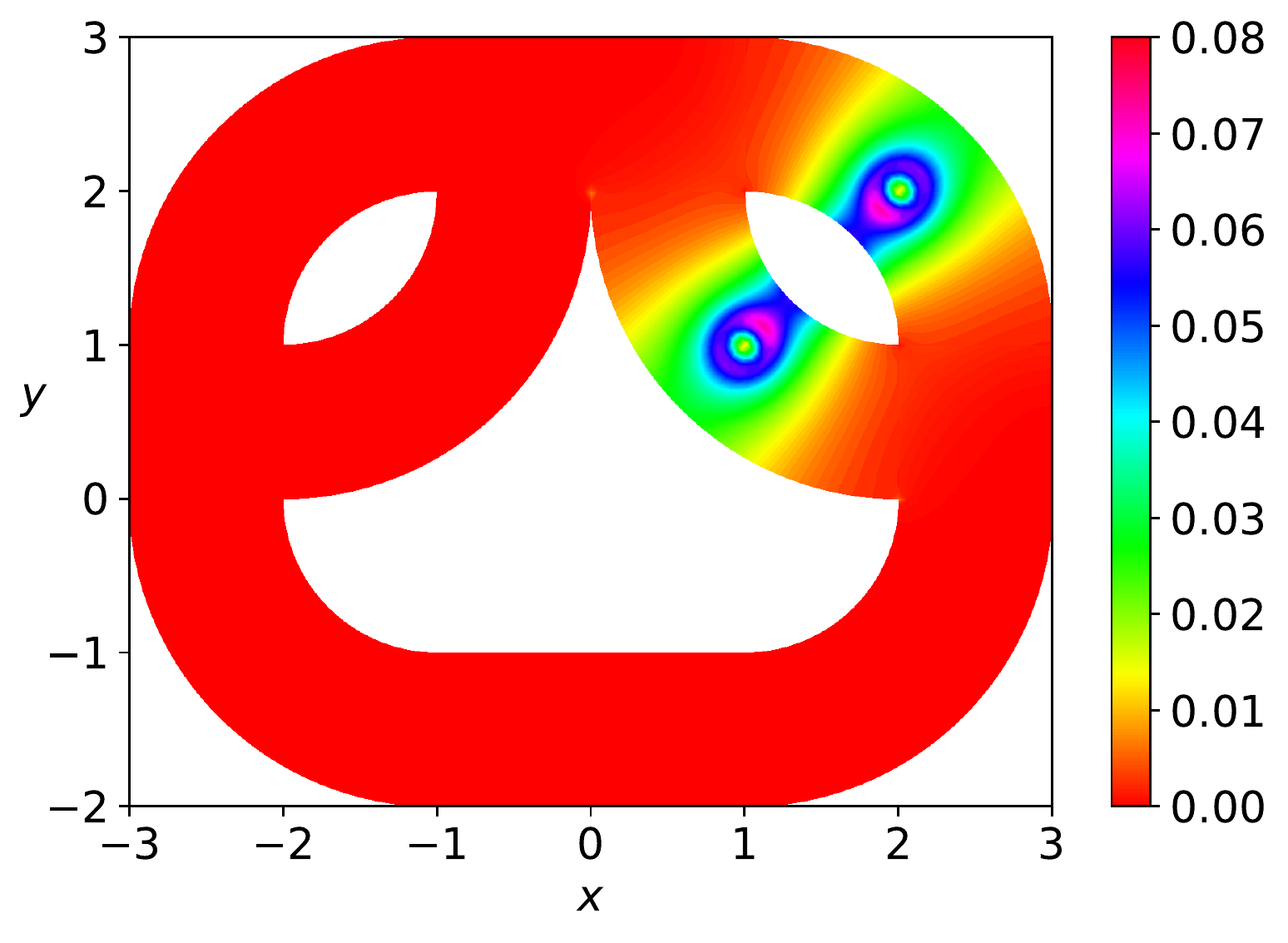}
  }
\end{center}
\caption{
Reference numerical solutions obtained with $20 \times 20$ cells per patch and 
elements of degree $6 \times 6$, for the magnetostatic test-cases
with pseudo-vacuum (top) and metallic boundary conditions (bottom).
The vector fields are shown on the left while the amplitudes are shown on the right.
}
\label{fig:MS_ref_sols}
\end{figure}

\begin{figure}[!htbp]
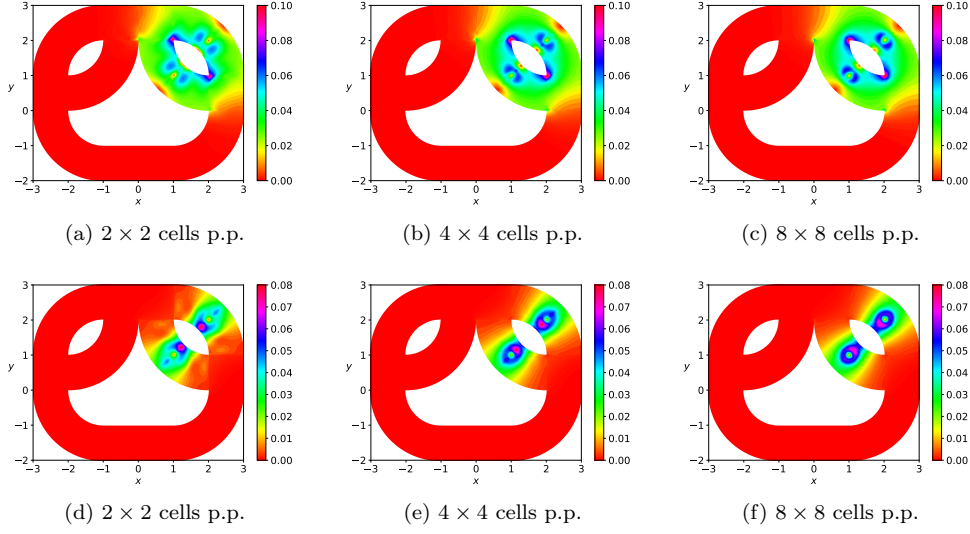

\begin{center}
  \def \plotdircoarse {magnetostatic_vacuum/pretzel_f_dipole_J_nc=2_deg=3}
  \def \plotdirmed {magnetostatic_vacuum/pretzel_f_dipole_J_nc=4_deg=3}
  \def \plotdirfine {magnetostatic_vacuum/pretzel_f_dipole_J_nc=8_deg=3}
  \def \plotfn {gamma0_h=10.0_gamma1_h=10.0_uh}
  \subfloat[$2 \times 2$ cells p.p.]{%
    \includegraphics[width=0.3\textwidth]{plots/\plotdircoarse/\plotfn}
  }
  \hspace{10pt}
  \subfloat[$4 \times 4$ cells p.p.]{%
    \includegraphics[width=0.3\textwidth]{plots/\plotdirmed/\plotfn}
  }
  \hspace{10pt}
  \subfloat[$8 \times 8$ cells p.p.]{%
    \includegraphics[width=0.3\textwidth]{plots/\plotdirfine/\plotfn}
  }
  \\
  \def \plotdircoarse {magnetostatic_metal/pretzel_f_dipole_J_nc=2_deg=3}
  \def \plotdirmed {magnetostatic_metal/pretzel_f_dipole_J_nc=4_deg=3}
  \def \plotdirfine {magnetostatic_metal/pretzel_f_dipole_J_nc=8_deg=3}
  \def \plotfn {gamma0_h=10.0_gamma1_h=10.0_uh}
  \subfloat[$2 \times 2$ cells p.p.]{%
    \includegraphics[width=0.3\textwidth]{plots/\plotdircoarse/\plotfn}
  }
  \hspace{10pt}
  \subfloat[$4 \times 4$ cells p.p.]{%
    \includegraphics[width=0.3\textwidth]{plots/\plotdirmed/\plotfn}
  }
  \hspace{10pt}
  \subfloat[$8 \times 8$ cells p.p.]{%
    \includegraphics[width=0.3\textwidth]{plots/\plotdirfine/\plotfn}
  }
\end{center}
\caption{
Numerical solutions for the magnetostatic test-cases with pseudo-vacuum (top) 
and metallic boundary conditions (bottom), obtained 
with elements of degree $3 \times 3$ and different patch-wise grids, as indicated.
}
\label{fig:MS_sols}
\end{figure}

\begin{table}[!htbp]
\centering
\begin{tabular}{c}
\hline
$\phantom{\Big(} \mspace{50mu}$ 
Errors for pseudo-vacuum boundary conditions
$\phantom{\Big(} \mspace{50mu}$  \\
\hline   
\end{tabular}
\begin{tabular}{|l|l|l|l|l|}
\hline
\diagbox[width=10em]{cells p.p.}{degree} & \multicolumn{1}{p{12mm}|}{$2\times 2$} & \multicolumn{1}{p{12mm}|}{$3 \times 3$} & \multicolumn{1}{p{12mm}|}{$4 \times 4$} & \multicolumn{1}{p{12mm}|}{$5 \times 5$}
\\ 
\hline   
$2  \times 2$   &  0.23443 & 0.16028 & 0.10290 & 0.06425 \\ \hline
$4  \times 4$   &  0.10514 & 0.05855 & 0.05773 & 0.03029 \\ \hline
$8  \times 8$   &  0.03909 & 0.02414 & 0.01893 & 0.01542 \\ \hline
$16 \times 16$  & 0.018733 & 0.01193 & 0.008375 & 0.00567 \\ \hline
\end{tabular}
\begin{tabular}{c}
\hline
$\phantom{\Big(} \mspace{50mu}$ 
Errors for metallic boundary conditions
$\phantom{\Big(} \mspace{50mu}$  \\
\hline   
\end{tabular}
\begin{tabular}{|l|l|l|l|l|}
\hline
\diagbox[width=10em]{cells p.p.}{degree} & \multicolumn{1}{p{12mm}|}{$2\times 2$} & \multicolumn{1}{p{12mm}|}{$3 \times 3$} & \multicolumn{1}{p{12mm}|}{$4 \times 4$} & \multicolumn{1}{p{12mm}|}{$5 \times 5$}
\\ 
\hline   
$2  \times 2$   & 0.37023 & 0.24518 & 0.15902 & 0.09266 \\ \hline
$4  \times 4$   & 0.15490 & 0.07747 & 0.08660 & 0.03508 \\ \hline
$8  \times 8$   & 0.03776 & 0.01840 & 0.01736 & 0.01550 \\ \hline
$16 \times 16$  & 0.00544 & 0.00187 & 0.00153 & 0.00085 \\ \hline
\end{tabular}
\caption{$L^2$ errors for the $\bB_h$ solutions of the magnetostatic test-cases
with pseudo-vacuum and metallic boundary conditions, 
corresponding to the amplitude plots shown in Figures~\ref{fig:MS_sols}. Here the errors are computed using numerical reference solutions
obtained with $20 \times 20$ cells per patch and a degree $6 \times 6$.}
\label{tab:MS_L2_error}
\end{table}

\subsection{Time-dependent Maxwell equation}
\label{sec:td_max_num}

We finally assess the qualitative and quantitative properties of our mapped spline-based 
CONGA scheme for the time-dependent Maxwell system, using a leap-frog time stepping 
\eqref{AF_lf}--\eqref{AF_mat}. 
We will consider two test-cases.

Our first test-case consists of an initial electric pulse 
\begin{equation} \label{td_tc1_E0}  
  \bE(t=0) = \bcurl \psi, 
  \qquad 
  \psi(\bx) = \exp\Big(-\frac{\big((x-x_0)^2 + (y-y_0)^2\big)^2}{2 \sigma^2} \Big)  
\end{equation}
in the pretzel-shaped domain, with $x_0 = y_0 = 1$ and $\sigma = 0.02$.
The initial magnetic field and the source is 
\begin{equation} \label{td_tc1_B0_J}
B(t=0) = 0, \qquad \text{ and } \qquad \bJ(t,\bx) = 0.
\end{equation}
In Figure~\ref{fig:td_max_sols} we compare successive snapshots of two solutions 
corresponding to different numbers $N \times N$ of cells per patch and degrees $p \times p$: 
on the left plots the solution is computed with $N=8$ and $p=3$, while that on the 
right plots uses $N=20$ and $p=6$.
We observe that these results display a qualitatively correct behaviour for 
propagating waves with reflecting boundary conditions, and that the profile
for both resolutions are similar up to small scale features, which can be seen as a 
practical indicator of convergence.

Since this test-case is without source the discrete energy should be preserved up to bounded oscillations,
as recalled in Section~\ref{sec:td_max}.
In Figure~\ref{fig:td_max_diags} we plot the time evolution of the discrete energy
$\cH_h(t^n) = \frac 12 \big(\norm{\bE^n_h}^2 + \norm{B^{n}_h}^2\big)$, together with 
that of the electric and magnetic fields, and we observe that the total energy is very stable.
We also plot the amplitude of the discrete divergence as time evolves:
assuming all computations exact it should be zero according to \eqref{Gauss_h}, indeed
we have $\rho = \Div \bE(t=0) = 0$ in this test case.
On Figure~\ref{fig:td_max_diags} we verify that it is zero up to machine accuracy and small quadrature errors.

\def \plotdirC {td_maxwell_E0_pulse_P_L2/pretzel_f_elliptic_J_P=GSP_nc=8_deg=3}
\def \plotdirF {td_maxwell_E0_pulse_P_L2/pretzel_f_elliptic_J_P=GSP_nc=20_deg=6}

\begin{figure}[!htbp]

\begin{center}
    \includegraphics[width=0.4\textwidth]{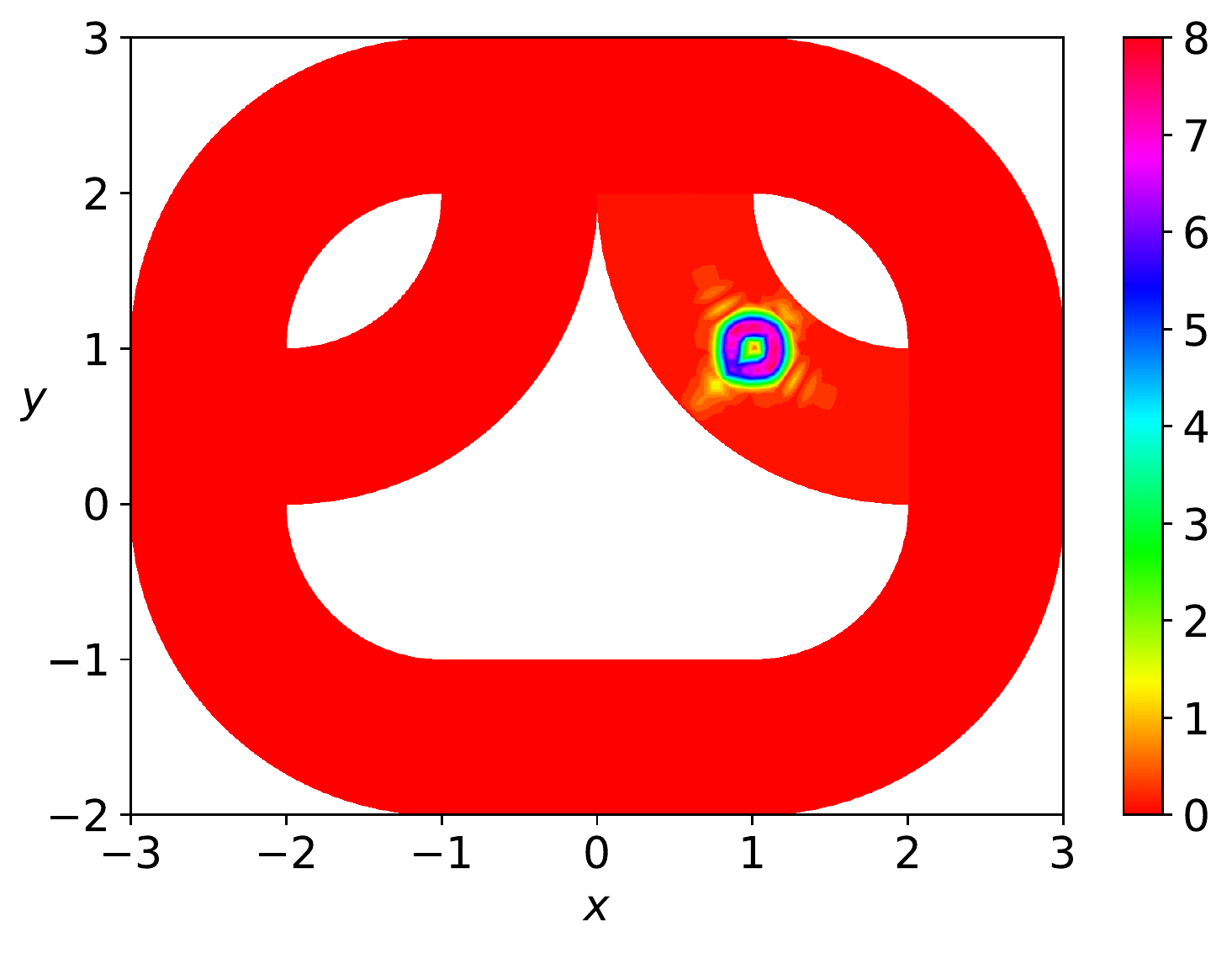}
  \hspace{15pt}
    \includegraphics[width=0.4\textwidth]{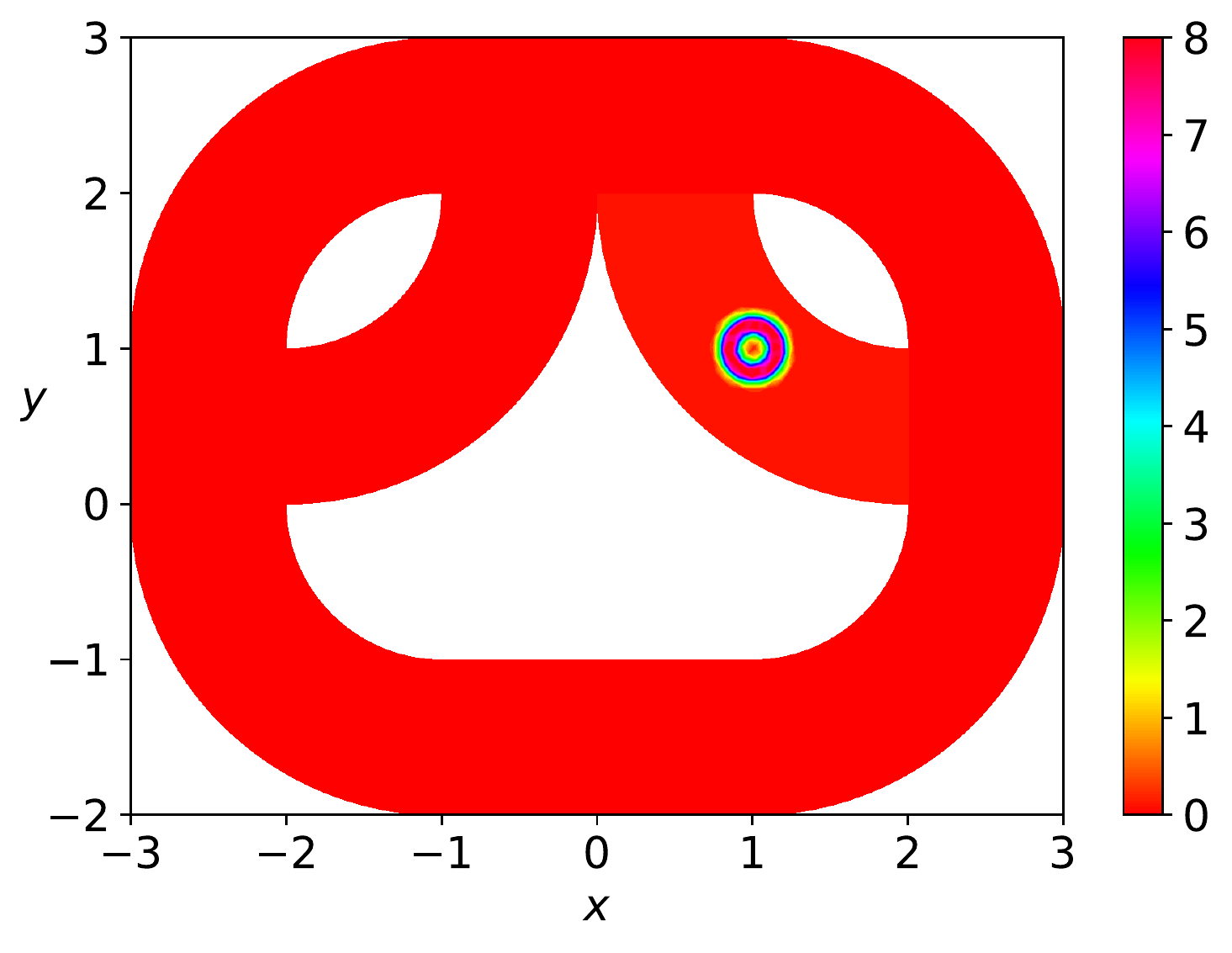}
  \\
    \includegraphics[width=0.4\textwidth]{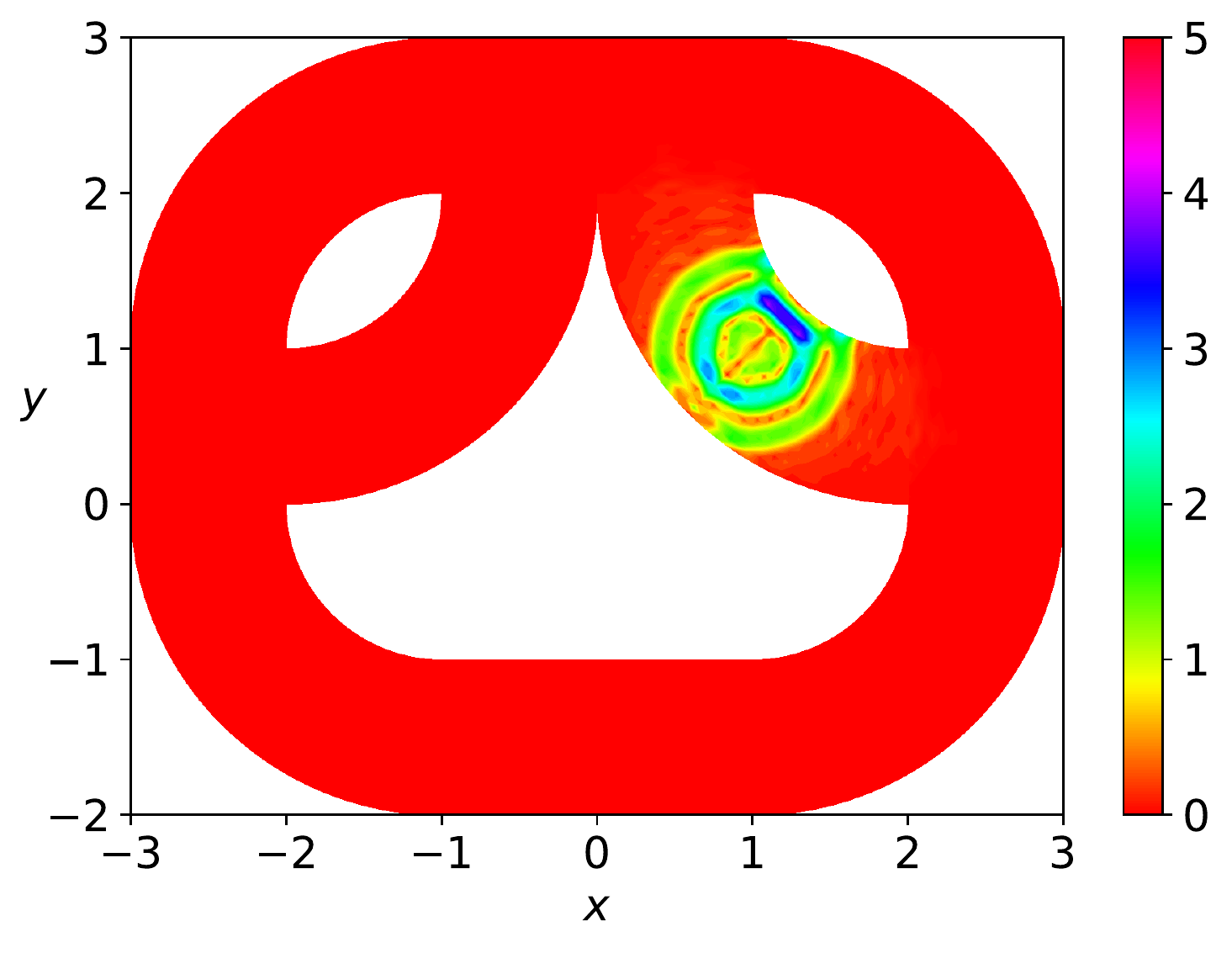}
  \hspace{15pt}
    \includegraphics[width=0.4\textwidth]{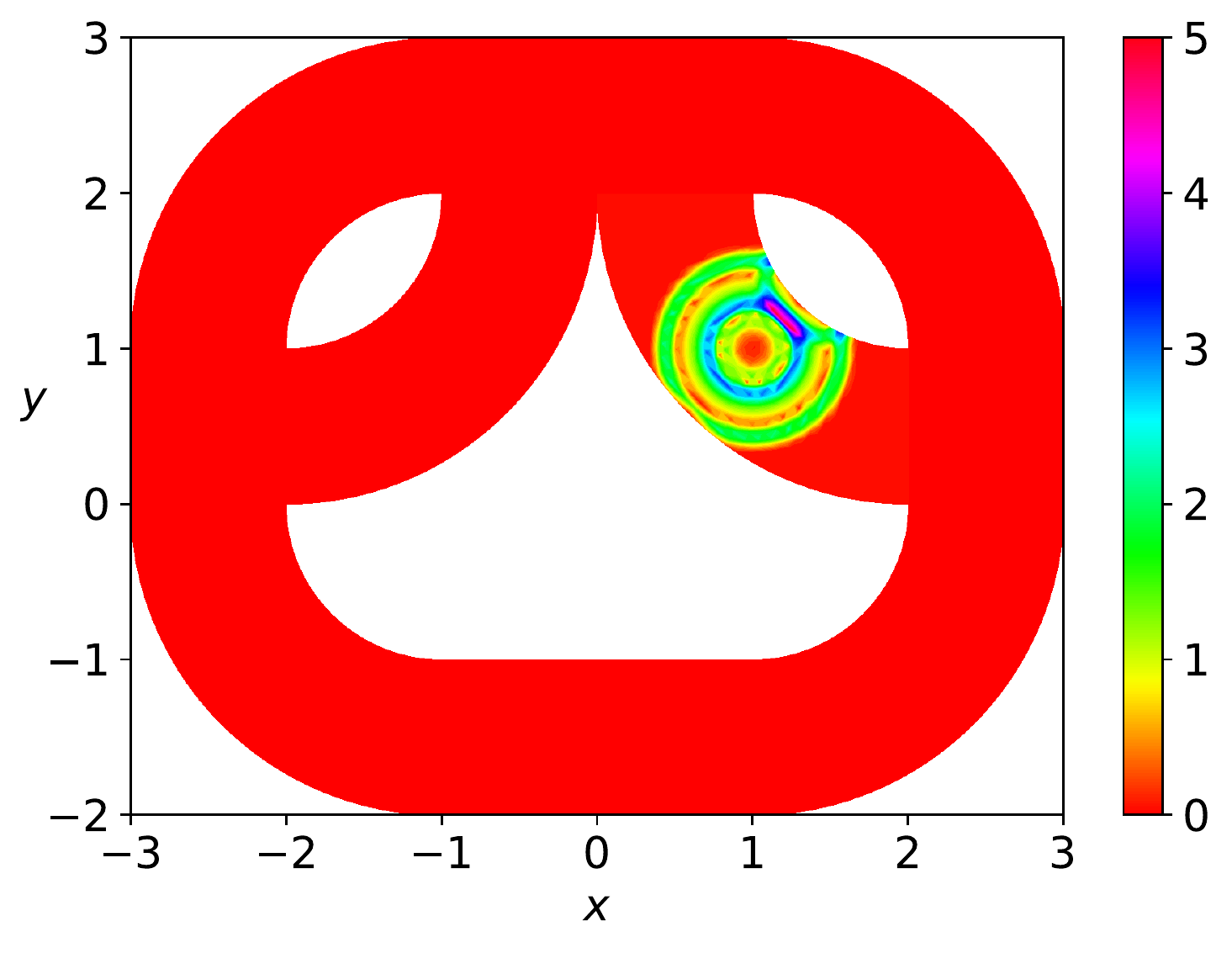}
  \\
    \includegraphics[width=0.4\textwidth]{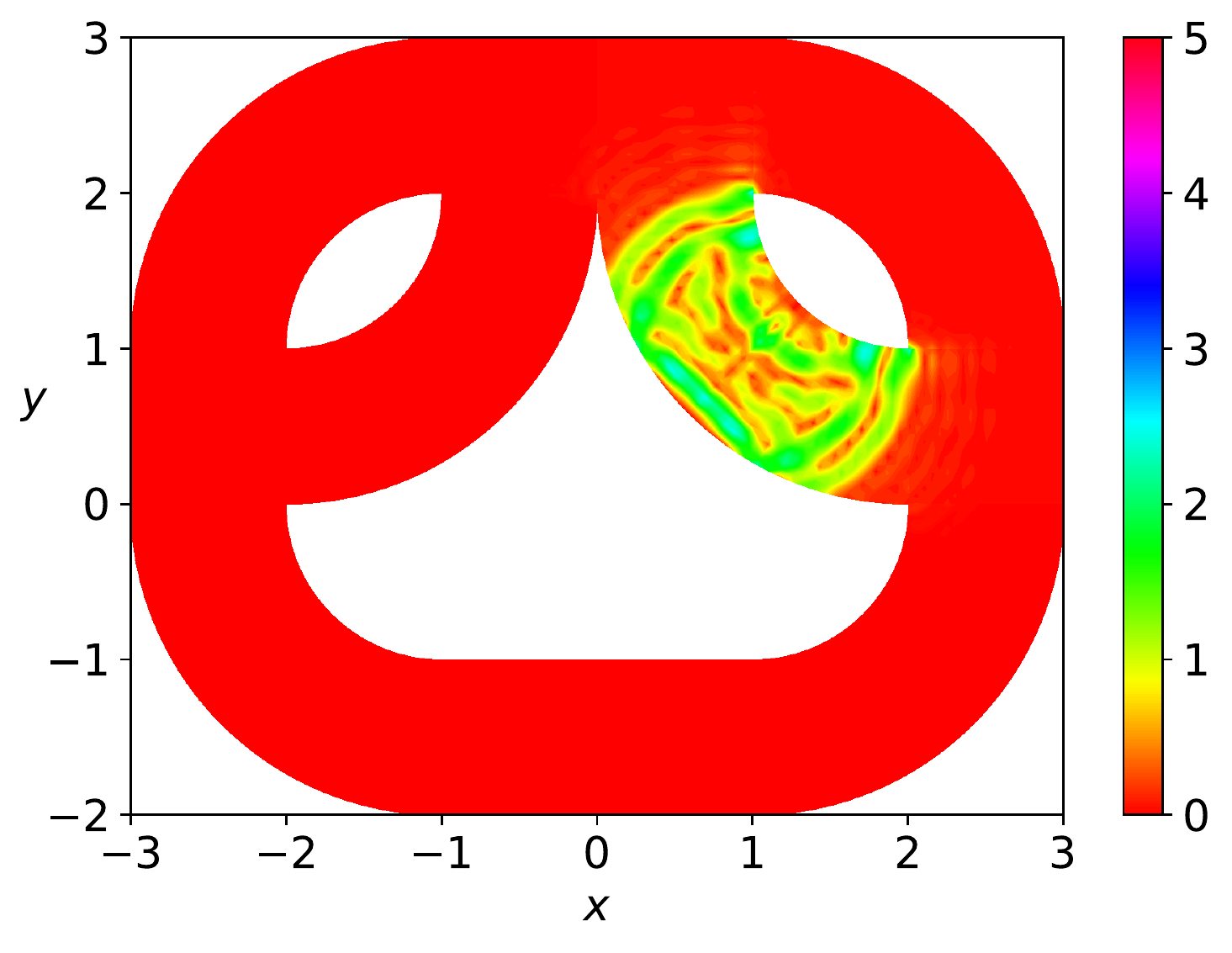}
  \hspace{15pt}
    \includegraphics[width=0.4\textwidth]{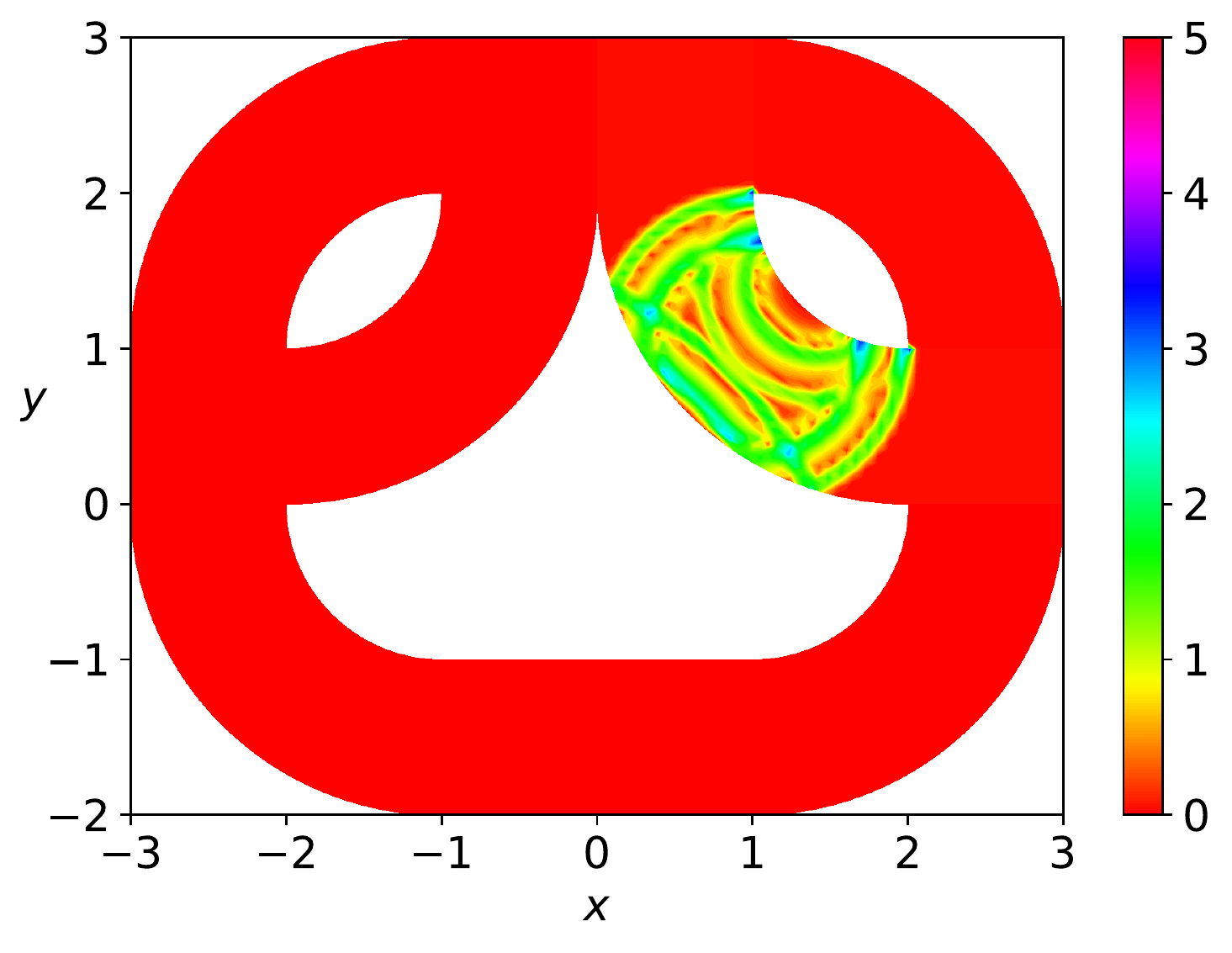}
  \\
    \includegraphics[width=0.4\textwidth]{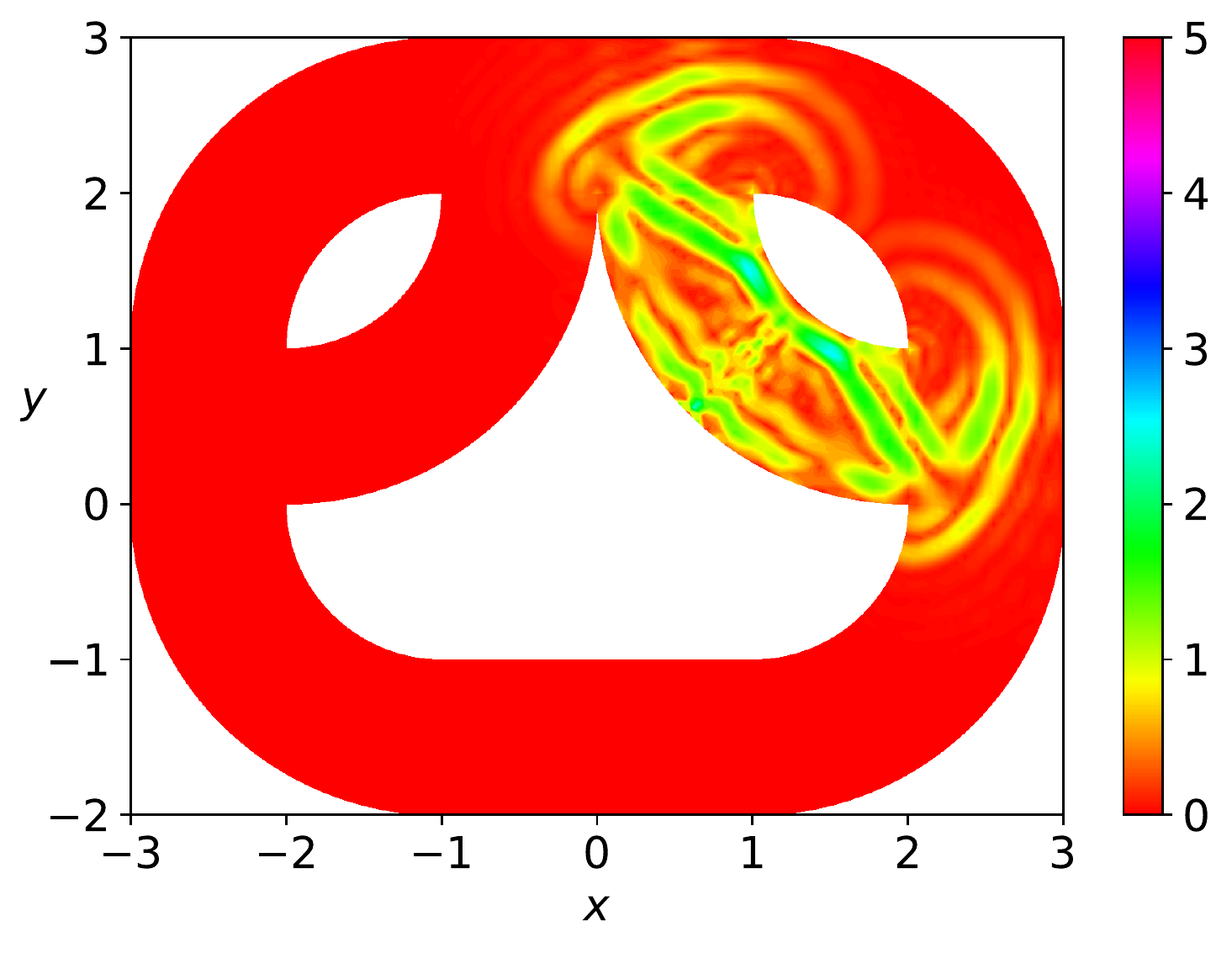}
  \hspace{15pt}
    \includegraphics[width=0.4\textwidth]{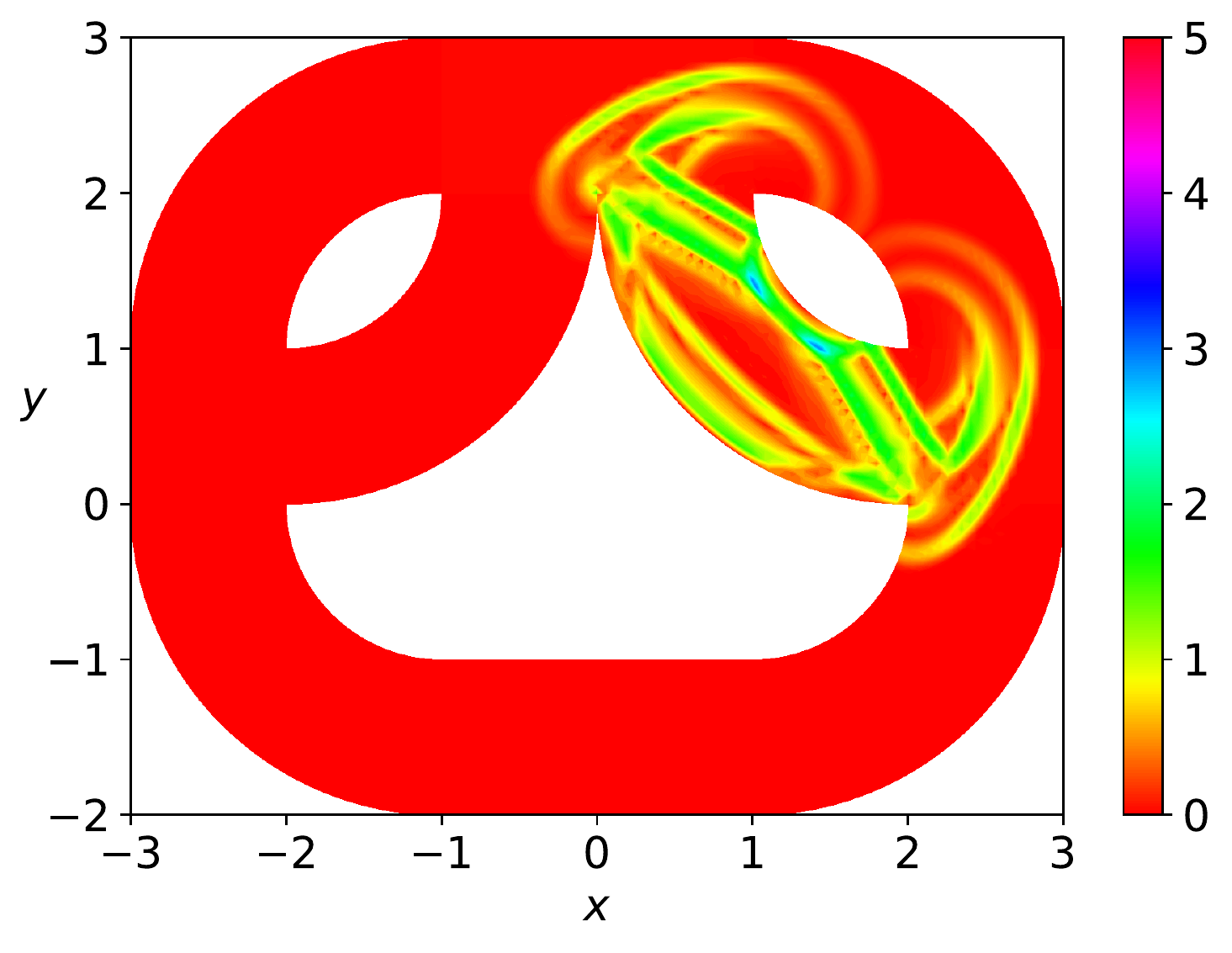}
  \\
    \includegraphics[width=0.4\textwidth]{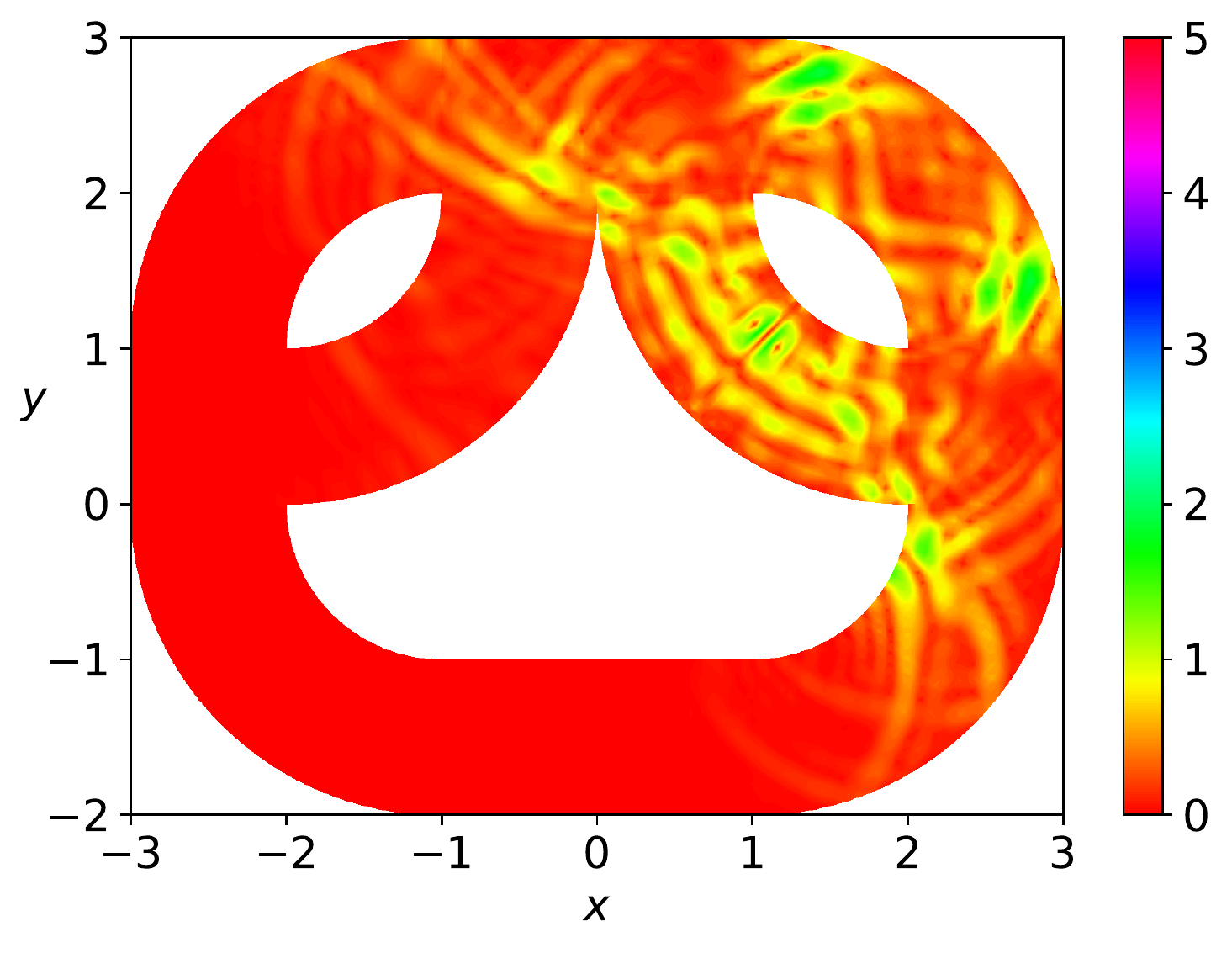}
  \hspace{15pt}
    \includegraphics[width=0.4\textwidth]{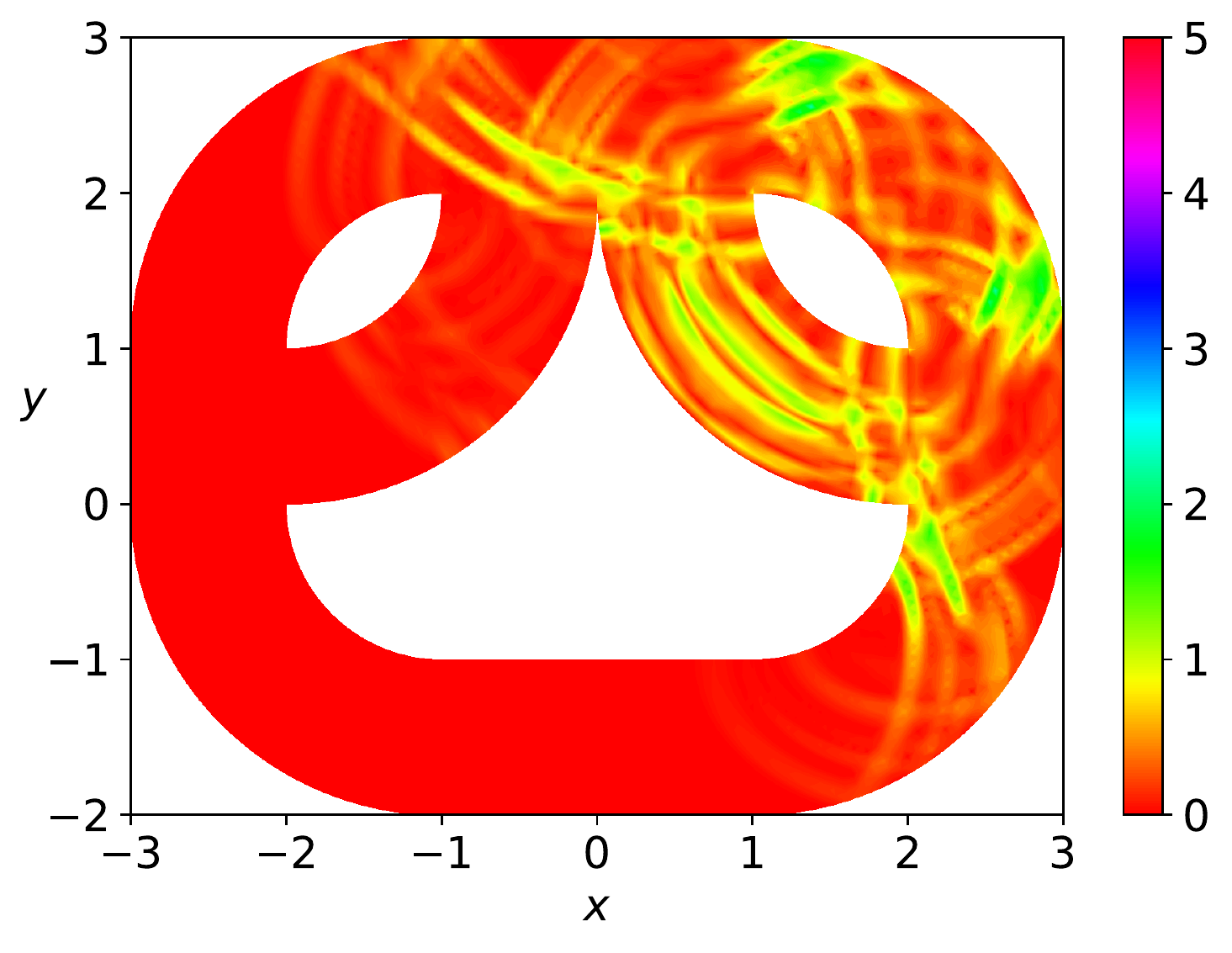}
\end{center}
\caption{
Snapshots of numerical solutions to the time-dependent Maxwell test-case
\eqref{td_tc1_E0}--\eqref{td_tc1_B0_J}
at $t = 0$, $0.4$, $0.8$, $1.6$ and $3.2$ (from top to bottom) 
discretized with the CONGA method~\eqref{AF_h} using spline elements of degree 
$p \times p$ and $N \times N$ cells per patch, and a leap-frog time stepping. 
The solution on the left corresponds to $N = 8$ and $p=3$, while 
the one on the right has been obtained $N = 20$ and $p=6$.
}
\label{fig:td_max_sols}
\end{figure}

\begin{figure}[!htbp]
\begin{center}
  \subfloat[Energy $\cH_h(t^n)$ for $N = 8$, $p=3$]{%
    \includegraphics[width=0.4\textwidth]{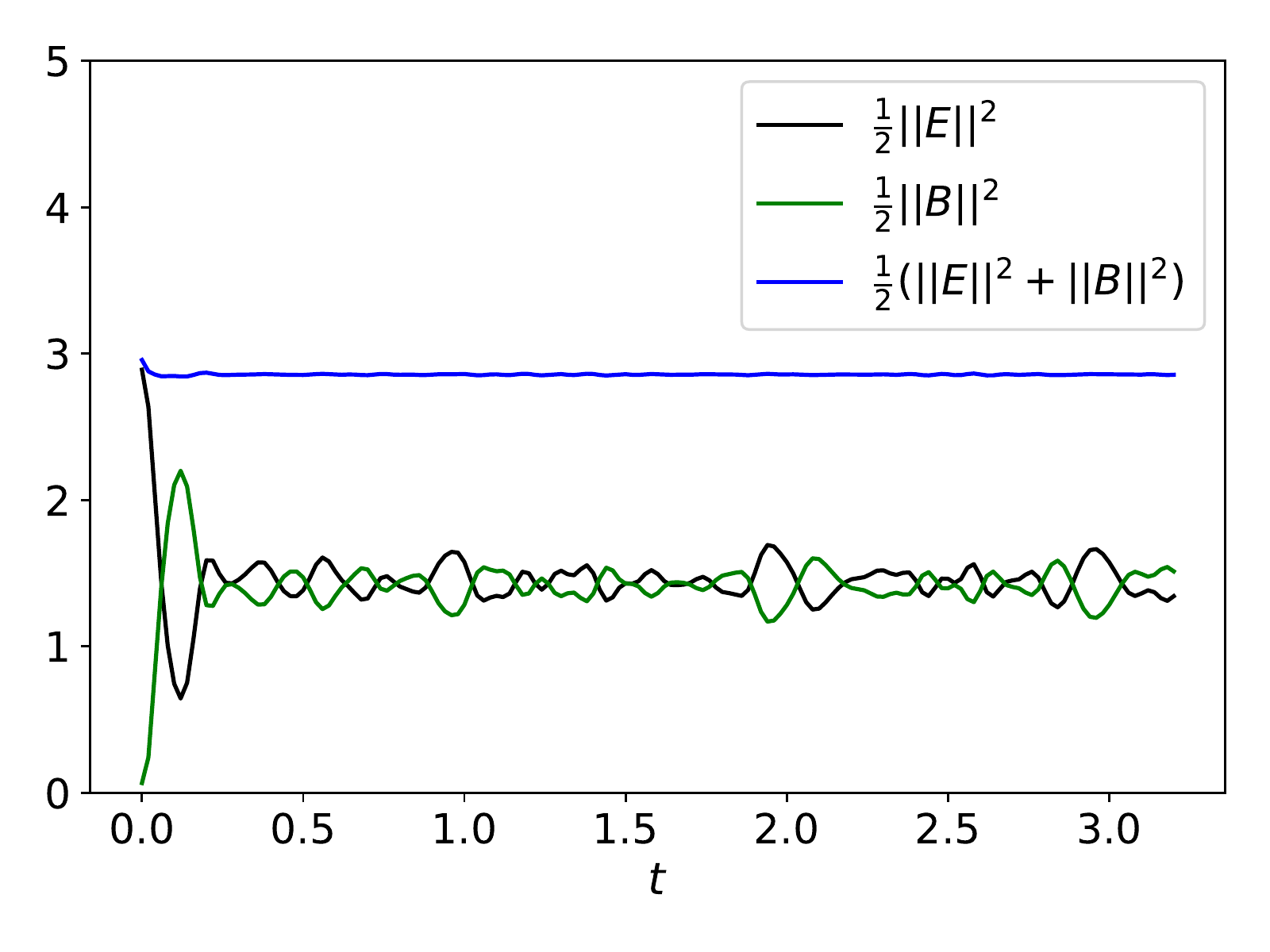}
  }
  \hspace{15pt}
  \subfloat[Energy $\cH_h(t^n)$ for $N = 20$, $p=6$]{%
    \includegraphics[width=0.4\textwidth]{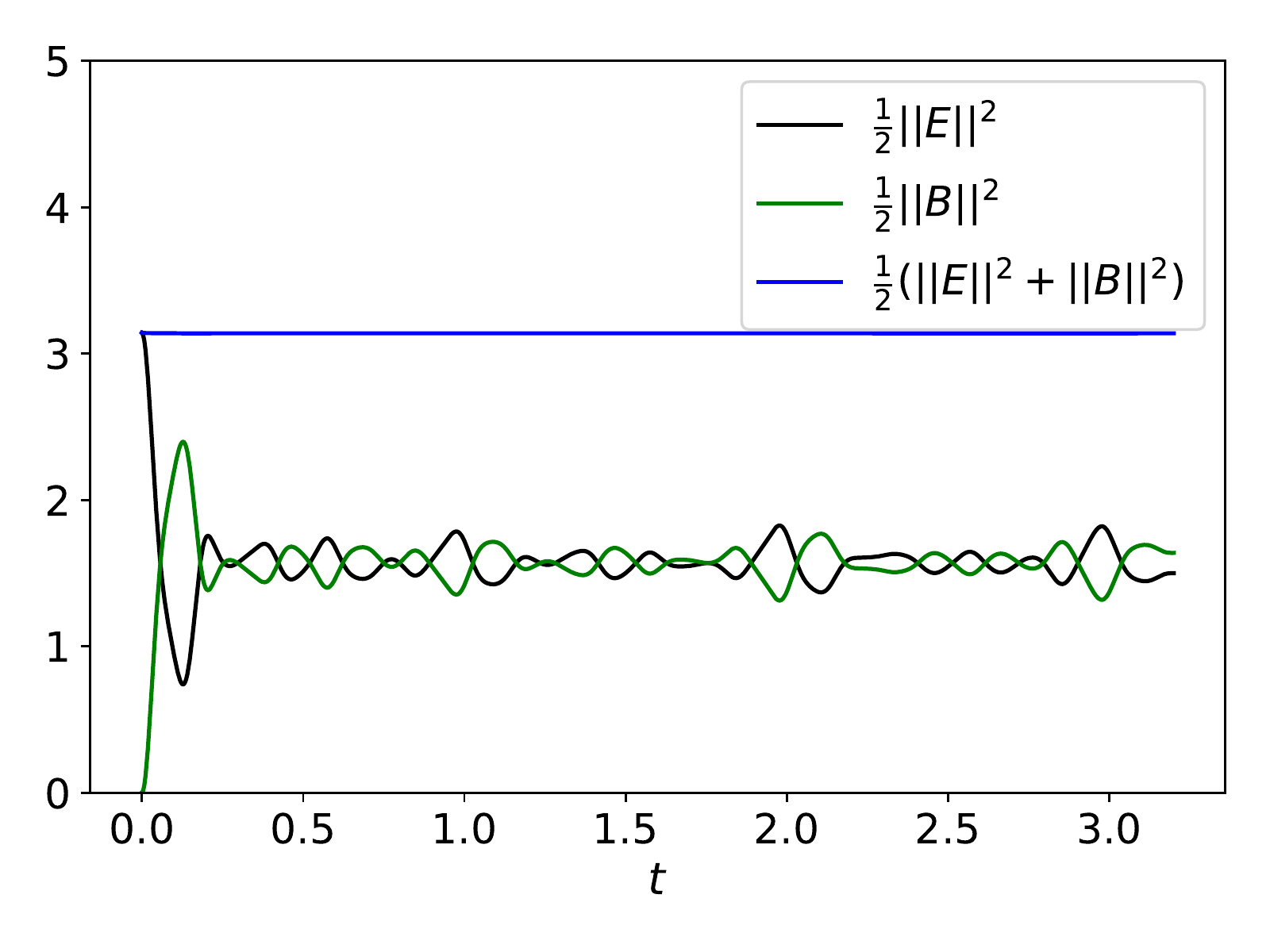}
  }
  \\
  \subfloat[$\norm{\wt \Div_h \bE_h(t^n)}$ for $N = 8$, $p=3$]{%
    \includegraphics[width=0.4\textwidth]{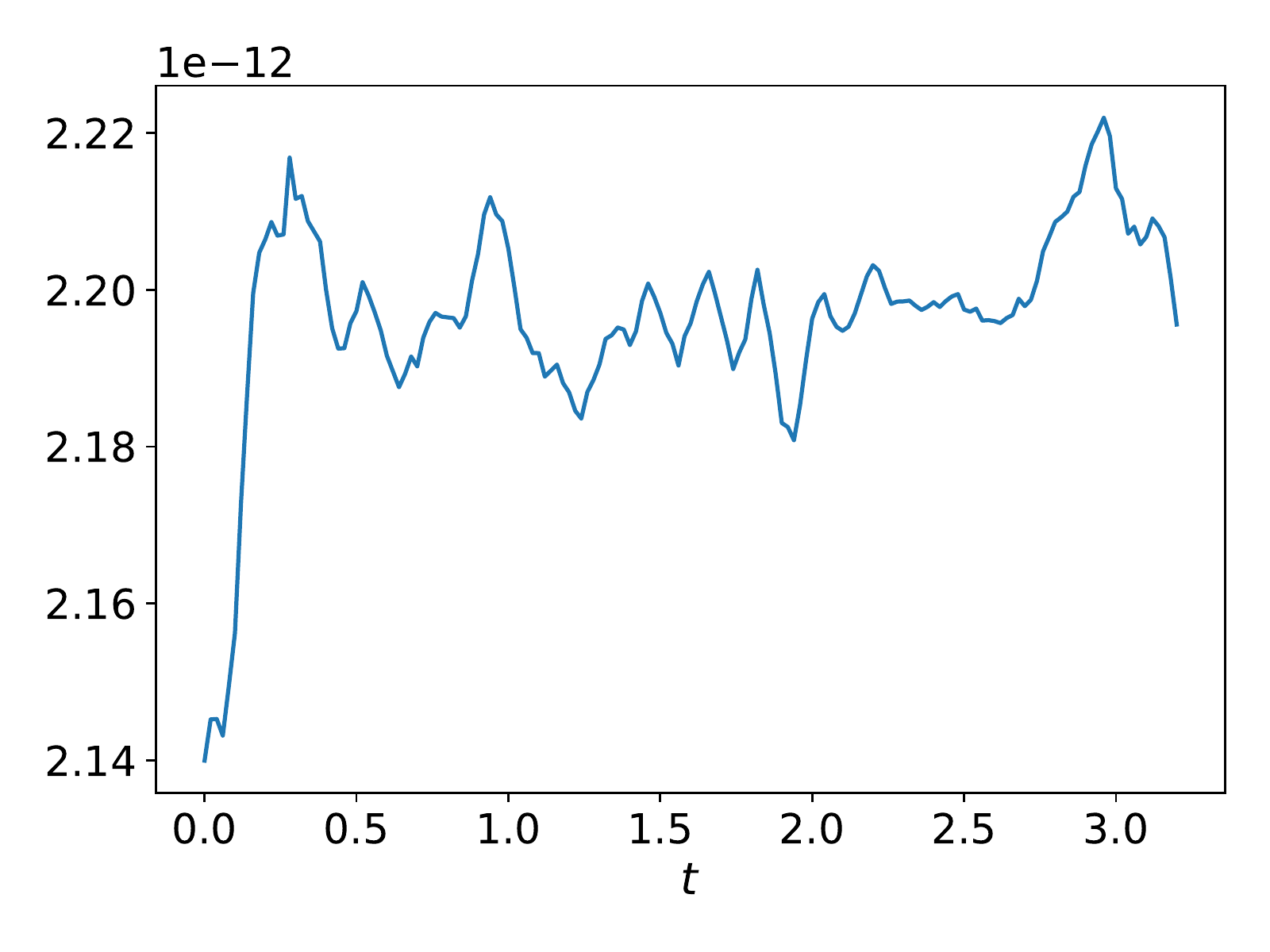}
  }
  \hspace{15pt}
  \subfloat[$\norm{\wt \Div_h \bE_h(t^n)}$ for $N = 20$, $p=6$]{%
    \includegraphics[width=0.4\textwidth]{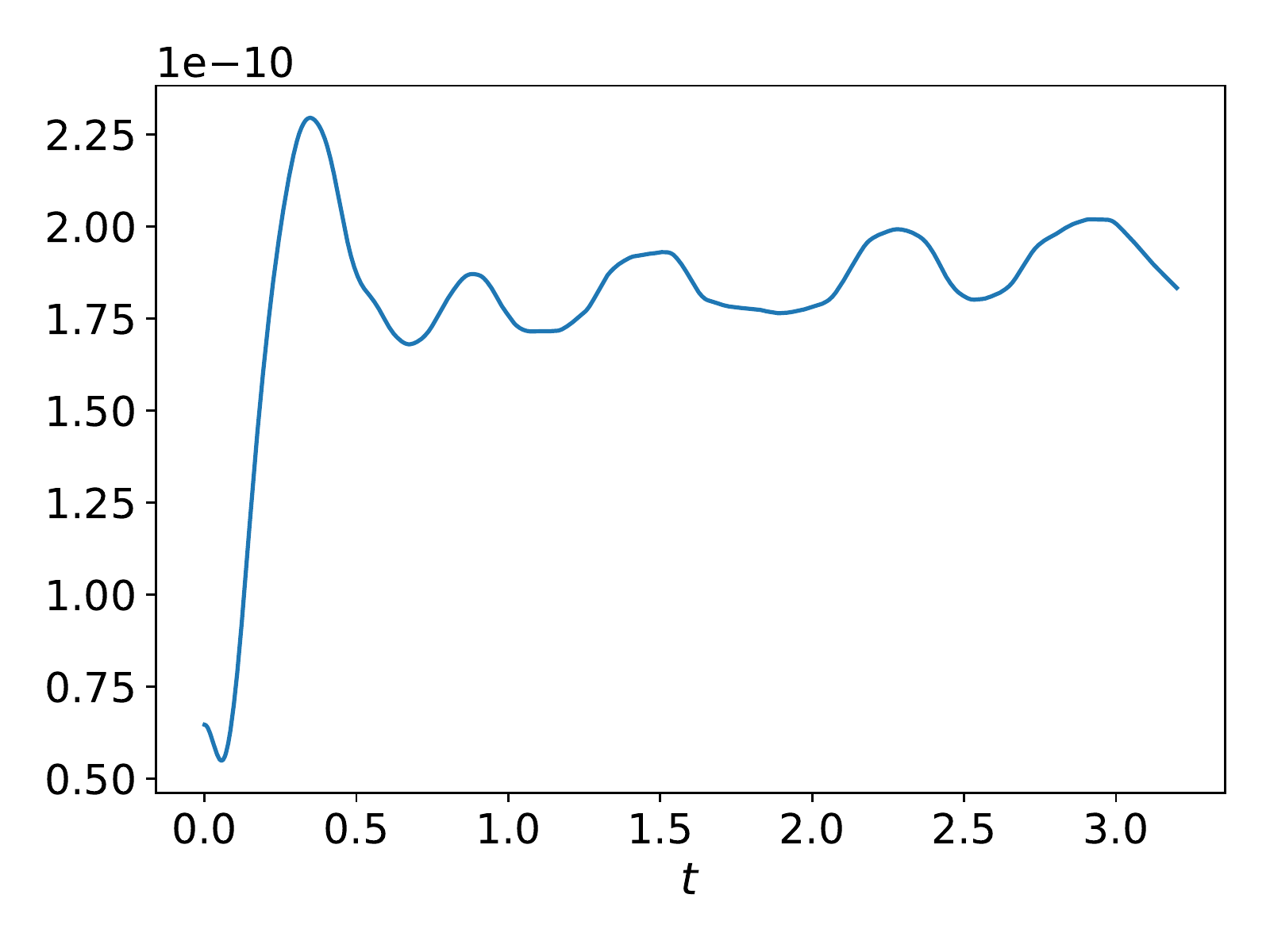}
  }
\end{center}
\caption{
Energy (top) and discrete divergence amplitude (bottom) of the coarse (left) and fine (right) numerical solutions
of the time-dependent Maxwell test-case \eqref{td_tc1_E0}--\eqref{td_tc1_B0_J} shown in Figure\ref{fig:td_max_sols}.
}
\label{fig:td_max_diags}
\end{figure}

We next study the quality of the source approximation operator 
$\bJ \to \bJ_h$ which is involved in the discrete Ampere equation \eqref{AF_h}.
For this we use a second time-dependent test-case with a zero initial condition 
\begin{equation} \label{td_tc2_EB0}
  \bE(t=0) = 0, \qquad \text{ and } \qquad
  B(t=0) = 0
\end{equation}
and a source of the form
\begin{equation} \label{td_tc2_J}
\bJ(t) = \bcurl \psi - \cos(\omega t) \grad \psi
\end{equation}
with $\psi$ as in \eqref{td_tc1_E0}.
The associated charge density is then
$$
\rho(t) = -\int_0^t \Div \bJ(t') \rmd t' = \omega^{1}\sin(\omega t) \Delta \psi.
$$
In Figures~\ref{fig:td_max_sols_J}--\ref{fig:td_max_diags_J} 
we compare three different approximation operators for the current source, namely:
\begin{itemize}
  \item[(i)] the primal finite element projection: $\bJ_h = \Pi^1_h \bJ$, 
  \item[(ii)] the $L^2$ projection on the broken space: $\bJ_h = P_{V^1_h} \bJ$, 
  \item[(iii)] the dual projection: $\bJ_h = \t \Pi^1_h \bJ$.
\end{itemize}
We note that each of these projection operators are local, in the sense that none 
requires solving a global problem on the computational domain $\Omega$.
We also remind that the primal projection $\Pi^1_h$ interpolates the geometric (edge) 
degrees of freedom and satisfies a commuting diagram property with the primal (strong) 
differential operators, but not with the dual ones. Hence it does not allow to preserve the 
discrete electric Gauss law in \eqref{Gauss_h}. In contrast, both the $L^2$ 
projection on the broken space $V^1_h$ and the dual projection $\t \Pi^1_h$
guarantee the preservation of the discrete Gauss laws,
however we expect an increased stability for the latter one
as discussed in Remark~\ref{rem:Gauss_h}.

In Figure~\ref{fig:td_max_sols_J} we first show some snapshots of the 
three numerical solutions on a time range $t \in [0,T]$ with $T=20$: 
There we see that the primal projection yields a very strong and steadily growing field 
in the region of the source (visible from the changing color scale)
which points towards a large error.
In contrast, the $L^2$ and the dual projections produce solutions with moderate amplitude
with some visible differences, namely an electric field that also builds up in the region where the source is located.


To better analyse the quality of these simulations we next show in Figure~\ref{fig:td_max_diags_J}
two error indicators for each one of the numerical strategies described above, namely
the Gauss law errors associated with the broken solution $\cG^n_h(\bE^n_h)$, and that of its conforming
projection $\cG^n_h(P^1_h \bE^n_h)$, where we have denoted
\begin{equation} \label{Gnh}
  \cG^n_h(\bF_h) = \norm{\wt \Div_h \bF_h - \t\Pi^0 \rho(t^n)}
  \qquad \text{ for } ~ \bF_h \in V^1_h.  
\end{equation}
Here the former errors are expected to be zero for both the $L^2$ and dual projections,
see Remark~\ref{rem:Gauss_h}: The numerical results confirm this value, up to machine accuracy,
whereas they show significant errors for the primal projection shown in the left plots.
As for the second error, it has no reason to be strictly zero but should remain small
for accurate and stable solutions, hence it is also an interesting error indicator. 
Here the curves show very large values (around $80$ at $t=T$) for the primal projection operator $\Pi^1_h$,
with a linear time growth (which is somehow consistent with the strong growth of the former 
error indicator $\cG^n_h(\bE^n_h)$). For the $L^2$ projection the error is smaller but it is far from being 
negligible (close to $3$), and also grows linearly in time. In contrast, the error is much smaller 
(on the order of $0.01$) for the dual projection $\t \Pi^1_h$, and it oscillates but does not seem to grow.
These results tend to indicate that the growing field visible in 
Figure~\ref{fig:td_max_sols_J} corresponds to a numerical error, and they
highlight the improved stability of the CONGA scheme with a proper source approximation.

\def \plotdirGeom  {td_maxwell_Issautier_like_source_J_proj=P_geom_qp4_nb_tau=100}
\def \plotdirPnofil{td_maxwell_Issautier_like_source_J_proj=P_L2_qp4_nb_tau=100}
\def \plotdirPfil  {td_maxwell_Issautier_like_source_J_proj=tilde_Pi_qp4_nb_tau=100}

\begin{figure}[!htbp]

\begin{center}
    \includegraphics[width=0.3\textwidth]{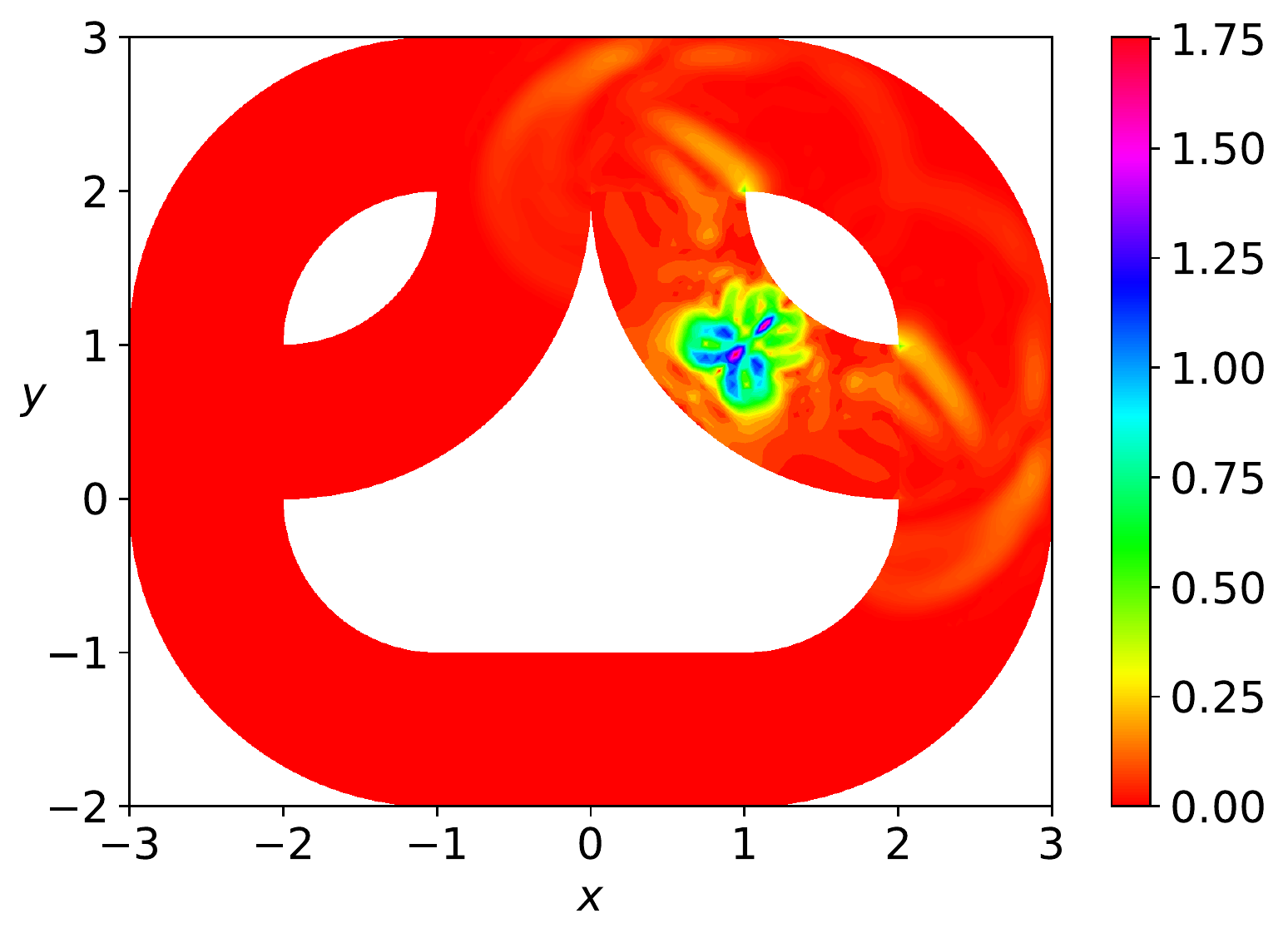}
  \hspace{10pt}
    \includegraphics[width=0.3\textwidth]{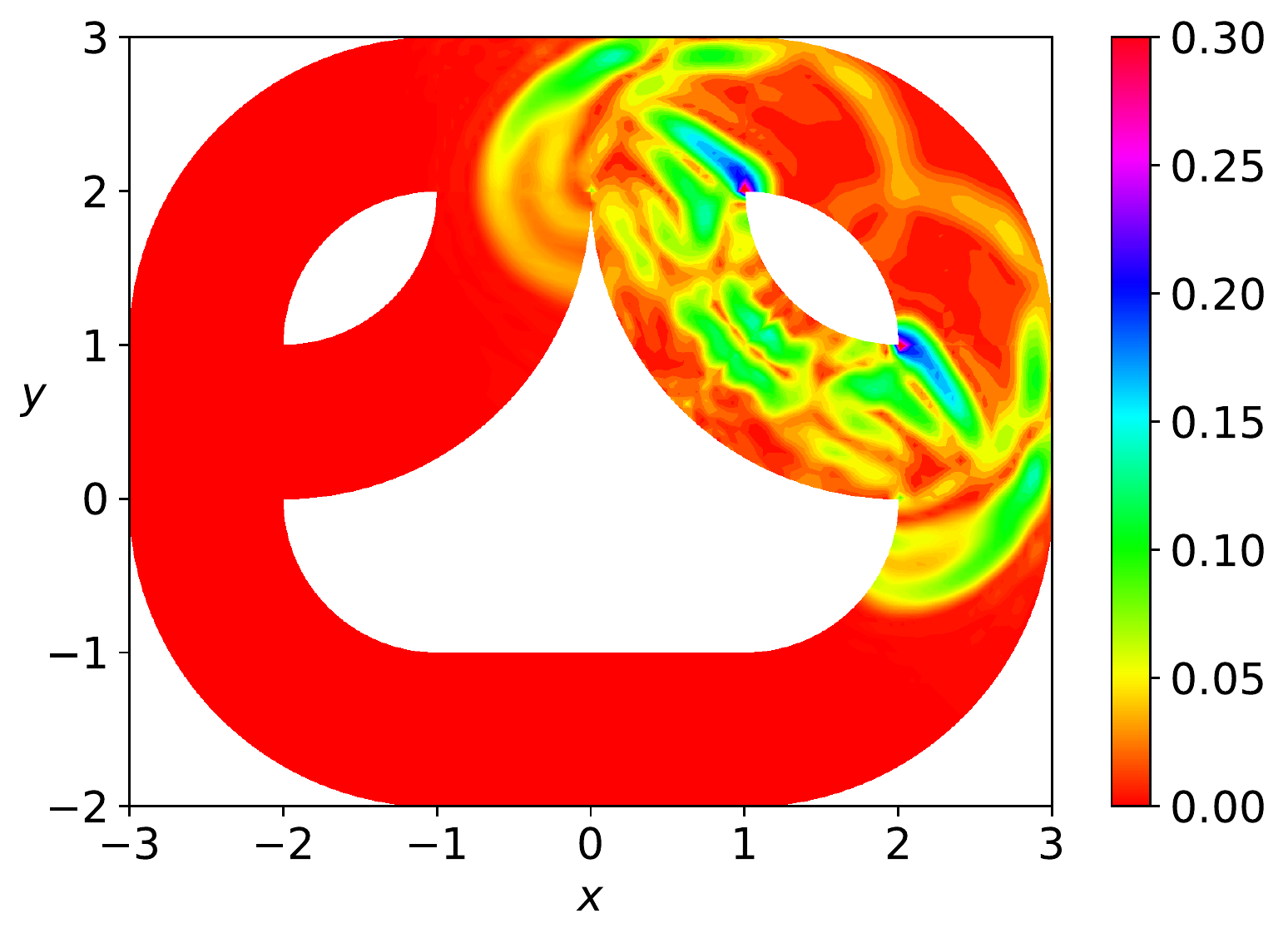}
  \hspace{10pt}
    \includegraphics[width=0.3\textwidth]{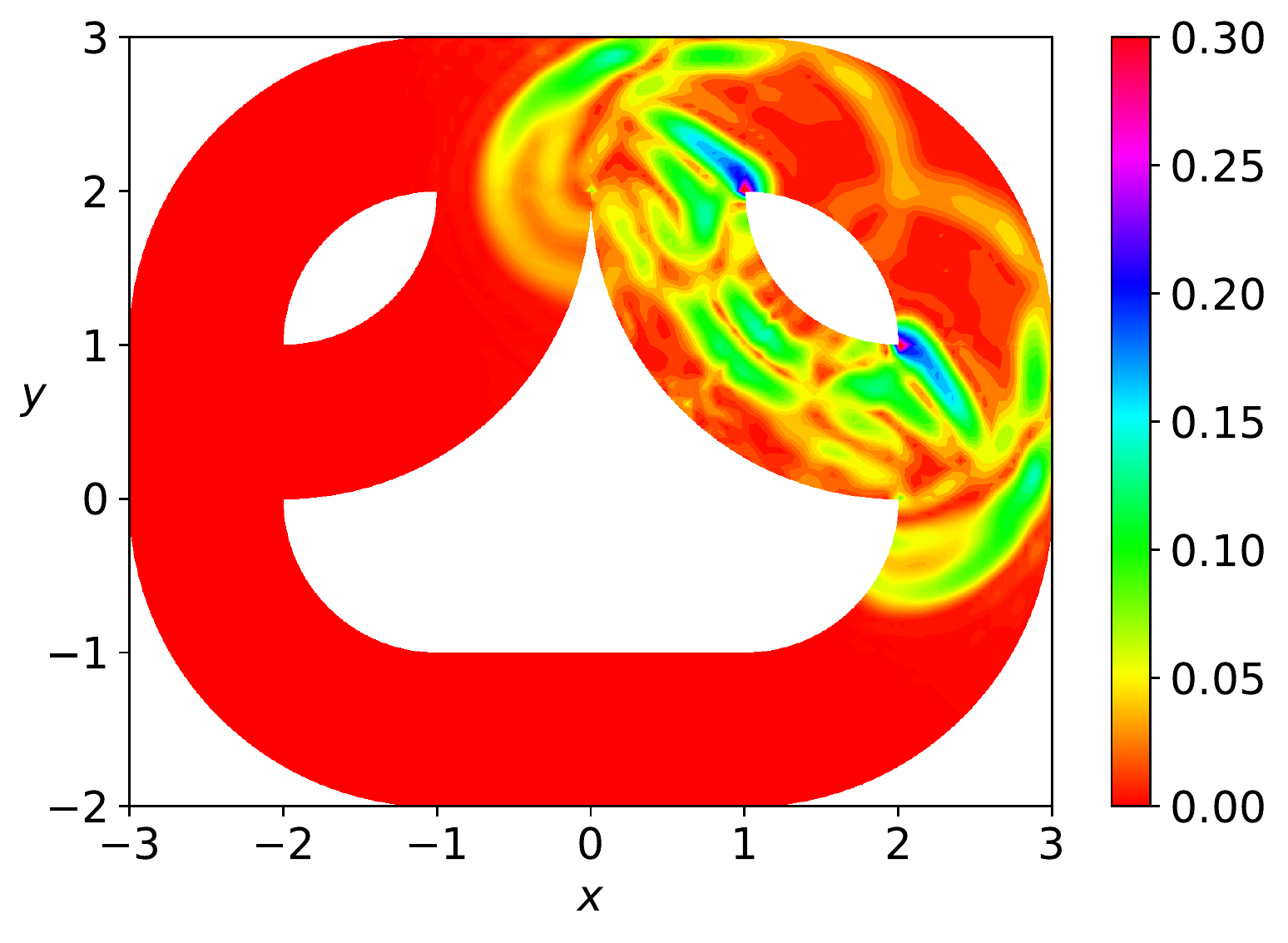}
  \\
    \includegraphics[width=0.3\textwidth]{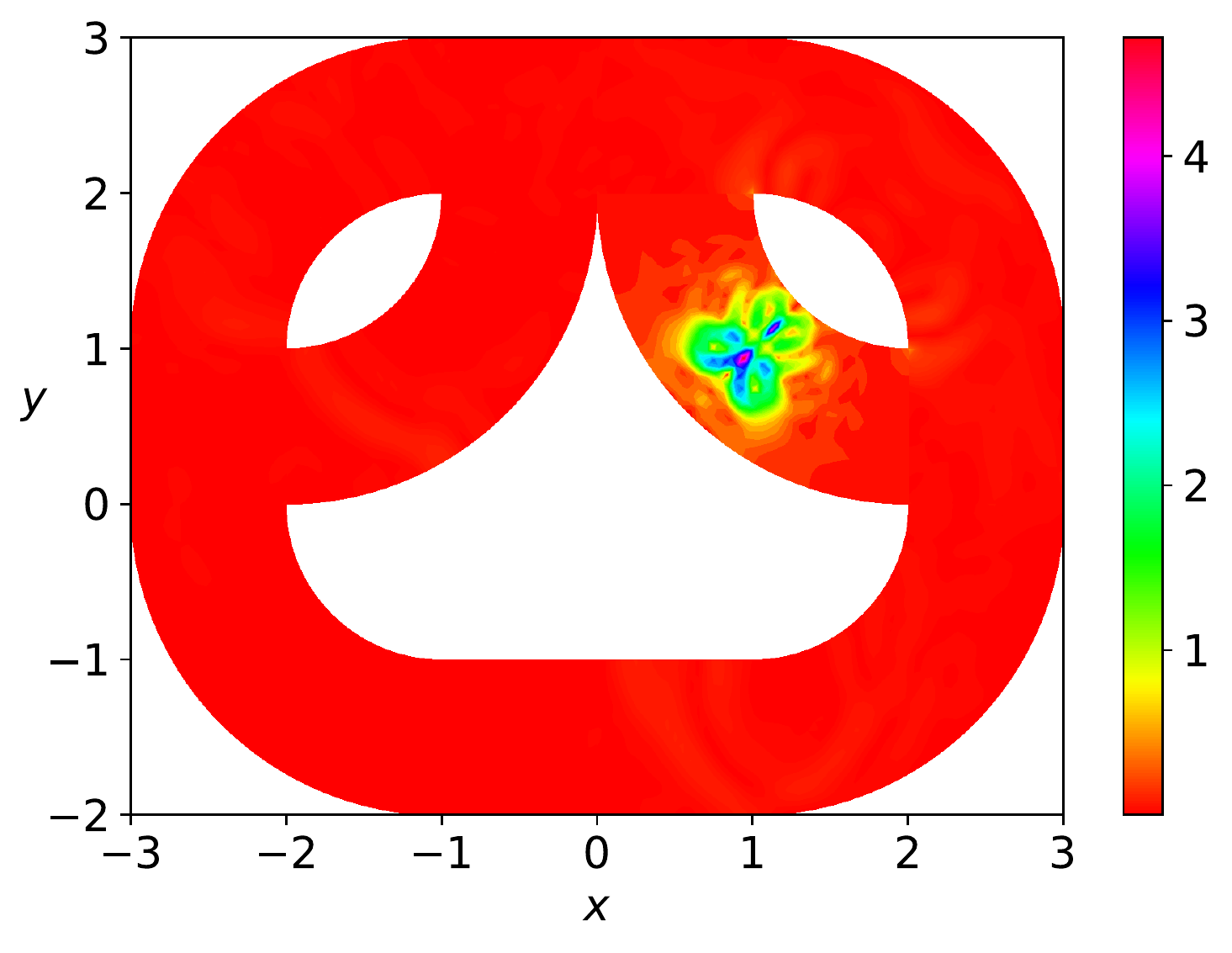}
  \hspace{10pt}
    \includegraphics[width=0.3\textwidth]{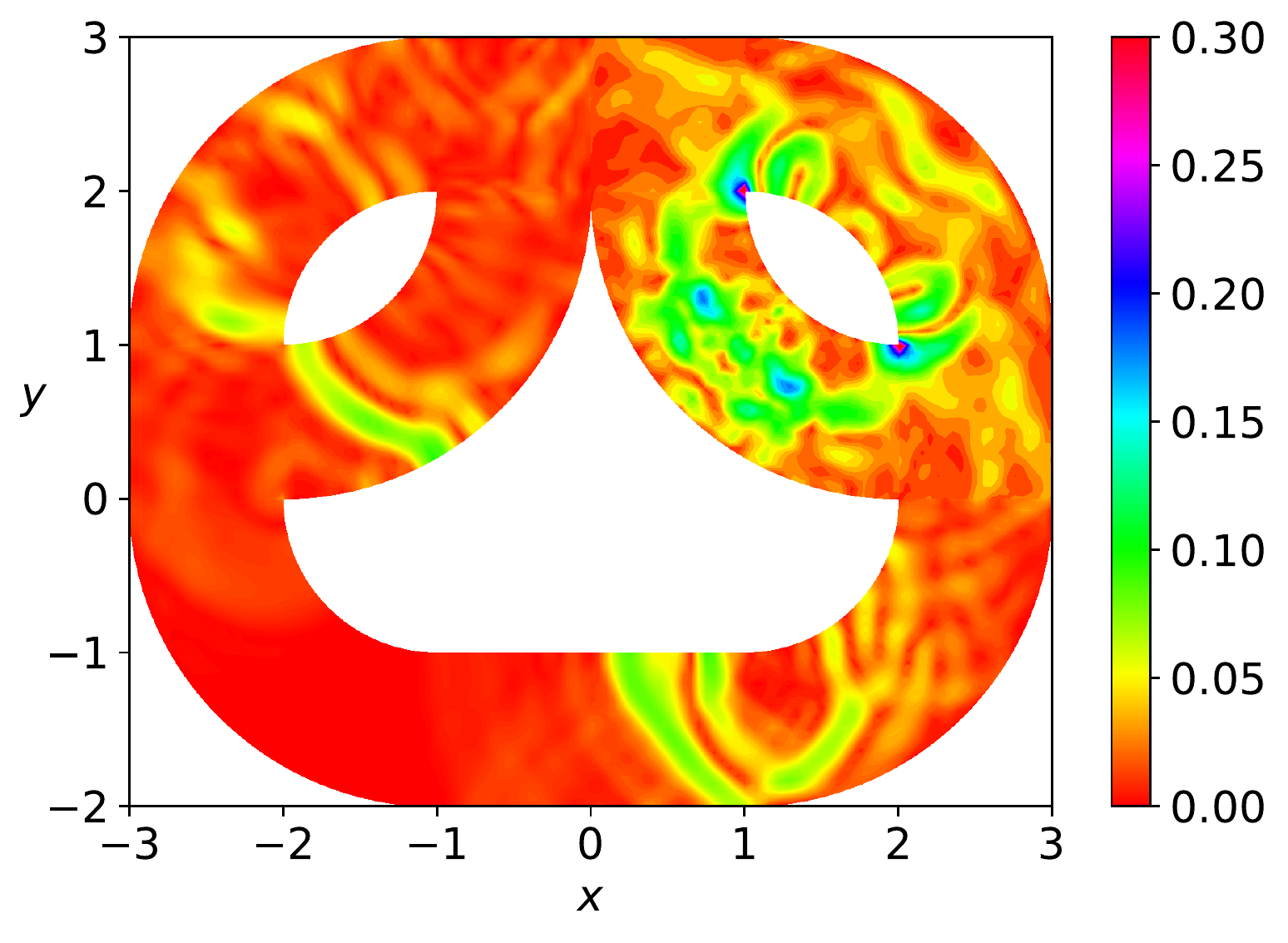}
  \hspace{10pt}
    \includegraphics[width=0.3\textwidth]{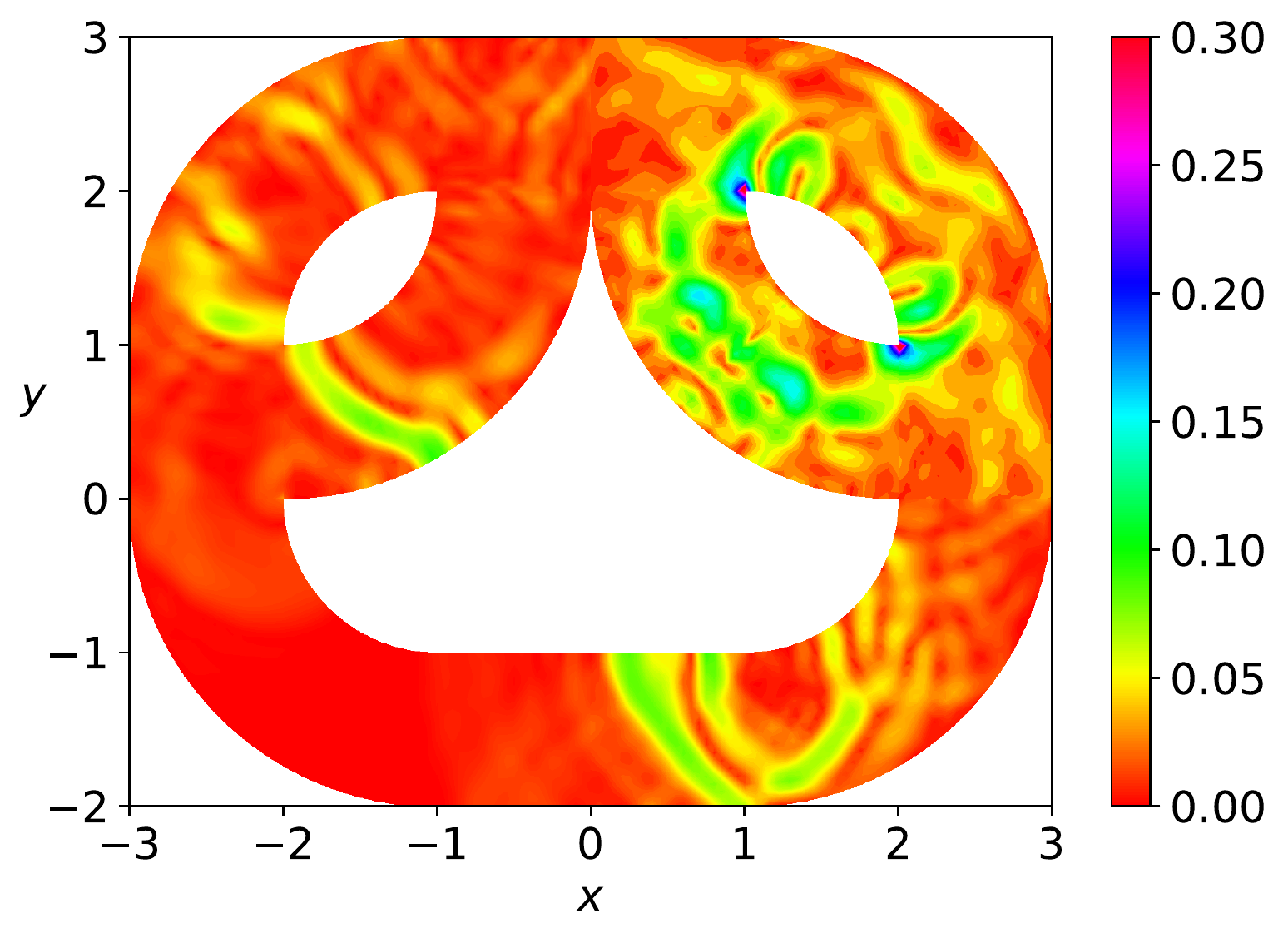}
  \\
    \includegraphics[width=0.3\textwidth]{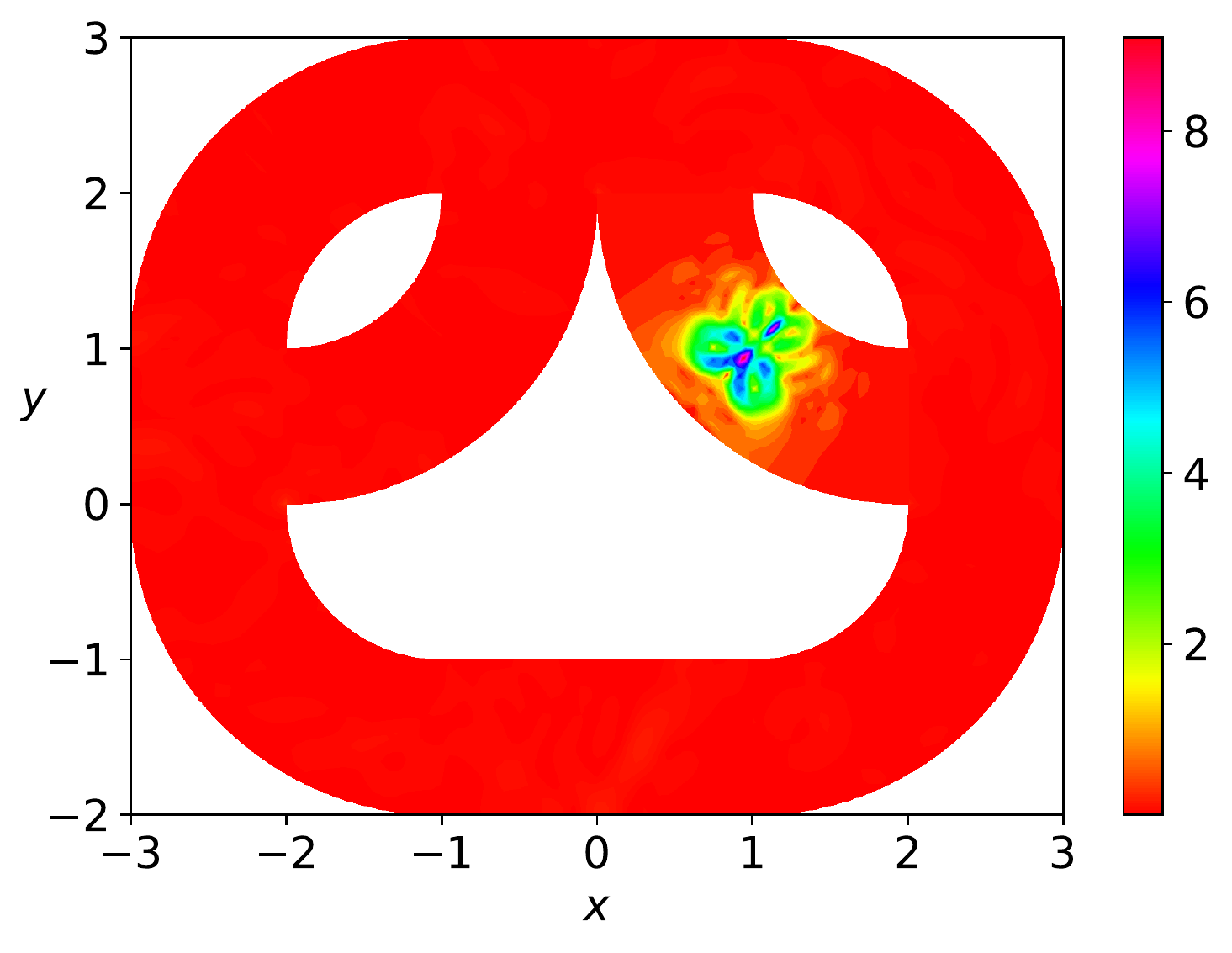}
  \hspace{10pt}
    \includegraphics[width=0.3\textwidth]{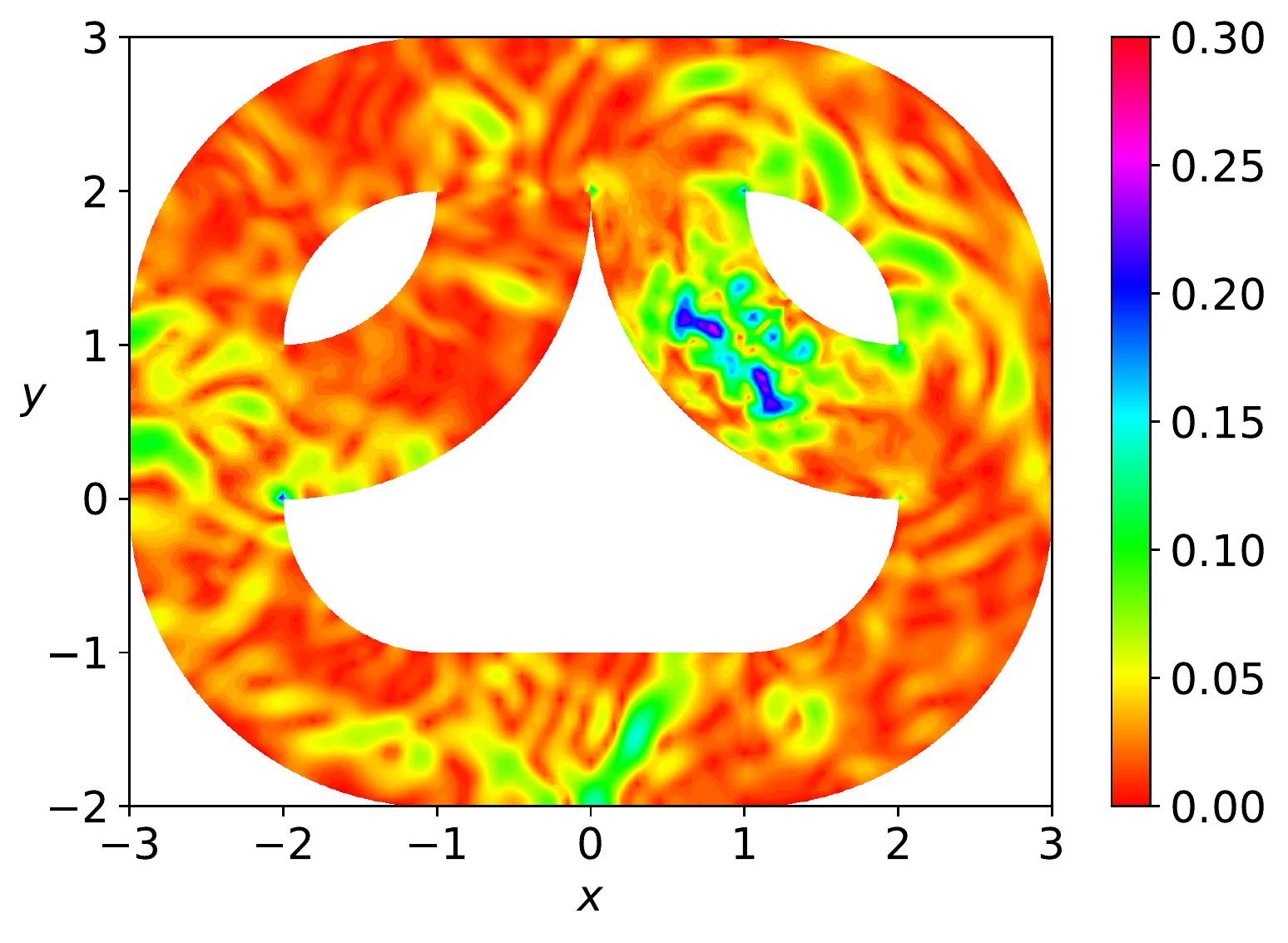}
  \hspace{10pt}
    \includegraphics[width=0.3\textwidth]{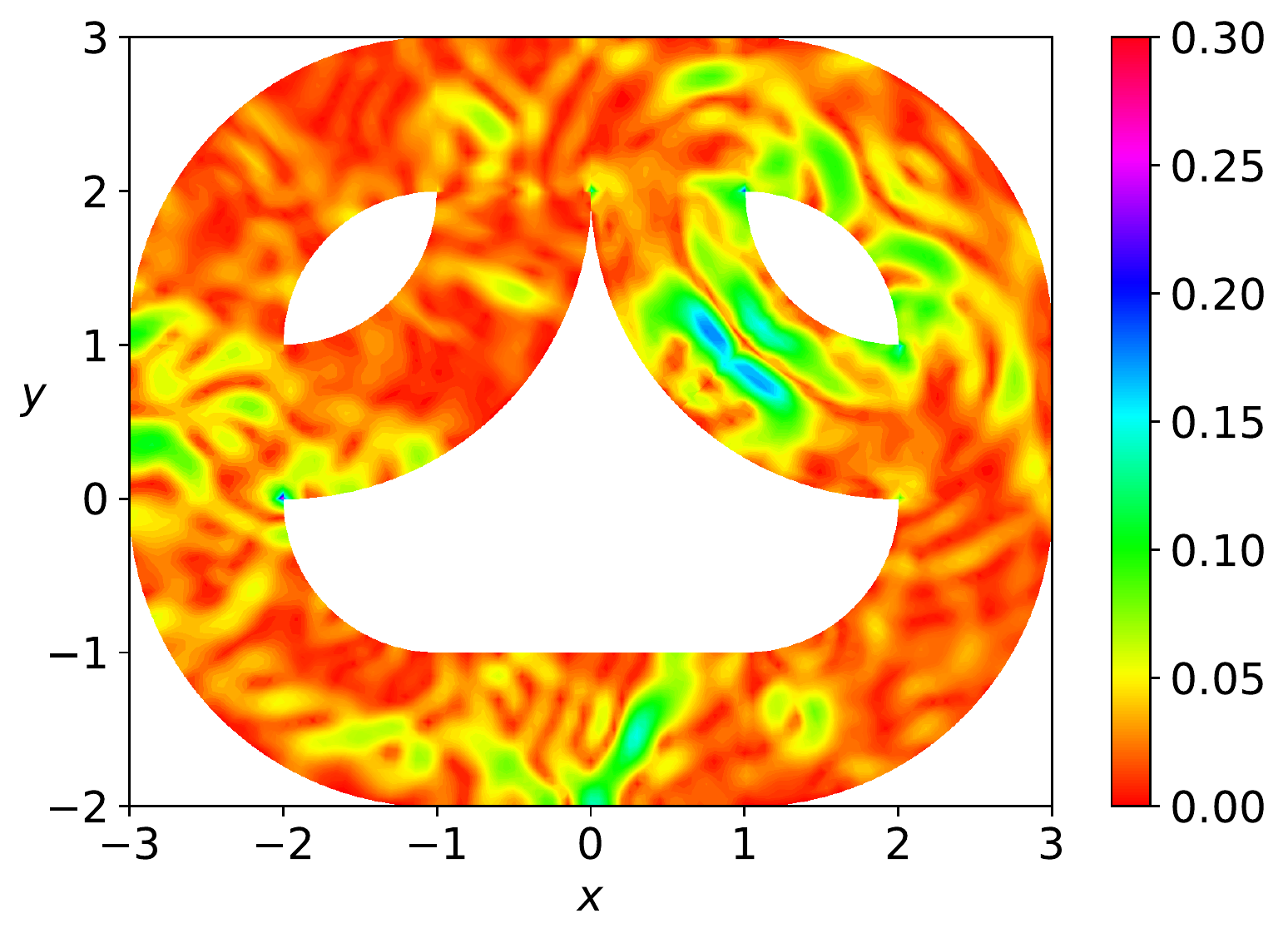}
  \\
    \includegraphics[width=0.3\textwidth]{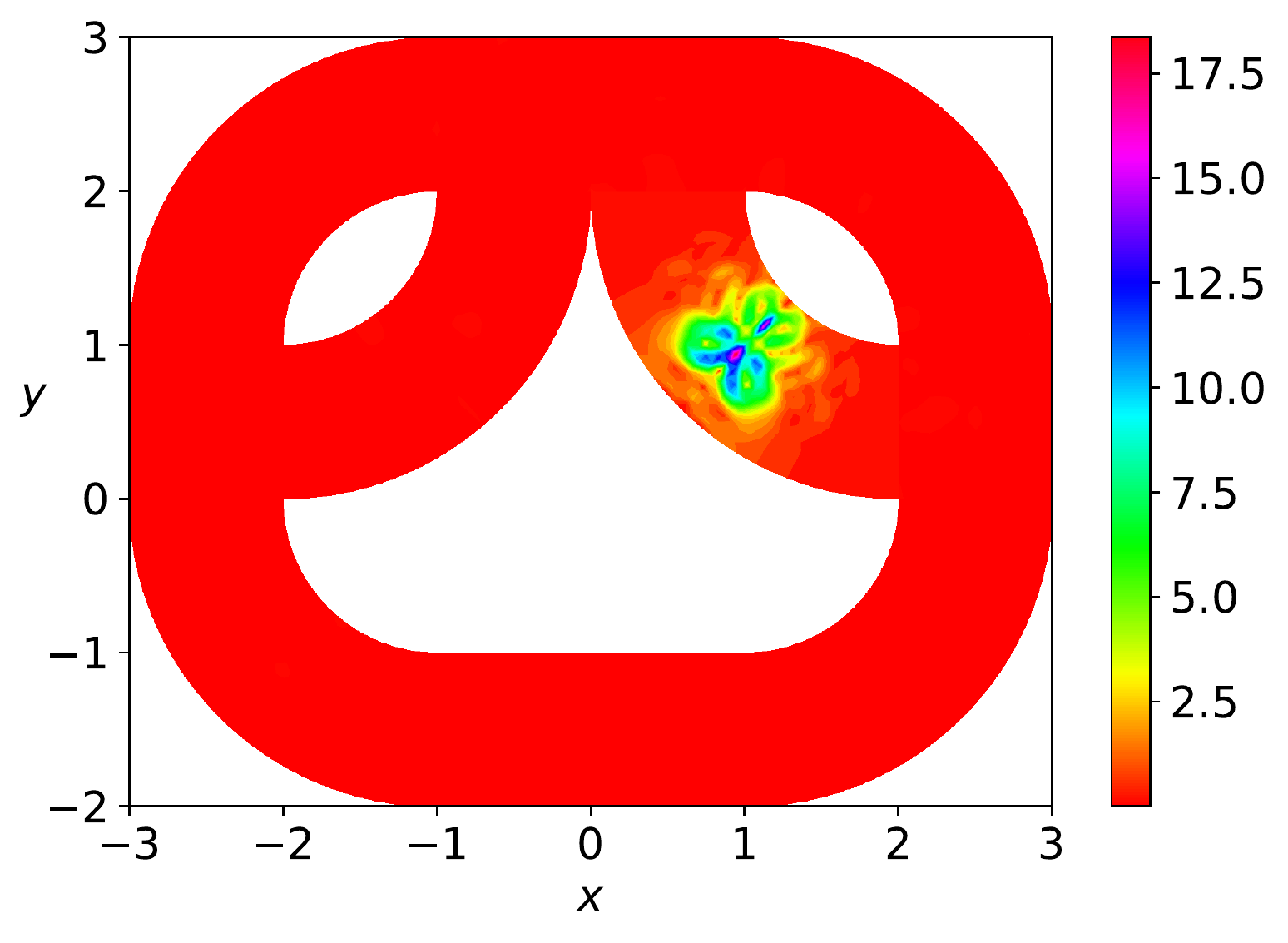}
  \hspace{10pt}
    \includegraphics[width=0.3\textwidth]{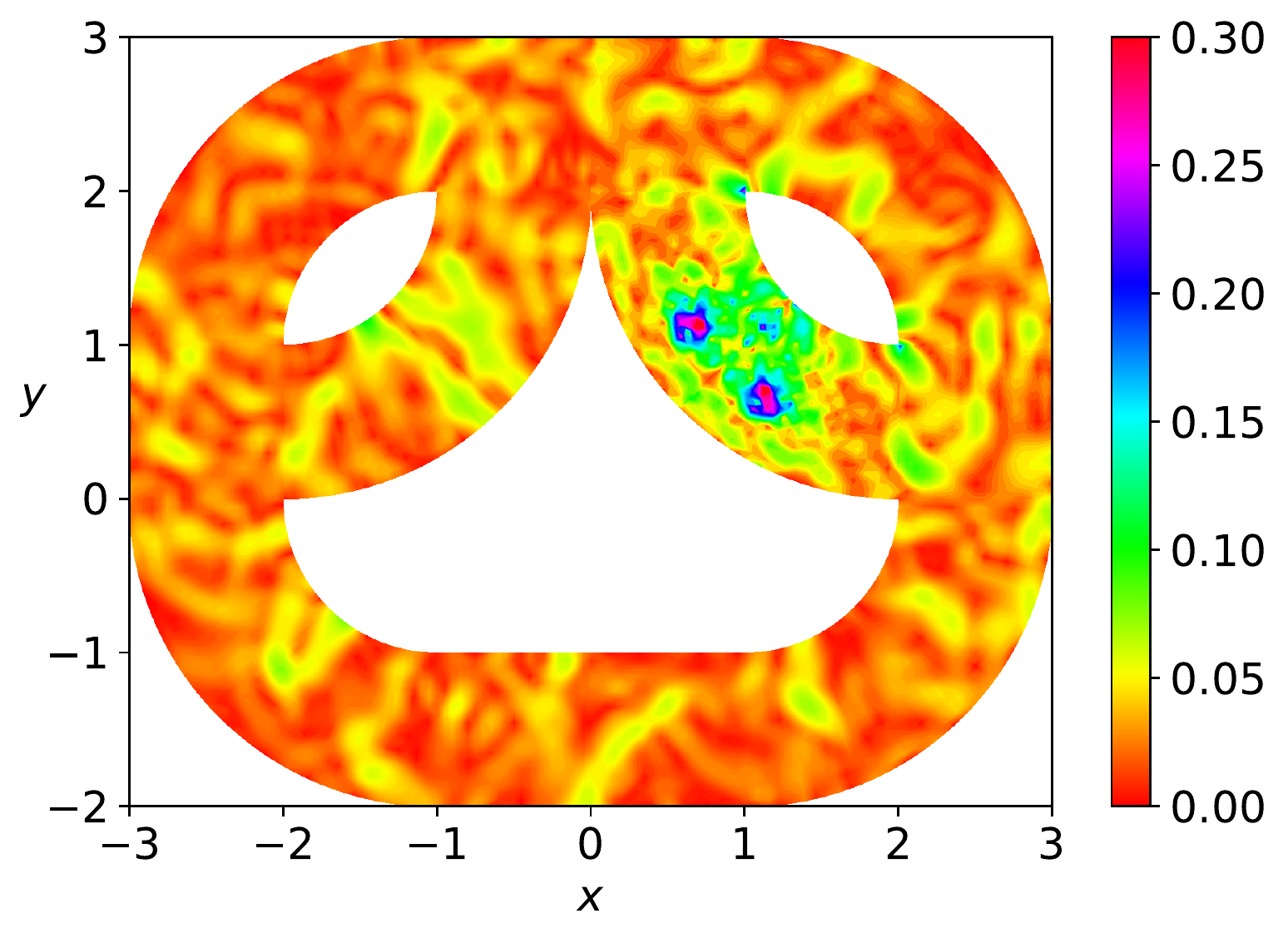}
  \hspace{10pt}
    \includegraphics[width=0.3\textwidth]{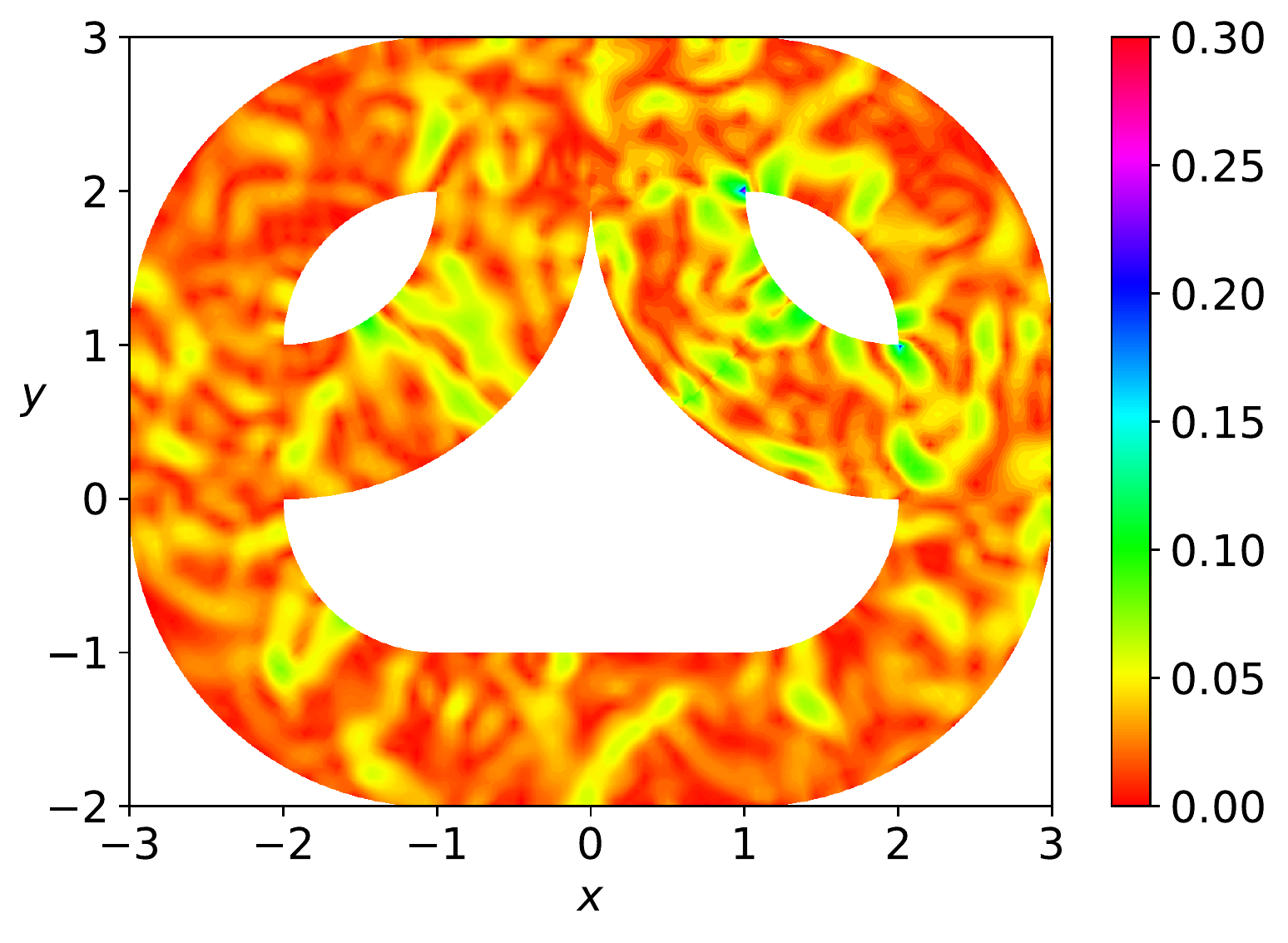}
\end{center}
\caption{
Snapshots of numerical solutions to the time-dependent Maxwell problem with
a time-varying source \eqref{td_tc2_J} discretized with the CONGA 
scheme~\eqref{AF_h} using spline elements of degree $3 \times 3$ and 
$8 \times 8$ cells per patch. 
The amplitudes $\abs{P^1_h\bE^n_h}$ are shown 
at $t = 2$, $5$, $10$ and $20$ (from top to bottom).
In the left panels the source projection $\bJ \to \bJ_h$ 
is a primal projection operator $\Pi^1_h$ which does not commute with the dual differential 
operators (note the time-varying color scale). 
The middle panels use an orthogonal projection $P_{V^1_h}$,
and the ones on the right a dual commuting projection $\t \Pi^1_h$. See the 
text for more details.
}
\label{fig:td_max_sols_J}
\end{figure}

\begin{figure}[!htbp]
\begin{center}
  \subfloat[$\cG^n_h(\bE^n_h)$ for $P_J = \Pi^1_h$]{%
    \includegraphics[width=0.3\textwidth]{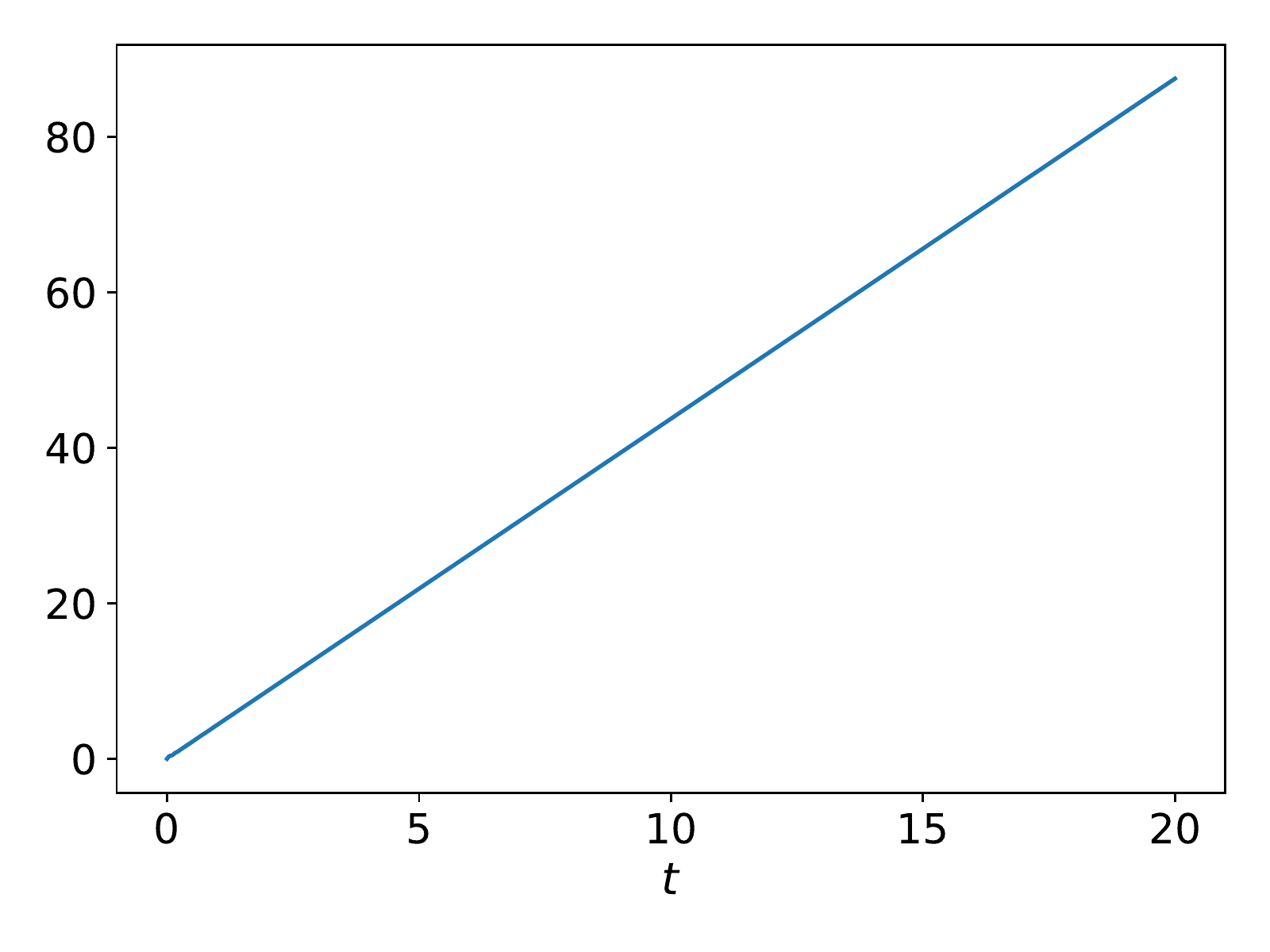}
  }
  \hspace{10pt}
  \subfloat[$\cG^n_h(\bE^n_h)$ for $P_J = P_{V^1_h}$]{%
    \includegraphics[width=0.3\textwidth]{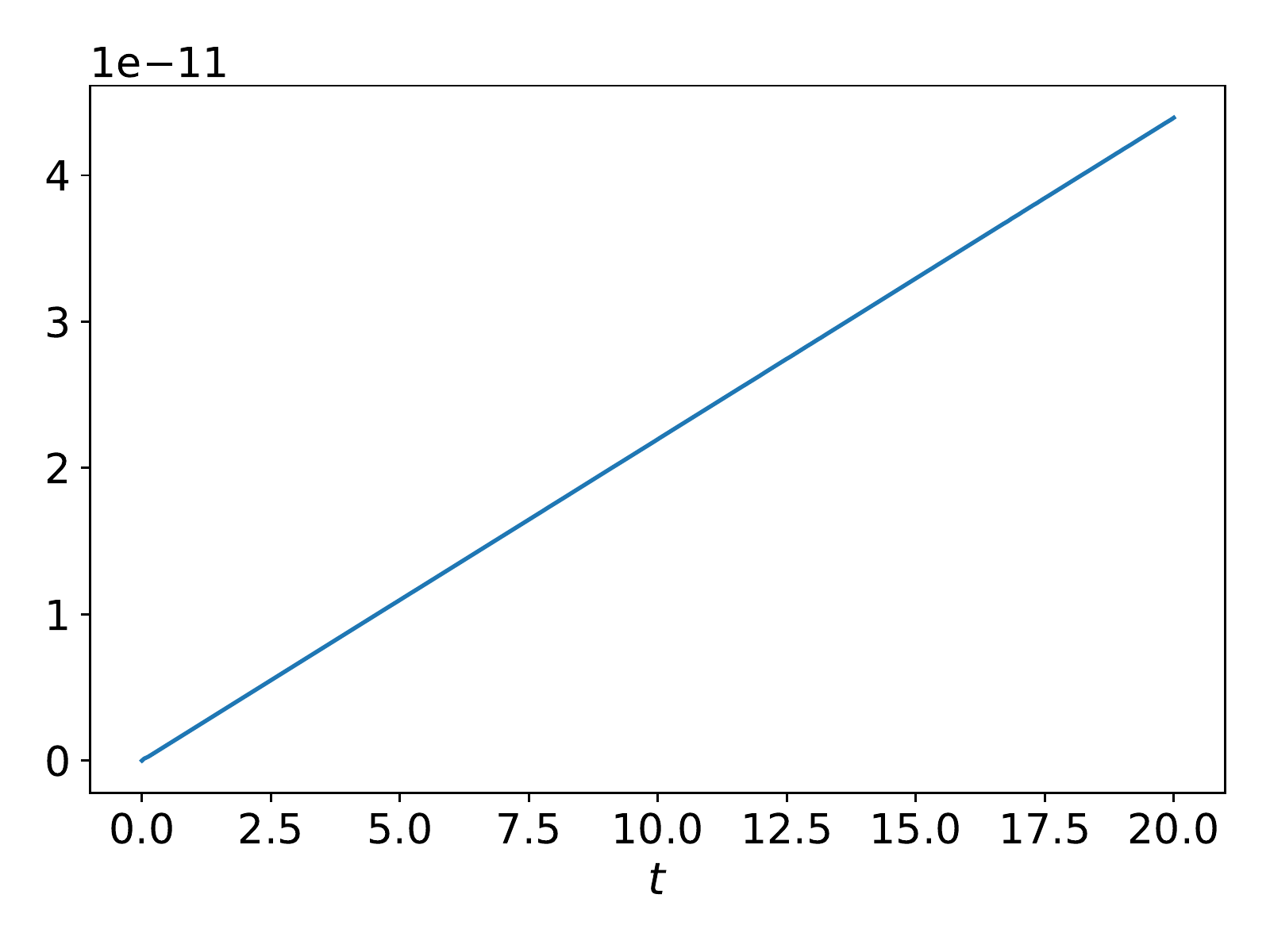}
  }
  \hspace{10pt}
  \subfloat[$\cG^n_h(\bE^n_h)$ for $P_J = \t\Pi^1_h$]{%
    \includegraphics[width=0.3\textwidth]{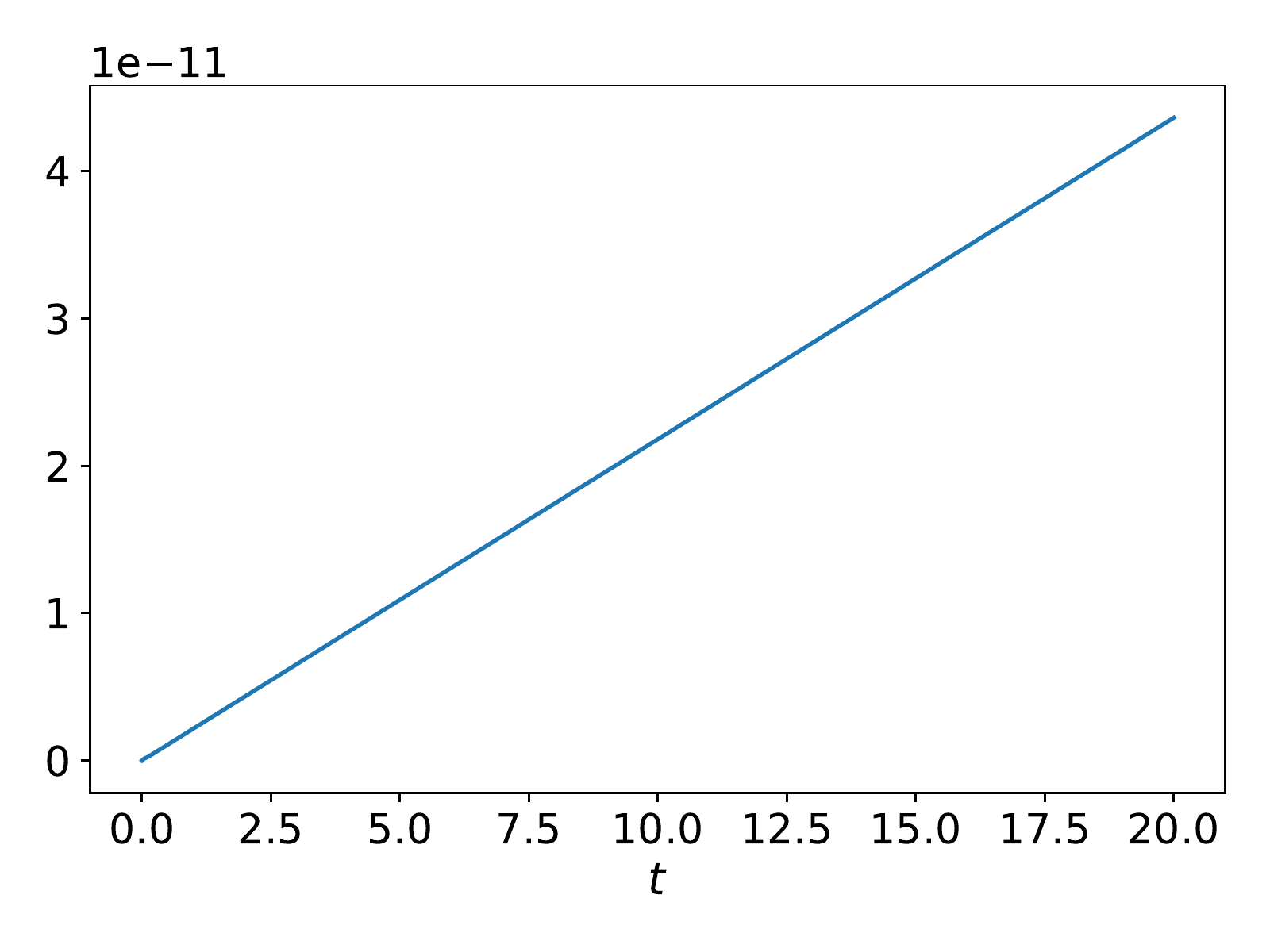}
  }
  \\
  \subfloat[$\cG^n_h(P^1_h \bE^n_h)$ for $P_J = \Pi^1_h$]{%
    \includegraphics[width=0.3\textwidth]{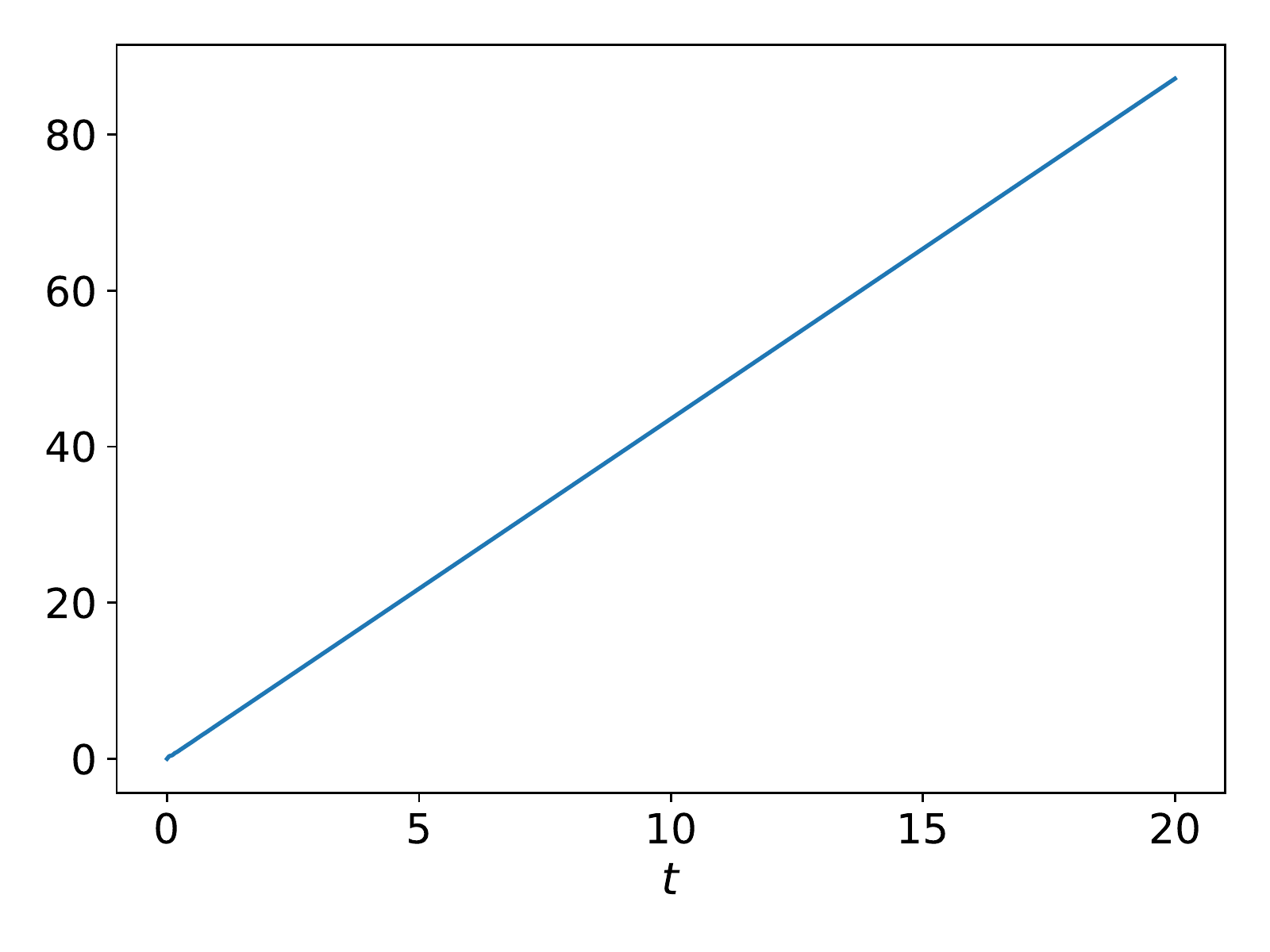}
  }
  \hspace{10pt}
  \subfloat[$\cG^n_h(P^1_h \bE^n_h)$ for $P_J = P_{V^1_h}$]{%
    \includegraphics[width=0.3\textwidth]{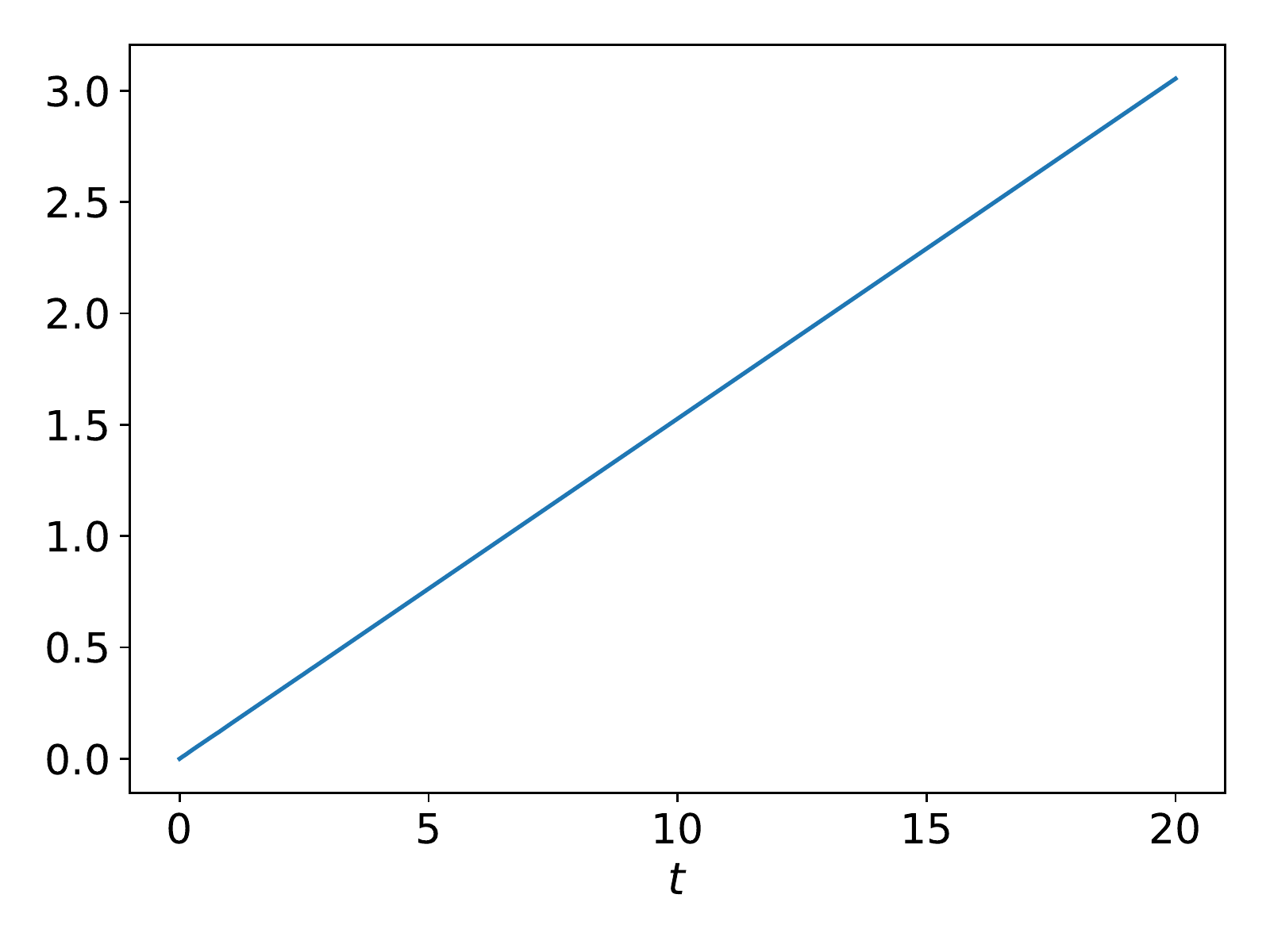}
  }
  \hspace{10pt}
  \subfloat[$\cG^n_h(P^1_h \bE^n_h)$ for $P_J = \t \Pi^1_h$]{%
    \includegraphics[width=0.3\textwidth]{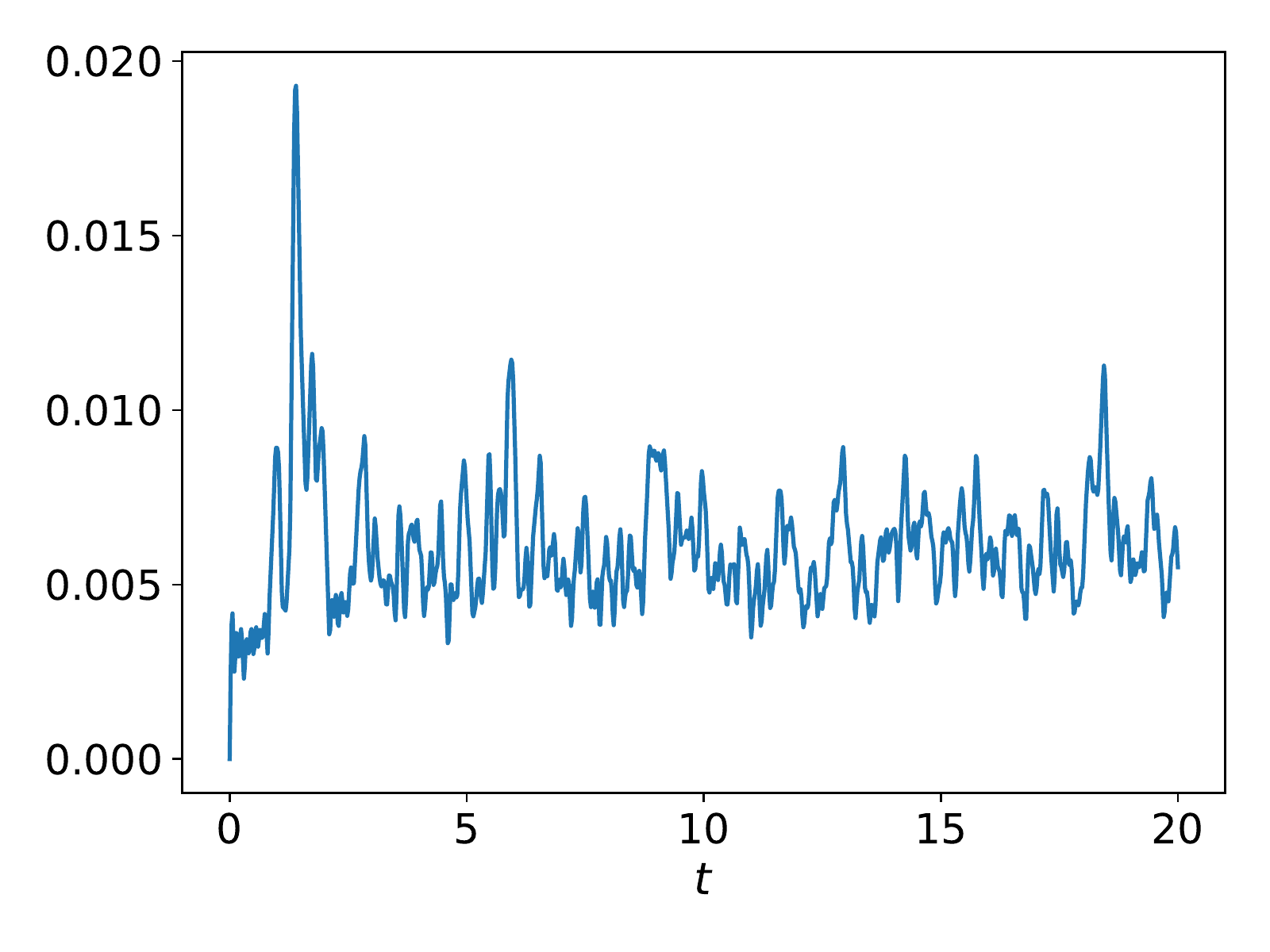}
  }
\end{center}
\caption{
Discrete Gauss law errors \eqref{Gnh} as a function of time for the test-case \eqref{td_tc2_EB0}--\eqref{td_tc2_J},
using different approximation operators for the discrete source $\bJ_h = P_J \bJ$. 
The top panels show the errors relative to the broken field $\bE_h$, whereas 
those on the bottom panels show the error relative to its conforming projection $P^1_h \bE_h$.
For each indicator, the left, middle and right plots correspond to the different 
source approximation operators shown in Figure\ref{fig:td_max_sols_J}.
}
\label{fig:td_max_diags_J}
\end{figure}

\section{Conclusions}
\label{sec:con}

In this work we have extended the classical theory of finite element exterior calculus (FEEC)
to mapped multipatch domains, using finite element spaces that are fully discontinuous across
the patch interfaces.
We refer to this approach as the ``broken FEEC'' or ``CONGA'' (COnforming/Non conforming GAlerkin) method.
While the foundational theory relative to the solution of the Hodge-Laplace equation was presented
in recent work~\cite{conga_hodge}, here we have presented stable broken-FEEC formulations
for many problems arising in electromagnetic applications, including Poisson's equation,
time-dependent and time-harmonic (source and eigenvalue) Maxwell's problems, and
magnetostatic problems with pseudo-vacuum and metallic boundary conditions.
Further, we have detailed a numerical framework based on tensor-product splines on each patch,
under the assumption of geometric conformity across the patch interfaces.

For all the electromagnetic problems presented, we have verified our broken FEEC framework
through extensive numerical testing in L-shaped and pretzel-like two-dimensional domains.
The latter geometry is particularly challenging because of its three holes and sharp reentrant corner.
The nominal order of accuracy was achieved in all cases, and the structure-preserving properties (such as divergence of harmonic constraints) were respected to floating-point accuracy.
For the time-dependent Maxwell problem with a current source, we could also observe long-term stability of the method, and presented alternative formulations which lack this property.

Given its solid theoretical bases and convincing numerical results, we are confident that the broken FEEC framework will find practical use in the computational physics community.
To this end we plan to relax the grid conformity constraints at the interfaces,
allowing for independent refinement of the patches,
and to investigate the efficiency of parallel implementations for high-performance computing applications.
This will allow us to tackle large problems in three dimensions, including MHD and kinetic models for plasma physics.


\section*{Acknowledgments}

The authors would like to thank Eric Sonnendrücker for inspiring discussions throughout this work.
The work of Yaman Güçlü was partially supported by the European Council under the Horizon 2020 
Project Energy oriented Centre of Excellence for computing applications - EoCoE, Project ID 676629.

\bibliography{bibfile_ppc}   

\begin{thebibliography}{10}

\bibitem{Arnold.Falk.Winther.2006.anum}
D.N. Arnold, R.S. Falk, and R.~Winther.
\newblock {Finite element exterior calculus, homological techniques, and
  applications}.
\newblock {\em Acta Numerica}, 2006.

\bibitem{Arnold.Falk.Winther.2010.bams}
D.N. Arnold, R.S. Falk, and R.~Winther.
\newblock {Finite element exterior calculus: from Hodge theory to numerical
  stability}.
\newblock {\em Bull. Amer. Math. Soc.(NS)}, 47(2):281--354, 2010.

\bibitem{Assous.Ciarlet.Labrunie.2018.sp}
Franck Assous, Patrick Ciarlet, and Simon Labrunie.
\newblock {\em {Mathematical Foundations of Computational Electromagnetism}}.
\newblock Springer, May 2018.

\bibitem{daVeiga_2014_actanum}
Louren{\c c}o Beirão~da Veiga, Annalisa Buffa, Giancarlo Sangalli, and
  Rafael~Hernandez Vázquez.
\newblock {Mathematical analysis of variational isogeometric methods}.
\newblock {\em Acta Numerica}, 23:157 -- 287, 05 2014.

\bibitem{Beirao_da_Veiga_Brezzi_Dassi_Marini_Russo_2017}
L.~Beirão da Veiga, F.~Brezzi, F.~Dassi, L.D. Marini, and A.~Russo.
\newblock Virtual element approximation of 2d magnetostatic problems.
\newblock {\em Computer Methods in Applied Mechanics and Engineering},
  327:173–195, 2017.

\bibitem{Bochev_Hyman_2006_csd}
Pavel~B. Bochev and James~M. Hyman.
\newblock Principles of mimetic discretizations of differential operators.
\newblock In {\em Compatible spatial discretizations. Papers presented at IMA
  hot topics workshop: compatible spatial discretizations for partial
  differential equations, Minneapolis, MN, USA, May 11--15, 2004.}, pages
  89--119. New York, NY: Springer, 2006.

\bibitem{Boffi.2010.anum}
D.~Boffi.
\newblock {Finite element approximation of eigenvalue problems}.
\newblock {\em Acta Numerica}, 19:1--120, 2010.

\bibitem{Boffi.Fernandes.Gastaldi.Perugia.1999.sinum}
D.~Boffi, P.~Fernandes, L.~Gastaldi, and I.~Perugia.
\newblock {Computational Models of Electromagnetic Resonators: Analysis of Edge
  Element Approximation}.
\newblock {\em SIAM Journal on Numerical Analysis}, 36(4):1264--1290, 1999.

\bibitem{Boffi.Brezzi.Fortin.2013.scm}
Daniele Boffi, Franco Brezzi, and M~Fortin.
\newblock {\em {Mixed finite element methods and applications}}, volume~44 of
  {\em Springer Series in Computational Mathematics}.
\newblock Springer, 2013.

\bibitem{Bonazzoli_Rapetti_2016_na}
Marcella Bonazzoli and Francesca Rapetti.
\newblock {High-order finite elements in numerical electromagnetism: degrees of
  freedom and generators in duality}.
\newblock {\em Numerical Algorithms}, 74(1):111 -- 136, 05 2016.

\bibitem{Bossavit_1988_whitney}
A.~Bossavit.
\newblock Mixed finite elements and the complex of {W}hitney forms.
\newblock In {\em The mathematics of finite elements and applications, {VI}
  ({U}xbridge, 1987)}, pages 137--144. Academic Press, London, 1988.

\bibitem{Bossavit.1990.IEEE-tm}
A.~Bossavit.
\newblock {Solving Maxwell equations in a closed cavity, and the question of
  'spurious modes'}.
\newblock {\em IEEE Transactions on Magnetics}, 26(2):702--705, 1990.

\bibitem{Bossavit.1998.ap}
A.~Bossavit.
\newblock {\em {Computational electromagnetism: variational formulations,
  complementarity, edge elements}}.
\newblock Academic Press, 1998.

\bibitem{Brezis.2010.fa}
H.~Brezis.
\newblock {\em {Functional analysis, Sobolev spaces and partial differential
  equations}}.
\newblock Springer, 2010.

\bibitem{Buffa_Doelz_Kurz_Schoeps_Vazquez_Wolf:2019}
Annalisa Buffa, Jürgen Dölz, Stefan Kurz, Sebastian Schöps, Rafael Vazquez,
  and Felix Wolf.
\newblock {Multipatch approximation of the de Rham sequence and its traces in
  isogeometric analysis}.
\newblock {\em Numerische Mathematik}, 144(1):201 -- 236, 10 2019.

\bibitem{Buffa_2011}
Annalisa Buffa, Judith Rivas, Giancarlo Sangalli, and Rafael V\'{a}zquez.
\newblock Isogeometric discrete differential forms in three dimensions.
\newblock {\em SIAM Journal on Numerical Analysis}, 49:818--844, 2011.

\bibitem{buffa2010isogeometric}
Annalisa Buffa, Giancarlo Sangalli, and Rafael V{\'a}zquez.
\newblock Isogeometric analysis in electromagnetics: {B}-splines approximation.
\newblock {\em Computer Methods in Applied Mechanics and Engineering},
  199(17):1143--1152, 2010.

\bibitem{Buffa_isogeometric_mp_2015}
Annalisa Buffa, Rafael~Hernandez Vázquez, Giancarlo Sangalli, and Lourenço
  Beirão~da Veiga.
\newblock {Approximation estimates for isogeometric spaces in multipatch
  geometries}.
\newblock {\em Numerical Methods for Partial Differential Equations}, 31(2):422
  -- 438, 03 2015.

\bibitem{Campos-Pinto.2016.cras}
M.~Campos~Pinto.
\newblock {Constructing exact sequences on non-conforming discrete spaces}.
\newblock {\em Comptes Rendus Mathematique}, 354(7):691--696, July 2016.

\bibitem{conga_hodge}
Martin Campos~Pinto and Yaman Güçlü.
\newblock {Broken-FEEC discretizations and Hodge Laplace problems}.
\newblock Preprint at \url{https://arxiv.org/abs/2109.02553}, 2022.

\bibitem{CPJSS_2014_crm}
Martin Campos~Pinto, S.~Jund, S.~Salmon, and E.~Sonnendrücker.
\newblock {Charge-conserving FEM–PIC schemes on general grids}.
\newblock {\em Comptes Rendus Mécanique}, 342(10-11):570 -- 582, 2014.

\bibitem{CPKS_variational_2020}
Martin Campos~Pinto, Katharina Kormann, and Eric Sonnendr\"ucker.
\newblock {Variational Framework for Structure-Preserving Electromagnetic
  Particle-In-Cell Methods}.
\newblock Accepted for publication in Journal of Scientific Computing, 2021.

\bibitem{CPMS_2016_amc}
Martin Campos~Pinto, Marie Mounier, and Eric Sonnendrücker.
\newblock {Handling the divergence constraints in Maxwell and Vlasov–Maxwell
  simulations}.
\newblock {\em Applied Mathematics and Computation}, 272:403 -- 419, 2016-01.

\bibitem{Campos-Pinto.Sonnendrucker.2016.mcomp}
Martin Campos~Pinto and E.~Sonnendr{\"u}cker.
\newblock {Gauss-compatible Galerkin schemes for time-dependent Maxwell
  equations}.
\newblock {\em Mathematics of Computation}, 85:2651--2685, 2016.

\bibitem{Campos-Pinto.Sonnendrucker.2017a.jcm}
Martin Campos~Pinto and Eric Sonnendr\"ucker.
\newblock {Compatible Maxwell solvers with particles I: conforming and
  non-conforming 2D schemes with a strong Ampere law}.
\newblock {\em The SMAI Journal of Computational Mathematics}, 3:53--89, 2017.

\bibitem{Campos-Pinto.Sonnendrucker.2017b.jcm}
Martin Campos~Pinto and Eric Sonnendr\"ucker.
\newblock {Compatible Maxwell solvers with particles II: conforming and
  non-conforming 2D schemes with a strong Faraday law}.
\newblock {\em The SMAI Journal of Computational Mathematics}, 3:91--116, 2017.

\bibitem{Caorsi.Fernandes.Raffetto.2000.sinum}
Salvatore Caorsi, Paolo Fernandes, and Mirco Raffetto.
\newblock {On the convergence of Galerkin finite element approximations of
  electromagnetic eigenproblems}.
\newblock {\em SIAM Journal on Numerical Analysis}, 38(2):580--607
  (electronic), 2000.

\bibitem{dauge_benchmarks}
Monique Dauge.
\newblock {Benchmark computations for Maxwell equations for the approximation
  of highly singular solutions}.
\newblock \url{http://perso.univ-rennes1.fr/monique.dauge/benchmax.html}.
\newblock Accessed: 2022-06-10.

\bibitem{Durufle.2006.phd}
Marc Durufl\'e.
\newblock {\em {Numerical integration and high order finite element methods
  applied to time-harmonic Maxwell equations}}.
\newblock PhD thesis, Universit\'e Paris Dauphine, 2006.

\bibitem{Gerritsma.2011.spec}
Marc Gerritsma.
\newblock {Edge functions for spectral element methods}.
\newblock In {\em Spectral and High Order Methods for Partial Differential
  Equations}, pages 199--207. Springer, Heidelberg, 2011.

\bibitem{Gross_Kotiuga_2004_cup}
Paul~W Gross and P~Robert Kotiuga.
\newblock {\em {Electromagnetic theory and computation: a topological
  approach}}, volume~48 of {\em Cambridge University Press, Cambridge}.
\newblock Cambridge University Press, Cambridge, 2004.

\bibitem{Hiptmair_hodge_2001}
R.~Hiptmair.
\newblock {Discrete Hodge operators}.
\newblock {\em Numerische Mathematik}, 90(2):265 -- 289, 12 2001.

\bibitem{Hiptmair.2002.anum}
R.~Hiptmair.
\newblock {Finite elements in computational electromagnetism}.
\newblock {\em Acta Numerica}, 11:237--339, 2002.

\bibitem{HPRW_2020_jcp}
Florian Holderied, Stefan Possanner, Ahmed Ratnani, and Xin Wang.
\newblock {Structure-preserving vs. standard particle-in-cell methods: The case
  of an electron hybrid model}.
\newblock {\em Journal of Computational Physics}, 402:109108, 02 2020.

\bibitem{Holderied_Possanner_Wang_2021}
Florian Holderied, Stefan Possanner, and Xin Wang.
\newblock {MHD-kinetic hybrid code based on structure-preserving finite
  elements with particles-in-cell}.
\newblock {\em Journal of Computational Physics}, 433:110143, 2021.

\bibitem{Hyman_Shashkov_1999_jcp}
James~M Hyman and Mikhail Shashkov.
\newblock {Mimetic Discretizations for Maxwell's Equations}.
\newblock {\em Journal of Computational Physics}, 151(2):881 -- 909, 1999-05.

\bibitem{Jackson_al_magneto_pvbc_2014}
A.~Jackson, A.~Sheyko, P.~Marti, A.~Tilgner, D.~Cébron, S.~Vantieghem,
  R.~Simitev, F.~Busse, X.~Zhan, G.~Schubert, and et~al.
\newblock A spherical shell numerical dynamo benchmark with pseudo-vacuum
  magnetic boundary conditions.
\newblock {\em Geophysical Journal International}, 196(2):712–723, 2014.

\bibitem{gempic_2016_jpp}
Michael Kraus, Katharina Kormann, Philip~J Morrison, and Eric Sonnendruecker.
\newblock {GEMPIC: geometric electromagnetic particle-in-cell methods}.
\newblock {\em Journal of Plasma Physics}, 83(04), 2017-08.

\bibitem{kreeft2011mimetic}
Jasper Kreeft, Artur Palha, and Marc Gerritsma.
\newblock Mimetic framework on curvilinear quadrilaterals of arbitrary order,
  2011.

\bibitem{Monk.Demkowicz.2001.mcomp}
P.~Monk and L~Demkowicz.
\newblock {Discrete compactness and the approximation of Maxwell's equations in
  R3}.
\newblock {\em Mathematics of Computation}, 70:507--523, 2001.

\bibitem{Monk.1993.jcam}
Peter Monk.
\newblock {An analysis of N{\'e}d{\'e}lec's method for the spatial
  discretization of Maxwell's equations}.
\newblock {\em Journal of Computational and Applied Mathematics},
  47(1):101--121, 1993.

\bibitem{MTO_2015_cpc}
Haksu Moon, Fernando~L Teixeira, and Yuri~A Omelchenko.
\newblock {Exact charge-conserving scatter-gather algorithm for
  particle-in-cell simulations on unstructured grids: a geometric perspective}.
\newblock {\em Computer Physics Communications}, 194:43 -- 53, 2015.

\bibitem{Nedelec.1980.numa}
J.-C. N{\'e}d{\'e}lec.
\newblock {Mixed finite elements in $\bf R^3$}.
\newblock {\em Numerische Mathematik}, 35(3):315--341, 1980.

\bibitem{Perse.Kormann.Sonnendrucker_2020}
Benedikt Perse, Katharina Kormann, and Eric Sonnendr\"{u}cker.
\newblock Geometric particle-in-cell simulations of the {Vlasov--Maxwell}
  system in curvilinear coordinates.
\newblock {\em SIAM Journal on Scientific Computing}, 43(1):B194--B218, 2021.

\bibitem{Robidoux.2008.histo}
Nicolas Robidoux.
\newblock {Polynomial histopolation, superconvergent degrees of freedom and
  pseudo-spectral discrete Hodge operators}.
\newblock Unpublished, 2008.

\bibitem{Schumaker_2007}
Larry Schumaker.
\newblock {\em Spline functions: basic theory}.
\newblock Cambridge University Press. Cambridge University Press, third
  edition, 2007.

\bibitem{psydac}
{Various Authors}.
\newblock {Psydac library}.
\newblock \url{https://github.com/pyccel/psydac}.

\bibitem{Whitney.1957.pup}
Hassler Whitney.
\newblock {\em {Geometric integration theory}}.
\newblock Princeton University Press, Princeton, N. J., 1957.

\end{thebibliography}

\end{document}